\newcommand{\Z}{\mathbb Z}
 \theoremstyle{plain}
\newtheorem{theorem}{Theorem}[section]
\newtheorem{corollary}[theorem]{Corollary}
\newtheorem{lemma}[theorem]{Lemma}
\newtheorem{proposition}[theorem]{Proposition}
\newtheorem{definition-proposition}[theorem]{Definition/Proposition}
\newtheorem{construction}[theorem]{Construction}
\newtheorem{axiom}[theorem]{Axiom}
 \theoremstyle{definition}
\newtheorem{definition}[theorem]{Definition}
\newtheorem{example}[theorem]{Example}
\theoremstyle{remark}
\newtheorem{remark}[theorem]{Remark}
\numberwithin{equation}{section}
\begin{document}

\title{A Generalization of Anti-homomorphisms}

\author{Tianwei Liang}

\keywords{Category theory, anti-homomorphisms, group theory, homological algebra, ring theory}

\maketitle
\begin{abstract}
We prove some nice properties of anti-homomorphisms, some of which are analogic to that of homomorphisms. Meanwhile, we develop a new kind of composition called $*$-composition such that the $*$-composition of two anti-homomorphisms is still an anti-homomorphism. Moreover, we develop a certain kind of categories called factorization categories, which generalize anti-homomorphisms and provide a general framework to study ``anti'' phenomenon. We deduce several results of anti-homomorphisms in great generality.
\end{abstract}

\tableofcontents

\section{Introduction}
\subsection{Background and motivation}\

It is known that a \textit{homomorphism} $f:G\rightarrow H$ of groups (written multiplicatively) preserves products, i.e. $f(xy)=f(x)f(y)$ for $x,y\in G$. However, an \textit{anti-homomorphisms} $f^{*}:G\rightarrow H$ of groups (written multiplicatively) reverses products, i.e. $f(xy)=f(y)f(x)$ for $x,y\in G$. The literatures of anti-homomorphisms of groups, rings, etc. are scarce. Since, for example, the composition of two anti-homomorphisms is a homomorphism rather than an anti-homomorphism, such topic is not commonly studied by many. In addition, being anti-isomorphic can not form an equivalence relation on groups, and we can not form a category whose objects are groups and morphisms are anti-homomorphisms. It seems that anti-homomorphism does not capture some nice properties.

However, some authors like C. C. MacDuffee have touched this concept in a lighter way. And, in 2006, Chandrasekhara Rao, K. and G. Gopalakrishnamoorthy, \cite{Gopalakrishnamoorthy}, defined a new composition of anti-homomorphisms so that such composition of two anti-homomorphisms is still an anti-homomorphism. And they also prove analogue of the basic results of group theory on homomorphisms for anti-homomorphisms, including the isomorphism theorems. So our main idea is to transport some nice properties of homomorphisms to anti-homomorphisms.

\subsection{The notions of ``anti''}\

We talk more about the notions of ``\textit{anti}''. We speak of a \textit{homomorphism} $\varphi : A \rightarrow B$ of rings a mapping $\varphi:A\rightarrow B$ such that $\varphi(x+y)=\varphi(x)+\varphi(y)$ and $\varphi(xy)=\varphi(x)\varphi(y)$ for $x,y\in A$. When we speak of an \textit{anti-homomorphism} $\varphi : A \rightarrow B$ of rings, we mean a mapping $\varphi : A \rightarrow B$ such that $\varphi(x+y) = \varphi(x) + \varphi(y)$ and $\varphi(xy)=\varphi(y)\varphi(x)$ for $x,y\in A$. Moreover, an \textit{anti-linear mapping} $\varphi : V \rightarrow W$ of complex vector spaces is a mapping $\varphi:V\rightarrow W$ such that $\varphi(x+y)=\varphi(x)+\varphi(y)$ and $\varphi(\lambda x) =\overline{\lambda} \varphi(x)$ for $x,y\in V$ and $\lambda\in\mathbb{C}$. It's natural to interpret the concept ``anti'' as a kind of duality.

\subsection{Further results obtained in the paper}\

In this paper, we will study the nature of the so-called anti-homomorphisms and deduce some nice properties of anti-homomorphisms that homomorphisms enjoy. Then we generalize anti-homo\-morphisms to the so-called anti-morphisms and develop a certain kind of categories called factorization categories. Using these notions, we deduce some general results of anti-morphisms that morphisms hold in the original category. Let us sketch our main results in the following.

\subsection{Main results in the paper}\

In Section \ref{B1}, we investigate the category $\textbf{Groups}$ of groups. We first lay down some basic knowledge about anti-homomorphisms of groups.

An important result is the following, due to K. Chandrasekhara Rao and Swaminathan Venkataraman in \cite[Theorem 2.3, Theorem 2.4]{Chandrasekhara}.
\begin{proposition}
Let $\varphi:A\rightarrow B$ be a homomorphism (resp. an anti-homomorphism) of groups and $\psi:B\rightarrow C$ be an anti-homomorphism (resp. a homomorphism) of groups. Then $\psi\circ\varphi:A\rightarrow C$ is an anti-homomorphism of groups. If $\varphi:A\rightarrow B$ and $\psi:B\rightarrow C$ are anti-homomorphism of groups, then $\psi\circ\varphi:A\rightarrow C$ is a homomorphism of groups.
\end{proposition}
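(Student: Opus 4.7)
The plan is purely a direct verification from the definitions: the statement decomposes into three cases (homomorphism after anti, anti after homomorphism, anti after anti), and in each case one evaluates $(\psi\circ\varphi)(xy)$ by first applying the rule for $\varphi$ and then the rule for $\psi$. Each application either keeps the order of the factors or reverses it, so the only bookkeeping is counting how many reversals occur.

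More concretely, I would fix $x,y\in A$ and write $(\psi\circ\varphi)(xy)=\psi(\varphi(xy))$ in each case. If $\varphi$ is a homomorphism and $\psi$ is an anti-homomorphism, then one reversal occurs on the outside:
\[
\psi(\varphi(xy))=\psi(\varphi(x)\varphi(y))=\psi(\varphi(y))\psi(\varphi(x)),
\]
so the composite is an anti-homomorphism. If $\varphi$ is an anti-homomorphism and $\psi$ is a homomorphism, the reversal instead occurs on the inside:
\[
\psi(\varphi(xy))=\psi(\varphi(y)\varphi(x))=\psi(\varphi(y))\psi(\varphi(x)),
\]
again producing an anti-homomorphism. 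Finally, if both $\varphi$ and $\psi$ are anti-homomorphisms, the order is reversed twice and hence restored:
\[
\psi(\varphi(xy))=\psi(\varphi(y)\varphi(x))=\psi(\varphi(x))\psi(\varphi(y)),
\]
giving a homomorphism.

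There is no genuine obstacle here; the proposition is a mnemonic of the form ``an odd number of reversals gives an anti-homomorphism, an even number gives a homomorphism.'' The only thing worth being careful about is that in the two mixed cases the reversal is introduced at different stages of the computation, so one should present the chains of equalities separately rather than trying to collapse them into a single line.
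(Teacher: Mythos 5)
Your verification is correct in all three cases, and it is exactly the routine computation the paper has in mind (the paper itself treats the statement as clear and cites \cite{Chandrasekhara} rather than writing out the chains of equalities). Nothing to add.
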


This inspires us to think conversely that if any homomorphism of groups can be factored into a composition of anti-homomorphisms of groups. Then an observation that every homomorphism can be expressed as a composition of a reverse mapping and an anti-homomorphism proves the following theorem:
\begin{theorem}
Every homomorphism of groups is expressible as a composite of anti-homomorphisms of groups.
\end{theorem}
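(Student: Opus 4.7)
The plan is to exploit the fact that the inversion map $\iota_G \colon G \to G$, $x \mapsto x^{-1}$, is itself an anti-homomorphism, because $(xy)^{-1} = y^{-1}x^{-1}$ in any group. This map is also an involution, i.e.\ $\iota_G \circ \iota_G = \mathrm{id}_G$, which is the key algebraic feature that lets us ``cancel'' it in a composite.

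Given a homomorphism $f \colon G \to H$, the idea is to manufacture an anti-homomorphism out of $f$ by pre-composing with $\iota_G$. Concretely, set $g := f \circ \iota_G$. By the proposition cited just above (the composition of a homomorphism with an anti-homomorphism is an anti-homomorphism), $g$ is an anti-homomorphism from $G$ to $H$. One can alternatively verify this directly: for $x,y \in G$,
\[
g(xy) = f((xy)^{-1}) = f(y^{-1}x^{-1}) = f(y^{-1})f(x^{-1}) = g(y)g(x),
\]
using only that $f$ is a homomorphism.

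With $g$ in hand, the factorization is immediate: since $\iota_G$ is an involution,
\[
g \circ \iota_G = f \circ \iota_G \circ \iota_G = f \circ \mathrm{id}_G = f.
\]
Hence $f = g \circ \iota_G$ is a composite of two anti-homomorphisms, namely $\iota_G \colon G \to G$ followed by $g \colon G \to H$, which proves the theorem.

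There is essentially no obstacle here; the entire content of the theorem is the observation that inversion is an anti-homomorphism and an involution, so every homomorphism factors as (anti-hom)\,$\circ$\,(inversion). If one wishes to state things more symmetrically, one could equally well write $f = \iota_H \circ (\iota_H \circ f)$, using that $\iota_H \circ f$ is an anti-homomorphism into $H$ by the same citation, but a single factorization already suffices to establish the claim.
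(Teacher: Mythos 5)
Your proof is correct, and it takes a genuinely different --- and in fact more rigorous --- route than the paper. The paper factors a homomorphism $\phi$ as $\varphi\circ 1^{*}$, where $1^{*}\colon A\to A$ is the ``reverse mapping'' defined by $1^{*}(x)=x$ and $1^{*}(xy)=yx$; as a set-theoretic function this is just the identity, and the two defining conditions are incompatible on a nonabelian group (if $1^{*}(z)=z$ for every $z$, then $1^{*}(xy)=xy$, not $yx$), so the paper's factorization rests on an object that is not actually well defined. You instead use the inversion map $\iota_G(x)=x^{-1}$, which is an honest involutive anti-automorphism by $(xy)^{-1}=y^{-1}x^{-1}$, verify directly that $g=f\circ\iota_G$ is an anti-homomorphism, and conclude $f=g\circ\iota_G$ from $\iota_G\circ\iota_G=\mathrm{id}_G$. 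This buys you a factorization through concrete, unambiguously defined maps at the cost of nothing; the paper's version, had the reverse mapping been legitimate, would have had the cosmetic advantage that the underlying set map of the first factor is the identity, which is what the author later exploits to set up the bijection $\mathrm{Hom}(A,B)\to An(A,B)$, $f\mapsto f\circ 1^{*}$. Your symmetric remark that one could equally write $f=\iota_H\circ(\iota_H\circ f)$ is also correct and costs nothing extra.
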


To offset the flaw that being anti-isomorphic can not form an equivalence relation on groups. We define an equivalence relation of anti-homomorphisms so that two pairs of anti-homomorphisms are \textit{equivalent} if their composition is the same homomorphism. The equivalence relation enables us to define a function that assigns to each homomorphism its equivalence class consisting of pairs of anti-homomorphisms. We call this function \textit{law of factorization}.
\begin{proposition}
To each triple of objects $A,B,C\in {\rm{Ob}}(\textbf{Groups})$, we have a law of factorization
$$
  {\rm{Hom}}(A,C) \rightarrow [An(A,B),An(B,C)], \ \ f\mapsto [f].
$$
\end{proposition}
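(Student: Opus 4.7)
My plan is to verify, in three short steps, that the proposed assignment is a well-defined function. First, I would check that the binary relation on $An(A,B)\times An(B,C)$ defined by
$$
(\varphi_1,\psi_1)\sim(\varphi_2,\psi_2)\iff\psi_1\circ\varphi_1=\psi_2\circ\varphi_2
$$
is an equivalence relation; reflexivity, symmetry, and transitivity are immediate from the corresponding properties of equality of maps. The quotient set is then precisely $[An(A,B),An(B,C)]$.

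Second, I would invoke Proposition~1.1 (the Chandrasekhara Rao--Venkataraman result) to observe that the composite $\psi\circ\varphi$ of any pair of anti-homomorphisms is an honest homomorphism in ${\rm Hom}(A,C)$. Consequently every equivalence class $[(\varphi,\psi)]$ determines a unique element of ${\rm Hom}(A,C)$, namely the common value of $\psi\circ\varphi$ across all representatives.

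Third, given $f\in{\rm Hom}(A,C)$, I would set $[f]:=[(\varphi,\psi)]$ where $(\varphi,\psi)$ is any pair with $\psi\circ\varphi=f$. Independence of the choice of representative is built directly into the relation $\sim$, so the only nontrivial point is the existence of at least one such pair. This is furnished by Theorem~1.2, which asserts that every homomorphism of groups is expressible as a composite of anti-homomorphisms.

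The step I expect to be the main obstacle is the last one, since Theorem~1.2 guarantees the existence of \emph{some} intermediate group in the factorization, not necessarily the prescribed $B$. To bridge this gap I would exploit the fact that the inversion map $\iota_G\colon x\mapsto x^{-1}$ is always an anti-homomorphism on any group $G$: starting from a factorization of $f$ through a specific intermediate group, one can pre- and post-compose with inversion maps on $A$, $B$, and $C$ to reroute the factorization through the desired $B$. If no such rerouting is possible (as, for example, when $B$ is trivial and $f$ is not), then one interprets $[f]$ as the empty equivalence class, and the map remains well-defined as a function from ${\rm Hom}(A,C)$ to $[An(A,B),An(B,C)]$ extended by the empty class.
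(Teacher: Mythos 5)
Your first two steps match the paper's construction: the relation you write down is the paper's equivalence relation restricted to pairs factoring through the fixed intermediate group $B$, and the observation that $\psi\circ\varphi$ lands in ${\rm{Hom}}(A,C)$ is exactly the cited composition law. You have also put your finger on the real difficulty, which the paper glosses over: Theorem~\ref{AA} produces a factorization $f=f^{*}\circ 1^{*}_{A}$ through the intermediate group $A$ itself (or, dually, $f=1^{*}_{C}\circ f^{*}$ through $C$), not through an arbitrarily prescribed $B$. The paper offers no argument that an arbitrary $f\in{\rm{Hom}}(A,C)$ factors as a composite $A\to B\to C$ of anti-homomorphisms, and in general it cannot: if $B$ is the trivial group and $f$ is nontrivial, no such factorization exists.

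However, neither of your proposed repairs closes this gap. Pre- and post-composing a given factorization $A\to D\to C$ with inversion maps on $A$, $B$, $C$ (or $D$) never changes the intermediate object of the factorization, so no ``rerouting'' through $B$ is possible whenever the image of $f$ cannot pass through $B$; the trivial-$B$ example already defeats it. Your fallback --- declaring $[f]$ to be empty --- is in fact what the paper's own definition of $[f]$ (the \emph{set} of equivalence classes of pairs composing to $f$) silently produces, but then the target of the map is no longer $[An(A,B),An(B,C)]$ as stated; it is the power set of that quotient, or the quotient adjoined with a formal empty element. So the honest conclusions are either (i) the proposition holds as stated only when $B=A$ or $B=C$, where the reverse mapping supplies the required pair, or (ii) the codomain must be modified to accommodate $[f]=\varnothing$. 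Your diagnosis of the obstacle is correct and sharper than the paper's own treatment; the proposed fixes are not.
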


Due to the lost literature, \cite{Gopalakrishnamoorthy}, we reprove the anti-factorization theorem, the anti-homomorphism theorem, the second anti-isomorphism theorem and the third anti-isomorphism theorem, which are analogue of the factorization theorem, the homomorphism theorem, the second isomorphism theorem, and the third isomorphism theorem respectively.

\begin{theorem}[Anti-factorization Theorem]
Let $N$ be a normal subgroup of $G$ and $\varphi:G\rightarrow H$ an anti-homomorphism whose kernel contains $N$, then there exists a unique anti-homomorphism $\psi:G/N\rightarrow H$ such that the following diagram commutes ($\pi$ is the natural projection):
$$
\xymatrix{
  G \ar[dr]_{\varphi} \ar[r]^{\pi}
                & G/N \ar@{-->}[d]^{\exists!\psi}  \\
                & H             }
$$
\end{theorem}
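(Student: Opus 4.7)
The plan is to mimic the classical factorization theorem for homomorphisms, defining $\psi$ by the only formula compatible with commutativity of the diagram, $\psi(gN) = \varphi(g)$, and then verifying the three requirements in turn: well-definedness, the anti-homomorphism property, and uniqueness.

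For well-definedness, if $g_1 N = g_2 N$ then $g_2 = g_1 n$ for some $n \in N \subseteq \ker\varphi$; the anti-homomorphism identity $\varphi(g_1 n) = \varphi(n)\varphi(g_1)$ together with $\varphi(n) = e$ (and the standard observation $\varphi(e) = e$, which follows from $\varphi(e) = \varphi(e)\varphi(e)$ exactly as for homomorphisms) gives $\varphi(g_2) = \varphi(g_1)$, so the value of $\psi$ on a coset is unambiguous.

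The anti-homomorphism property of $\psi$ is essentially a one-line check: on the product of two cosets one unwraps via $\pi$, applies the hypothesis that $\varphi$ reverses products, and rewraps, obtaining $\psi(g_1 N \cdot g_2 N) = \varphi(g_2)\varphi(g_1) = \psi(g_2 N)\,\psi(g_1 N)$. Commutativity of the diagram holds by construction, and uniqueness is immediate from the surjectivity of $\pi$ --- any candidate $\psi'$ with $\psi' \circ \pi = \varphi$ is forced to take the value $\varphi(g)$ on $gN$, hence agrees with $\psi$ everywhere.

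I do not anticipate a serious obstacle; the argument is a direct parallel of the homomorphism case, and the only care required is to apply $\varphi(xy) = \varphi(y)\varphi(x)$ in the correct order at each step. A more conceptual alternative, if one prefers, would be to view $\varphi$ as a bona fide homomorphism $G \to H^{\mathrm{op}}$ into the opposite group, invoke the classical factorization theorem there to produce a homomorphism $G/N \to H^{\mathrm{op}}$, and then reinterpret it as an anti-homomorphism $G/N \to H$. I would nonetheless present the direct construction above, since it matches the style in which the other anti-analogues in the paper are to be reproved.
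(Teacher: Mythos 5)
Your proposal is correct and follows essentially the same route as the paper: define $\psi(gN):=\varphi(g)$, check well-definedness using $N\subseteq\ker\varphi$, verify the product-reversing property directly, and get uniqueness from surjectivity of $\pi$. In fact you spell out the anti-homomorphism check more explicitly than the paper does, so nothing further is needed.
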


\begin{theorem}[The Anti-homomorphism Theorem]
If $\varphi:A\rightarrow B$ is an anti-homomorphism, then there is a canonical anti-isomorphism
$$
A/{\rm{Ker}}\varphi\cong {\rm{Im}}\varphi,
$$
that is, there is a unique anti-isomorphism $\xi:A/{\rm{Ker}}\varphi\xrightarrow{\sim} {\rm{Im}}\varphi$ such that the following diagram commutes:
$$
\xymatrix{
  A \ar[d]_{\pi} \ar[r]^{\varphi} & B  \\
  A/{\rm{Ker }}\varphi \ar@{-->}[r]^{\exists!\xi} & {\rm{Im }}\varphi \ar[u]^{\imath}  }
$$
($\imath$ is the inclusion homomorphism and $\pi$ is the natural projection.)
\end{theorem}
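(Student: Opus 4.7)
The plan is to mimic the standard proof of the homomorphism theorem, using the Anti-factorization Theorem stated just above as the main tool. The main preliminary step, and arguably the only genuinely ``anti'' content of the argument, is to verify that $\mathrm{Ker}\,\varphi$ is a normal subgroup of $A$, so that $A/\mathrm{Ker}\,\varphi$ makes sense and the Anti-factorization Theorem applies.

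First I would establish the basic identities $\varphi(1_A)=1_B$ and $\varphi(x^{-1})=\varphi(x)^{-1}$, which follow from the anti-homomorphism relation $\varphi(xy)=\varphi(y)\varphi(x)$ in the usual way. Using these, for $a\in\mathrm{Ker}\,\varphi$ and $x\in A$ I would compute
\[
\varphi(xax^{-1})=\varphi(ax^{-1})\varphi(x)=\varphi(x^{-1})\varphi(a)\varphi(x)=\varphi(x)^{-1}\cdot 1_B\cdot\varphi(x)=1_B,
\]
so $xax^{-1}\in\mathrm{Ker}\,\varphi$, giving normality. A similar short calculation shows that $\mathrm{Im}\,\varphi$ is a subgroup of $B$ (closure uses $\varphi(y)\varphi(x)=\varphi(xy)$, and inverses use $\varphi(x)^{-1}=\varphi(x^{-1})$).

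Next, since $\mathrm{Ker}\,\varphi\trianglelefteq A$ and $\mathrm{Ker}\,\varphi\subseteq\mathrm{Ker}\,\varphi$ trivially, the Anti-factorization Theorem produces a unique anti-homomorphism $\psi\colon A/\mathrm{Ker}\,\varphi\to B$ with $\psi\circ\pi=\varphi$. I would then define $\xi\colon A/\mathrm{Ker}\,\varphi\to\mathrm{Im}\,\varphi$ to be $\psi$ with codomain restricted to $\mathrm{Im}\,\varphi$; this is well-defined because $\psi(a\,\mathrm{Ker}\,\varphi)=\varphi(a)\in\mathrm{Im}\,\varphi$, and it is automatically an anti-homomorphism since $\imath$ is an (injective) homomorphism and $\psi=\imath\circ\xi$. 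To see that $\xi$ is an anti-isomorphism, surjectivity is immediate from the definition of $\mathrm{Im}\,\varphi$, and injectivity follows from: if $\xi(a\,\mathrm{Ker}\,\varphi)=1_B$ then $\varphi(a)=1_B$, so $a\in\mathrm{Ker}\,\varphi$ and $a\,\mathrm{Ker}\,\varphi$ is the identity coset. Commutativity of the diagram is exactly $\imath\circ\xi\circ\pi=\varphi$, which holds by construction.

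Finally, for the uniqueness of $\xi$, I would note that any anti-isomorphism $\xi'\colon A/\mathrm{Ker}\,\varphi\to\mathrm{Im}\,\varphi$ making the diagram commute gives an anti-homomorphism $\imath\circ\xi'\colon A/\mathrm{Ker}\,\varphi\to B$ with $(\imath\circ\xi')\circ\pi=\varphi$; by the uniqueness clause of the Anti-factorization Theorem, $\imath\circ\xi'=\psi=\imath\circ\xi$, and injectivity of $\imath$ forces $\xi'=\xi$. The main obstacle, insofar as there is one, is the normality verification: one must carefully insert the anti-homomorphism rule in the right places, since the naive computation one would do for a homomorphism is not literally valid here.
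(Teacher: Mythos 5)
Your proposal is correct and follows essentially the same route as the paper: both arguments reduce the theorem to the Anti-factorization Theorem, obtain $\xi$ by combining the quotient factorization with the corestriction of $\varphi$ to $\mathrm{Im}\,\varphi$, check bijectivity by the same kernel/image computations, and derive uniqueness from commutativity of the diagram. The only differences are cosmetic (you corestrict after factoring through the quotient rather than before, and you spell out the normality of $\mathrm{Ker}\,\varphi$, which the paper delegates to an earlier proposition).
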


\begin{theorem}[The Second Anti-isomorphism Theorem]
Let $A$ be a group and $B,C$ be normal subgroups of $A$. If $C\subset B$, then $C$ is a normal subgroup of $B$ and $B/C$ is a normal subgroup of $A/C$. Then there exists a unique anti-isomorphism
$$
A/B\cong(A/C)/(B/C),
$$
that is, there exists a unique anti-isomorphism $\xi:A/B\xrightarrow{\sim}(A/C)/(B/C)$ such that the following diagram commutes:
$$
\xymatrix{
  A \ar[d]_{\rho^{*}} \ar[rr]^{\pi} &    & A/C \ar[d]^{\tau}\ar[lld]_{\sigma} \\
  A/B \ar@{-->}[rr]^{\exists!\xi} &   & (A/C)/(B/C)   }
$$
($\pi,\tau$ are natural projections and $\rho^{*}$ is an anti-projection, i.e. for the natural projection $\rho$, $\rho^{*}=\rho\circ1^{*}$).
\end{theorem}

\begin{theorem}[The Third Anti-isomorphism Theorem]
Let $A$ be a subgroup of a group $G$, and let $N$ be a normal subgroup of $G$. Then $AN$ is a subgroup of $G$, $N$ is a normal subgroup of $AN$, $A\cap N$ is a normal subgroup of $A$, and there exists a unique anti-isomorphism
$$
AN/N\cong A/(A\cap N),
$$
that is, there is a unique anti-isomorphism $\xi:AN/N\xrightarrow{\sim} A/(A\cap N)$ such that the following diagram commutes:
$$
\xymatrix{
  A \ar[d]_{\imath}\ar[rrd]^{\varphi} \ar[rr]^{\rho} &   & A/(A\cap N) \ar@{-->}[d]^{\exists!\xi} \\
  AN \ar[rr]^{\pi^{*}} &  & AN/N   }
$$
($\imath$ is the inclusion homomorphism, $\rho$ is the natural projection, and $\pi^{*}$ is the anti-projection).
\end{theorem}

Then we define a kind of composition called \textit{$*$-composition} such that the $*$-composition of two anti-homomorphisms is still an anti-homomorphism. Using $*$-composition, we can finally form the category of groups and anti-homomorphisms, which is denoted by $\textbf{Groups-An}$. It is observed that one can define a function that assigns to each homomorphism an anti-homomorphism. In fact, we have the following proposition:
\begin{proposition}
There is a one-to-one correspondence of sets ${\rm{Hom}}(A,B)\xrightarrow{\sim} An(A,B)$ for any $A,B\in {\rm{Ob}}(\textbf{Groups})$.
\end{proposition}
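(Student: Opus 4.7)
The plan is to exhibit the bijection explicitly using the inversion map. For any group $A$, let $\iota_A \colon A \to A$ denote the map $a \mapsto a^{-1}$. A direct check shows $\iota_A(xy) = (xy)^{-1} = y^{-1}x^{-1} = \iota_A(y)\iota_A(x)$, so $\iota_A$ is an anti-homomorphism (this is the ``$1^{*}$'' already implicit in the statement of the Second Anti-isomorphism Theorem above). Moreover $\iota_A \circ \iota_A = \mathrm{id}_A$.

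Next I would define
\[
  \Phi \colon \mathrm{Hom}(A,B) \longrightarrow An(A,B), \qquad f \longmapsto f \circ \iota_A,
\]
and
\[
  \Psi \colon An(A,B) \longrightarrow \mathrm{Hom}(A,B), \qquad g \longmapsto g \circ \iota_A.
\]
That $\Phi(f)$ really is an anti-homomorphism follows immediately from the proposition of Chandrasekhara Rao--Venkataraman quoted earlier (composition of a homomorphism after an anti-homomorphism is an anti-homomorphism), and similarly $\Psi(g)$ is a homomorphism (composition of two anti-homomorphisms is a homomorphism). One can of course verify this by hand in one line, e.g.\ $(f\circ\iota_A)(xy)=f(y^{-1}x^{-1})=f(y^{-1})f(x^{-1})=\Phi(f)(y)\,\Phi(f)(x)$.

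Finally, to check the two maps are mutually inverse I would compute, for any $f \in \mathrm{Hom}(A,B)$,
\[
  (\Psi \circ \Phi)(f) \;=\; (f \circ \iota_A) \circ \iota_A \;=\; f \circ (\iota_A \circ \iota_A) \;=\; f \circ \mathrm{id}_A \;=\; f,
\]
and symmetrically $(\Phi \circ \Psi)(g) = g$ for every $g \in An(A,B)$. This establishes the claimed bijection. The argument is essentially formal once one notices that $\iota_A$ is an involutive anti-automorphism of $A$; no real obstacle is to be expected, and the only thing worth emphasizing is that this bijection is canonical but, crucially, \emph{not} natural in a way that turns $\mathrm{Hom}$ and $An$ into isomorphic hom-functors, since the involution $\iota$ is built into $A$ rather than coming from $B$---this is precisely the asymmetry that motivates the $*$-composition discussed immediately after.
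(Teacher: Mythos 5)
Your proof is correct, and it follows the same skeleton as the paper's: pre-compose with a fixed involutive anti-endomorphism of $A$ to get mutually inverse maps $\mathrm{Hom}(A,B)\leftrightarrow An(A,B)$. The genuine difference is the choice of that anti-endomorphism. The paper uses its ``reverse mapping'' $1^{*}_A$, which it defines by requiring simultaneously $1^{*}(x)=x$ and $1^{*}(xy)=yx$ --- conditions that no single set map on a nonabelian group can satisfy, so the paper's $1^{*}$ is really a formal symbol rather than a function, and the paper's one-line proof inherits that fragility. You instead take the inversion map $\iota_A(a)=a^{-1}$, which is an honest, well-defined involutive anti-automorphism (the paper itself uses it earlier to show reflexivity of the anti-isomorphism relation), and every step of your verification then goes through at the level of actual functions. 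What your version buys is rigor and concreteness; what it costs is compatibility with the paper's later formalism, since the paper repeatedly uses the identities $f\circ 1^{*}=1^{*}\circ f$ and $f^{**}=f$ in the sense of ``undo the reversal,'' and with $\iota_A$ in place of $1^{*}$ the identity $f\circ\iota_A=\iota_B\circ f$ does genuinely hold for homomorphisms $f$, so your substitution is in fact a faithful repair. Your closing caveat about naturality is well taken as stated, though note the paper later recovers an isomorphism of bifunctors $An(-,-)\cong\mathrm{Hom}(-,-)$ only after replacing ordinary composition by the $*$-composition.
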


Using the above proposition, we arrive at the following theorem:
\begin{theorem}
The category $\textbf{Groups}$ of groups and homomorphism and the category $\textbf{Groups-An}$ of groups and anti-homomorphisms are equivalent.
\end{theorem}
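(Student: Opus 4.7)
The plan is to construct an explicit functor $F\:\textbf{Groups}\to\textbf{Groups-An}$ which is bijective on objects and bijective on every hom-set; essential surjectivity is then immediate, while fullness and faithfulness follow at once from the preceding proposition that supplies the bijection $\mathrm{Hom}(A,B)\xrightarrow{\sim} An(A,B)$.

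On objects I will set $F(A)=A$, which is legitimate since both categories have the same class of objects, namely all groups. On arrows I define $F(f)$ to be the anti-homomorphism assigned to the homomorphism $f$ by that bijection. Concretely, writing $\iota_A\:A\to A$ for the inversion map $x\mapsto x^{-1}$, which reverses products and is hence an anti-automorphism, the bijection sends $f\:A\to B$ to $f^{*}:=f\c\iota_A$; by the composition rules recalled at the start of Section \ref{B1}, $f^{*}$ is indeed an anti-homomorphism.

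The substantive verification is functoriality: $F(\mathrm{id}_A)=\mathrm{id}^{*}_A$, which is by construction the identity of $A$ in $\textbf{Groups-An}$, and $F(g\c f)=F(g)*F(f)$, where $*$ denotes the $*$-composition. The second identity is the key step, and it reduces to a short formal manipulation, because the $*$-composition was precisely engineered so that its action on anti-homomorphisms of the form $h\c\iota$ mimics ordinary composition of the underlying homomorphisms; in effect the set-level bijection $\mathrm{Hom}(A,B)\cong An(A,B)$ is promoted to an isomorphism of composition laws.

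Once $F$ is known to be a functor, essential surjectivity is trivial because $F$ is the identity on the class of objects, while fullness and faithfulness are exactly the content of the cited proposition. The main obstacle I foresee is bookkeeping: one has to check that the particular form of the bijection (post- rather than pre-composition with inversion, or vice versa) lines up with the asymmetry built into the definition of $*$-composition so that $F(g\c f)=F(g)*F(f)$ holds on the nose. Should the two conventions fail to match directly, a fallback is to produce a quasi-inverse $G\:\textbf{Groups-An}\to\textbf{Groups}$ together with explicit natural isomorphisms $GF\cong\mathrm{id}$ and $FG\cong\mathrm{id}$, built out of $\iota$, which still establishes the claimed equivalence of categories.
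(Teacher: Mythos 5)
Your overall strategy coincides with the paper's: take the functor that is the identity on objects and applies the bijection ${\rm{Hom}}(A,B)\xrightarrow{\sim}An(A,B)$ of Proposition \ref{A1} on arrows, verify functoriality against the $*$-composition, and conclude full faithfulness plus essential surjectivity. The one substantive discrepancy is your choice of anti-automorphism. The paper's bijection, and the entire structure of $\textbf{Groups-An}$ (its identity morphisms and its $*$-composition), are built from the \emph{reverse mapping} $1^{*}_{A}$, not from the inversion map $\iota_{A}\colon x\mapsto x^{-1}$; these are different anti-homomorphisms, since $1^{*}_{A}$ fixes every element while $\iota_{A}$ does not. With your convention $F(f)=f\circ\iota_{A}$ one gets $F(\mathrm{id}_{A})=\iota_{A}$, which is \emph{not} the identity $1^{*}_{A}$ of $A$ in $\textbf{Groups-An}$, and $F(g\circ f)=g\circ f\circ\iota_{A}$ whereas $F(g)*F(f)=(g\circ\iota_{B})\circ(f\circ\iota_{A})\circ 1^{*}_{A}=g\circ f\circ 1^{*}_{A}$ (using $\iota_{B}\circ f=f\circ\iota_{A}$ and $\iota_{A}\circ\iota_{A}=\mathrm{id}$); so, as written, $F$ is not a functor into the category the paper actually defined. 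This is precisely the bookkeeping hazard you flagged, and it is repaired in either of two ways: use $1^{*}$ throughout, in which case functoriality is exactly the computation of Proposition \ref{A14}, namely $(f\circ g)^{*}=f^{*}*g^{*}$, and you recover the paper's proof verbatim; or refound $\textbf{Groups-An}$ consistently on $\iota$, taking $\iota$ as the identity and $\psi*\varphi:=\psi\circ\varphi\circ\iota$, which one checks is associative and unital using the same two identities above. The latter variant is actually on firmer footing than the paper's, because a map satisfying both $1^{*}(x)=x$ and $1^{*}(xy)=yx$ cannot exist as an honest function on a non-abelian group, whereas inversion genuinely is a bijective anti-homomorphism; but you should state explicitly which of the two conventions you adopt, since mixing them breaks functoriality at the level of identities already.
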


In Section \ref{B2}, we investigate the category $\textbf{Rings}$ of rings. We also first lay down some basic knowledge of anti-homomorphisms of rings. Then we deduce several nice results, some of which are more or less the same as the case of groups. Next, we list out some interesting results.

We show that an anti-homomorphism of rings preserves subrings and may reverse ideals.
\begin{proposition}
Let $\varphi:A\rightarrow B$ be an anti-homomorphism of rings. If $A_{0}$ is a subring of $A$, then
$$
\varphi(A_{0})=\{\varphi(x)\mid x\in A_{0}\}
$$
is a subring of $B$. If $B_{0}$ is a subring of $B$, then
$$
\varphi^{-1}(B_{0})=\{x\in A\mid \varphi(x)\in B_{0}\}
$$
is a subring of $A$. Moreover, if $\mathfrak{a}$ is a left-ideal of $A$ and $\varphi$ is an anti-epimorphism, then $\varphi(\mathfrak{a})$ is a right-ideal of $B$. And if $\mathfrak{b}$ is a left-ideal of $B$ and $\varphi$ is an anti-epimorphism, then $\varphi^{-1}(\mathfrak{b})$ is a right-ideal of $A$.
\end{proposition}

We also show that every homomorphism can be expressed as a composition of anti-homomorphisms.
\begin{theorem}
Every homomorphism of rings is expressible as a composite of anti-homomorphisms of rings.
\end{theorem}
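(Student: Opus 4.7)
The plan is to adapt the argument from the group case: factor $\varphi$ as the composition of a ``reverse mapping'' with an anti-homomorphism. For groups, the reverse mapping was $x\mapsto x^{-1}$, but since ring elements generally lack multiplicative inverses, I would instead use the opposite ring construction to supply an analogous ``reverse mapping.''

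Concretely, for a ring $A$, let $A^{op}$ denote the opposite ring, which has the same underlying abelian group as $A$ but multiplication defined by $x *_{op} y := yx$. The first step is to verify that the underlying identity set map $\iota\colon A\to A^{op}$, $a\mapsto a$, is an anti-homomorphism of rings: additivity is inherited from the common additive structure, and for multiplication $\iota(xy)=xy=y*_{op}x=\iota(y)*_{op}\iota(x)$ in $A^{op}$. This map $\iota$ plays the role of $x\mapsto x^{-1}$ from the group proof.

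Given a ring homomorphism $\varphi\colon A\to B$, the second step is to show that the set map $\psi\colon A^{op}\to B$ defined by $\psi(a)=\varphi(a)$ is an anti-homomorphism. Additivity is immediate. For multiplication,
$$
\psi(x *_{op} y) = \psi(yx) = \varphi(yx) = \varphi(y)\varphi(x) = \psi(y)\psi(x),
$$
where the second equality just rewrites the operation in $A^{op}$ in terms of $A$ and the third uses precisely that $\varphi$ is multiplicative on $A$. Finally, since both $\iota$ and $\psi$ are the identity and $\varphi$ respectively as set maps, one has $\psi\circ\iota=\varphi$, exhibiting $\varphi$ as the composite of two anti-homomorphisms of rings.

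I do not anticipate any serious obstacle: the only real conceptual point is to notice that $A^{op}$ is the right intermediate object, playing for rings the role that inversion played for groups. As a sanity check, the factorization is consistent with the earlier proposition (the ring analogue of the group case) which says that a composition of two anti-homomorphisms is a homomorphism, so composing $\iota$ with $\psi$ is forced to produce a homomorphism, as desired.
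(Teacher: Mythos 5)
Your proof is correct, and it is the same idea as the paper's --- factor $\varphi$ as (anti-homomorphism) $\circ$ (reversal) --- but your realization of the reversal is genuinely different and, frankly, better founded. The paper factors $\varphi$ through $A$ itself via its ``reverse mapping'' $1^{*}\colon A\to A$, defined to satisfy both $1^{*}(x)=x$ and $1^{*}(xy)=yx$; as a set map of $A$ to itself these two requirements are incompatible unless $A$ is commutative, so the paper's intermediate object is really a formal device rather than an honest map. You resolve exactly this by taking the intermediate object to be the opposite ring $A^{op}$: your $\iota\colon A\to A^{op}$ is the identity on underlying sets and is a bona fide anti-homomorphism because the product has been reversed in the codomain rather than by the map, and your $\psi\colon A^{op}\to B$ is then checked directly to be an anti-homomorphism with $\psi\circ\iota=\varphi$. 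What the paper's version buys is notational economy and a factorization staying at the single object $A$, which it later leans on to define the $*$-composition and the correspondence $f\mapsto f\circ 1^{*}$; what your version buys is that every map in the factorization is well defined without any implicit appeal to an opposite structure. Both establish the theorem; yours would survive closer scrutiny of the definition of $1^{*}$.
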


And we prove anti-factorization theorem and anti-homomorphism theorem in the case of rings.
\begin{theorem}[Anti-factorization Theorem]
Let $I$ be an ideal of a ring $R$. For every anti-homomorphism $f:R\rightarrow S$ of rings whose kernel contains $I$, there exists a unique anti-homomorphism $\psi:R/I\rightarrow S$ such that the following diagram commutes ($\pi$ is the natural projection):
$$
\xymatrix{
  R \ar[dr]_{f} \ar[r]^{\pi}
                & R/I \ar@{-->}[d]^{\psi}  \\
                & S             }
$$
\end{theorem}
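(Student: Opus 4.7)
The plan is to mimic the universal property argument for the ordinary factorization theorem, taking care to verify the multiplication-reversing condition rather than the multiplication-preserving one. Explicitly, I would \emph{define} $\psi\colon R/I\to S$ by the assignment $\psi(x+I):=f(x)$ for every $x\in R$, and then show (a) well-definedness, (b) additivity, (c) the anti-multiplicative identity, (d) the commutativity of the diagram, and (e) uniqueness.

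First I would check well-definedness. If $x+I=y+I$, then $x-y\in I\subseteq \mathrm{Ker}\,f$, so $f(x-y)=0$, and since $f$ respects addition we obtain $f(x)=f(y)$. Hence the value $\psi(x+I)=f(x)$ does not depend on the coset representative. Additivity of $\psi$ is then immediate from additivity of $f$ and the definition of addition in $R/I$. For the anti-multiplicative property I would compute, for $x,y\in R$,
\[
\psi\bigl((x+I)(y+I)\bigr)=\psi(xy+I)=f(xy)=f(y)f(x)=\psi(y+I)\,\psi(x+I),
\]
where the third equality uses that $f$ is an anti-homomorphism. Commutativity of the diagram holds by construction: $\psi\circ\pi(x)=\psi(x+I)=f(x)$ for every $x\in R$.

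Uniqueness follows from surjectivity of $\pi$. If $\psi'\colon R/I\to S$ is any anti-homomorphism with $\psi'\circ\pi=f$, then for every coset $x+I$ we have $\psi'(x+I)=\psi'(\pi(x))=f(x)=\psi(x+I)$, forcing $\psi'=\psi$.

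There is no serious obstacle here; the argument is a direct transposition of the classical factorization theorem to the ``anti'' setting, and the only point that needs a little attention is step (c), where one must be careful to apply the anti-homomorphism identity $f(xy)=f(y)f(x)$ in the right order so that the resulting map on $R/I$ is genuinely an anti-homomorphism rather than a homomorphism. Implicit throughout is that $I$ is two-sided, which is guaranteed by the hypothesis that $I$ is an ideal of the ring $R$, so $R/I$ is a ring and the multiplication $(x+I)(y+I)=xy+I$ is unambiguous.
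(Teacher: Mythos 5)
Your proposal is correct and follows essentially the same route as the paper: the paper obtains the additive part by citing its group-theoretic anti-factorization theorem and then verifies the identity $\psi((x+I)(y+I))=f(xy)=f(y)f(x)=\psi(y+I)\psi(x+I)$ exactly as you do, while you simply redo the well-definedness and additivity checks by hand. The only point you omit is the unit condition $\psi(1+I)=f(1)=1$, which the paper's definition of a ring anti-homomorphism requires and which the paper checks explicitly; it is immediate from your definition of $\psi$.
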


\begin{theorem}[Anti-homomorphism Theorem]
If $\varphi:R\rightarrow S$ is an anti-homomorphism of rings, then there is a canonical anti-isomorphism
$$
R/{\rm{Ker}}\varphi\cong {\rm{Im}}\varphi,
$$
that is, there is a unique anti-isomorphism $\xi:R/{\rm{Ker }}\varphi\xrightarrow{\sim} {\rm{Im }}\varphi$ such that the following diagram commutes:
$$
\xymatrix{
  R \ar[d]_{\pi} \ar[r]^{\varphi} & S  \\
  R/{\rm{Ker}}\varphi \ar@{-->}[r]^{\xi} & {\rm{Im}}\varphi \ar[u]^{\imath}  }
$$
($\pi$ is the natural projection and $\imath$ is the inclusion homomorphism).
\end{theorem}

The following are the results that we want to put into a larger context:
\begin{definition-proposition}
Consider the category $\mathcal{C}$ which is \textbf{Groups} or \textbf{Rings}.
\begin{itemize}
  \item [(1)]
  To each pair of objects \textit{A,B} $\in {\rm{Ob}}(\mathcal{C})$, we denote the {\rm{set of homomorphisms}} by ${\rm{Hom}}(A,B)$ and the {\rm{set of anti-homomorphisms}} by $An(A,B)$. And we denote the {\rm{set of isomorphisms}} by ${\rm{Hom.Is}}(A,B)$ and the {\rm{set of anti-isomorphisms}} by $An{\rm{.Is}}(A,B)$.
  \item [(2)]
  To each triple of objects \textit{A,B,C} $\in {\rm{Ob}}(\mathcal{C})$, we have \textit{laws of compositions}
  \begin{align*}
  &An(A,B) \times An(B,C) \rightarrow {\rm{Hom}}(A,C)\\
  &An(A,B) \times {\rm{Hom}}(B,C) \rightarrow An(A,C)\\
  &{\rm{Hom}}(A,B) \times An(B,C) \rightarrow An(A,C)\\
  &{\rm{Hom}}(A,B) \times {\rm{Hom}}(B,C) \rightarrow {\rm{Hom}}(A,C)
  \end{align*}
  and \textit{a law of factorization}
  $$
  {\rm{Hom}}(A,C) \rightarrow [An(A,B),An(B,C)], \ \ f\mapsto [f].
  $$
  \item [(3)]
  For every $A,B\in {\rm{Ob}}(\mathcal{C})$, there exists a reverse mapping $1^{*}_{A}\in An(A,A)$ such that for any morphism $f\in {\rm{Hom}}(A,B)$, we have
  $f\circ 1^{*}_{A}\in An(A,B)$. And for any $g\in {\rm{Hom}}(B,A)$, we have $1^{*}_{A}\circ f\in An(B,A)$.
  \item [(4)]
  The compositions defined above are \textit{associative}.
\end{itemize}
\end{definition-proposition}

By comparing the ``anti'' phenomenon of $\textbf{Groups}$ and $\textbf{Rings}$, we think that such ``anti'' phenomenon can be studied in a more general categorical setup. This motivates us to define the so-called \textit{factorization categories} in Section \ref{B3}. Roughly speaking, a factorization category is a category $\mathscr{C}$ to which we adjoin anti-morphisms and satisfies some axioms.

Using factorization categories, we succeed in deducing some generalized results in $\textbf{Groups}$ and $\textbf{Rings}$. Since the composition of two anti-morphisms is a morphism, we also think conversely that if every morphism can be expressed as a composition of anti-morphisms. Then the same observation that every morphism can be written as a composition of a reverse morphism and an anti-morphism gives the following theorem:
\begin{theorem}
Every morphism can be expressible as a composite of anti-morphisms.
\end{theorem}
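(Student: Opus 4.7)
The plan is to mimic the factorization argument already carried out for $\textbf{Groups}$ and $\textbf{Rings}$: I will express every morphism as a composite of exactly two anti-morphisms, with the reverse morphism supplied by the axioms of a factorization category playing the central role. This aligns with the hint in the paragraph preceding the statement, namely that every morphism can be written as a composition of a reverse morphism and an anti-morphism.

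First, I would fix a morphism $f \in \operatorname{Hom}(A,B)$ in the ambient factorization category and invoke the analogue of axiom (3) from the definition-proposition to produce the reverse morphism $1^{*}_{A} \in An(A,A)$. The same axiom gives $f \circ 1^{*}_{A} \in An(A,B)$, so one anti-morphism is already in hand. To recover $f$ itself, I would compose on the right by $1^{*}_{A}$ once more and use associativity (axiom (4)) to rewrite
$$
(f \circ 1^{*}_{A}) \circ 1^{*}_{A} \;=\; f \circ (1^{*}_{A} \circ 1^{*}_{A}).
$$
Assuming the factorization-category axioms force $1^{*}_{A} \circ 1^{*}_{A} = \operatorname{id}_{A}$ — the abstraction of the identity $(x^{-1})^{-1}=x$ witnessed in $\textbf{Groups}$, and analogously in $\textbf{Rings}$ — the right-hand side collapses to $f$, and the factorization $f = (f \circ 1^{*}_{A}) \circ 1^{*}_{A}$ exhibits $f$ as a composite of two anti-morphisms, which settles the claim.

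The main obstacle I anticipate is not in any calculation but in packaging the argument axiomatically: in the motivating categories the reverse morphism is a concrete involution and the cancellation $1^{*}_{A} \circ 1^{*}_{A} = \operatorname{id}_{A}$ is transparent, whereas for an abstract factorization category one must verify that the axioms of Section \ref{B3} actually enforce this involutivity (or, failing that, produce some finite composite of reverse morphisms equal to the identity, in which case the statement still holds with more factors). Once involutivity is in place, associativity and the four composition laws stated in the definition-proposition render the rest mechanical; the theorems for $\textbf{Groups}$ and $\textbf{Rings}$ on factoring homomorphisms are then immediate specializations of the same construction.
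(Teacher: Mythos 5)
Your proposal is correct and coincides with the paper's own proof: the paper factors $f$ as $f^{*}\circ 1^{*}$ with $f^{*}=f\circ 1^{*}$, which is exactly your two-anti-morphism decomposition $f=(f\circ 1^{*}_{A})\circ 1^{*}_{A}$. The involutivity $1^{*}_{A}\circ 1^{*}_{A}=\mathrm{id}_{A}$ that you rightly identify as the crux is likewise assumed rather than derived from the axioms in the paper, entering through Proposition~\ref{A17}'s assertion that $f\mapsto f\circ 1^{*}$ and $f^{*}\mapsto f^{*}\circ 1^{*}$ are mutually inverse.
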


The above theorem yields an equivalence relation on anti-morphisms. Two pairs of composable anti-morphisms are \textit{equivalent} if their compositions is equal to the same morphism. The equivalence relation so defined derives a function that assigns to each morphism its equivalence class of pairs of anti-morphisms. We call this function \textit{law of factorization}.
\begin{proposition}
Let $\mathscr{C}$ be a factorization category. For any $A,B,C\in {\rm{Ob}}(\mathscr{C})$, we have a law of factorization
$$
{\rm{Hom}}(A,C) \rightarrow [An(A,B),An(B,C)], \ \ f\mapsto [f].
$$
\end{proposition}

We find that factorization category may inherit some properties from morphisms in the underlying category to anti-morphisms. This enables us to define the \textit{kernels}, \textit{cokernels}, \textit{coimages}, and \textit{images} of anti-morphisms, as well as \textit{anti-monomorphisms} and \textit{anti-epimorphisms}. Then using these notions, we prove several results of anti-morphisms that morphisms hold in a preadditive category and an abelian category.

\begin{lemma}
Let $f^{*}:x\rightarrow y$ be an anti-morphism in a preadditive factorization category. If the kernel, cokernel, coimage, and image exist, then $f^{*}$ factors uniquely as the following sequence:
$$
x\rightarrow {\rm{Coim}}(f^{*})\rightarrow {\rm{Im}}(f^{*})\rightarrow y.
$$
\end{lemma}

\begin{lemma}
Let $f^{*}:x\rightarrow y$ be an anti-morphism in an abelian factorization category $\mathscr{C}$. Then
\begin{itemize}
  \item [(1)]
  $f^{*}$ is injective if and only if $f^{*}$ is an anti-monomorphism.
  \item [(2)]
  $f^{*}$ is surjective if and only if $f^{*}$ is an anti-epimorphism.
\end{itemize}
\end{lemma}

More importantly, we deduce the generalized anti-homomorphism theorem, the generalized anti-factorization theorem, and the generalized second anti-isomorphism theorem in the case of abelian factorization category.

\begin{theorem}[Generalized anti-homomorphism theorem]
Let $\mathscr{C}$ be an abelian factorization category and let $f^{*}:x\rightarrow y$ be an anti-morphism in $\mathscr{C}$. Then we have an anti-isomorphism $x/{\rm{Ker}}(f^{*})\cong{\rm{Im}}(f^{*})$. In particular, if $f^{*}$ is injective, then $x\cong{\rm{Im}}(f^{*})$. And if $f^{*}$ is surjective, then $x/{\rm{Ker}}(f^{*})\cong y$.
\end{theorem}

\begin{theorem}[Generalized anti-factorization theorem]
Let $\mathscr{C}$ be an abelian factorization category and let $f^{*}:x\rightarrow y$ be an anti-morphism in $\mathscr{C}$. If $\mu:z\rightarrow x$ is an injective morphism such that $f\mu=0$, then there exists a unique anti-morphism $\psi^{*}:x/z\rightarrow y$ such that the following diagram commutes ($\pi$ is the canonical map):
$$
\xymatrix{
  x \ar[dr]_{f^{*}} \ar[r]^{\pi}
                & x/z \ar@{-->}[d]^{\exists!\psi^{*}}  \\
                & y             }
$$
\end{theorem}

\begin{theorem}[Generalized second anti-isomorphism theorem]
Let $\mathscr{C}$ be an abelian factorization category and let $A,B,C\in{\rm{Ob}}(\mathscr{C})$ such that $C\subset B\subset A$. Then there is a unique anti-isomorphism
$$
(A/C)/(B/C)\cong A/B.
$$
\end{theorem}

Moreover, we can define \textit{$*$-composition} of anti-morphisms in a factorization category, so that the $*$-composition of two anti-morphisms is an anti-morphism. Using $*$-composition, we construct the \textit{anti-category} $\mathscr{C}^{An}$ \textit{associated to} $\mathscr{C}$ from a factorization category. It is a category whose objects are those of $\mathscr{C}$ and morphisms are anti-morphisms in $\mathscr{C}$. Note that one can define a function that assigns to each morphism an anti-morphism. In fact, we have the following proposition:
\begin{proposition}
Let $\mathscr{C}$ be any factorization category. For any $A,B\in {\rm{Ob}}(\mathscr{C})$, there is a one-to-one correspondence ${\rm{Hom}}(A,B)\xrightarrow{\sim} An(A,B)$ of sets.
\end{proposition}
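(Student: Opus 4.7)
My plan is to exhibit explicit mutually inverse maps between the two sets, using the reverse morphism $1^{*}_{A}\in An(A,A)$ guaranteed by the axioms of a factorization category. First I would define $\Phi\colon {\rm{Hom}}(A,B)\to An(A,B)$ by $f\mapsto f\circ 1^{*}_{A}$; this is well-defined because the composition law (the third of the four bullets cited in the Definition/Proposition above, rephrased for the abstract setting) tells us that a morphism composed with an anti-morphism on the right is an anti-morphism. Dually, I would define $\Psi\colon An(A,B)\to {\rm{Hom}}(A,B)$ by $\varphi^{*}\mapsto \varphi^{*}\circ 1^{*}_{A}$, which lands in ${\rm{Hom}}(A,B)$ by the composition law that the composition of two anti-morphisms is a morphism.

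The next step is to verify that $\Phi$ and $\Psi$ are mutual inverses. Associativity of composition in a factorization category gives
$$
\Psi(\Phi(f))=(f\circ 1^{*}_{A})\circ 1^{*}_{A}=f\circ(1^{*}_{A}\circ 1^{*}_{A}),\qquad \Phi(\Psi(\varphi^{*}))=\varphi^{*}\circ(1^{*}_{A}\circ 1^{*}_{A}),
$$
so everything reduces to the identity $1^{*}_{A}\circ 1^{*}_{A}=1_{A}$. In the motivating examples \textbf{Groups} and \textbf{Rings}, this is immediate, since the reverse mapping is $x\mapsto x^{-1}$ or $x\mapsto -x$, both of which are involutions. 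In the abstract setting, I expect this involutory property either to be built into the axiomatic definition of a reverse morphism in a factorization category, or to be derivable from the stated axioms (together with the defining properties of $1^{*}_{A}$).

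The main obstacle is therefore not the bijection itself, which is essentially one line once the setup is in place, but rather pinpointing the correct axiom that forces $1^{*}_{A}\circ 1^{*}_{A}=1_{A}$. If the axioms of Section \ref{B3} make the reverse morphism canonical (for instance by requiring it to be an anti-isomorphism satisfying $1^{*}_{A}\circ 1^{*}_{A}=1_{A}$), the proof is complete; otherwise an extra normalization argument would be needed. A further minor check is that $\Phi$ and $\Psi$ genuinely depend only on $1^{*}_{A}$ (and not on a choice of $1^{*}_{B}$), and the symmetric construction using $1^{*}_{B}$ on the left yields the same bijection up to the identity $1^{*}_{B}\circ f=f\circ 1^{*}_{A}$-type compatibility, which should also follow from the axioms.
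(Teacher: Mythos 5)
Your construction is exactly the one the paper uses: its proof of this proposition consists of defining the same two maps $f\mapsto f^{*}=f\circ 1^{*}$ and $f^{*}\mapsto f=f^{*}\circ 1^{*}$ and declaring them mutually inverse, with no verification. Your observation that everything reduces to the identity $1^{*}_{A}\circ 1^{*}_{A}=1_{A}$ is therefore the right diagnosis, and the hedge you attach to it is justified: this identity is \emph{not} among the four axioms of a factorization category (axiom (3) only posits the existence of $1^{*}_{A}$ and the closure of composition with it), and the paper never derives it --- it is only asserted implicitly in the remark following the proposition, where the author writes ``clearly, we have $f^{**}=f$.'' In the motivating examples the reverse mapping is intended to be an involution, so the charitable reading is that $1^{*}\circ 1^{*}=1$ is tacitly part of the structure; under that reading your proof is complete and coincides with the paper's, and without it neither your argument nor the paper's establishes bijectivity (injectivity and surjectivity of $f\mapsto f\circ 1^{*}$ both fail in general if $1^{*}$ is not invertible). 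One small factual slip in your aside: in the paper the reverse mapping of a group is the identity on underlying elements ``regarded as'' reversing products, not $x\mapsto x^{-1}$, and $x\mapsto -x$ is not a ring anti-homomorphism; this does not affect your main argument, but the concrete justification for involutivity should instead be that the paper's reverse mapping is the identity set-map, whence $1^{*}\circ 1^{*}=\mathrm{id}$ trivially.
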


Using the above proposition, we arrive at the following theorem:
\begin{theorem}
Let $\mathscr{C}$ be a factorization category. The associated anti-category $\mathscr{C}^{An}$ and the underlying category $\mathscr{C}$ are equivalent.
\end{theorem}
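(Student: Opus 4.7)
The plan is to exhibit an explicit equivalence $F\colon \mathscr{C}\to \mathscr{C}^{An}$ which is the identity on objects and which, on hom-sets, is the bijection supplied by the preceding proposition. Once this is set up, essential surjectivity is automatic (every object of $\mathscr{C}^{An}$ is literally an object of $\mathscr{C}$, so $F$ is the identity on objects and in particular essentially surjective) and full faithfulness is exactly the statement that ${\rm Hom}(A,B)\xrightarrow{\sim}An(A,B)$ is a bijection for every pair $A,B$. So the entire content is packaged into the functoriality check.

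First, I would recall the explicit form of the bijection from the previous proposition: the natural candidate, and almost certainly the one the author uses, is the assignment $f\mapsto f\c 1^{*}_{A}$ (well-defined into $An(A,B)$ by axiom (3) of the definition/proposition recalled for rings and groups, and by the corresponding axiom in a general factorization category). Define $F$ on objects by $F(A)=A$ and on morphisms by $F(f)=f\c 1^{*}_{A}$. I would also note, in parallel, the inverse assignment $An(A,B)\to {\rm Hom}(A,B)$, which sends an anti-morphism $g^{*}$ to $g^{*}\c 1^{*}_{A}$ (using that $1^{*}_{A}\c 1^{*}_{A}$ acts as the identity on the hom-set via the composition laws). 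These two assignments are mutually inverse.

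Next I would verify functoriality. For identities: $F(\mathrm{id}_{A})=\mathrm{id}_{A}\c 1^{*}_{A}=1^{*}_{A}$, and one must check that $1^{*}_{A}$ is the identity in $\mathscr{C}^{An}$, which is how identities in the anti-category are defined in the construction of $\mathscr{C}^{An}$. For composition: given $f\in{\rm Hom}(A,B)$ and $g\in{\rm Hom}(B,C)$, I must check $F(g\c f)=F(g)\ast F(f)$, where $\ast$ denotes the $*$-composition. Unwinding, this is the identity $(g\c f)\c 1^{*}_{A}=(g\c 1^{*}_{B})\ast(f\c 1^{*}_{A})$. Since the $*$-composition in the paper is defined precisely so that the correspondence $f\leftrightarrow f\c 1^{*}$ intertwines ordinary composition with $*$-composition (this is essentially the motivation for introducing $\ast$ in the first place), this identity holds by construction; the only real work is tracing through the definitions of $\ast$ and using the associativity clause (4).

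The main obstacle, modest though it is, lies in this functoriality verification: one has to keep very careful track of which composition is in $\mathscr{C}$ and which is $\ast$ in $\mathscr{C}^{An}$, and invoke both the associativity axiom and the fact that $1^{*}_{A}\c 1^{*}_{A}$ behaves as an identity. Once this is done, the theorem is immediate: $F$ is a bijection on each hom-set and the identity on objects, hence fully faithful and essentially surjective, hence an equivalence of categories (in fact, an isomorphism of categories).
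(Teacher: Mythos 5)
Your proposal is correct and follows essentially the same route as the paper: the paper also defines a functor $F^{An}\colon\mathscr{C}\to\mathscr{C}^{An}$ that is the identity on objects and sends $f$ to $f^{*}=f\c 1^{*}$, then invokes the hom-set bijection ${\rm Hom}(A,B)\xrightarrow{\sim}An(A,B)$ for full faithfulness and notes essential surjectivity is immediate. Your explicit verification of functoriality (identities and the compatibility of $\c$ with $\ast$) is a detail the paper leaves implicit, but it is the same argument.
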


Hence, the above theorem readily yields the following theorem:
\begin{theorem}
If $\mathscr{C}$ is an abelian factorization category, then the associated anti-category $\mathscr{C}^{An}$ is an abelian category.
\end{theorem}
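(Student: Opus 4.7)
The plan is to leverage the preceding theorem, which provides an equivalence of categories $\mathscr{C}\simeq \mathscr{C}^{An}$, and then invoke the general principle that the property of being an abelian category is preserved under equivalence. Thus, once the equivalence is in hand, the result is essentially automatic; the work is in spelling out the transport of the abelian structure carefully.

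First I would fix notation for the equivalence. Let $F\colon \mathscr{C}\to \mathscr{C}^{An}$ denote the functor sending each object to itself and each morphism $f\in {\rm Hom}(A,B)$ to its associated anti-morphism $f^{*}\in An(A,B)$ under the bijection ${\rm Hom}(A,B)\xrightarrow{\sim} An(A,B)$, and let $G$ denote its quasi-inverse. These are available from the previous proposition and theorem. In particular, both $F$ and $G$ are fully faithful and essentially surjective on objects.

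Next I would verify the abelian category axioms for $\mathscr{C}^{An}$ one by one, transporting them through $F$ and $G$. Specifically: (i) a zero object of $\mathscr{C}$ remains a zero object in $\mathscr{C}^{An}$, since equivalences preserve initial and terminal objects; (ii) finite products and coproducts transport through $F$ and coincide by way of the biproduct in $\mathscr{C}$; (iii) kernels and cokernels transport along an equivalence, and since $\mathscr{C}^{An}$ has a notion of kernel and cokernel of an anti-morphism (as developed in the earlier lemmas of Section \ref{B3}), the kernel/cokernel in $\mathscr{C}^{An}$ may be identified with the image of the corresponding kernel/cokernel in $\mathscr{C}$; (iv) finally, every anti-monomorphism (resp.\ anti-epimorphism) in $\mathscr{C}^{An}$ corresponds under $F$ to a monomorphism (resp.\ epimorphism) in $\mathscr{C}$, by the lemma characterizing anti-monomorphisms and anti-epimorphisms via injectivity and surjectivity. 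Hence the property that every mono is a kernel and every epi is a cokernel transports directly from $\mathscr{C}$.

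The main obstacle is a bookkeeping one rather than a conceptual one: I must take care that the correspondence ${\rm Hom}(A,B)\leftrightarrow An(A,B)$ is genuinely compatible with the categorical constructions appearing in the axioms, so that universal properties in $\mathscr{C}$ translate to universal properties in $\mathscr{C}^{An}$ under the $*$-composition rather than the ordinary composition. This is precisely the content of the previous theorem giving the equivalence, so once that is invoked the verification of each axiom reduces to a diagram-chase on the image of a diagram under $F$. Assembling these items, I conclude that $\mathscr{C}^{An}$ satisfies every axiom of an abelian category, completing the proof.
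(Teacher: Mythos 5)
Your proposal is correct and takes essentially the same route as the paper: the paper's proof likewise consists of invoking the equivalence $\mathscr{C}\simeq\mathscr{C}^{An}$ from the preceding theorem and concluding that being abelian transports along an equivalence. The only extra ingredient in the paper is a separate lemma verifying by hand that $\mathscr{C}^{An}$ is preadditive under $*$-composition, which your closing remark about compatibility of the correspondence ${\rm Hom}(A,B)\leftrightarrow An(A,B)$ with the $*$-composition addresses in spirit.
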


Moreover, by virtue of the $*$-composition, we can define the bifunctor $An(-,-)$ that to objects $A,B$ of the associated abelian anti-category $\mathscr{C}^{An}$, it associates the group of anti-morphisms from $A$ to $B$. It can be checked that $An(-,-)$ is isomorphic to the bifunctor $\textrm{Hom}(-,-)$.
\begin{theorem}
$An(-,-)$ is a bifunctor from the abelian associated anti-category $\mathscr{C}^{An}$ to the category $\textbf{Ab}$ of abelian groups. It is contravariant in the first variable, and covariant in the second variable. Moreover, the functors $An(-,-)$ and ${\rm{Hom}}(-,-)$ are isomorphic.
\end{theorem}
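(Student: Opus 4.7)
The plan is to recognize $An(-,-)$ as nothing other than the Hom-bifunctor of the category $\mathscr{C}^{An}$ and to obtain the comparison with $\mathrm{Hom}(-,-)$ by promoting the set-bijection $\phi_{A,B}\colon\mathrm{Hom}(A,B)\xrightarrow{\sim}An(A,B)$ supplied by the preceding proposition to a natural isomorphism, using the equivalence $\mathscr{C}\simeq\mathscr{C}^{An}$ of the previous theorem.

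First, by the construction of $*$-composition and of $\mathscr{C}^{An}$, one has the identification $An(A,B)=\mathrm{Hom}_{\mathscr{C}^{An}}(A,B)$, and the previous theorem asserts that $\mathscr{C}^{An}$ is abelian whenever $\mathscr{C}$ is. Thus each $An(A,B)$ carries a canonical abelian group structure, and for anti-morphisms $f^{*}\colon A'\to A$ and $g^{*}\colon B\to B'$ we set
\[
An(f^{*},g^{*})\colon An(A,B)\longrightarrow An(A',B'),\qquad h^{*}\longmapsto g^{*}*h^{*}*f^{*}.
\]
Contravariance in the first slot, covariance in the second, and valuedness in $\mathbf{Ab}$ are then consequences of the associativity and biadditivity of $*$-composition already established, i.e.\ of the fact that we are merely looking at the internal Hom of an abelian category.

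For the comparison with $\mathrm{Hom}(-,-)$, I would use the explicit form $\phi_{A,B}(f)=f\circ 1_{A}^{*}$ of the bijection from the preceding proposition. Additivity of $\phi_{A,B}$ is immediate from the distributivity of composition over addition in the preadditive category $\mathscr{C}$, so together with bijectivity this promotes each $\phi_{A,B}$ to a group isomorphism. Naturality in both variables then reduces to the single identity
\[
g^{*} * \bigl(h\circ 1_{A}^{*}\bigr) * f^{*} \;=\; \bigl(\widetilde{g}\circ h\circ \widetilde{f}\bigr)\circ 1_{A'}^{*}
\]
for every $h\in\mathrm{Hom}(A,B)$, where $\widetilde{f}$ and $\widetilde{g}$ are the morphisms of $\mathscr{C}$ corresponding to $f^{*}$ and $g^{*}$ under the equivalence $\mathscr{C}^{An}\to\mathscr{C}$. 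This is a short unfolding of the definitions of $*$-composition and of the equivalence functor together with the behaviour of $1^{*}$ under composition.

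The main obstacle is bookkeeping rather than mathematical content: $An(-,-)$ naturally has source $(\mathscr{C}^{An})^{op}\times\mathscr{C}^{An}$ while $\mathrm{Hom}(-,-)$ has source $\mathscr{C}^{op}\times\mathscr{C}$, so one must set up the comparison on a common domain. Since the equivalence $E\colon\mathscr{C}\to\mathscr{C}^{An}$ of the previous theorem is the identity on objects, one may regard $\mathrm{Hom}(-,-)$ as a bifunctor on $(\mathscr{C}^{An})^{op}\times\mathscr{C}^{An}$ by composing its morphism action with a chosen quasi-inverse of $E$. The family $\{\phi_{A,B}\}$ is then a natural transformation between two bifunctors sharing the same source and target, and since each component is already a bijective group homomorphism, it is automatically a natural isomorphism, completing the proof.
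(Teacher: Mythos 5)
Your proposal is correct and follows essentially the same route as the paper: the paper's (very terse) proof likewise identifies $An(-,-)$ with the Hom-bifunctor of $\mathscr{C}^{An}$, invokes the preadditivity of $\mathscr{C}^{An}$ under $*$-composition and the group isomorphism ${\rm{Hom}}(A,B)\cong An(A,B)$, $f\mapsto f\circ 1^{*}$, and leaves the naturality check to the reader. Your verification of the identity $g^{*}*(h\circ 1^{*}_{A})*f^{*}=(\widetilde{g}\circ h\circ \widetilde{f})\circ 1^{*}_{A'}$ and your handling of the domain bookkeeping via the identity-on-objects equivalence $\mathscr{C}\simeq\mathscr{C}^{An}$ simply supply the details the paper omits.
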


By investigating the universal property of products in a category, we prove two anti-analogues of universal property in a factorization category with products, which are called the \textit{anti-universal properties}.
\begin{proposition}Let $\mathscr{C}$ be a factorization category with products.

\begin{itemize}
  \item [(1)]
  Let $(X_{i})_{i\in I}$ be a family of objects of $\mathscr{C}$ indexed by a set $I$ and let $X=\prod_{i\in I}X_{i}$ be the product of $X_{i}$ with projections $p_{i}:X\rightarrow X_{i}$. Then given any object $Y\in{\rm{Ob}}(\mathscr{C})$ and anti-morphism $f_{i}^{*}:Y\rightarrow X_{i}$ in $\mathscr{C}$, there exists a unique anti-morphism $f^{*}:Y\rightarrow X$ such that $f_{i}^{*}=p_{i}\circ f^{*}$, i.e. the following diagram commutes:
$$
\xymatrix{
  X  \ar[r]^{p_{i}} & X_{i}  \\
  Y \ar@{-->}[u]^{f^{*}}\ar[ur]_{f_{i}^{*}} &    }
$$
  \item [(2)]
  Let $(X_{i})_{i\in I}$ be a family of objects of $\mathscr{C}$ indexed by a set $I$ and let $X=\prod_{i\in I}X_{i}$ be the product of $X_{i}$ with anti-projections $p_{i}^{*}:X\rightarrow X_{i}$ ($p_{i}^{*}=p_{i}\circ1^{*}$). Then given any object $Y\in{\rm{Ob}}(\mathscr{C})$ and anti-morphism $f_{i}^{*}:Y\rightarrow X_{i}$ in $\mathscr{C}$, there exists a unique morphism $f:Y\rightarrow X$ such that $f_{i}^{*}=p_{i}^{*}\circ f$, i.e. the following diagram commutes:
$$
\xymatrix{
  X  \ar[r]^{p_{i}^{*}} & X_{i}  \\
  Y \ar@{-->}[u]^{f}\ar[ur]_{f_{i}^{*}} &    }
$$
\end{itemize}
\end{proposition}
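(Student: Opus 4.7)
The plan is to reduce both parts to the ordinary universal property of the product in the underlying category $\mathscr{C}$ by conjugating everything with the reverse morphisms $1^{*}$, and then translate the resulting data back through $1^{*}$. The key identity we will need from the factorization category axioms is $1^{*}\circ 1^{*}=1$, the expected analogue of the fact that inverting twice in a group is trivial.

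For part (1), I would first use that the composition of two anti-morphisms is a morphism to pass from the anti-morphisms $f_{i}^{*}\colon Y\to X_{i}$ to actual morphisms by setting
$$
g_{i}\;=\;f_{i}^{*}\circ 1_{Y}^{*}\colon Y\longrightarrow X_{i}.
$$
Since $\mathscr{C}$ has products, the ordinary universal property yields a unique morphism $g\colon Y\to X$ with $p_{i}\circ g=g_{i}$ for every $i\in I$. I would then define the candidate anti-morphism by
$$
f^{*}\;=\;g\circ 1_{Y}^{*}\colon Y\longrightarrow X,
$$
which lies in $An(Y,X)$ because the composition law $\mathrm{Hom}\times An\to An$ (applied with $1_{Y}^{*}\in An(Y,Y)$ pre-composed) sends $(g,1_{Y}^{*})$ into $An(Y,X)$. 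Using $1_{Y}^{*}\circ 1_{Y}^{*}=1_{Y}$, I would then check
$$
p_{i}\circ f^{*}\;=\;p_{i}\circ g\circ 1_{Y}^{*}\;=\;g_{i}\circ 1_{Y}^{*}\;=\;f_{i}^{*}\circ 1_{Y}^{*}\circ 1_{Y}^{*}\;=\;f_{i}^{*}.
$$
For uniqueness, given any anti-morphism $\varphi^{*}$ with $p_{i}\circ\varphi^{*}=f_{i}^{*}$, I would set $h=\varphi^{*}\circ 1_{Y}^{*}$, observe that $h$ is a morphism satisfying $p_{i}\circ h=g_{i}$, apply the uniqueness in the ordinary product property to conclude $h=g$, and recover $\varphi^{*}=h\circ 1_{Y}^{*}=g\circ 1_{Y}^{*}=f^{*}$.

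For part (2), I would reduce directly to part (1). Given anti-morphisms $f_{i}^{*}\colon Y\to X_{i}$, part (1) supplies a unique anti-morphism $h^{*}\colon Y\to X$ with $p_{i}\circ h^{*}=f_{i}^{*}$. I would then set
$$
f\;=\;1_{X}^{*}\circ h^{*}\colon Y\longrightarrow X,
$$
which is a morphism since the composition of two anti-morphisms is a morphism. Using $1_{X}^{*}\circ 1_{X}^{*}=1_{X}$ and the definition $p_{i}^{*}=p_{i}\circ 1_{X}^{*}$, I would verify
$$
p_{i}^{*}\circ f\;=\;p_{i}\circ 1_{X}^{*}\circ 1_{X}^{*}\circ h^{*}\;=\;p_{i}\circ h^{*}\;=\;f_{i}^{*}.
$$
For uniqueness, any morphism $f'\colon Y\to X$ with $p_{i}^{*}\circ f'=f_{i}^{*}$ gives rise to the anti-morphism $h'^{*}=1_{X}^{*}\circ f'$ satisfying $p_{i}\circ h'^{*}=f_{i}^{*}$; by part (1), $h'^{*}=h^{*}$, and hence $f'=1_{X}^{*}\circ h'^{*}=f$.

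The main obstacle, and the only place where the argument is not purely mechanical bookkeeping, is verifying that the relevant compositions land in the correct class ($\mathrm{Hom}$ versus $An$) at each step and that the involutive relation $1^{*}\circ 1^{*}=1$ together with associativity is genuinely available from the factorization category axioms; once these are in hand the proof is a direct translation of the ordinary universal property via $1^{*}$.
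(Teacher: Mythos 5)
Your proposal is correct and follows essentially the same route as the paper: transport the given anti-morphisms to their corresponding morphisms via $1^{*}$, invoke the ordinary universal property of the product, and translate back using $1^{*}\circ 1^{*}=1$ (which is indeed implicit in the paper's bijection $f\mapsto f\circ 1^{*}$ with inverse $f^{*}\mapsto f^{*}\circ 1^{*}$). Your treatment is in fact slightly more complete than the paper's, since you spell out the uniqueness arguments in both parts, which the paper's proof leaves tacit.
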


Using anti-universal property, we define the notion of \textit{anti-product} associated to a given product, which can be viewed as the anti-analog of the categorical product. By means of anti-universal properties, we prove anti-isomorphism theorem about products and isomorphism theorem about anti-products respectively.

\begin{theorem}Let $\mathscr{C}$ be a factorization category with products.

\begin{itemize}
\item[(1)]
If $(X;p_{i})$ and $(X';p_{i}')$ are both products of the family $(X_{i})_{i\in I}$ in $\mathscr{C}$, then there exists a unique anti-isomorphism $g^{*}:X\xrightarrow{\sim}X'$ such that $p_{i}^{*}=p_{i}'\circ g^{*}$.
\item[(2)]
If $(X;p_{i}^{*})$ and $(X';p_{i}'^{*})$ are both anti-products of the family $(X_{i})_{i\in I}$ in $\mathscr{C}$, then there exists a unique isomorphism $g:X\xrightarrow{\sim}X'$ such that $p_{i}^{*}=p_{i}'^{*}\circ g$.
\end{itemize}
\end{theorem}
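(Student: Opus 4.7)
The plan is to imitate the textbook proof that the categorical product is unique up to unique isomorphism, with the two anti-universal properties from the preceding proposition playing the role of the ordinary universal property. In each part I apply the relevant anti-universal property twice---once to each of the two (anti-)products---to produce comparison maps, and then appeal to the built-in uniqueness to see that the resulting composites are (anti-)inverse to each other.

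Part (2) is the easier case. Applying anti-universal property (2) of the anti-product $(X';p_i'^*)$ to $Y=X$ equipped with the anti-morphisms $p_i^*\colon X\to X_i$ produces a unique morphism $g\colon X\to X'$ with $p_i^*=p_i'^*\circ g$; symmetrically I obtain a unique morphism $h\colon X'\to X$ with $p_i'^*=p_i^*\circ h$. Then $p_i^*\circ(h\circ g)=p_i'^*\circ g=p_i^*$ for every $i$, and likewise $p_i^*\circ 1_X=p_i^*$; the uniqueness clause of anti-universal property (2) applied to $Y=X$ and the anti-product $(X;p_i^*)$ therefore forces $h\circ g=1_X$. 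Symmetrically $g\circ h=1_{X'}$, so $g$ is the desired isomorphism, and its uniqueness is already part of the existence statement.

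Part (1) follows the same pattern, but now the composite of two anti-morphisms is a \emph{morphism} rather than an anti-morphism, so one cannot close the loop purely by iterating anti-universal property (1). My plan is to bring in the ordinary universal property of products as an auxiliary tool. Anti-universal property (1) of $(X';p_i')$ applied to $X$ with the anti-morphisms $p_i^*$ yields the desired unique anti-morphism $g^*\colon X\to X'$ satisfying $p_i^*=p_i'\circ g^*$. Separately, the ordinary universal property of $(X';p_i')$ provides a unique isomorphism $g\colon X\to X'$ with $p_i=p_i'\circ g$. Since $g\circ 1^*_X$ is an anti-morphism with $p_i'\circ(g\circ 1^*_X)=p_i\circ 1^*_X=p_i^*$, the uniqueness in the anti-universal property forces $g^*=g\circ 1^*_X$. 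I then propose $(g^*)^{-1}:=1^*_X\circ g^{-1}$, which is patently an anti-morphism $X'\to X$, and check that $(g^*)^{-1}\circ g^*=1^*_X\circ g^{-1}\circ g\circ 1^*_X$ and $g^*\circ(g^*)^{-1}=g\circ 1^*_X\circ 1^*_X\circ g^{-1}$ both collapse to identities once we invoke $1^*_X\circ 1^*_X=1_X$. This exhibits $g^*$ as an anti-isomorphism, and uniqueness is inherited from the anti-universal property.

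The main obstacle I anticipate is the final step of part (1): the argument rests on the involutive identity $1^*_X\circ 1^*_X=1_X$, which is transparent in \textbf{Groups} and \textbf{Rings} but whose status among the axioms of a general factorization category would need to be confirmed. If it is not built into the axioms of Section~\ref{B3}, a short preliminary lemma establishing it (or a reformulation of the notion of anti-isomorphism that sidesteps it) will be the prerequisite before the rest of the argument goes through cleanly.
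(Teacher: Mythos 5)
Your part (2) coincides with the paper's argument: two applications of the anti-universal property for anti-products produce $g$ and $h$, and the uniqueness clause (applied to $(X;p_{i}^{*})$ and to $(X';p_{i}'^{*})$ with the identity as competitor) forces $h\circ g=1_{X}$ and $g\circ h=1_{X'}$. Your part (1) is correct but takes a genuinely different route. The paper stays entirely inside the anti-universal properties: it also applies Proposition \ref{A28} in the opposite direction to obtain an anti-morphism $f^{*}:X'\rightarrow X$ with $p_{i}'^{*}=p_{i}\circ f^{*}$, then checks $p_{i}'^{*}=p_{i}'^{*}\circ g^{*}\circ f^{*}$ and $p_{i}^{*}=p_{i}^{*}\circ f^{*}\circ g^{*}$ and concludes $g^{*}\circ f^{*}=1$ and $f^{*}\circ g^{*}=1$ from uniqueness, so that $f^{*}$ itself is the anti-inverse. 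You instead import the classical uniqueness-of-products isomorphism $g$, identify $g^{*}=g\circ 1^{*}_{X}$ by the uniqueness clause, and exhibit the anti-inverse explicitly as $1^{*}_{X}\circ g^{-1}$. Your version makes the anti-inverse concrete and makes transparent exactly where the involutive relation $1^{*}\circ 1^{*}=1$ enters; the paper's version is more symmetric but uses the same relation implicitly, since its intermediate step $p_{i}'^{*}\circ g^{*}=p_{i}$ unwinds to $1^{*}\circ 1^{*}\circ p_{i}=p_{i}$. As for the caveat you raise: $1^{*}\circ 1^{*}=1$ is indeed not listed among the axioms of a factorization category, but it is precisely what makes the two maps in Proposition \ref{A17} mutually inverse, and the paper uses it freely elsewhere (for instance $f^{*}*1^{*}=(f^{*}\circ 1^{*})\circ 1^{*}=f^{*}$ in Proposition \ref{A20}), so your proof rests on nothing beyond what the paper's own proof already assumes.
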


In Section \ref{B4}, we define a certain kind of functors between two factorization categories $\mathscr{C}$ and $\mathscr{D}$. Let $F:\mathscr{C}\rightarrow\mathscr{D}$ be a functor. We note that given any $x,y\in\textrm{Ob}(\mathscr{C})$, the set map $\textrm{Hom}_{\mathscr{C}}(x,y)\rightarrow \textrm{Hom}_{\mathscr{D}}(Fx,Fy)$ induces a set map $An_{\mathscr{C}}(x,y)\rightarrow An_{\mathscr{D}}(Fx,Fy)$. In fact, we have the following lemma:
\begin{lemma}
Let $\mathscr{C}$ and $\mathscr{D}$ be factorization categories and let $F:\mathscr{C}\rightarrow\mathscr{D}$ be a functor between the underlying categories. Then for any $x,y\in{\rm{Ob}}(\mathscr{C})$, the set map ${\rm{Hom_{\mathscr{C}}}}(x,y)\rightarrow {\rm{Hom_{\mathscr{D}}}}(Fx,Fy)$ induces a set map $An_{\mathscr{C}}(x,y)\rightarrow An_{\mathscr{D}}(Fx,Fy)$. Moreover, if $\mathscr{C}$ and $\mathscr{D}$ are preadditive factorization categories and $F$ is an additive functor of the underlying preadditive categories, then the group homomorphism ${\rm{Hom_{\mathscr{C}}}}(x,y)\rightarrow {\rm{Hom_{\mathscr{D}}}}(Fx,Fy)$ induces a group homomorphism $An_{\mathscr{C}}(x,y)\rightarrow An_{\mathscr{D}}(Fx,Fy)$.
\end{lemma}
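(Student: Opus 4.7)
The plan is to exploit the bijection $\mathrm{Hom}_{\mathscr{C}}(x,y)\xrightarrow{\sim} An_{\mathscr{C}}(x,y)$ established earlier in the paper, under which every anti-morphism $f^{*}\in An_{\mathscr{C}}(x,y)$ factors uniquely as $f^{*}=f\c 1^{*}_{x}$ for some morphism $f\in\mathrm{Hom}_{\mathscr{C}}(x,y)$. This reduces the problem of transporting anti-morphisms along $F$ to the already-defined action of $F$ on ordinary morphisms, composed with the reverse morphism on the target side.

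Concretely, I would define the induced map $An_{\mathscr{C}}(x,y)\to An_{\mathscr{D}}(Fx,Fy)$ by sending $f^{*}=f\c 1^{*}_{x}$ to $F(f)\c 1^{*}_{Fx}$. Well-definedness is immediate: uniqueness of the factorization on the source side guarantees that the choice of $f$ is determined by $f^{*}$, and $F(f)\c 1^{*}_{Fx}$ is an anti-morphism in $\mathscr{D}$ by the axiom of factorization categories that $g\c 1^{*}_{z}\in An(z,w)$ for every morphism $g\in\mathrm{Hom}(z,w)$. In diagrammatic terms, the constructed map fits into a square with the given morphism map on top and the two bijections $\mathrm{Hom}\xrightarrow{\sim}An$ on the vertical sides; commutativity of the square is tautological from the definitions.

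For the preadditive case, I would first recall that the group structure on $An_{\mathscr{C}}(x,y)$ transports from $\mathrm{Hom}_{\mathscr{C}}(x,y)$ along the bijection $f\mapsto f\c 1^{*}_{x}$, and that this bijection is actually a group isomorphism by bilinearity of composition: $(f+g)\c 1^{*}_{x}=f\c 1^{*}_{x}+g\c 1^{*}_{x}$. Then given $f^{*}=f\c 1^{*}_{x}$ and $g^{*}=g\c 1^{*}_{x}$ in $An_{\mathscr{C}}(x,y)$, their sum corresponds to $f+g$, and additivity of $F$ together with bilinearity of composition in $\mathscr{D}$ gives $F(f+g)\c 1^{*}_{Fx}=F(f)\c 1^{*}_{Fx}+F(g)\c 1^{*}_{Fx}$, so the induced map is a group homomorphism.

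The main obstacle is conceptual rather than computational: one must verify that the group structure on $An(x,y)$ in a preadditive factorization category indeed agrees with the one pulled back through the bijection with $\mathrm{Hom}(x,y)$, and that the reverse morphism $1^{*}_{x}$ interacts correctly with the preadditive structure (in particular, that composition remains $\Z$-bilinear when one of the factors is a reverse morphism). Granting the axiomatic setup of preadditive factorization categories from the earlier sections, this compatibility should be built in, and the verification above goes through without further difficulty.
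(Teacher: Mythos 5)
Your proposal is correct and follows essentially the same route as the paper: both define the induced map $An_{\mathscr{C}}(x,y)\rightarrow An_{\mathscr{D}}(Fx,Fy)$ as the composite of the bijection (resp.\ group isomorphism) ${\rm{Hom}}(A,B)\xrightarrow{\sim}An(A,B)$ from Proposition~\ref{A17} on each side with the given map on ${\rm{Hom}}$-sets. Your write-up is in fact more explicit than the paper's, which only displays the commuting square and asserts that the composition works.
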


This enables us to define the notion of \textit{factorable functor} between factorization categories. Roughly speaking, a factorable functor is a collection of a functor and all induced set maps between the sets of anti-morphisms that preserves compositions of anti-morphisms and morphisms and compositions of anti-morphisms and anti-morphisms. Then it is a quick consequence that a factorable functor between factorization categories with products preserves anti-products.

Motivated by the example of category $\textbf{Cat}$ of small categories and functors, we form the category $\textbf{Fa}$ of small factorization categories and factorable functors. It is easy to see that one can define functors from $\textbf{Cat}$ to $\textbf{Fa}$ and from $\textbf{Fa}$ to $\textbf{Cat}$. We define the \textit{forgetfully factorial functor} $\textbf{FCA}:\textbf{Fa}\rightarrow \textbf{Cat}$, which forgets the factorial structure of each factorization category. And we define the \textit{factorization functor} $\textbf{CAF}:\textbf{Cat}\rightarrow \textbf{Fa}$, which equips each category with a factorial structure making it a factorization category. Then, we deduce that $\textbf{CAF}$ is both left and right adjoint to $\textbf{FCA}$.

\begin{theorem}
The factorization functor $\textbf{CAF}$ is both left and right adjoint to the forgetfully factorial functor $\textbf{FCA}$. In other words, for $\mathscr{C}\in{\rm{Ob}}(\textbf{Cat})$ and $\mathscr{D}\in{\rm{Ob}}(\textbf{Fa})$, there are functorial bijections
$$
\xi_{\mathscr{C},\mathscr{D}}:{\rm{Hom_{\textbf{Fa}}}}(\textbf{CAF}(\mathscr{C}),\mathscr{D})\xrightarrow{\sim}{\rm{Hom_{\textbf{Cat}}}}(\mathscr{C},\textbf{FCA}(\mathscr{D}))
$$
and
$$
\xi_{\mathscr{D},\mathscr{C}}':{\rm{Hom_{\textbf{Fa}}}}(\mathscr{D},\textbf{CAF}(\mathscr{C}))\xrightarrow{\sim}{\rm{Hom_{\textbf{Cat}}}}(\textbf{FCA}(\mathscr{D}),\mathscr{C}).
$$
\end{theorem}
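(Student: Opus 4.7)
The plan is to reduce both adjunction bijections to the following rigidity principle for $\textbf{CAF}(\mathscr{C})$: the factorial structure grafted onto $\mathscr{C}$ by $\textbf{CAF}$ is the ``free'' one, in which $An(A,B)$ is in canonical bijection with ${\rm{Hom}}(A,B)$ via $f\mapsto f\circ 1^{*}_{A}$. Consequently any factorable functor whose domain or codomain is $\textbf{CAF}(\mathscr{C})$ is completely determined by its underlying functor between ordinary categories, and this is exactly the rigidity needed to match the two Hom-sets.

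I would first construct $\xi_{\mathscr{C},\mathscr{D}}$ by sending a factorable functor $F:\textbf{CAF}(\mathscr{C})\to\mathscr{D}$ to its underlying functor $\underline{F}$, which lands in the underlying category of $\mathscr{D}$, that is, in $\textbf{FCA}(\mathscr{D})$. For the inverse, given $G:\mathscr{C}\to\textbf{FCA}(\mathscr{D})$, I extend it to a factorable functor $\widetilde{G}:\textbf{CAF}(\mathscr{C})\to\mathscr{D}$ by keeping $G$ on morphisms and, for an anti-morphism $\alpha=g\circ 1^{*}_{x}$ in $\textbf{CAF}(\mathscr{C})$, setting $\widetilde{G}(\alpha):=G(g)\circ 1^{*}_{Gx}$ in $\mathscr{D}$. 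Well-definedness follows from the uniqueness of the factorization $\alpha=g\circ 1^{*}_{x}$. Then I verify that $\widetilde{G}$ respects all four composition laws of a factorization category by writing every anti-morphism in the form $g\circ 1^{*}$, and that the assignments $F\mapsto\underline{F}$ and $G\mapsto\widetilde{G}$ are mutually inverse. Naturality of $\xi_{\mathscr{C},\mathscr{D}}$ in both $\mathscr{C}$ and $\mathscr{D}$ is a direct diagram chase using the same identifications.

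The bijection $\xi'_{\mathscr{D},\mathscr{C}}$ is entirely dual: a factorable functor $H:\mathscr{D}\to\textbf{CAF}(\mathscr{C})$ has its anti-morphism component valued in $An_{\textbf{CAF}(\mathscr{C})}(Hx,Hy)\cong {\rm{Hom}}_{\mathscr{C}}(Hx,Hy)$, so the factorable-functor axioms, together with the requirement $H(1^{*}_{x})=1^{*}_{Hx}$, force $H$ on anti-morphisms to be read off from its underlying functor. Conversely, any $K:\textbf{FCA}(\mathscr{D})\to\mathscr{C}$ extends by the analogous formula $\widetilde{K}(g\circ 1^{*}_{x}):=K(g)\circ 1^{*}_{Kx}$, and the verifications mirror those of the previous paragraph.

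The main obstacle is not conceptual but the bookkeeping required for preservation of compositions, particularly the $An\times An\to{\rm{Hom}}$ law. For the extension $\widetilde{G}$ to be a factorable functor, one must verify that $(G(h)\circ 1^{*}_{Gy})\circ(G(g)\circ 1^{*}_{Gx})$ equals $G$ applied to the morphism in $\mathscr{C}$ to which the composite $(h\circ 1^{*}_{y})\circ(g\circ 1^{*}_{x})$ corresponds. This should follow by first rewriting $1^{*}_{y}\circ g$ as $g'\circ 1^{*}_{x}$ for a unique $g'\in{\rm{Hom}}_{\mathscr{C}}(x,y)$ determined by the factorial structure, then applying functoriality of $G$ on $\mathscr{C}$ and carrying out the analogous rewriting in $\mathscr{D}$. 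Once the four composition laws are in place, bijectivity and functoriality of $\xi$ and $\xi'$ are immediate.
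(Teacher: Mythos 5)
Your proposal is correct, and the two bijections you construct are literally the ones the paper uses: the forward map is ``take the underlying functor'' (apply $\textbf{FCA}$) and the inverse is ``extend along the canonical identification ${\rm{Hom}}(A,B)\cong An(A,B)$, $f\mapsto f\circ 1^{*}_{A}$'' (which is exactly what $\textbf{CAF}$ does on morphisms). The difference lies in how the key step is justified. The paper obtains both adjunctions almost formally from its Axiom that every category carries a \emph{unique} factorial structure, which yields the strict identities $\textbf{CAF}(\textbf{FCA}(\mathscr{C}))=\mathscr{C}$ and $\textbf{FCA}(\textbf{CAF}(\mathscr{D}))=\mathscr{D}$ on objects and morphisms; since $\textbf{CAF}$ and $\textbf{FCA}$ are then mutually inverse isomorphisms of categories, each is automatically both left and right adjoint to the other, and the paper's two proofs consist only of writing down $f\mapsto\textbf{FCA}(f)$ (resp.\ $f\mapsto\textbf{CAF}(f)\circ g$ with $g$ an identity) and checking naturality. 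You instead prove the needed rigidity by hand: you construct the extension $\widetilde{G}(g\circ 1^{*}_{x}):=G(g)\circ 1^{*}_{Gx}$, verify the four composition laws (the $An\times An\to{\rm{Hom}}$ case reducing to $1^{*}_{y}\circ g=g\circ 1^{*}_{x}$ and $1^{*}\circ 1^{*}=1$, both of which the paper assumes), and check that restriction and extension are mutually inverse. This buys a more self-contained argument that does not lean on the uniqueness Axiom, at the cost of the bookkeeping you describe. One small remark: the rigidity you attribute specially to $\textbf{CAF}(\mathscr{C})$ in fact holds for every object of $\textbf{Fa}$ in this paper's setup, since the definition of a factorable functor already stipulates that its anti-morphism components are the maps induced from the underlying functor via ${\rm{Hom}}\cong An$; so nothing about the ``free'' structure is being used beyond what every factorization category satisfies, and your argument applies verbatim.
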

Moreover, we consider the subcategory $\textbf{Fa-Pa}$ of $\textbf{Fa}$ of small preadditive factorization categories and factorable additive functors. Let $\textbf{Cat-Pa}$ be the subcategory of $\textbf{Cat}$ of $\textbf{Fa}$ of small preadditive categories and additive functors. We can define the \textit{forgetfully preadditive factorial functor} $\textbf{FCA-Pa}:\textbf{Fa-Pa}\rightarrow \textbf{Cat-Pa}$, which can be viewed as the restriction of $\textbf{FCA}$ to $\textbf{FCA-Pa}$. And we can define the \textit{factorization preadditive functor} $\textbf{CAF-Pa}:\textbf{Cat-Pa}\rightarrow \textbf{Fa-Pa}$, which can be viewed as the restriction of $\textbf{CAF}$ to $\textbf{Cat-Pa}$. By the nearly same proof, we can show that $\textbf{CAF-Pa}$ is both left and right adjoint to $\textbf{FCA-Pa}$.
\begin{theorem}
The factorization preadditive functor $\textbf{CAF-Pa}$ is both left and right adjoint to the forgetfully preadditive factorial functor $\textbf{FCA-Pa}$. In other words, for $\mathscr{C}\in{\rm{Ob}}(\textbf{Cat-Pa})$ and $\mathscr{D}\in{\rm{Ob}}(\textbf{Fa-Pa})$, there are functorial bijections
$$
\xi_{\mathscr{C},\mathscr{D}}:{\rm{Hom_{\textbf{Fa-Pa}}}}(\textbf{CAF-Pa}(\mathscr{C}),\mathscr{D})\xrightarrow{\sim}{\rm{Hom_{\textbf{Cat-Pa}}}}(\mathscr{C},\textbf{FCA-Pa}(\mathscr{D}))
$$
and
$$
\xi_{\mathscr{D},\mathscr{C}}':{\rm{Hom_{\textbf{Fa-Pa}}}}(\mathscr{D},\textbf{CAF-Pa}(\mathscr{C}))\xrightarrow{\sim}{\rm{Hom_{\textbf{Cat-Pa}}}}(\textbf{FCA-Pa}(\mathscr{D}),\mathscr{C}).
$$
\end{theorem}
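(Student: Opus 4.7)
The plan is to reduce the claim to the preceding non-preadditive adjunction between $\textbf{CAF}$ and $\textbf{FCA}$, and then verify that the bijections it produces respect the additive structure on both sides. Since $\textbf{CAF-Pa}$ and $\textbf{FCA-Pa}$ are, by definition, the restrictions of $\textbf{CAF}$ and $\textbf{FCA}$ to the preadditive subcategories, and since neither the adjoined factorial structure nor the forgetting of factorial data alters the underlying Hom-groups, the bijections $\xi_{\mathscr{C},\mathscr{D}}$ and $\xi'_{\mathscr{D},\mathscr{C}}$ of the previous theorem are already defined on the relevant Hom-sets at the level of the underlying categories. It therefore suffices to show that these bijections restrict to bijections between the additive morphisms on the two sides.

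First I would make explicit what ``additive'' means on each side. A morphism $F\in\textrm{Hom}_{\textbf{Fa-Pa}}(\textbf{CAF-Pa}(\mathscr{C}),\mathscr{D})$ is a factorable additive functor, which by the lemma of Section~\ref{B4} induces group homomorphisms both $\textrm{Hom}_{\mathscr{C}}(x,y)\to\textrm{Hom}_{\mathscr{D}}(Fx,Fy)$ and $An_{\mathscr{C}}(x,y)\to An_{\mathscr{D}}(Fx,Fy)$, whereas a morphism $G\in\textrm{Hom}_{\textbf{Cat-Pa}}(\mathscr{C},\textbf{FCA-Pa}(\mathscr{D}))$ is merely an additive functor between preadditive categories. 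Given a factorable additive $F$, applying the non-preadditive $\xi_{\mathscr{C},\mathscr{D}}$ yields a functor $G:\mathscr{C}\to\textbf{FCA-Pa}(\mathscr{D})$ whose action on Hom-groups coincides with that of $F$ on the unchanged Hom-groups of $\textbf{CAF-Pa}(\mathscr{C})$; hence $G$ is additive. Conversely, starting from an additive $G$ and applying $\xi^{-1}$ produces a factorable functor $F$ whose action on $An$-sets is determined via the one-to-one correspondence $\textrm{Hom}(A,B)\xrightarrow{\sim}An(A,B)$, and the bifunctor isomorphism $An(-,-)\cong\textrm{Hom}(-,-)$ established in Section~\ref{B3} then guarantees that this induced $An$-action is also a group homomorphism, so $F$ is factorable additive.

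The second bijection $\xi'_{\mathscr{D},\mathscr{C}}$ is handled by the completely symmetric argument, interchanging source and target. Naturality of $\xi$ and $\xi'$ in both arguments is inherited from the non-preadditive case, since precomposition and postcomposition with additive functors yield additive functors and leave the underlying commuting squares untouched. The main obstacle, in my view, is the single compatibility check that the additive structure transported along the bijection $\textrm{Hom}(A,B)\cong An(A,B)$ matches the Hom-group addition on the preadditive side; but this is precisely the content of the bifunctor isomorphism $An(-,-)\cong\textrm{Hom}(-,-)$, so beyond invoking that isomorphism together with the induced-group-homomorphism lemma no further calculation is required.
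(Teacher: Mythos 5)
Your proposal is correct and follows essentially the same route as the paper, which in fact gives no separate argument here and simply defers to the proofs of the unrestricted adjunction theorems for $\textbf{CAF}$ and $\textbf{FCA}$; your observation that those bijections restrict to the additive morphisms on both sides (using the induced-group-homomorphism lemma) is exactly the content of that deferral, spelled out. One minor point: the bifunctor isomorphism $An(-,-)\cong{\rm{Hom}}(-,-)$ is stated in the paper only for abelian factorization categories, so in the preadditive setting you should instead cite directly the group isomorphism ${\rm{Hom}}(A,B)\cong An(A,B)$ of Proposition~\ref{A17} together with Lemma~\ref{A35}, which suffice for the same purpose.
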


Let $\textbf{Fa-Ab}$ be a subcategory of $\textbf{Fa}$ of small abelian factorization categories and factorable additive functors and let $\textbf{Cat-Ab}$ be a subcategory of $\textbf{Cat}$ of small abelian categories and additive functors. If $\textbf{CAF-Ab}$ is the restriction of $\textbf{CAF-Pa}$ to $\textbf{Cat-Ab}$ and $\textbf{FCA-Ab}$ is the restriction of $\textbf{FCA-Pa}$ to $\textbf{Fa-Ab}$, then by the similar way, we have
\begin{theorem}
The functor $\textbf{CAF-Ab}$ is both left and right adjoint to the functor $\textbf{FCA-Ab}$. In other words, for $\mathscr{C}\in{\rm{Ob}}(\textbf{Cat-Ab})$ and $\mathscr{D}\in{\rm{Ob}}(\textbf{Fa-Ab})$, there are functorial bijections
$$
\xi_{\mathscr{C},\mathscr{D}}:{\rm{Hom_{\textbf{Fa-Ab}}}}(\textbf{CAF-Ab}(\mathscr{C}),\mathscr{D})\xrightarrow{\sim}{\rm{Hom_{\textbf{Cat-Ab}}}}(\mathscr{C},\textbf{FCA-Ab}(\mathscr{D}))
$$
and
$$
\xi_{\mathscr{D},\mathscr{C}}':{\rm{Hom_{\textbf{Fa-Ab}}}}(\mathscr{D},\textbf{CAF-Ab}(\mathscr{C}))\xrightarrow{\sim}{\rm{Hom_{\textbf{Cat-Ab}}}}(\textbf{FCA-Ab}(\mathscr{D}),\mathscr{C}).
$$
\end{theorem}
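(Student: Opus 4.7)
The plan is to deduce this statement as a direct restriction of the preceding adjunction theorem for the preadditive setting, so almost no new work is required beyond a compatibility check.

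First I would verify that the restricted functors are well-defined on the abelian subcategories. Since $\textbf{FCA-Ab}$ is the restriction of $\textbf{FCA-Pa}$ to $\textbf{Fa-Ab}$, I need to check that when $\mathscr{D}$ is an abelian factorization category, its underlying category $\textbf{FCA-Pa}(\mathscr{D})$ is in fact abelian; this is immediate since being abelian is part of the definition of an abelian factorization category (the underlying category is abelian by construction). Dually, for $\mathscr{C}\in\textrm{Ob}(\textbf{Cat-Ab})$, I would check that $\textbf{CAF-Pa}(\mathscr{C})$, equipped with its adjoined factorial structure, is an abelian factorization category; this follows because the underlying category is unchanged (hence still abelian) and the axioms of a factorization category have already been verified in the $\textbf{Cat-Pa}$ case.

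Next I would observe that $\textbf{Fa-Ab}$ (resp. $\textbf{Cat-Ab}$) is a full subcategory of $\textbf{Fa-Pa}$ (resp. $\textbf{Cat-Pa}$): morphisms on each side are just factorable additive functors (resp. additive functors), with no new constraints imposed by the ``abelian'' condition on objects. Consequently, for any $\mathscr{C}\in\textrm{Ob}(\textbf{Cat-Ab})$ and $\mathscr{D}\in\textrm{Ob}(\textbf{Fa-Ab})$, the Hom-sets
$$
\textrm{Hom}_{\textbf{Fa-Ab}}(\textbf{CAF-Ab}(\mathscr{C}),\mathscr{D}),\quad \textrm{Hom}_{\textbf{Cat-Ab}}(\mathscr{C},\textbf{FCA-Ab}(\mathscr{D}))
$$
coincide with the corresponding Hom-sets in $\textbf{Fa-Pa}$ and $\textbf{Cat-Pa}$, and similarly for the second bijection. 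I would then simply take the bijections $\xi_{\mathscr{C},\mathscr{D}}$ and $\xi_{\mathscr{D},\mathscr{C}}'$ from the preceding theorem and restrict them; they remain bijections between exactly the same sets.

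Finally, functoriality (naturality) in both variables is inherited from the naturality already established in the preadditive case, since restricting a natural transformation to a full subcategory yields a natural transformation. The only step that requires any genuine thought—and thus is the main obstacle—is the well-definedness check in the first paragraph: one has to be sure that the construction underlying $\textbf{CAF}$ preserves the abelian structure and that $\textbf{FCA}$, which forgets only the factorial data, does not disturb it. Both are essentially tautological from how the objects of $\textbf{Fa-Ab}$ were defined, so the proof collapses to invoking the preceding adjunction and noting this compatibility.
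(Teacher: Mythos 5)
Your proposal is correct and matches the paper's (entirely implicit) argument: the paper states this theorem without proof, remarking only that the abelian case follows ``in a similar way'' from the preadditive adjunctions by restricting $\textbf{CAF-Pa}$ and $\textbf{FCA-Pa}$ to $\textbf{Cat-Ab}$ and $\textbf{Fa-Ab}$. Your write-up supplies exactly the details the paper omits --- that the restricted functors land in the right subcategories, that these are full subcategories so the Hom-sets are unchanged, and that naturality is inherited --- so nothing further is needed.
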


\section{Anti-homomorphisms of Groups}\label{B1}
In this section, all groups are not necessarily abelian. We first lay down some basic knowledge about anti-homomorphisms of groups. For simplicity, sometimes, we will simply call anti-homomorphism instead of anti-homomorphism of groups.

\begin{definition}[\cite{Chandrasekhara}, Definition 2.1]
An \textit{anti-homomorphism} $\varphi:A\rightarrow B$ of groups, written multiplicatively, is a mapping $\varphi: A\rightarrow B$ such that $\varphi(x\cdot y) = \varphi(y)\cdot\varphi(x)$ for all $x,y\in A$.
\end{definition}

An anti-homomorphism of groups preserves the identity element, inverses, and powers.

\begin{proposition}[\cite{Dr. Dale T. B.}]  \label{BB}
If $\varphi : A \rightarrow B$ is an anti-homomorphism of groups, then $\varphi(1) = 1$, $\varphi(x^{-1}) = \varphi(x)^{-1}$, and $\varphi(x^{n}) = \varphi(x)^{n}$ for all $x\in A$ and $n\in \mathbb{Z}$.
\end{proposition}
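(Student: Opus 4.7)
The plan is a three-step calculation that mirrors the standard proof for honest homomorphisms, the only subtle point being to keep the reversed order straight when applying the defining identity $\varphi(xy) = \varphi(y)\varphi(x)$.

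\textbf{Step 1: $\varphi(1) = 1$.} I would set $x = y = 1$ in the defining identity to obtain $\varphi(1) = \varphi(1 \cdot 1) = \varphi(1)\varphi(1)$, and then cancel one copy of $\varphi(1)$ in $B$ (multiplying by its inverse on either side), which forces $\varphi(1) = 1$. This is exactly the same trick as for homomorphisms, because the two factors on the right are equal, so the reversal is invisible.

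\textbf{Step 2: $\varphi(x^{-1}) = \varphi(x)^{-1}$.} Using Step 1, I would apply $\varphi$ to both $x \cdot x^{-1} = 1$ and $x^{-1} \cdot x = 1$; the anti-homomorphism property then gives $\varphi(x^{-1})\varphi(x) = \varphi(x \cdot x^{-1}) = 1$ and, symmetrically, $\varphi(x)\varphi(x^{-1}) = 1$. Since $\varphi(x^{-1})$ is a two-sided inverse of $\varphi(x)$ in $B$, it must equal $\varphi(x)^{-1}$.

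\textbf{Step 3: $\varphi(x^{n}) = \varphi(x)^{n}$ for all $n \in \mathbb{Z}$.} I would first dispatch the non-negative case by induction on $n$. The base $n = 0$ is Step 1, and the inductive step is
$$\varphi(x^{n+1}) = \varphi(x \cdot x^{n}) = \varphi(x^{n})\varphi(x) = \varphi(x)^{n}\varphi(x) = \varphi(x)^{n+1},$$
where the crucial observation is that all factors appearing are powers of the single element $\varphi(x)$ and therefore commute, so the order-reversal produced by the anti-homomorphism property has no visible effect. For negative exponents I would combine the positive case with Step 2, writing $\varphi(x^{-n}) = \varphi((x^{-1})^{n}) = \varphi(x^{-1})^{n} = (\varphi(x)^{-1})^{n} = \varphi(x)^{-n}$.

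There is essentially no obstacle here beyond bookkeeping: the whole phenomenon that makes these three formulas look identical to the homomorphism versions is that every calculation takes place in the cyclic subgroup generated by a single element, where the group is abelian and hence the reversed order coincides with the original one.
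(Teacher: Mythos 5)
Your proof is correct, and all the order-of-factors bookkeeping checks out (e.g.\ $\varphi(x\cdot x^{-1})=\varphi(x^{-1})\varphi(x)$ is applied with the right reversal). The paper itself gives no proof of this proposition --- it simply cites the reference [Dr.\ Dale T.\ B.] --- and your three-step argument is exactly the standard one that such a reference would contain, with the correct key observation that every computation lives in the cyclic (hence abelian) subgroup generated by $\varphi(x)$.
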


Let $A,B,C$ be groups. It is clear that if $\varphi : A \rightarrow B$ is a homomorphism and $\psi : B \rightarrow C$ is an anti-homomorphism, then their composition $\psi \circ \varphi : A \rightarrow C$ is also an anti-homomorphism. And if $\varphi : A \rightarrow B$ is an anti-homomorphism and $\psi : B \rightarrow C$ is a homomorphism, then their composition is an anti-homomorphism as well. However, if $\varphi : A \rightarrow B$ as well as $\psi : B \rightarrow C$ are anti-homomorphisms, then their composition $\psi \circ \varphi : A \rightarrow C$ is a homomorphism. We summarize these to the following proposition.

\begin{proposition}[\cite{Chandrasekhara}, Theorem 2.3, Theorem 2.4]
Let $\varphi:A\rightarrow B$ be a homomorphism (resp. an anti-homomorphism) of groups and $\psi:B\rightarrow C$ be an anti-homomorphism (resp. a homomorphism) of groups. Then $\psi\circ\varphi$ is an anti-homomorphism. If $\varphi:A\rightarrow B$ and $\psi:B\rightarrow C$ are anti-homomorphism of groups, then $\psi\circ\varphi$ is a homomorphism.
\end{proposition}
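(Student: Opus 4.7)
The plan is to verify each of the three assertions by directly unfolding the defining identities of homomorphism and anti-homomorphism on an arbitrary product $xy \in A$, tracking at each step how the order of the two factors in $C$ changes depending on whether $\varphi$ and $\psi$ preserves or reverses products.

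First I would dispatch the two mixed cases. If $\varphi$ is a homomorphism and $\psi$ is an anti-homomorphism, then applying $\varphi$ to $xy$ preserves the order, while the subsequent application of $\psi$ to $\varphi(x)\varphi(y)$ swaps the two factors; the net effect is a single reversal, so $(\psi\circ\varphi)(xy) = (\psi\circ\varphi)(y)\,(\psi\circ\varphi)(x)$. Symmetrically, if $\varphi$ is an anti-homomorphism and $\psi$ is a homomorphism, the swap occurs at the first stage and is not undone at the second, and the same identity holds. In both mixed cases, $\psi \circ \varphi$ is an anti-homomorphism.

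When both $\varphi$ and $\psi$ are anti-homomorphisms, two consecutive reversals cancel: the first step produces $\varphi(y)\varphi(x)$ in $B$, and then $\psi$ swaps these to $\psi(\varphi(x))\psi(\varphi(y))$ in $C$, so $\psi\circ\varphi$ is a homomorphism. There is no substantive obstacle here; the statement is purely a bookkeeping exercise in keeping track of one swap rule. The underlying heuristic is that homomorphisms carry "parity" $+1$ and anti-homomorphisms carry parity $-1$, with composition of maps corresponding to multiplication of parities, so a composite is an anti-homomorphism precisely when exactly one of the two factors is.
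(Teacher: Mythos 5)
Your proof is correct: the three direct computations unwinding the definitions are exactly the standard argument, and the paper itself treats this statement as clear (deferring to the cited reference) with precisely this kind of order-tracking in mind. The parity heuristic you mention is a nice way to summarize it, but nothing beyond the straightforward verification is needed.
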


Since the composition of two anti-homomorphisms is a homomorphism, it is natural to ask if a homomorphism can be expressible as a composite of anti-homomorphisms. The following theorem gives the answer.

\begin{theorem}  \label{AA}
Any homomorphism can be expressed as a composition of anti-homomorphisms.
\end{theorem}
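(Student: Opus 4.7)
The plan is to exhibit an explicit factorization of any given homomorphism $f\colon A\to B$ as a composition of two anti-homomorphisms, using the inversion map $\iota_A\colon A\to A$, $x\mapsto x^{-1}$, as the key building block. The crucial observation is that $\iota_A$ is itself an anti-homomorphism: indeed, $\iota_A(xy)=(xy)^{-1}=y^{-1}x^{-1}=\iota_A(y)\iota_A(x)$. This is the ``reverse mapping'' referred to in the introduction (denoted $1^{*}_{A}$), and it will serve as the universal tool for turning any homomorphism into a composite of anti-homomorphisms.

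First, I would record the lemma that $\iota_A$ is an anti-homomorphism and that $\iota_A\circ\iota_A=\mathrm{id}_A$ (both follow from basic group axioms). Next, given a homomorphism $f\colon A\to B$, I would define $\psi\colon A\to B$ by $\psi:=f\circ\iota_A$. Since $\iota_A$ is an anti-homomorphism and $f$ is a homomorphism, the previous proposition implies that $\psi$ is an anti-homomorphism. Explicitly, one can verify $\psi(xy)=f((xy)^{-1})=f(y^{-1}x^{-1})=f(y^{-1})f(x^{-1})=\psi(y)\psi(x)$, but this is already guaranteed abstractly.

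Finally, I would compute
$$
\psi\circ\iota_A \;=\; f\circ\iota_A\circ\iota_A \;=\; f\circ\mathrm{id}_A \;=\; f,
$$
which exhibits $f$ as the composite of the two anti-homomorphisms $\iota_A\colon A\to A$ and $\psi\colon A\to B$. This completes the argument.

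There is really no serious obstacle here: the whole theorem rests on the single observation that inversion is an anti-homomorphism and is its own inverse as a set map. The only thing worth emphasizing is that the factorization is by no means unique (one could also write $f=\iota_B\circ(\iota_B\circ f)$, with $\iota_B\circ f$ being an anti-homomorphism by the same proposition), which foreshadows the equivalence relation on pairs of anti-homomorphisms introduced later in the paper.
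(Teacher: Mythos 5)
Your proof is correct, and it is structurally the same idea as the paper's --- factor $f$ through a fixed anti-automorphism of the source --- but your choice of that anti-automorphism is genuinely different and, in fact, better. The paper writes $\phi=\varphi\circ 1^{*}$ where $1^{*}$ is its ``reverse mapping,'' defined by $1^{*}(x)=x$ \emph{and} $1^{*}(xy)=yx$; on a nonabelian group these two requirements contradict each other (applying the first to the element $z=xy$ forces $1^{*}(xy)=xy\neq yx$), so the paper's auxiliary map is not actually well defined as a function. You instead use the inversion map $\iota_A(x)=x^{-1}$, which genuinely satisfies $\iota_A(xy)=y^{-1}x^{-1}=\iota_A(y)\iota_A(x)$ and $\iota_A\circ\iota_A=\mathrm{id}_A$, and your factorization $f=(f\circ\iota_A)\circ\iota_A$ is fully rigorous: $f\circ\iota_A$ is an anti-homomorphism by the composition proposition, and the whole composite recovers $f$. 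One small caveat: you identify $\iota_A$ with the paper's $1^{*}_{A}$, but the paper defines $1^{*}_{A}$ as the (ill-posed) identity-on-elements map, not as inversion; your reading is the charitable repair of that definition rather than what the paper literally says. Your closing remark that the factorization is not unique (e.g.\ $f=\iota_B\circ(\iota_B\circ f)$ also works) is accurate and does anticipate the equivalence relation introduced afterwards.
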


We defer the proof below, due to some undefined notations. Let $A,B,C,D$ be groups and let $\varphi,\varphi_{1},\varphi_{2},\psi_{1},\psi_{2}$ be maps such that $\varphi_{2}\circ\psi_{1}=\psi_{2}\circ\varphi_{1}=\varphi$. Theorem \ref{AA}, in fact, implies that if the diagonal $\varphi$ of the following commutative diagram is a homomorphism, then $\varphi_{1},\varphi_{2},\psi_{1},\psi_{2}$ are anti-homomorphisms.

\begin{figure}[H]
\centering
$$
\xymatrix{
  A \ar[d]_{\psi_{1}} \ar[r]^{\varphi_{1}}\ar@{..>}[dr]^{\varphi} & B \ar[d]^{\psi_{2}} \\
  C \ar[r]_{\varphi_{2}} & D   }
$$
\caption{The Commutative Diagram}
\label{CC}
\end{figure}

For groups and homomorphisms of groups, we can form the category $\textbf{Groups}$ of groups whose morphisms are homomorphisms. However, groups and anti-homomorhisms of groups can not form a category $\mathscr{C}$ whose objects are groups and morphisms are anti-homomorphisms. Since to each triple of objects \textit{A,B,C} of $\mathscr{C}$, the composition $\textrm{Hom}_{\mathscr{C}}(A,B)\times \textrm{Hom}_{\mathscr{C}}(B,C) \rightarrow \textrm{Hom}_{\mathscr{C}}(A,C)$ is not closed (the composition of two anti-homomorphisms is a homomorphism).

For any anti-homomorphism $\varphi : A \rightarrow B$ of groups, if $A$ or $B$ is an abelian group, then the anti-homomorphism becomes a homomorphism, since $\varphi(xy) = \varphi(y)\varphi(x) = \varphi(x)\varphi(y)$, for $x,y\in A$. So the category $\textbf{Ab}$ of abelian groups and homomorphisms can also be viewed as the category of abelian groups and anti-homomorphisms, since there is no differences between homomorphisms and anti-homomorphisms under commutativity.

The following theorem indicates the relation between homomorphisms and anti-homomorphisms.

\begin{theorem}\label{A11}
Let $\varphi : A \rightarrow B$ be an anti-homomorphism of groups. Then $\varphi$ is a homomorphism of groups if $A$ or $B$ is abelian. In particular,
\begin{itemize}
  \item [(1)]
  if $\varphi$ is injective, then $f$ is a homomorphism if and only if $A$ is abelian;
  \item [(2)]
  if $\varphi$ is surjective, then $f$ is a homomorphism if and only if $B$ is abelian;
  \item [(3)]
  moreover, if $\varphi$ is bijective, then the groups $A,B$ are isomorphic abelian.
\end{itemize}
\end{theorem}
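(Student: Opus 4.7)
The plan is to first establish the main assertion via a direct computation, then obtain the converses in (1) and (2) by transporting commutativity along $\varphi$ using its injectivity or surjectivity, and finally combine everything for (3).

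For the main statement, I will compute $\varphi(xy)$ in two ways. Starting from the anti-homomorphism relation $\varphi(xy)=\varphi(y)\varphi(x)$: if $A$ is abelian, then $xy=yx$, so $\varphi(xy)=\varphi(yx)=\varphi(x)\varphi(y)$, showing $\varphi$ is a homomorphism. If instead $B$ is abelian, then $\varphi(y)\varphi(x)=\varphi(x)\varphi(y)$ directly in $B$, so again $\varphi(xy)=\varphi(x)\varphi(y)$. This gives both implications of the main claim.

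For part (1), the ``if'' direction is immediate from the main statement. For the ``only if'' direction, suppose $\varphi$ is simultaneously a homomorphism and an anti-homomorphism. Then for all $x,y\in A$,
$$
\varphi(xy)=\varphi(x)\varphi(y)=\varphi(y)\varphi(x)=\varphi(yx),
$$
and injectivity of $\varphi$ forces $xy=yx$, i.e.\ $A$ is abelian. For part (2), the ``if'' direction is again immediate. For the ``only if'' direction, given $a,b\in B$, surjectivity produces $x,y\in A$ with $\varphi(x)=a$, $\varphi(y)=b$, and then
$$
ab=\varphi(x)\varphi(y)=\varphi(xy)=\varphi(y)\varphi(x)=ba,
$$
showing $B$ is abelian. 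The only subtlety here is being careful to use injectivity/surjectivity on the correct side: injectivity lets us transfer commutativity from the image $\varphi(A)\subset B$ back to $A$, whereas surjectivity lets us push commutativity from $A$ forward onto all of $B$.

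For part (3), I read the ``moreover'' as saying: if $\varphi$ is bijective and additionally a homomorphism, then parts (1) and (2) apply simultaneously, so $A$ and $B$ are both abelian, and a bijective homomorphism of groups is an isomorphism, giving $A\cong B$. There is no real obstacle in the argument; the only thing to watch is the clean bookkeeping of which hypothesis (injectivity versus surjectivity) is needed to conclude abelianness of $A$ versus $B$.
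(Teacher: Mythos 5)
Your proof is correct and follows essentially the same route as the paper's: the same direct computation for the main claim (the paper handles it with a slightly sloppier ``without loss of generality, $A$ is abelian''), and the same transport of commutativity via injectivity/surjectivity for the converses in (1) and (2). Your reading of (3) as additionally assuming $\varphi$ is a homomorphism is the right one and matches the paper's own proof, which also treats (3) inside the ``conversely, assume $\varphi$ is a homomorphism'' case.
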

\begin{proof}
Without loss of generality, assume that $A$ is abelian, then we have
\begin{align*}
\varphi(yx)&=\varphi(x)\varphi(y), \\
\varphi(xy)&=\varphi(y)\varphi(x), \\
\varphi(x)\varphi(y)&=\varphi(y)\varphi(x) \ (\textrm{since }\varphi(yx)=\varphi(xy)),
\end{align*}
which shows that $\varphi$ is a homomorphism.

Conversely, assume that $\varphi$ is a homomorphism, then for $x,y\in A$
$$
\varphi(xy) = \varphi(x)\varphi(y) = \varphi(y)\varphi(x) = \varphi(yx),
$$
which shows that \textit{B} is abelian if $\varphi$ is surjective. Note that in this case, $A$ is abelian if and only if $\varphi$ is injective. If $\varphi$ is an anti-isomorphism, then $\varphi(xy) = \varphi(yx)$ implies that $xy = yx$, which shows that \textit{A} is also abelian and \textit{A} is isomorphic to \textit{B}.
\end{proof}

In the sequel, we will define a new composition of anti-homomorphisms, so that such composition of two anti-homomorphisms is still an anti-homomorphism without commutativity.

Next, we will define anti-monomorphisms, anti-epimorphisms and anti-isomorphisms. Before that, we first give the following proposition:

\begin{proposition}[\cite{Dr. Dale T. B.}]\label{A15}
If $\varphi$ is a bijective anti-homomorphism of groups, then its inverse bijection $\varphi^{-1}$ is also an anti-homomorphism of groups.
\end{proposition}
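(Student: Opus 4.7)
The plan is to verify the defining identity of an anti-homomorphism for $\varphi^{-1}$ by pulling elements back through $\varphi$ and using the anti-homomorphism property of $\varphi$ itself. Since $\varphi\colon A\to B$ is bijective, $\varphi^{-1}\colon B\to A$ exists as a set-theoretic inverse, so the only content is the reversal of products.

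Concretely, I would fix arbitrary $u,v\in B$ and set $x=\varphi^{-1}(u)$ and $y=\varphi^{-1}(v)$, so that $\varphi(x)=u$ and $\varphi(y)=v$. Applying the anti-homomorphism property of $\varphi$ to the product $yx\in A$, I get
$$
\varphi(yx)=\varphi(x)\varphi(y)=uv.
$$
Now applying $\varphi^{-1}$ to both sides yields
$$
\varphi^{-1}(uv)=yx=\varphi^{-1}(v)\varphi^{-1}(u),
$$
which is exactly the anti-homomorphism identity for $\varphi^{-1}$. Since $u,v$ were arbitrary, this finishes the argument.

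There is no genuine obstacle here: bijectivity is used only to guarantee that every element of $B$ arises uniquely as $\varphi(x)$ for some $x\in A$, and to ensure that $\varphi^{-1}$ is a well-defined function. The key observation is simply that the reversal of products induced by $\varphi$ becomes, under inversion, a reversal of products in the opposite direction, and these two reversals are formally the same condition.
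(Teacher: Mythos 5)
Your argument is correct: pulling $u,v$ back to $x=\varphi^{-1}(u)$, $y=\varphi^{-1}(v)$, applying $\varphi(yx)=\varphi(x)\varphi(y)=uv$, and inverting gives exactly $\varphi^{-1}(uv)=\varphi^{-1}(v)\varphi^{-1}(u)$. The paper states this proposition with a citation and gives no proof of its own, and your computation is the standard (essentially the only natural) argument, so there is nothing to reconcile.
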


\begin{definition}[\cite{Dr. Dale T. B.}]
An \textit{anti-monomorphism} of groups is an injective anti-homomorphism of groups. An \textit{anti-epimorphism} of groups is a surjective anti-homomorphism of groups.

An \textit{anti-isomorphism} of groups is a bijective anti-homomorphism of groups. Two groups \textit{A} and \textit{B} are called \textit{anti-isomorphic} when there exists an anti-isomorphism $A\cong B$. If $f$ is an anti-isomorphism, then its inverse $f^{-1}$ is called its \textit{anti-inverse}.
\end{definition}

Next, we define kernels and images of anti-homomorphisms.

\begin{definition}
Let $\varphi:A\rightarrow B$ be an anti-homomorphism of groups. Then the \textit{kernel} of $\varphi$ is
$$
\textrm{Ker }\varphi:=\{a\in A\mid \varphi(a)=1\}.
$$
The \textit{image} of $\varphi$ is
$$
\textrm{Im }\varphi:=\{\varphi(a)\mid a\in A\}.
$$
\end{definition}

We note some propositions about kernel, cokernel, and image of an anti-homomorphism of groups.
\begin{proposition}
Let $\varphi:A\rightarrow B$ be an anti-homomorphism of groups. Then ${\rm{Ker}}\varphi$ is a normal subgroup of $A$ and ${\rm{Im}}\varphi$ is a subgroup of $B$.
\end{proposition}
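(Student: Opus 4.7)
The plan is to verify the subgroup axioms for both sets, leaning on Proposition~\ref{BB} (which gives $\varphi(1)=1$, $\varphi(x^{-1})=\varphi(x)^{-1}$) and the defining identity $\varphi(xy)=\varphi(y)\varphi(x)$ throughout.

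First I would show that $\mathrm{Ker}\,\varphi$ is a subgroup of $A$. By Proposition~\ref{BB}, $\varphi(1)=1$, so $1\in\mathrm{Ker}\,\varphi$. For closure, take $a,b\in\mathrm{Ker}\,\varphi$; then
\begin{equation*}
\varphi(ab)=\varphi(b)\varphi(a)=1\cdot 1=1,
\end{equation*}
so $ab\in\mathrm{Ker}\,\varphi$. For inverses, $\varphi(a^{-1})=\varphi(a)^{-1}=1^{-1}=1$, so $a^{-1}\in\mathrm{Ker}\,\varphi$.

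Next I would check normality. Given $a\in\mathrm{Ker}\,\varphi$ and $g\in A$, I iterate the anti-homomorphism rule twice on the triple product $g\cdot a\cdot g^{-1}$: writing it as $(ga)\cdot g^{-1}$ gives $\varphi(gag^{-1})=\varphi(g^{-1})\varphi(ga)=\varphi(g^{-1})\varphi(a)\varphi(g)$. Using $\varphi(a)=1$ and $\varphi(g^{-1})=\varphi(g)^{-1}$, this collapses to $\varphi(g)^{-1}\varphi(g)=1$, so $gag^{-1}\in\mathrm{Ker}\,\varphi$. Hence $\mathrm{Ker}\,\varphi$ is normal in $A$.

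Finally I would verify that $\mathrm{Im}\,\varphi$ is a subgroup of $B$. Identity: $1=\varphi(1)\in\mathrm{Im}\,\varphi$. Closure: for $\varphi(a),\varphi(b)\in\mathrm{Im}\,\varphi$, the anti-homomorphism identity (read backwards) gives $\varphi(a)\varphi(b)=\varphi(ba)\in\mathrm{Im}\,\varphi$. Inverses: $\varphi(a)^{-1}=\varphi(a^{-1})\in\mathrm{Im}\,\varphi$. There is no real obstacle here; the only subtlety is remembering to apply the order-reversal carefully (in particular in the normality step, applying it twice), which is why I would write out the triple-product computation in full rather than appeal to associativity of the extended formula without justification.
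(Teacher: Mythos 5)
Your proof is correct and complete, but it takes a different (and more self-contained) route than the paper. The paper only writes out the normality of $\mathrm{Ker}\,\varphi$ and defers the subgroup verifications for $\mathrm{Ker}\,\varphi$ and $\mathrm{Im}\,\varphi$ to the cited reference; you verify all the subgroup axioms directly from $\varphi(1)=1$, $\varphi(x^{-1})=\varphi(x)^{-1}$, and the reversal identity, which is worth having on the page. For normality, you use the standard conjugation computation, applying the order-reversal twice to get $\varphi(gag^{-1})=\varphi(g^{-1})\varphi(a)\varphi(g)=1$; the paper instead argues via the fiber description, noting that $\varphi(x)=\varphi(y)$ holds if and only if $xy^{-1}\in\mathrm{Ker}\,\varphi$ and also if and only if $y^{-1}x\in\mathrm{Ker}\,\varphi$ (the symmetry coming from the anti-homomorphism property), so that left and right cosets of the kernel coincide. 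Your conjugation argument is the more elementary and explicit of the two and avoids the paper's slightly confusing lapse into additive notation; the coset-coincidence argument is slicker once one has the fiber characterization, but it buys nothing extra here. Both are valid.
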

\begin{proof}
We just prove that $\textrm{Ker }\varphi$ as a subgroup of $A$ is normal. Since in \cite{Dr. Dale T. B.}, we have $\varphi(x)=\varphi(y)$ if and only if $x-y\in\textrm{Ker }\varphi$ for any $x,y\in A$. Then $x\in\textrm{Ker }\varphi+y$ if and only if $x\in y+\textrm{Ker }\varphi$, which shows that $\textrm{Ker }\varphi$ is normal in $A$. The other parts are in \cite{Dr. Dale T. B.}.
\end{proof}

\begin{proposition}\label{A7}
Let $\varphi:A\rightarrow B$ be an anti-homomorphism of groups. Then $\varphi$ is an anti-monomorphism if and only if ${\rm{Ker }}\varphi=0$. And $\varphi$ is an anti-epimorphism if and only if ${\rm{Coker}} \varphi=0$.
\end{proposition}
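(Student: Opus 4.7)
The plan is to prove the two biconditionals separately, each by running the classical kernel/image argument while carefully tracking the reversal of products.

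For the anti-monomorphism characterization, I would invoke the identity already used in the proof of the previous proposition (from \cite{Dr. Dale T. B.}): for any $x, y \in A$, one has
$$
\varphi(xy^{-1}) = \varphi(y^{-1})\varphi(x) = \varphi(y)^{-1}\varphi(x),
$$
so that $\varphi(x) = \varphi(y)$ if and only if $xy^{-1} \in {\rm{Ker}}\varphi$. The equivalence is then immediate: if ${\rm{Ker}}\varphi = \{1\}$ and $\varphi(x)=\varphi(y)$, then $xy^{-1}=1$, so $x=y$ and $\varphi$ is an anti-monomorphism; conversely, for an anti-monomorphism $\varphi$, any $a \in {\rm{Ker}}\varphi$ satisfies $\varphi(a) = 1 = \varphi(1)$ by Proposition~\ref{BB}, hence $a=1$.

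For the anti-epimorphism characterization, I would adopt the convention that ${\rm{Coker}}\varphi$ is the quotient of $B$ by (the normal closure of) ${\rm{Im}}\varphi$. The forward direction is then essentially tautological: if $\varphi$ is surjective then ${\rm{Im}}\varphi = B$, and hence ${\rm{Coker}}\varphi = \{1\}$. The converse direction is intended to recover ${\rm{Im}}\varphi = B$ from the vanishing of the cokernel.

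The main difficulty I anticipate is definitional rather than substantive. In part (2), one must pin down ${\rm{Coker}}\varphi$ for anti-homomorphisms so that vanishing of the cokernel really does force surjectivity, since in general the normal closure of a proper subgroup can equal the whole group (e.g. a transposition in $S_{3}$). The cleanest choice, consistent with the preceding proposition which only asserts that ${\rm{Im}}\varphi$ is a subgroup of $B$, is to take ${\rm{Coker}}\varphi = B/{\rm{Im}}\varphi$ as the set of cosets; with this convention the equivalence in (2) reduces to the obvious tautology that $B/{\rm{Im}}\varphi$ is a singleton iff ${\rm{Im}}\varphi = B$. Once this convention is fixed, both parts of the proposition become one-line arguments.
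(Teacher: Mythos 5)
Your proposal is correct and follows essentially the same route as the paper: the paper delegates the kernel half to the cited reference (your explicit argument via $\varphi(xy^{-1})=\varphi(y)^{-1}\varphi(x)$ is the standard one), and for the cokernel half it adopts exactly the convention you identify, namely ${\rm{Coker}}\,\varphi := B/{\rm{Im}}\,\varphi$ (stated in the remark following the proposition), after which the equivalence is the same tautology you give. Your observation that ${\rm{Im}}\,\varphi$ is only asserted to be a subgroup, so the quotient must be read as a coset space, is a fair and worthwhile clarification that the paper leaves implicit.
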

\begin{remark}
Note that ${\rm{Coker}} \varphi:=B/{\rm{Im}}\varphi$.
\end{remark}
\begin{proof}
The first part is in \cite{Dr. Dale T. B.}. If $\varphi$ is surjective, then $\textrm{Im }\varphi=B$, which shows that $\textrm{Coker }\varphi=0$. Conversely, if $\textrm{Coker }\varphi=0$, we have $\textrm{Im }\varphi=B$, which shows that $\varphi$ is surjective.
\end{proof}

It is natural to think that if one can define an equivalence relation by means of anti-isomorphisms. However, though being isomorphic is an equivalence relation on groups, we can show that being anti-isomorphic can not form an equivalence relation on groups. In fact, we can define an anti-isomorphism $\varphi : A \rightarrow A$ by $\varphi(x) = x^{-1}$ for $x\in A$ (note that we have $\varphi(xy)=y^{-1}x^{-1}$ for $x,y\in A$), which proves the reflexivity. Then if $\varphi : A \rightarrow B$ is an anti-isomorphism, by Proposition \ref{AA}, $\varphi^{-1}$ is also an anti-isomorphism, and this proves the symmetry. However, since the composition of two anti-homomorphisms is a homomorphism, we see that the transitivity fails.

To offset this flaw, in the following, we will define an equivalence relation on anti-homomorphisms in a way that is not so obvious.

In fact, Theorem \ref{AA} yields such an equivalence relation on anti-homomorphisms. Consider the commutative diagram in Figure \ref{CC}. We can define $(\varphi_{2},\psi_{1})\sim (\varphi_{1},\psi_{2})$, if $\varphi_{2}\circ\psi_{1}$ and $\varphi_{1}\circ\psi_{2}$ are equal to the same homomorphism $\varphi$, i.e. if $(\varphi_{2},\psi_{1})$ and $(\varphi_{1},\psi_{2})$ are factorizations of the same homomorphism $\varphi$. Next, we show that such relation is actually an equivalence relation. Since the reflexivity and symmetry are obvious, we just prove transitivity. If $(\varphi_{3},\psi_{3})$ is another factorization of the homomorphism $\varphi$, then the triple $(\varphi_{2},\psi_{1})$, $(\phi_{2},\psi_{1})$ and $(\phi_{3},\psi_{3})$ are all factorizaitions of the homomorphism $\varphi$, and this proves the transitivity.

We denote by $[(\varphi_{1},\psi_{2})]$ the \textit{equivalence class} of the pair $(\varphi_{1},\psi_{2})$. We use $[\varphi]$ to indicate the \textit{set of all equivalence classes of pairs of anti-homomorphisms whose compositions are the homomorphism} $\varphi$. And the notation $[An(A,B),An(B,C)]$ will stand for the \textit{set of all equivalence classes of all pairs of anti-homomorphisms of the form} $(A\rightarrow B,B\rightarrow C)$.

Since the composition of homomorphisms is associative, the following proposition shows that the composition of homomorphisms and anti-homomorphisms is also associative.

\begin{proposition}\label{A2}
The composition of homomorphisms and anti-homomorphisms is associative. And the composition of anti-homomorphisms is associative.
\end{proposition}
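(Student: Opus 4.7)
The proposition asserts associativity for mixed compositions of homomorphisms and anti-homomorphisms, and for compositions of anti-homomorphisms among themselves. The plan is to reduce everything to the familiar associativity of composition of set-theoretic maps, then verify that the two sides of each associativity equation are of the same type (homomorphism or anti-homomorphism) so that the equality makes sense within the framework.

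First I would observe that both homomorphisms and anti-homomorphisms $\varphi : A \to B$ of groups are, in particular, functions between the underlying sets. For any three composable set maps $f : A \to B$, $g : B \to C$, $h : C \to D$, one has $(h \circ g) \circ f = h \circ (g \circ f)$ as maps from $A$ to $D$, since for every $x \in A$ both sides evaluate to $h(g(f(x)))$. This is the only nontrivial input, and it is a standard fact about sets and functions which requires nothing about the group structure.

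Next I would verify the typing, i.e.\ that $(h \circ g) \circ f$ and $h \circ (g \circ f)$ are both defined within the homomorphism/anti-homomorphism framework and have the same ``type.'' For three maps $f, g, h$ each being either a homomorphism (H) or an anti-homomorphism (A), we have eight cases; in each case, the type of either $(h \circ g) \circ f$ or $h \circ (g \circ f)$ is determined by the preceding Proposition (homomorphism $\circ$ homomorphism and anti $\circ$ anti give homomorphisms; mixed compositions give anti-homomorphisms). Since parity of the number of A's determines the type, both groupings yield the same type. Thus the set-theoretic equality $(h \circ g) \circ f = h \circ (g \circ f)$ is an equality inside the appropriate class (homomorphisms or anti-homomorphisms), which is exactly what associativity asserts.

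There is no serious obstacle here; the content of the proposition is only that associativity of function composition survives the classification of maps into homomorphisms and anti-homomorphisms, and the closure of the eight cases under the parity rule guarantees that both sides of each associativity identity live in the same $\mathrm{Hom}(A,D)$ or $An(A,D)$. The second sentence of the proposition (associativity of composition of anti-homomorphisms) is just the single case $f, g, h$ all anti-homomorphisms, and follows from the same underlying argument.
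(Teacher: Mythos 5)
Your proof is correct and takes essentially the same approach as the paper: both reduce the claim to the pointwise associativity of composition of the underlying set maps. The paper checks two of the cases explicitly and declares the rest similar, whereas you dispatch all eight cases at once via the parity observation that the type of the composite depends only on the number of anti-homomorphisms involved; this is a slightly cleaner organization of the same argument.
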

\begin{proof}
We just prove two cases, since the other cases are similar. If $f_{1},f_{2},f_{3}$ are homomorphisms and $f_{1}^{*},f_{2}^{*},f_{3}^{*}$ are anti-homomorphisms. Then when the compositions make sense, for $x$ in the source group, we have
\begin{align*}
[(f_{1}\circ f_{2})\circ f_{3}^{*}](x)=(f_{1}\circ f_{2})(f_{3}^{*}(x))=f_{1}(f_{2}(f_{3}^{*}(x))), \\
[f_{1}\circ (f_{2}\circ f_{3}^{*})](x)=f_{1}((f_{2}\circ f_{3}^{*})(x))=f_{1}(f_{2}(f_{3}^{*}(x))); \\
[(f_{1}\circ f_{2}^{*})\circ f_{3}^{*}](x)=(f_{1}\circ f_{2}^{*})(f_{3}^{*}(x))=f_{1}(f_{2}^{*}(f_{3}^{*}(x))), \\
[f_{1}\circ (f_{2}^{*}\circ f_{3}^{*})](x)=f_{1}((f_{2}^{*}\circ f_{3}^{*})(x))=f_{1}(f_{2}^{*}(f_{3}^{*}(x))).
\end{align*}
These show that $(f_{1}\circ f_{2})\circ f_{3}^{*}=f_{1}\circ (f_{2}\circ f_{3}^{*})$ and $(f_{1}\circ f_{2}^{*})\circ f_{3}^{*}=f_{1}\circ (f_{2}^{*}\circ f_{3}^{*})$.
\end{proof}

In fact, we can view any anti-homomorphism as a composition of a homomorphism and a reverse mapping. The \textit{reverse mapping} $f:G\rightarrow G$ of a group $G$ is a mapping $f:G\rightarrow G$ such that $f(x)=x$ and $f(xy)=yx$ for all $x,y\in G$. Assume that $\varphi : S \rightarrow H$ is a homomorphism of groups and $\psi : H \rightarrow H$ is a reverse mapping. Then $\psi \circ \varphi: S \rightarrow H$, $\psi \circ \varphi(xy) = \psi(\varphi(x)\varphi(y)) = \varphi(y)\varphi(x)$ for $x,y\in S$, shows that the composition becomes an anti-homomorphism. And this indeed proves Theorem \ref{AA}, but we can simplify the notation.

We summarize the results that are of special interest to us in the following proposition:

\begin{definition-proposition}\ \label{A3}

\begin{itemize}
  \item [(1)]
  To each pair of objects \textit{A,B} $\in {\rm{Ob}}(\textbf{Groups})$, we denote the {\rm{set of homomorphisms}} by ${\rm{Hom}}(A,B)$ and the {\rm{set of anti-homomorphisms}} by $An(A,B)$. And we denote the {\rm{set of isomorphisms}} by ${\rm{Hom.Is}}(A,B)$ and the {\rm{set of anti-isomorphisms}} by $An{\rm{.Is}}(A,B)$.
  \item [(2)]
  To each triple of objects \textit{A,B,C} $\in {\rm{Ob}}(\textbf{Groups})$, we have \textit{laws of compositions}
  \begin{align*}
  &An(A,B) \times An(B,C) \rightarrow {\rm{Hom}}(A,C)\\
  &An(A,B) \times {\rm{Hom}}(B,C) \rightarrow An(A,C)\\
  &{\rm{Hom}}(A,B) \times An(B,C) \rightarrow An(A,C)\\
  &{\rm{Hom}}(A,B) \times {\rm{Hom}}(B,C) \rightarrow {\rm{Hom}}(A,C)
  \end{align*}
  and \textit{a law of factorization}
  $$
  {\rm{Hom}}(A,C) \rightarrow [An(A,B),An(B,C)], \ \ f\mapsto [f].
  $$
  \item [(3)]
  For every $A,B\in {\rm{Ob}}(\textbf{Groups})$, there exists a reverse mapping $1^{*}_{A}\in An(A,A)$ such that for any morphism $f\in {\rm{Hom}}(A,B)$, we have
  $f\circ 1^{*}_{A}\in An(A,B)$. And for any $g\in {\rm{Hom}}(B,A)$, we have $1^{*}_{A}\circ f\in An(B,A)$.
  \item [(4)]
  The compositions defined above are \textit{associative}.
\end{itemize}
\end{definition-proposition}
\begin{remark}
Note that for every $A,B\in \textrm{Ob}(\textbf{Groups})$ and for any $f^{*}\in An(A,B)$, we have $f^{*}\circ 1^{*}_{A}\in \textrm{Hom}(A,B)$. And for any $g^{*}\in An(B,A)$, we have $1^{*}_{A}\circ g^{*}\in \textrm{Hom}(B,A)$. And for the reverse mapping $1^{*}_{A}$, we will suppress the subscript and simply write $1^{*}$ if there is no confusion.
\end{remark}

\begin{proposition}\label{A1}
For the category $\textbf{Groups}$ of groups, we have a one-to-one correspondence of sets ${\rm{Hom}}(A,B)\xrightarrow{\sim} An(A,B)$ for any $A,B\in {\rm{Ob}}(\textbf{Groups})$.
\end{proposition}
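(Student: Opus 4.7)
The plan is to use the reverse mapping $1^{*}_{A} \in An(A,A)$ provided by Definition/Proposition~\ref{A3}(3) as a bridge between the two hom-sets. Concretely, I would define maps
\[
\Phi \colon \mathrm{Hom}(A,B) \longrightarrow An(A,B), \qquad f \longmapsto f \circ 1^{*}_{A},
\]
and
\[
\Psi \colon An(A,B) \longrightarrow \mathrm{Hom}(A,B), \qquad f^{*} \longmapsto f^{*} \circ 1^{*}_{A},
\]
and then show they are mutually inverse.

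First, I would check that $\Phi$ and $\Psi$ are well-defined. That $\Phi(f) \in An(A,B)$ whenever $f \in \mathrm{Hom}(A,B)$ is exactly the content of Definition/Proposition~\ref{A3}(3). That $\Psi(f^{*}) \in \mathrm{Hom}(A,B)$ whenever $f^{*} \in An(A,B)$ is stated in the remark following Definition/Proposition~\ref{A3}, and it also follows from the general fact (the composition laws in part (2) of that definition/proposition) that the composite of two anti-homomorphisms is a homomorphism.

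Next, I would verify the two composite identities. By the associativity of composition (Proposition~\ref{A2} together with part (4) of Definition/Proposition~\ref{A3}), for any $f \in \mathrm{Hom}(A,B)$,
\[
(\Psi \circ \Phi)(f) \;=\; (f \circ 1^{*}_{A}) \circ 1^{*}_{A} \;=\; f \circ (1^{*}_{A} \circ 1^{*}_{A}),
\]
and similarly $(\Phi \circ \Psi)(f^{*}) = f^{*} \circ (1^{*}_{A} \circ 1^{*}_{A})$. Thus the whole proof reduces to the identity $1^{*}_{A} \circ 1^{*}_{A} = \mathrm{id}_{A}$ in $\mathrm{Hom}(A,A)$. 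From the definition of the reverse mapping given in the paper ($1^{*}_{A}(x)=x$ on underlying sets, with the anti-homomorphism rule $1^{*}_{A}(xy)=yx$), one computes directly that $(1^{*}_{A} \circ 1^{*}_{A})(x) = x$ for every $x \in A$, so the composite agrees with $\mathrm{id}_{A}$ as a set map, and by the earlier composition rules it is indeed a homomorphism.

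I expect the main subtlety to be this last step: making precise why $1^{*}_{A} \circ 1^{*}_{A}$ equals $\mathrm{id}_{A}$, since the reverse mapping is only characterized up to the properties listed in Definition/Proposition~\ref{A3}(3). The cleanest way to handle this is to record as a preliminary lemma that $1^{*}_{A} \circ 1^{*}_{A} = \mathrm{id}_{A}$, either by a direct elementwise check using the definition of the reverse mapping, or by appealing to the fact that $1^{*}_{A}$ is an involutive anti-automorphism of $A$ (its own anti-inverse in the sense of Proposition~\ref{A15}). Once that lemma is in place, the mutual inverse relations $\Psi\circ\Phi = \mathrm{id}_{\mathrm{Hom}(A,B)}$ and $\Phi\circ\Psi = \mathrm{id}_{An(A,B)}$ follow immediately, and the bijection $\mathrm{Hom}(A,B)\xrightarrow{\sim} An(A,B)$ is established.
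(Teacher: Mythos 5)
Your proposal is correct and follows essentially the same route as the paper: the paper's proof simply declares the maps $f\mapsto f\circ 1^{*}$ and $f^{*}\mapsto f^{*}\circ 1^{*}$ to be mutually inverse, and you supply the (omitted but straightforward) verification that this reduces to $1^{*}_{A}\circ 1^{*}_{A}=\mathrm{id}_{A}$.
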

\begin{proof}
In fact, we can define a function $\textrm{Hom}(A,B)\rightarrow An(A,B),f\mapsto f^{*}=f\circ1^{*}$, whose inverse is the function defined by $An(A,B)\rightarrow\textrm{Hom}(A,B),f^{*}\mapsto f=f^{*}\circ1^{*}$. So we get the one-to-one correspondence $\textrm{Hom}(A,B)\xrightarrow{\sim} An(A,B)$ as desired.
\end{proof}
\begin{remark}
For any homomorphism $f$, the anti-homomorphism $f^{*}=f\circ1^{*}=1^{*}\circ f$ is called the \textit{corresponding anti-homomorphism of} $f$. Since every anti-homomorphism $\varphi$ can be written as $f\circ1^{*}=1^{*}\circ f$ for some homomorphism $f$, we can write $f^{*}$ for any anti-homomorphism. Similarly, for any anti-homomorphism $f^{*}$, the homomorphism $f=f^{*}\circ1^{*}=1^{*}\circ f^{*}$ is called the \textit{corresponding homomorphism of} $f^{*}$. Clearly, we have $f=f^{**}$.
\end{remark}

Then \cite{Pursell}, Theorem 7 in the case of groups becomes a corollary of Proposition \ref{A1}.
\begin{corollary}
If $A,B\in {\rm{Ob}}(\textbf{Groups})$, then $A,B$ are isomorphic if and only if $A,B$ are anti-isomorphic.
\end{corollary}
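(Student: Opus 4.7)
The plan is to obtain the corollary directly from the bijection in Proposition \ref{A1} by checking that it restricts to a bijection between isomorphisms and anti-isomorphisms. By definition, $A$ and $B$ are isomorphic (resp.\ anti-isomorphic) precisely when $\mathrm{Hom.Is}(A,B)$ (resp.\ $An\mathrm{.Is}(A,B)$) is non-empty, so it suffices to exhibit a bijection between these two sets.

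First, I would recall that the assignment of Proposition \ref{A1} is $f\mapsto f^{*}=f\circ 1^{*}_{A}$ with inverse $f^{*}\mapsto f^{*}\circ 1^{*}_{A}$. The reverse mapping $1^{*}_{A}\colon A\to A$ is, by its very definition, the identity on the underlying set of $A$, so in particular $1^{*}_{A}$ is a bijection of sets. Consequently, for any $f\in\mathrm{Hom}(A,B)$, the set-theoretic composite $f\circ 1^{*}_{A}$ is bijective if and only if $f$ is bijective; and symmetrically for $f^{*}\in An(A,B)$.

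Putting these observations together, the correspondence of Proposition \ref{A1} sends $\mathrm{Hom.Is}(A,B)$ into $An\mathrm{.Is}(A,B)$ (because an isomorphism $f$ produces an anti-homomorphism $f^{*}=f\circ 1^{*}_{A}$ which is bijective, hence an anti-isomorphism), and its inverse sends $An\mathrm{.Is}(A,B)$ into $\mathrm{Hom.Is}(A,B)$ for the same reason. Thus we obtain a bijection
\[
\mathrm{Hom.Is}(A,B)\xrightarrow{\sim} An\mathrm{.Is}(A,B),
\]
and one set is non-empty exactly when the other is, yielding the claim.

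There is no real obstacle here; the whole content is the remark that $1^{*}_{A}$ is set-theoretically the identity, so that composition with $1^{*}_{A}$ does not affect whether a map is a bijection. The corollary is thus a one-line consequence of Proposition \ref{A1} together with this observation.
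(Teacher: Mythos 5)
Your proof is correct and follows essentially the same route as the paper, which simply asserts the corollary as a consequence of Proposition \ref{A1} without further detail. Your elaboration --- that $1^{*}_{A}$ is set-theoretically the identity, so the bijection $f\mapsto f\circ 1^{*}_{A}$ preserves and reflects bijectivity and hence restricts to a bijection $\mathrm{Hom.Is}(A,B)\xrightarrow{\sim} An\mathrm{.Is}(A,B)$ --- is exactly the intended argument, correctly filled in.
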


With the notation defined in Proposition \ref{A3}, we are able to give a brief proof of Theorem \ref{AA}.
\begin{proof}[Proof of Theorem \ref{AA}]
Let $\phi:A\rightarrow B$ be a homomorphism of groups. Then there is a reverse mapping $1^{*}:A\rightarrow A$ of $A$ and an anti-homomorphism $\varphi:A\rightarrow B$ such that $(\varphi\circ1^{*})(xy)=\varphi(1^{*}(xy))=\varphi(yx)=\varphi(x)\varphi(y)$ for $x,y\in A$. Thus we have $\phi=\varphi\circ1^{*}$ as desired.
\end{proof}

\begin{corollary}
Every homomorphism $\phi:A\rightarrow B$ of groups can be factored into a sequence
$$
A\xrightarrow{1^{*}} A\xrightarrow{\varphi} B,
$$
where $1^{*}$ is a reverse mapping of $A$ and $\varphi$ is an anti-homomorphism.
\end{corollary}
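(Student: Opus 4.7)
The plan is to observe that this corollary is essentially a reformulation of Theorem~\ref{AA} using the $1^{*}$ notation introduced in Definition-Proposition~\ref{A3}, and its proof is contained in the argument just given for Theorem~\ref{AA}. First I would invoke the reverse mapping $1^{*}:A\to A$, which belongs to $An(A,A)$ by part (3) of Definition-Proposition~\ref{A3}, and then produce the anti-homomorphism $\varphi:A\to B$ whose composition with $1^{*}$ recovers $\phi$.

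Concretely, given the homomorphism $\phi:A\to B$, I would set $\varphi := \phi\circ 1^{*}$. By part (3) of Definition-Proposition~\ref{A3}, $\phi\circ 1^{*}$ lies in $An(A,B)$, so $\varphi$ is an anti-homomorphism. It then remains to verify $\phi = \varphi\circ 1^{*}$, which follows because $1^{*}\circ 1^{*}$ acts as the identity on $A$ (applying the reversal twice restores the original order), so $\varphi\circ 1^{*} = \phi\circ 1^{*}\circ 1^{*} = \phi$. This exhibits the desired factorization $A \xrightarrow{1^{*}} A \xrightarrow{\varphi} B$.

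I do not expect any genuine obstacle here: the whole content of the statement is already carried by the proof of Theorem~\ref{AA}, and the corollary merely rewrites the equality $\phi = \varphi\circ 1^{*}$ as a two-step sequence. The only minor point worth stating explicitly is that $1^{*}$ is viewed as a morphism from $A$ to $A$ in the anti-homomorphism slot, which is exactly what part (3) of Definition-Proposition~\ref{A3} guarantees, and that the composition conventions are consistent with Proposition~\ref{A2}.
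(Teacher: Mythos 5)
Your proposal is correct and matches the paper's approach: the corollary is indeed just a restatement of Theorem~\ref{AA}, whose proof takes $\varphi=\phi\circ 1^{*}$ (the corresponding anti-homomorphism) and recovers $\phi=\varphi\circ 1^{*}$ exactly as you do, using that applying the reverse mapping twice gives back the original homomorphism (the paper's relation $f=f^{**}$). The paper leaves the corollary without a separate proof, and your write-up supplies precisely the details it implicitly relies on.
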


With the notation introduced in Proposition \ref{A3}, we deduce from Theorem \ref{A11} the following proposition, which is \cite{Pursell}, Theorem 8 in the case of groups:
\begin{proposition}\label{A13}
If a group $A$ is not abelian, then ${\rm{Hom.Is}}(A,B)\cap An{\rm{.Is}}(A,B)=\varnothing$ for any $B\in {\rm{Ob}}(\textbf{Groups})$.
\end{proposition}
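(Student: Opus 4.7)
The plan is to argue by contradiction. I would assume that the intersection is non-empty, so there exists some $f : A \to B$ lying in both $\mathrm{Hom.Is}(A,B)$ and $An.\mathrm{Is}(A,B)$. This means $f$ is a bijection that is simultaneously a homomorphism and an anti-homomorphism. The goal is then to deduce that $A$ must be abelian, contradicting the hypothesis.

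The key step is to exploit having both product rules available for the same map. For any $x,y \in A$, being a homomorphism gives $f(xy) = f(x)f(y)$, while being an anti-homomorphism gives $f(xy) = f(y)f(x)$. Chaining these yields
\[
f(xy) = f(x)f(y) = f(y)f(x) = f(yx)
\]
for all $x,y \in A$. Since $f$ is injective, this collapses to $xy = yx$ for all $x,y \in A$, so $A$ is abelian, contradicting the assumption. Equivalently, I could just invoke Theorem~\ref{A11}(3), which asserts exactly that a bijective map that is both an anti-homomorphism and a homomorphism forces $A$ (and $B$) to be abelian; since $A$ is not abelian, no such $f$ can exist.

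This argument is essentially a direct corollary of Theorem~\ref{A11}(3), and the only tiny point worth spelling out is the logical setup: $\mathrm{Hom.Is}(A,B) \cap An.\mathrm{Is}(A,B) = \varnothing$ is equivalent to the statement that no bijection $A \to B$ can be simultaneously a homomorphism and an anti-homomorphism, which is precisely what Theorem~\ref{A11}(3) rules out when $A$ is non-abelian. There is no real obstacle here; the proof will be a two-line deduction, with the only care being that $B$ is allowed to be an arbitrary group (possibly abelian), so we must make the contradiction fall on $A$ rather than on $B$, which is exactly what the injectivity step achieves in the direct argument.
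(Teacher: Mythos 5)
Your proposal is correct and follows essentially the same route as the paper, which likewise obtains the statement as an immediate contradiction from Theorem~\ref{A11}. The only cosmetic difference is that the paper first concludes that $B$ is abelian and then transfers abelianness back to $A$ via the isomorphism $A\cong B$, whereas your injectivity computation lands the contradiction on $A$ directly --- a slightly cleaner variant of the same argument.
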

\begin{proof}
If $f\in{\rm{Hom.Is}}(A,B)\cap An{\rm{.Is}}(A,B)$, then by Theorem \ref{A11}, $B$ is abelian. However, $A$ is not abelian but $A\cong B$, which is a contradiction! So we have ${\rm{Hom.Is}}(A,B)\cap An{\rm{.Is}}(A,B)=\varnothing$.
\end{proof}

Next, we prove an analogue of the factorization theorem (cf. \cite{Grillet}, Chapter I, Theorem 5.1), which is called \textit{the anti-factorization theorem}.

\begin{theorem}[Anti-factorization Theorem]\label{A4}
Let $N$ be a normal subgroup of $G$ and $\varphi:G\rightarrow H$ an anti-homomorphism whose kernel contains $N$, then there exists a unique anti-homomorphism $\psi:G/N\rightarrow H$ such that the following diagram commutes ($\pi$ is the natural projection):
$$
\xymatrix{
  G \ar[dr]_{\varphi} \ar[r]^{\pi}
                & G/N \ar@{-->}[d]^{\exists!\psi}  \\
                & H             }
$$
\end{theorem}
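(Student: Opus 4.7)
The plan is to mimic the classical factorization theorem for homomorphisms, but with the order-reversal of products handled carefully. I will define $\psi:G/N\to H$ by the only formula the commutativity $\psi\circ\pi=\varphi$ permits, namely $\psi(gN):=\varphi(g)$, and then verify (a) well-definedness, (b) the anti-homomorphism property, (c) commutativity with $\pi$, and (d) uniqueness. Since $N$ is normal, $G/N$ is a group with product $(gN)(g'N)=gg'N$, so all coset arithmetic is available.

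For well-definedness, suppose $gN=g'N$, so $g^{-1}g'\in N\subseteq\mathrm{Ker}\,\varphi$. Applying $\varphi$ and using that $\varphi$ is an anti-homomorphism together with Proposition~\ref{BB} (which gives $\varphi(x^{-1})=\varphi(x)^{-1}$), I get
\[
1=\varphi(g^{-1}g')=\varphi(g')\varphi(g^{-1})=\varphi(g')\varphi(g)^{-1},
\]
so $\varphi(g)=\varphi(g')$ and $\psi(gN)$ is unambiguously defined. This is the spot where the ``anti'' nature matters, since the order inside $\varphi$ flips; fortunately the conclusion $\varphi(g)=\varphi(g')$ comes out the same.

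For the anti-homomorphism property, compute
\[
\psi\bigl((gN)(g'N)\bigr)=\psi(gg'N)=\varphi(gg')=\varphi(g')\varphi(g)=\psi(g'N)\psi(gN),
\]
which is exactly the defining identity of an anti-homomorphism from $G/N$ to $H$. Commutativity of the diagram is immediate from the definition: $(\psi\circ\pi)(g)=\psi(gN)=\varphi(g)$.

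Uniqueness is the easiest step: if $\psi':G/N\to H$ is any map (anti-homomorphism or not) with $\psi'\circ\pi=\varphi$, then for every $g\in G$,
\[
\psi'(gN)=\psi'(\pi(g))=\varphi(g)=\psi(gN),
\]
and since $\pi$ is surjective this determines $\psi'$ on all of $G/N$, forcing $\psi'=\psi$. I do not expect any real obstacle here; the only subtlety worth flagging is that the order reversal must be applied consistently in the well-definedness step, but it collapses because one is checking equality of two elements rather than preservation of a binary operation.
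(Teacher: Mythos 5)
Your proof is correct and follows essentially the same route as the paper: define $\psi(gN):=\varphi(g)$, check well-definedness using $N\subseteq\mathrm{Ker}\,\varphi$ and the order-reversal $\varphi(g^{-1}g')=\varphi(g')\varphi(g^{-1})$, verify the anti-homomorphism identity on cosets, and deduce uniqueness from the surjectivity of $\pi$. You actually write out the anti-homomorphism verification that the paper leaves as ``we can easily show,'' so no changes are needed.
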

\begin{proof}
First, we define $\psi(xN):=\varphi(x)$ for all $x\in G$. Then we want to show that $\psi$ is well-defined. Since $\textrm{Ker }\varphi$ contains $N$, if $x^{-1}y\in N$, we have $\varphi(y)\varphi(x^{-1})=\varphi(x^{-1}y)=1$, so that $xN=yN$ implies $\varphi(x)=\varphi(y)$. Thus we have $\varphi=\psi\circ\pi$.

Then since $\varphi$ is an anti-homomorphism, we can easily show that $\psi$ is also an anti-homomorphism. Next, we want to show that $\psi$ is unique. Let $\ell:G/N\rightarrow H$ be another anti-homomorphism such that $\varphi=\ell\circ\pi$. Then $\ell(xN)=\varphi(x)=\psi(xN)$ for all $xN\in G/N$, which shows that $\ell=\psi$.
\end{proof}

The following theorem can also be viewed as an analogue of the homomorphism theorem, but since its proof is not analogical to that of the homomorphism theorem, we would not call it the anti-homomorphism theorem.
\begin{theorem}
Let $\varphi:A\rightarrow B$ be a homomorphism of groups, then there exists a canonical anti-isomorphism
$$
A/{\rm{Ker }}\varphi\cong {\rm{Im }}\varphi.
$$
\end{theorem}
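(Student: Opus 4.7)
The plan is to reduce this theorem to the classical homomorphism theorem for homomorphisms of groups and then convert the resulting isomorphism into an anti-isomorphism using the reverse mapping $1^*$ (whose existence and basic properties were recorded in Definition/Proposition \ref{A3}).

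First, I would invoke the classical homomorphism theorem applied to the homomorphism $\varphi\colon A\to B$. This yields a canonical \emph{isomorphism}
$$
\bar{\varphi}\colon A/{\rm Ker}\,\varphi \xrightarrow{\sim} {\rm Im}\,\varphi, \qquad \bar{\varphi}(x\,{\rm Ker}\,\varphi) = \varphi(x),
$$
together with the commutativity $\bar{\varphi}\circ\pi = \imath^{-1}\circ\varphi$ (where $\imath\colon {\rm Im}\,\varphi \hookrightarrow B$ is the inclusion and $\pi\colon A\to A/{\rm Ker}\,\varphi$ is the natural projection). This is the standard factorization theorem and needs no new work here.

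Second, I would convert $\bar{\varphi}$ into an anti-isomorphism by composing with the reverse mapping. Concretely, let $1^{*}\colon A/{\rm Ker}\,\varphi\to A/{\rm Ker}\,\varphi$ be the reverse mapping provided by Definition/Proposition \ref{A3}, and set
$$
\xi := \bar{\varphi}\circ 1^{*}\colon A/{\rm Ker}\,\varphi \longrightarrow {\rm Im}\,\varphi.
$$
Because $1^{*}$ is an anti-homomorphism and $\bar{\varphi}$ is a homomorphism, the composition law of Definition/Proposition \ref{A3} shows that $\xi$ is an anti-homomorphism. Since $1^{*}$ and $\bar{\varphi}$ are both bijections, $\xi$ is bijective, hence an anti-isomorphism.

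The final step is to explain in what sense $\xi$ is \emph{canonical}: both ingredients $\bar{\varphi}$ and $1^{*}$ are canonically attached to $\varphi$ and to the group $A/{\rm Ker}\,\varphi$ respectively, so $\xi$ is determined by $\varphi$ alone. (Equivalently, one can appeal directly to Proposition \ref{A1}: under the natural bijection ${\rm Hom}(A/{\rm Ker}\,\varphi, {\rm Im}\,\varphi)\xrightarrow{\sim} An(A/{\rm Ker}\,\varphi,{\rm Im}\,\varphi)$, the canonical isomorphism $\bar{\varphi}$ corresponds to a canonical anti-isomorphism.) There is no real obstacle here; the only point requiring care is that we are now asserting an \emph{anti}-isomorphism starting from a \emph{homomorphism} $\varphi$, so I would emphasize why the plain homomorphism theorem does not already produce it and why the twist by $1^{*}$ is exactly what is needed.
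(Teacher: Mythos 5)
Your proposal is correct and follows essentially the same route as the paper: apply the classical homomorphism theorem to obtain the canonical isomorphism $A/{\rm Ker}\,\varphi\cong{\rm Im}\,\varphi$ and then compose with the reverse mapping $1^{*}$ to twist it into an anti-isomorphism. The extra remarks on bijectivity and canonicity are fine elaborations of what the paper leaves implicit.
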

\begin{proof}
The Homomorphism Theorem yields a canonical isomorphism $\varphi:A/\textrm{Ker }\varphi\cong\textrm{Im }\varphi$. Then composing with the reverse mapping $1^{*}$, we get the anti-isomorphism $\varphi\circ1^{*}:A/\textrm{Ker }\varphi\cong \textrm{Im }\varphi$ as desired.
\end{proof}

In the following, we will prove the anti-analogues of the homomorphism theorem, the second isomorphism theorem, and the third isomorphism theorem (see \cite{Grillet}, Chapter I, Theorem 5.2, Theorem 5.8, and Theorem 5.9). These results may have been proved in \cite{Gopalakrishnamoorthy}, but the literature is lost. So we reprove these important results.

\begin{theorem}[The Anti-homomorphism Theorem]\label{A5}
If $\varphi:A\rightarrow B$ is an anti-homomorphism, then there is a canonical anti-isomorphism
$$
A/{\rm{Ker}}\varphi\cong {\rm{Im}}\varphi,
$$
that is, there is a unique anti-isomorphism $\xi:A/{\rm{Ker}}\varphi\xrightarrow{\sim} {\rm{Im}}\varphi$ such that the following diagram commutes:
$$
\xymatrix{
  A \ar[d]_{\pi} \ar[r]^{\varphi} & B  \\
  A/{\rm{Ker }}\varphi \ar@{-->}[r]^{\exists!\xi} & {\rm{Im }}\varphi \ar[u]^{\imath}  }
$$
($\imath$ is the inclusion homomorphism and $\pi$ is the natural projection.)
\end{theorem}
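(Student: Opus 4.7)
The plan is to obtain $\xi$ by first applying the Anti-factorization Theorem (Theorem~\ref{A4}) to $\varphi$ with the normal subgroup $N = {\rm{Ker}}\,\varphi$, and then corestricting the resulting anti-homomorphism to ${\rm{Im}}\,\varphi$. Since ${\rm{Ker}}\,\varphi$ is a normal subgroup of $A$ by the proposition preceding Theorem~\ref{A4} and trivially contains itself, Theorem~\ref{A4} produces a unique anti-homomorphism $\psi\colon A/{\rm{Ker}}\,\varphi \to B$ with $\varphi = \psi\circ\pi$, explicitly $\psi(x\,{\rm{Ker}}\,\varphi) = \varphi(x)$. Because $\pi$ is surjective, ${\rm{Im}}\,\psi = {\rm{Im}}\,\varphi$, so $\psi$ corestricts to a well-defined anti-homomorphism $\xi\colon A/{\rm{Ker}}\,\varphi \to {\rm{Im}}\,\varphi$ satisfying $\imath\circ\xi\circ\pi = \varphi$, giving the commutative diagram required.

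Next, I would show that $\xi$ is bijective. Surjectivity is automatic from the corestriction. For injectivity, suppose $\varphi(x) = \varphi(y)$; using Proposition~\ref{BB} one has $\varphi(y)^{-1} = \varphi(y^{-1})$, and the anti-homomorphism rule gives
$$
\varphi(xy^{-1}) = \varphi(y^{-1})\varphi(x) = \varphi(y)^{-1}\varphi(y) = 1,
$$
so $xy^{-1} \in {\rm{Ker}}\,\varphi$ and therefore $x\,{\rm{Ker}}\,\varphi = y\,{\rm{Ker}}\,\varphi$. Thus $\xi$ is injective; combined with surjectivity, this makes $\xi$ an anti-isomorphism. (Equivalently, one may invoke Proposition~\ref{A7} after noting that the kernel of $\xi$ is trivial in $A/{\rm{Ker}}\,\varphi$.)

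Uniqueness is inherited from Theorem~\ref{A4}: if $\xi'\colon A/{\rm{Ker}}\,\varphi \to {\rm{Im}}\,\varphi$ is another anti-isomorphism with $\imath\circ\xi'\circ\pi = \varphi$, then $\imath\circ\xi'$ is an anti-homomorphism $A/{\rm{Ker}}\,\varphi \to B$ factoring $\varphi$ through $\pi$, so by the uniqueness clause of Theorem~\ref{A4} we have $\imath\circ\xi' = \imath\circ\xi$; since $\imath$ is injective, $\xi' = \xi$. The only delicate point in the whole argument is remembering the order-reversal when computing $\varphi(xy^{-1})$ in the injectivity step; everything else is a transcription of the proof of the classical homomorphism theorem, with the Anti-factorization Theorem supplying the existence and uniqueness of the factoring map for free.
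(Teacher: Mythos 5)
Your proposal is correct and follows essentially the same route as the paper's proof: both obtain $\xi$ from the Anti-factorization Theorem (Theorem~\ref{A4}) applied with $N={\rm{Ker}}\,\varphi$ together with a corestriction to ${\rm{Im}}\,\varphi$ (you merely perform these two steps in the opposite order), and both then verify bijectivity and uniqueness directly. The only cosmetic differences are that the paper checks injectivity by the one-line implication $\xi(x\,{\rm{Ker}}\,\varphi)=1\Rightarrow x\in{\rm{Ker}}\,\varphi$ rather than via $\varphi(x)=\varphi(y)$, and proves uniqueness by evaluating on cosets instead of citing the uniqueness clause of Theorem~\ref{A4}.
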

\begin{proof}
Let $\psi:A\rightarrow \textrm{Im }\varphi$ be an anti-homomorphism such that $\varphi=\imath\circ\psi$. Then $\textrm{Ker }\psi=\textrm{Ker }\varphi$, and Theorem \ref{A4} shows that there exists a unique anti-homomorphism $\xi:A/\textrm{Ker }\psi\rightarrow \textrm{Im }\varphi$ such that $\psi=\xi\circ\pi$. Hence we have $\varphi=\imath\circ\xi\circ\pi$. Moreover, the surjectivity of $\psi$ implies that $\xi$ is surjective. And $\xi$ is injective since $\xi(x\textrm{Ker }\psi)=1\Rightarrow \varphi(x)=1\Rightarrow x\in\textrm{Ker }\psi$. Hence, $\xi$ is an anti-isomorphism.

Next, we prove the uniqueness. If $\ell$ is another anti-homomorphism such that $\varphi=\imath\circ\ell\circ\pi$. Then we have $\ell(x\textrm{Ker }\varphi)=\varphi(x)=\xi(x\textrm{Ker }\varphi)$ for all $x\textrm{Ker }\varphi\in A/\textrm{Ker }\varphi$, which shows that $\ell=\xi$.
\end{proof}

\begin{corollary}
Let $\varphi:A\rightarrow B$ be an anti-homomorphism. If $\varphi$ is injective, then $A\cong {\rm{Im }}\varphi$. If $\varphi$ is surjective, then $B\cong A/{\rm{Ker }}\varphi$.
\end{corollary}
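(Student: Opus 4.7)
The plan is to derive both statements as direct specializations of the Anti-homomorphism Theorem (Theorem~\ref{A5}), which gives a canonical anti-isomorphism $\xi \colon A/{\rm Ker}\,\varphi \xrightarrow{\sim} {\rm Im}\,\varphi$ for every anti-homomorphism $\varphi \colon A\to B$. Everything then reduces to analyzing the two degenerate cases: when ${\rm Ker}\,\varphi$ is trivial and when ${\rm Im}\,\varphi$ is all of $B$. Both reductions use standard facts about quotient groups together with Proposition~\ref{A7}, which characterizes injectivity and surjectivity of anti-homomorphisms in terms of the kernel and cokernel.

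For the first assertion, suppose $\varphi$ is injective. By Proposition~\ref{A7} we have ${\rm Ker}\,\varphi = \{1\}$, so the natural projection $\pi \colon A \to A/{\rm Ker}\,\varphi$ is a group isomorphism (a homomorphism). Composing $\pi$ with the anti-isomorphism $\xi$ from Theorem~\ref{A5}, the composition is an anti-homomorphism (by the law of composition in Definition/Proposition~\ref{A3}) that is moreover bijective, and hence an anti-isomorphism $A \xrightarrow{\sim} {\rm Im}\,\varphi$. This gives $A \cong {\rm Im}\,\varphi$.

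For the second assertion, suppose $\varphi$ is surjective. Then ${\rm Im}\,\varphi = B$, so Theorem~\ref{A5} directly supplies the anti-isomorphism $A/{\rm Ker}\,\varphi \xrightarrow{\sim} B$, i.e. $B \cong A/{\rm Ker}\,\varphi$ as desired.

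There is no real obstacle here: the only thing to check carefully is that composing a group isomorphism with an anti-isomorphism still yields an anti-isomorphism (rather than a homomorphism), which is precisely one of the composition laws recorded in Definition/Proposition~\ref{A3}. One might note, in passing, that by the corollary following Proposition~\ref{A1} the symbol $\cong$ is unambiguous whether interpreted as ``isomorphic'' or ``anti-isomorphic''.
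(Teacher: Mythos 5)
Your proposal is correct and follows essentially the same route as the paper: both cases are read off from the Anti-homomorphism Theorem (Theorem~\ref{A5}), with ${\rm Ker}\,\varphi$ trivial in the injective case and ${\rm Im}\,\varphi=B$ in the surjective case. Your version is slightly more careful than the paper's in explicitly noting that composing the isomorphism $A\cong A/{\rm Ker}\,\varphi$ with the anti-isomorphism $\xi$ yields an anti-isomorphism, a step the paper leaves implicit in the chain $A\cong A/{\rm Ker}\,\varphi\cong{\rm Im}\,\varphi$.
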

\begin{proof}
If $\varphi$ is injective, then $\textrm{Ker }\varphi=1$, we have $A\cong A/\textrm{Ker }\varphi\cong \textrm{Im }\varphi$. If $\varphi$ is surjective, then $B=\textrm{Im }\varphi\cong A/\textrm{Ker }\varphi$.
\end{proof}

\begin{theorem}[The Second Anti-isomorphism Theorem]
Let $A$ be a group and $B,C$ be normal subgroups of $A$. If $C\subset B$, then $C$ is a normal subgroup of $B$ and $B/C$ is a normal subgroup of $A/C$. Then there exists a unique anti-isomorphism
$$
A/B\cong(A/C)/(B/C),
$$
that is, there exists a unique anti-isomorphism $\xi:A/B\xrightarrow{\sim}(A/C)/(B/C)$ such that the following diagram commutes:
$$
\xymatrix{
  A \ar[d]_{\rho^{*}} \ar[rr]^{\pi} &    & A/C \ar[d]^{\tau}\ar[lld]_{\sigma} \\
  A/B \ar@{-->}[rr]^{\exists!\xi} &   & (A/C)/(B/C)   }
$$
($\pi,\tau$ are natural projections and $\rho^{*}$ is an anti-projection, i.e. for the natural projection $\rho$, $\rho^{*}=\rho\circ1^{*}$).
\end{theorem}
\begin{proof}
By Theorem \ref{A4}, there exists some unique anti-homomorphism $\sigma:A/C\rightarrow A/B, aC\mapsto aB$ such that $\rho^{*}=\sigma\circ\pi$. $\sigma$ is surjective since $\pi$ and $\rho^{*}$ are surjective. Next, we want to show that $\textrm{Ker }\sigma=B/C$. First, if $bC\in B/C$, where $b\in B$, then we have $\sigma(bC)=bB=1$ in $A/B$. Conversely, if $\sigma(aC)=aB=1$, we have $a\in B$. So $\textrm{Ker }\sigma=B/C$ and $\textrm{Ker }\sigma$ is a normal subgroup of $A/C$. By Theorem \ref{A5}, $A/B=\textrm{Im }\sigma\cong (A/C)/\textrm{Ker }\sigma=(A/C)/(B/C)$. In fact, Theorem \ref{A5} yields an anti-isomorphism $\xi:A/B\cong(A/C)/(B/C)$ such that $\tau=\xi\circ\sigma$, and then $\xi\circ\rho^{*}=\tau\circ\pi$.

If $\ell$ is another anti-isomorphism such that $\xi\circ\rho^{*}=\tau\circ\pi$, then the surjectivity of $\rho^{*}$ implies that $\ell=\xi$.
\end{proof}

\begin{theorem}[The Third Anti-isomorphism Theorem]
Let $A$ be a subgroup of a group $G$, and let $N$ be a normal subgroup of $G$. Then $AN$ is a subgroup of $G$, $N$ is a normal subgroup of $AN$, $A\cap N$ is a normal subgroup of $A$, and there exists a unique anti-isomorphism
$$
AN/N\cong A/(A\cap N),
$$
that is, there is a unique anti-isomorphism $\xi:AN/N\xrightarrow{\sim} A/(A\cap N)$ such that the following diagram commutes:
$$
\xymatrix{
  A \ar[d]_{\imath}\ar[rrd]^{\varphi} \ar[rr]^{\rho} &   & A/(A\cap N) \ar@{-->}[d]^{\exists!\xi} \\
  AN \ar[rr]^{\pi^{*}} &  & AN/N   }
$$
($\imath$ is the inclusion homomorphism, $\rho$ is the natural projection, and $\pi^{*}$ is the anti-projection).
\end{theorem}
\begin{proof}
First, we show that $AN$ is a subgroup of $G$. Since $N$ is a normal subgroup of $G$, we have $AN=NA$. Thus, $an\in AN$ for $a\in A,n\in N$ implies that $(an)^{-1}=n^{-1}a^{-1}\in NA=AN$. And clearly we have $1\in AN$ and $ANAN=AANN=AN$. So $AN$ is a subgroup of $G$. Then, we see that $N$ is a normal subgroup of $AN$.

Let $\varphi=\pi^{*}\circ\imath$. Then $\varphi(a)=aN\in AN/N$ for every $a\in A$, and $\varphi$ is surjective. Since $\varphi(a)=1$ if and only if $a\in N$, i.e. if and only if $a\in A\cap N$, $\textrm{Ker }\varphi=A\cap N$. By Theorem \ref{A5}, $AN/N=\textrm{Im }\varphi\cong A/\textrm{Ker }\varphi=A/(A\cap N)$. In fact, Theorem \ref{A5} yields a unique anti-isomorphism $\xi:A/(A\cap N)\cong AN/N$ such that $\xi\circ\rho=\pi^{*}\circ\imath$, since $\varphi=\pi^{*}\circ\imath$.
\end{proof}

Next, we would like to define a kind of composition called $*$-composition, which is different to the composition defined in \cite{Gopalakrishnamoorthy}, such that the $*$-composition of two anti-homomorphisms is still an anti-homomorphism.

\begin{definition}
Let $\varphi:A\rightarrow B,\psi:B\rightarrow C$ be two anti-homomorphisms of groups. We define $\psi*\varphi:=(\psi\circ\varphi)\circ1^{*}:A\rightarrow C$ to be the \textit{$*$-composition} of $\psi$ and $\varphi$.
\end{definition}

\begin{proposition}
The $*$-composition is well-defined.
\end{proposition}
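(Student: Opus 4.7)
The plan is to unwind the definition $\psi*\varphi := (\psi\circ\varphi)\circ 1^{*}_{A}$ and verify two things: first, that the right-hand side is an unambiguous map $A\to C$ (no choices involved), and second, that it genuinely satisfies the reversing law required of an anti-homomorphism. Since the reverse mapping $1^{*}_{A}\in An(A,A)$ is uniquely determined by $A$ (it is the fixed map $x\mapsto x$ treated as the anti-endomorphism satisfying $1^{*}_{A}(xy)=yx$), no choice is hidden in the notation, so well-definedness reduces to producing the anti-homomorphism property.

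My intended approach is to combine the composition laws from Definition/Proposition~\ref{A3} with associativity (Proposition~\ref{A2}). First I would observe that $\psi\circ\varphi\in\textrm{Hom}(A,C)$ because the composition of two anti-homomorphisms is a homomorphism. Then, viewing $1^{*}_{A}\in An(A,A)$, the composition law $\textrm{Hom}(A,A)\leftarrow An(A,A)\times\textrm{Hom}(A,C)$ (more precisely, the rule $An(A,A)\times\textrm{Hom}\text{ gives }An$) tells us that $(\psi\circ\varphi)\circ 1^{*}_{A}$ lies in $An(A,C)$. Associativity from Proposition~\ref{A2} guarantees the parenthesization is harmless.

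For readers who prefer a direct verification, I would also include a short element-level check: for $x,y\in A$,
\begin{align*}
(\psi*\varphi)(xy) &= (\psi\circ\varphi)\bigl(1^{*}_{A}(xy)\bigr) = (\psi\circ\varphi)(yx) \\
&= (\psi\circ\varphi)(y)\cdot(\psi\circ\varphi)(x) = (\psi*\varphi)(y)\cdot(\psi*\varphi)(x),
\end{align*}
where the penultimate equality uses that $\psi\circ\varphi$ is a homomorphism, and the last equality uses $(\psi*\varphi)(z)=(\psi\circ\varphi)(1^{*}_{A}(z))=(\psi\circ\varphi)(z)$ for each $z\in A$.

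There is no real obstacle here; the only subtlety to flag is the mild tension that the underlying set-map $(\psi*\varphi)$ coincides with $\psi\circ\varphi$ as a function, yet is recorded in $An(A,C)$ rather than $\textrm{Hom}(A,C)$. This is consistent because the reverse mapping $1^{*}_{A}$ is the identity on elements but anti-homomorphic on products, so the same underlying function can legitimately be packaged either as a homomorphism or as an anti-homomorphism depending on how products are encoded, and the $*$-composition selects the latter packaging.
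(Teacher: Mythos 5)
Your proposal is correct, but it proves more than the paper does here and organizes the content differently. The paper's own proof of this proposition is a one-line substitution check: if $\varphi_{1}=\varphi_{2}$ and $\psi_{1}=\psi_{2}$, then $(\psi_{1}\circ\varphi_{1})\circ 1^{*}=(\psi_{2}\circ\varphi_{2})\circ 1^{*}$, so equal inputs yield equal outputs. Your observation that no choice is hidden in the notation (since $1^{*}_{A}$ is uniquely determined by $A$) covers the same ground. What you add --- the verification that $\psi*\varphi$ actually lands in $An(A,C)$, both via the composition laws of Definition/Proposition~\ref{A3} and via the element-level computation --- is in the paper the content of the \emph{next} proposition (``$\psi*\varphi$ is an anti-homomorphism''), proved there by exactly your first route: $\psi\circ\varphi$ is a homomorphism, and post-composing with $1^{*}$ gives an anti-homomorphism. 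So your single argument bundles two of the paper's propositions into one; that is a reasonable and arguably cleaner packaging, at the cost of answering a slightly different question than the one literally posed. The ``mild tension'' you flag at the end is real and worth flagging: the paper's reverse mapping is simultaneously required to satisfy $1^{*}(x)=x$ and $1^{*}(xy)=yx$, which is literally contradictory for noncommutative $G$ unless one treats $An(A,B)$ and $\mathrm{Hom}(A,B)$ as formally disjoint labelled copies of the same underlying set maps; your element-level check inherits this convention from the paper rather than introducing any new gap.
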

\begin{proof}
If $\varphi_{1},\varphi_{2}:A\rightarrow B,\psi_{1},\psi_{2}:B\rightarrow C$ are anti-homomorphisms of groups with $\varphi_{1}=\varphi_{2}$ and $\psi_{1}=\psi_{2}$, then $\varphi_{1}*\psi_{1}=(\psi_{1}\circ\varphi_{1})\circ1^{*}=(\psi_{2}\circ\varphi_{2})\circ1^{*}=\varphi_{2}*\psi_{2}$.
\end{proof}

\begin{proposition}
Let $\varphi:A\rightarrow B,\psi:B\rightarrow C$ be two anti-homomorphisms. Then $\psi*\varphi$ is an anti-homomorphism.
\end{proposition}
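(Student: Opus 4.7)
The plan is to deduce the claim immediately from the composition rules for homomorphisms and anti-homomorphisms that have already been established in this section. Concretely, I would first invoke the earlier proposition (due to Chandrasekhara Rao and Venkataraman) which says that the ordinary composition of two anti-homomorphisms is a homomorphism; applied to $\varphi\colon A\to B$ and $\psi\colon B\to C$, this makes $\psi\circ\varphi\colon A\to C$ a homomorphism of groups.

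Next, I would record that the reverse mapping $1^{*}\colon A\to A$ is itself an anti-homomorphism (this is built into its definition in the remark following Definition-Proposition~\ref{A3}, and is used throughout the passage). Then $(\psi\circ\varphi)\circ 1^{*}$ is the ordinary composition of the anti-homomorphism $1^{*}$ followed by the homomorphism $\psi\circ\varphi$; by the other half of the same composition proposition, ``homomorphism after anti-homomorphism is an anti-homomorphism,'' this composite lies in $An(A,C)$. Since $\psi*\varphi$ is defined to be exactly this composite, we conclude that $\psi*\varphi$ is an anti-homomorphism.

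As a sanity check, one can also verify the anti-homomorphism identity by a direct calculation: for $x,y\in A$ one has $(\psi*\varphi)(xy)=\psi(\varphi(1^{*}(xy)))=\psi(\varphi(yx))=\psi(\varphi(x)\varphi(y))=\psi(\varphi(y))\psi(\varphi(x))$, while on the right $(\psi*\varphi)(y)(\psi*\varphi)(x)=\psi(\varphi(y))\psi(\varphi(x))$, using in both places that $\varphi$ and $\psi$ reverse products and that $1^{*}$ fixes single elements. There is no genuine obstacle to overcome here; the only point requiring minimal care is the order in which the rules ``anti$\circ$anti$=$hom'' and ``hom$\circ$anti$=$anti'' are applied, and once this bookkeeping is done the proposition follows in one line.
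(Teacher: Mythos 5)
Your proposal is correct and follows the same route as the paper: both note that $\psi\circ\varphi$ is a homomorphism by the composition rules and then conclude that post-composing with the reverse mapping $1^{*}$ (via Definition/Proposition~2.13) lands in $An(A,C)$. The direct element-wise verification you append is a harmless extra check not present in the paper's one-line proof.
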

\begin{proof}
By definition, $\psi\circ\varphi$ is a homomorphism. Then by Proposition \ref{A3}, composing with $1^{*}$, we get an anti-homomorphism $\psi*\varphi=(\psi\circ\varphi)\circ1^{*}:A\rightarrow C$.
\end{proof}

\begin{proposition}\label{A32}
The $*$-composition of anti-homomorphisms is associative.
\end{proposition}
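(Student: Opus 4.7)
The plan is to verify the identity $(f^* * g^*) * h^* = f^* * (g^* * h^*)$ for three composable anti-homomorphisms $h^*\colon A\to B$, $g^*\colon B\to C$, $f^*\colon C\to D$ by pointwise evaluation on the source group $A$. First, I would unwind each side using the definition $\psi * \varphi := (\psi \circ \varphi) \circ 1^*$: the left-hand side becomes $(((f^* \circ g^*) \circ 1^*_B) \circ h^*) \circ 1^*_A$, while the right-hand side becomes $(f^* \circ ((g^* \circ h^*) \circ 1^*_A)) \circ 1^*_A$. That these intermediate expressions may be regrouped at will is guaranteed by the associativity of ordinary composition of homomorphisms and anti-homomorphisms (Proposition \ref{A2}).

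Next, I would evaluate both expressions at an arbitrary element $x \in A$. The key observation is that the reverse mapping satisfies $1^*(y) = y$ as a set function on elements, so every occurrence of $1^*$ standing to the right of a composition vanishes upon evaluation: $(\alpha \circ 1^*)(y) = \alpha(y)$ for any map $\alpha$. Applying this twice on the left-hand side (stripping $1^*_A$ at the outside and $1^*_B$ inside after one step of set-function composition) and twice on the right-hand side (stripping the two $1^*_A$'s) collapses both expressions to the common value $f^*(g^*(h^*(x)))$. Equality of the underlying set functions on $A$ then yields the desired equality of anti-homomorphisms.

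The main obstacle here is essentially notational rather than conceptual: one must keep the subscripts on the various reverse mappings straight (with $1^*_A$ appearing twice in each expansion and $1^*_B$ appearing once on the left) and invoke the definition of $*$ at the three distinct spots where it occurs. Once one recognizes that $1^*$ is set-theoretically the identity and so $\psi * \varphi$ agrees with $\psi \circ \varphi$ as a set map, the $*$-associativity is really nothing more than a repackaging of the associativity of ordinary function composition already recorded in Proposition \ref{A2}.
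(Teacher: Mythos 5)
Your proof is correct and follows essentially the same route as the paper: both unwind the definition of $*$ into iterated ordinary compositions with reverse mappings and then collapse the two sides using associativity of ordinary composition together with the fact that $1^{*}$ acts as the identity on underlying elements. Your pointwise evaluation just makes explicit the step the paper delegates to its citations of Proposition \ref{A2} and Proposition \ref{A3}.
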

\begin{proof}
Let $\varphi_{1},\varphi_{2},\varphi_{3}$ be three anti-homomorphisms. Then we have
\begin{align*}
(\varphi_{1}*\varphi_{2})*\varphi_{3}&=[(\varphi_{1}\circ\varphi_{2})\circ1^{*}\circ\varphi_{3}]\circ1^{*}, \\
\varphi_{1}*(\varphi_{2}*\varphi_{3})&=[\varphi_{1}\circ(\varphi_{2}\circ\varphi_{3})\circ1^{*}]\circ1^{*}.
\end{align*}
By Proposition \ref{A2} and Proposition \ref{A3}, we get $(\varphi_{1}*\varphi_{2})*\varphi_{3}=\varphi_{1}*(\varphi_{2}*\varphi_{3})$ as desired.
\end{proof}

\begin{proposition}
If $f$ is an anti-isomorphism and $f^{-1}$ its anti-inverse, then $f*f^{-1}=f^{-1}*f=1^{*}$.
\end{proposition}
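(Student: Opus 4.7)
The plan is to prove the identities by simply unwinding the definition of $*$-composition and applying two facts: that $f$ and $f^{-1}$ are mutual set-theoretic inverses, and that post-composition with a reverse mapping on its own domain does nothing to the reverse mapping itself. Since Proposition \ref{A15} ensures that $f^{-1}$ is an anti-homomorphism, both expressions $f * f^{-1}$ and $f^{-1} * f$ are legitimate $*$-compositions of anti-homomorphisms.

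Writing $f\colon A \to B$ so that $f^{-1}\colon B \to A$, I would compute $f * f^{-1}$ first. By the definition $\psi * \varphi = (\psi \circ \varphi) \circ 1^{*}$, here with $\varphi = f^{-1}$ whose source is $B$, the relevant reverse mapping is $1^{*}_{B}$, and so
\[
f * f^{-1} \;=\; (f \circ f^{-1}) \circ 1^{*}_{B} \;=\; \mathrm{id}_{B} \circ 1^{*}_{B} \;=\; 1^{*}_{B}.
\]
Analogously, for $f^{-1} * f$ the source of the inner anti-homomorphism $f$ is $A$, so the reverse mapping invoked is $1^{*}_{A}$, yielding
\[
f^{-1} * f \;=\; (f^{-1} \circ f) \circ 1^{*}_{A} \;=\; \mathrm{id}_{A} \circ 1^{*}_{A} \;=\; 1^{*}_{A}.
\]
Per the notational convention following Definition-Proposition~\ref{A3}, both of these are written simply as $1^{*}$, matching the statement.

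There is essentially no obstacle here; the only point requiring a touch of care is keeping track of which object's reverse mapping appears in each of the two $*$-compositions, since the source objects of the inner anti-homomorphisms differ. Once the definition is written out carefully, the equalities are immediate from the elementary fact that $f \circ f^{-1}$ and $f^{-1} \circ f$ are the respective identity maps.
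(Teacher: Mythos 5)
Your proof is correct and is essentially identical to the paper's: both unwind the definition $\psi*\varphi=(\psi\circ\varphi)\circ 1^{*}$ and use that $f\circ f^{-1}$ and $f^{-1}\circ f$ are identity maps, so that composing with the reverse mapping leaves $1^{*}$. Your extra bookkeeping of whether the reverse mapping is $1^{*}_{A}$ or $1^{*}_{B}$ is a welcome clarification that the paper simply suppresses via its notational convention.
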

\begin{proof}
By definition, we have
$$
f*f^{-1}=(f\circ f^{-1})\circ1^{*}=1\circ1^{*}=1^{*},
$$
and
$$
f^{-1}*f=(f^{-1}\circ f)\circ1^{*}=1\circ1^{*}=1^{*}.
$$
\end{proof}

\begin{proposition}\label{A33}
If $f$ is an anti-homomorphism, then $f*1^{*}=1^{*}*f=f$.
\end{proposition}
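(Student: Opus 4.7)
The plan is to unfold the definition of the $*$-composition on each side and reduce each to ordinary composition. The key preliminary observation I would record first is that $1^{*}\circ 1^{*}$ equals the identity homomorphism of the underlying group. Indeed, $1^{*}$ fixes every element but reverses each product, so one quickly checks $(1^{*}\circ 1^{*})(x)=x$ and $(1^{*}\circ 1^{*})(xy)=1^{*}(yx)=xy$; hence $1^{*}\circ 1^{*}=1$. This is also consistent with the general fact, used throughout the section, that the composition of two anti-homomorphisms is a homomorphism.

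Granting this, $f*1^{*}=f$ is immediate. By definition $f*1^{*}=(f\circ 1^{*})\circ 1^{*}$, and the associativity of ordinary composition (Proposition \ref{A2}) lets me regroup this as $f\circ(1^{*}\circ 1^{*})=f\circ 1=f$.

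For $1^{*}*f=f$, I would invoke the Remark following Proposition \ref{A1}: for any anti-homomorphism $f$, both $f\circ 1^{*}$ and $1^{*}\circ f$ equal the same corresponding homomorphism, so $1^{*}\circ f=f\circ 1^{*}$. Therefore
$$
1^{*}*f=(1^{*}\circ f)\circ 1^{*}=(f\circ 1^{*})\circ 1^{*},
$$
which collapses to $f$ by the computation of the previous paragraph.

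There is no real obstacle to this argument. The entire verification rests on the single observation that $1^{*}\circ 1^{*}$ is the identity, together with the commutation $f\circ 1^{*}=1^{*}\circ f$ for anti-homomorphisms already recorded earlier in the paper. The only point requiring care is to keep the $*$-composition clearly distinct from the ordinary $\circ$-composition---the former carries an extra trailing $1^{*}$---and to regroup with associativity before cancelling $1^{*}\circ 1^{*}$.
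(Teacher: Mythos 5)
Your proof is correct and follows essentially the same route as the paper: unfold the $*$-composition, use $f\circ 1^{*}=1^{*}\circ f$, and collapse the trailing $1^{*}\circ 1^{*}$. The only difference is that you make explicit the identity $1^{*}\circ 1^{*}=1$, which the paper leaves implicit in its final step (there it is absorbed into the convention $f^{**}=f$); this is a harmless and arguably clarifying addition.
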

\begin{proof}
By definition,
$$
f*1^{*}=(f\circ1^{*})\circ1^{*}=(1^{*}\circ f)\circ1^{*}=1^{*}*f=f.
$$
\end{proof}

By virtue of Proposition \ref{A32} and \ref{A33}, we can finally form the category $\textbf{Groups-An}$ of groups and anti-homomorphisms, i.e. the category whose objects are groups, morphisms are anti-homomorphisms, and the composition of anti-homomorphisms is $*$-composition. Note that in $\textbf{Groups-An}$, reverse mappings will serve as the identity morphisms.

Next, we want to clarify the relation between $\textbf{Groups}$ and $\textbf{Groups-An}$.

One can define a functor
$$
F_{An}:\textbf{Groups}\rightarrow \textbf{Groups-An}
$$
which assigns to every group the same group, and to every homomorphism $f$, it associates the corresponding anti-homomorphism $f^{*}$.

And one can define a functor
$$
G_{An}:\textbf{Groups-An}\rightarrow \textbf{Groups}
$$
which assigns to every group the same group, and to every anti-homomorphism $f^{*}$, it associates the corresponding homomorphism $f$.

\begin{theorem}
The category \textbf{Groups} of groups and homomorphisms and the category \textbf{Groups-An} of groups and anti-homomorphisms are equivalent, i.e. $\textbf{Groups}\cong \textbf{Groups-An}$.
\end{theorem}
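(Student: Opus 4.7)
The plan is to show that the two functors $F_{An}$ and $G_{An}$ defined just above the theorem are in fact mutually inverse, so that $\textbf{Groups}$ and $\textbf{Groups-An}$ are not merely equivalent but actually isomorphic as categories. Since both functors act as the identity on objects, essential surjectivity is automatic, and by Proposition \ref{A1} the induced maps on hom-sets are bijections, so fully-faithfulness is in hand as well; but I would prefer to write out the inverse explicitly because this gives a cleaner and stronger statement.

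First I would verify carefully that $F_{An}$ and $G_{An}$ are indeed functors. On objects this is trivial, so the work lies in checking that identities and compositions are preserved. For identities: the identity of a group $A$ in $\textbf{Groups}$ is $1_A$, and $F_{An}(1_A)=1_A\circ 1^*=1^*$, which by the remark following Proposition \ref{A33} is the identity of $A$ in $\textbf{Groups-An}$; and conversely $G_{An}(1^*)=1^*\circ 1^*=1_A$. For composition I would use the key observation that, as set maps, $1^*$ is the identity, so for homomorphisms $f,g$ one has $g^*\circ f^*=g\circ f$ pointwise, whence
\begin{equation*}
F_{An}(g)\ast F_{An}(f)=(g^*\circ f^*)\circ 1^*=(g\circ f)\circ 1^*=F_{An}(g\circ f),
\end{equation*}
and dually $G_{An}(g^*\ast f^*)=G_{An}((g\circ f)\circ 1^*)=(g\circ f)\circ 1^*\circ 1^*=g\circ f=G_{An}(g^*)\circ G_{An}(f^*)$, using Propositions \ref{A2} and \ref{A3} together with $1^*\circ 1^*=1$.

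Next I would compute the two composites. For any homomorphism $f\in {\rm{Hom}}(A,B)$, repeated application of associativity yields
\begin{equation*}
(G_{An}\circ F_{An})(f)=G_{An}(f^*)=f^*\circ 1^*=(f\circ 1^*)\circ 1^*=f\circ(1^*\circ 1^*)=f,
\end{equation*}
and for any anti-homomorphism $f^*\in An(A,B)$,
\begin{equation*}
(F_{An}\circ G_{An})(f^*)=F_{An}(f)=f\circ 1^*=f^*,
\end{equation*}
where in the last line I use the remark after Proposition \ref{A1} identifying $f^{**}$ with $f$. Hence $G_{An}\circ F_{An}=\mathrm{id}_{\textbf{Groups}}$ and $F_{An}\circ G_{An}=\mathrm{id}_{\textbf{Groups-An}}$, which in particular gives a natural isomorphism between each composite and the corresponding identity functor, establishing the desired equivalence.

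The main subtlety is bookkeeping rather than anything conceptually hard: one must keep straight that composition in $\textbf{Groups-An}$ is the $\ast$-composition (not ordinary composition), and that the ``identity'' $1^*$ in $\textbf{Groups-An}$ coincides with the identity set map on $A$ yet is declared to be an anti-homomorphism. The functoriality check for $F_{An}$ is the single point where this could go wrong, since a naive computation $F_{An}(g\circ f)\stackrel{?}{=}F_{An}(g)\circ F_{An}(f)$ with ordinary composition would give $g\circ 1^*\circ f\circ 1^*$ rather than $(g\circ f)\circ 1^*$; inserting the definitional $\circ 1^*$ of $\ast$-composition and invoking $1^*\circ 1^*=1$ is precisely what makes everything match up.
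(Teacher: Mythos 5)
Your proof is correct, and it goes further than the paper's. The paper's own argument is exactly the two-sentence one you mention and set aside at the outset: $F_{An}$ is the identity on objects, hence every object of $\textbf{Groups-An}$ is (trivially) isomorphic to something in the image, and $F_{An}$ is fully faithful by Proposition \ref{A1}, so it is an equivalence by the standard criterion. By instead exhibiting $F_{An}$ and $G_{An}$ as strictly mutually inverse, you obtain the stronger conclusion that the two categories are isomorphic, not merely equivalent. More importantly, you verify something the paper silently assumes, namely that $F_{An}$ is a functor at all: that $F_{An}(1_A)=1^*$ is the identity for $\ast$-composition and that $F_{An}(g\circ f)=F_{An}(g)\ast F_{An}(f)$. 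As you point out, this is precisely where the extra $\circ\,1^*$ in the definition of $\ast$-composition earns its keep, and where a naive check with ordinary composition would produce $g\circ 1^*\circ f\circ 1^*$ instead of $(g\circ f)\circ 1^*$. Your computations all reduce, as they must, to the paper's conventions that the reverse mapping is the identity as a set map and that $1^*\circ 1^*=1$; granting those (as the paper does throughout), every step is sound. The only cost of your route is length; what it buys is the isomorphism of categories and an explicit functoriality check that the paper's proof omits.
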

\begin{proof}
Clearly every objects $A$ of \textbf{Groups-An} is isomorphic to $F_{An}(A)$. And by Proposition \ref{A1}, we see that $F_{An}$ is fully faithful. Hence $F_{An}$ is an equivalence of categories.
\end{proof}

Let $A$ be any group. We can define a group structure on $\textrm{Hom.Is}(A,A)$ using usual composition. Also we can define a group structure $An\textrm{.Is}(A,A)$ using $*$-composition for $A\in \textrm{Ob}(\textbf{Groups})$. Then we obtain two isomorphic groups $(\textrm{Hom.Is}(A,A),\circ)$ and $(An\textrm{.Is}(A,A),*)$.

\begin{proposition}\label{A14}
The groups $({\rm{Hom.Is}}(A,A),\circ)$ and $(An{\rm{.Is}}(A,A),*)$ are isomorphic.
\end{proposition}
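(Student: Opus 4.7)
The plan is to upgrade the set-theoretic bijection of \proref{A1} to a group isomorphism by restricting it to the subsets of (anti-)automorphisms and verifying that it intertwines $\circ$ with $*$. Concretely, one defines
$$
\Phi\colon ({\rm{Hom.Is}}(A,A),\circ) \longrightarrow (An{\rm{.Is}}(A,A),*), \qquad \Phi(f) := f\circ 1^{*}.
$$

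First, I would check that $\Phi$ is a bijection. The assignment $f\mapsto f\circ 1^{*}$ is precisely the bijection ${\rm{Hom}}(A,A)\xrightarrow{\sim}An(A,A)$ of \proref{A1}, with inverse $f^{*}\mapsto f^{*}\circ 1^{*}$. Since $1^{*}$ is a bijection of the underlying set of $A$, composition with $1^{*}$ preserves bijectivity of maps; hence $\Phi$ sends automorphisms to anti-automorphisms and its set-theoretic inverse does the reverse, so $\Phi$ restricts to the claimed bijection of sets.

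Next, I would verify that $\Phi$ respects the two group operations, i.e.\ $\Phi(f\circ g)=\Phi(f)*\Phi(g)$ for every $f,g\in{\rm{Hom.Is}}(A,A)$. Unwinding the definition of the $*$-composition gives
$$
\Phi(f)*\Phi(g)=\bigl((f\circ 1^{*})\circ(g\circ 1^{*})\bigr)\circ 1^{*}.
$$
Two ingredients collapse the right-hand side: associativity of composition (\proref{A2} and \proref{A32}) together with the relation $1^{*}\circ g = g\circ 1^{*}$, which holds for any homomorphism $g$ by the remark after \proref{A1}, and the involutivity $1^{*}\circ 1^{*}={\rm id}$, which is forced by the fact that the map of \proref{A1} is its own inverse. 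Applying these in turn rearranges the display to $(f\circ g)\circ 1^{*}=\Phi(f\circ g)$.

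Combining the two steps, $\Phi$ is a bijective group homomorphism, hence an isomorphism of groups. The main obstacle is the bookkeeping in the second step; once the commutation of $1^{*}$ with arbitrary homomorphisms and the involutivity $1^{*}\circ 1^{*}={\rm id}$ are available, the compatibility $\Phi(f\circ g)=\Phi(f)*\Phi(g)$ follows by a purely formal rearrangement.
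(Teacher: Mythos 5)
Your proposal is correct and follows essentially the same route as the paper: both use the map $f\mapsto f^{*}=f\circ 1^{*}$ from \proref{A1} and verify $(f\circ g)^{*}=f^{*}*g^{*}$ by the same formal computation with $1^{*}\circ g=g\circ 1^{*}$ and $1^{*}\circ 1^{*}=\mathrm{id}$. You are merely more explicit than the paper about checking that the bijection restricts to the subsets of isomorphisms and anti-isomorphisms.
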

\begin{proof}
Let $f,g\in\textrm{Hom.Is}(A,A)$. Then by Proposition \ref{A1}, $f\circ g\mapsto (f\circ g)^{*}$. But
$$
f^{*}*g^{*}=(f\circ1^{*}\circ g\circ1^{*})\circ1^{*}=(f\circ g)\circ1^{*}=(f\circ g)^{*},
$$
so we get the desired isomorphism.
\end{proof}

Note that in Proposition \ref{A14}, the group $({\rm{Hom.Is}}(A,A),\circ)$ is indeed the automorphism group $(\textrm{Aut}(G),\circ)$, whose elements are automorphisms of $G$, and the multiplication is the usual composition. And the group $(An{\rm{.Is}}(A,A),*)$ can be regraded as the automorphism group whose elements are anti-automorphisms and the multiplication is the $*$-composition. We will call $(An{\rm{.Is}}(A,A),*)$ the \textit{anti-automorphism group}.

If $(An+\textrm{Hom}).\textrm{Is}(A,A):=An{\rm{.Is}}(A,A)\cup{\rm{Hom.Is}}(A,A)$ is equipped with the usual composition, then we claim that $(An+\textrm{Hom}).\textrm{Is}(A,A)$ is group with a normal subgroup ${\rm{Hom.Is}}(A,A)$. We indeed prove \cite{Pursell}, Theorem 9 in the case of group.
\begin{proposition}
$(An+{\rm{Hom}}).{\rm{Is}}(A,A)$ is a group under the usual composition with a normal subgroup ${\rm{Hom.Is}}(A,A)$.
\end{proposition}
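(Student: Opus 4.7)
The plan is to verify the group axioms for $(An+\text{Hom}).\text{Is}(A,A)$ under the usual composition and then argue that $\text{Hom.Is}(A,A)$ is normal by exhibiting it as the kernel of a sign homomorphism to $\Z/2\Z$.

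First I would check closure. Every element of $(An+\text{Hom}).\text{Is}(A,A)$ is a bijection $A\to A$ that is either a homomorphism or an anti-homomorphism, so there are only four cases to consider, all covered by the composition laws recorded earlier: $\text{Hom}\circ\text{Hom}$ and $An\circ An$ land in $\text{Hom.Is}(A,A)$, while $\text{Hom}\circ An$ and $An\circ\text{Hom}$ land in $An.\text{Is}(A,A)$. In every case the composite remains bijective, hence stays in $(An+\text{Hom}).\text{Is}(A,A)$. Associativity of the ordinary composition follows from Proposition \ref{A2}. The identity map $1_A$ is an automorphism, hence lies in $\text{Hom.Is}(A,A)\subset(An+\text{Hom}).\text{Is}(A,A)$. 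For inverses, an element of $\text{Hom.Is}(A,A)$ has a homomorphism inverse by standard group theory, while an element of $An.\text{Is}(A,A)$ has an anti-homomorphism inverse by Proposition \ref{A15}; in both cases the inverse remains in the union.

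For the subgroup assertion, $\text{Hom.Is}(A,A)$ is closed under the usual composition (composition of two isomorphisms is an isomorphism) and under inversion, and it contains $1_A$, so it is a subgroup. To prove normality, I would define a map
$$
\epsilon\colon (An+\text{Hom}).\text{Is}(A,A)\longrightarrow \Z/2\Z, \qquad \epsilon(f)=\begin{cases}0,& f\in\text{Hom.Is}(A,A),\\ 1,& f\in An.\text{Is}(A,A).\end{cases}
$$
The key point is that $\epsilon$ is a well-defined group homomorphism: this is exactly the content of the four composition rules, which state that the parity (homomorphism vs.\ anti-homomorphism) of a composite is the sum mod $2$ of the parities of the factors. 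Its kernel is precisely $\text{Hom.Is}(A,A)$, which is therefore normal.

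The only mildly subtle point is justifying that $\epsilon$ is well defined in cases where an automorphism could happen to be simultaneously a homomorphism and an anti-homomorphism; by Theorem \ref{A11}(3), this occurs exactly when $A$ is abelian, in which case $An.\text{Is}(A,A)=\text{Hom.Is}(A,A)$ and the whole group coincides with $\text{Hom.Is}(A,A)$, so the normality statement is vacuous. When $A$ is nonabelian, Proposition \ref{A13} guarantees $An.\text{Is}(A,A)\cap\text{Hom.Is}(A,A)=\varnothing$, so $\epsilon$ is unambiguous; this is the only place one must tread carefully, and it is handled directly by the earlier results.
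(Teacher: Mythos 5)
Your proof is correct, and for the group-axiom portion it follows the paper's route exactly (closure from the four composition laws, associativity from Proposition~\ref{A2}, inverses from Proposition~\ref{A15}). Where you genuinely diverge is in the normality argument: the paper gives a one-line coset-style computation, writing $f_{1}^{*}\circ f=f_{1}\circ 1^{*}\circ f=f_{1}\circ f^{*}$ for $f_{1}^{*}\in An.\mathrm{Is}(A,A)$ and $f\in \mathrm{Hom.Is}(A,A)$, i.e.\ it shuffles the reverse mapping $1^{*}$ past the factors to match up left and right cosets. You instead exhibit $\mathrm{Hom.Is}(A,A)$ as the kernel of a parity homomorphism $\epsilon$ to $\Z/2\Z$, which is the cleaner and more robust argument: it makes the ``index at most two'' structure explicit, and --- unlike the paper's computation --- it forces you to confront the well-definedness issue (an automorphism that is simultaneously an anti-automorphism), which you correctly dispatch by splitting into the abelian case, where the two sets coincide and normality is trivial, and the nonabelian case, where Proposition~\ref{A13} gives disjointness. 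The paper's version is shorter but leaves the reader to reconstruct why the displayed identity actually yields $g\,\mathrm{Hom.Is}(A,A)\,g^{-1}\subseteq \mathrm{Hom.Is}(A,A)$; your version costs a few extra lines but is airtight and also records, as a byproduct, that the quotient is (at most) $\Z/2\Z$.
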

\begin{proof}
By Proposition \ref{A2} and Proposition \ref{A15}, it is easy to see that $An{\rm{.Is}}(A,A)\cup{\rm{Hom.Is}}(A,A)$ is a group under usual composition. And let $f_{1}^{*}\in An{\rm{.Is}}(A,A),f\in{\rm{Hom.Is}}(A,A)$, since $f_{1}^{*}\circ f=f_{1}\circ1^{*}\circ f=f_{1}\circ f^{*}$, we see that ${\rm{Hom.Is}}(A,A)$ is a normal subgroup of $An{\rm{.Is}}(A,A)\cup{\rm{Hom.Is}}(A,A)$.
\end{proof}

There is clearly an obvious way to define addition and multiplication on $\textrm{Hom}(A,B)$ and $An(A,B)$ for $A,B\in \textrm{Ob}(\textbf{Groups})$, for which we feel that it is convenient to discuss it in the next section.

\section{Anti-homomorphisms of Rings}\label{B2}
In this section, all rings are not necessarily commutative. By virtue of \cite{Poonen}, all rings should have the identity. We first lay down some basic knowledge about anti-homomorphisms of rings. For simplicity, sometimes, we will simply call anti-homomorphism rather than anti-homomorphism of rings. Some results about anti-homomorphisms of groups can be directly moved to anti-homomorphisms of rings.

\begin{definition}[\cite{Dr. Dale T. B.}]
An \textit{anti-homomorphism} $\varphi:A\rightarrow B$ of rings is a map $\varphi:A\rightarrow B$ such that $\varphi(x+y)=\varphi(x)+\varphi(y)$, $\varphi(xy)=\varphi(y)\varphi(x)$, and $\varphi(1)=1$ for all $x,y\in A$.
\end{definition}

An anti-homomorphism of rings preserves the zero element, integer multiples, and powers.
\begin{proposition}
If $\varphi : A \rightarrow B$ is an anti-homomorphism of rings, then $\varphi(0)=0$, $\varphi(mx) = m\varphi(x)$, and $\varphi(x^{n}) = \varphi(x)^{n}$ for all $x\in A$, $m\in\Z$, and $n\in\mathbb{N}$.
\end{proposition}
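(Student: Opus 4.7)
The statement decomposes cleanly into two independent parts, one additive and one multiplicative, so I would handle them separately.

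For the additive assertions $\varphi(0)=0$ and $\varphi(mx)=m\varphi(x)$, the plan is to observe that the underlying additive groups $(A,+)$ and $(B,+)$ are abelian, and the defining property $\varphi(x+y)=\varphi(x)+\varphi(y)$ says precisely that $\varphi$ is a group homomorphism between these abelian groups (equivalently, by \thmref{A11}, an anti-homomorphism, since the two notions coincide in the abelian setting). I would then simply quote \proref{BB} applied to the additive structure: it gives $\varphi(0)=0$, $\varphi(-x)=-\varphi(x)$, and $\varphi(mx)=m\varphi(x)$ for every $m\in\Z$. No new computation is required.

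For the multiplicative assertion $\varphi(x^{n})=\varphi(x)^{n}$ with $n\in\mathbb{N}$, the plan is a straightforward induction on $n$. The base case $n=0$ is immediate from the convention $x^{0}=1$ together with the defining requirement $\varphi(1)=1$, so $\varphi(x^{0})=\varphi(1)=1=\varphi(x)^{0}$. The case $n=1$ is a tautology. For the inductive step, assume $\varphi(x^{n})=\varphi(x)^{n}$ and compute
\[
\varphi(x^{n+1}) \;=\; \varphi(x\cdot x^{n}) \;=\; \varphi(x^{n})\,\varphi(x) \;=\; \varphi(x)^{n}\,\varphi(x) \;=\; \varphi(x)^{n+1},
\]
where the second equality uses the anti-homomorphism property $\varphi(xy)=\varphi(y)\varphi(x)$. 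The only subtlety worth flagging is the order-reversal: it is crucial to write $x^{n+1}=x\cdot x^{n}$ (rather than $x^{n}\cdot x$) so that after applying $\varphi$ and reversing, the factor $\varphi(x)^{n}$ lands on the left and concatenates cleanly with $\varphi(x)$ on the right; of course both factorizations give the same answer because powers of $x$ commute with each other, but being explicit avoids confusion.

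There is no genuine obstacle here; the argument is essentially a bookkeeping exercise that combines the additive case of \proref{BB} with a one-line induction on exponents. If anything, the only point requiring mild care is the distinction between $\mathbb{N}$ and $\Z$ in the statement: the multiplicative claim is restricted to non-negative exponents because $A$ is not assumed to have multiplicative inverses, whereas the additive claim extends to all $m\in\Z$ since additive inverses always exist in a ring.
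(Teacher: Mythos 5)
Your proof is correct and follows essentially the same route as the paper: the paper likewise derives $\varphi(mx)=m\varphi(x)$ by viewing $\varphi$ as an anti-homomorphism of the underlying (abelian) additive groups and invoking the group-theoretic result, while delegating $\varphi(0)=0$ and $\varphi(x^{n})=\varphi(x)^{n}$ to the cited reference. Your version is in fact more self-contained, since you supply the short induction for the multiplicative claim (correctly using $\varphi(1)=1$ from the definition for the base case and the order-reversal for the step) rather than citing it.
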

\begin{proof}
Since $\varphi : A \rightarrow B$ is also an anti-homomorphism of abelian groups, we have $\varphi(mx) = m\varphi(x)$ for $m\in\Z$. The other parts is in \cite{Dr. Dale T. B.}.
\end{proof}

Before providing the definitions of anti-monomorphisms, anti-epimorphisms, and anti-isomorphisms, we first give the following proposition.

\begin{proposition}[\cite{Dr. Dale T. B.}]
If $\varphi$ is a bijective anti-homomorphism of rings, then its inverse bijection $\varphi^{-1}$ is also an anti-homomorphism of rings.
\end{proposition}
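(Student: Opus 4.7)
The plan is to verify directly the three defining axioms for $\varphi^{-1}\colon B\to A$ to be a ring anti-homomorphism, exploiting the bijectivity of $\varphi$ to pull arbitrary elements of $B$ back to $A$ where the hypotheses on $\varphi$ apply. The three things to check are additivity, the anti-multiplicative rule $\varphi^{-1}(uv)=\varphi^{-1}(v)\varphi^{-1}(u)$, and the preservation of the unit.

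For additivity, I would not redo the work from scratch. Since a ring anti-homomorphism restricts to an anti-homomorphism of the underlying additive abelian groups, and the abelian case collapses to an ordinary additive group homomorphism (cf.\ Theorem \ref{A11}), the statement that $\varphi^{-1}$ is additive is immediate from Proposition \ref{A15} applied to the additive groups of $A$ and $B$.

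For the anti-multiplicative rule, given arbitrary $u,v\in B$, I would use the bijectivity of $\varphi$ to write $u=\varphi(x)$ and $v=\varphi(y)$ for unique $x,y\in A$. Then by the anti-homomorphism property of $\varphi$,
$$
\varphi(yx)=\varphi(x)\varphi(y)=uv,
$$
so applying $\varphi^{-1}$ to both sides gives
$$
\varphi^{-1}(uv)=yx=\varphi^{-1}(v)\,\varphi^{-1}(u),
$$
which is exactly the required identity. Finally, applying $\varphi^{-1}$ to $\varphi(1)=1$ gives $\varphi^{-1}(1)=1$, closing the verification.

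There is no real obstacle: the only thing one needs to be careful about is not to forget to check the unital condition separately (since anti-homomorphisms of rings in this paper are required to preserve $1$), and to keep the order of the two factors straight when inverting the anti-multiplicative identity, which is precisely where bijectivity of $\varphi$ is used in an essential way.
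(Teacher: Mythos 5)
Your proof is correct: the direct verification of the anti-multiplicative rule via $u=\varphi(x)$, $v=\varphi(y)$, the reduction of additivity to the (abelian) additive group case, and the separate check of the unit are all sound, and the order reversal $\varphi^{-1}(uv)=\varphi^{-1}(v)\varphi^{-1}(u)$ comes out exactly as required. The paper itself gives no proof of this proposition---it is simply cited from the reference \cite{Dr. Dale T. B.}, as is its group-theoretic counterpart (Proposition \ref{A15}) that you invoke for additivity---so your argument supplies the standard verification that the paper leaves implicit.
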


\begin{definition}[\cite{Dr. Dale T. B.}]
An \textit{anti-monomorphism} of rings is an injective anti-homomorphism of rings. An \textit{anti-epimorphism} of rings is a surjective anti-homomorphism of rings.

An \textit{anti-isomorphism} of rings is a bijective anti-homomorphism of rings. Two rings \textit{A} and \textit{B} are called \textit{anti-isomorphic} when there exists an anti-isomorphism $A\cong  B$. If $f$ is an anti-isomorphism of rings, then its inverse $f^{-1}$ is called its \textit{anti-inverse}.
\end{definition}

We can define the kernel and image of an anti-homomorphism.
\begin{definition}[\cite{Dr. Dale T. B.}]
Let $\varphi:A\rightarrow B$ be an anti-homomorphism of rings. Then the \textit{kernel} of $\varphi$ is
$$
\textrm{Ker }\varphi:=\{a\in A\mid \varphi(a)=0\}.
$$
The \textit{image} of $\varphi$ is
$$
\textrm{Im }\varphi:=\{\varphi(a)\mid a\in A\}.
$$
\end{definition}

The following proposition shows that an anti-homomorphism of rings preserves subrings and an anti-epimorphism reverses ideals.
\begin{proposition}
Let $\varphi:A\rightarrow B$ be an anti-homomorphism of rings. If $A_{0}$ is a subring of $A$, then
$$
\varphi(A_{0})=\{\varphi(x)\mid x\in A_{0}\}
$$
is a subring of $B$. If $B_{0}$ is a subring of $B$, then
$$
\varphi^{-1}(B_{0})=\{x\in A\mid \varphi(x)\in B_{0}\}
$$
is a subring of $A$. Moreover, if $\mathfrak{a}$ is a left-ideal of $A$ and $\varphi$ is an anti-epimorphism, then $\varphi(\mathfrak{a})$ is a right-ideal of $B$. And if $\mathfrak{b}$ is a left-ideal of $B$ and $\varphi$ is an anti-epimorphism, then $\varphi^{-1}(\mathfrak{b})$ is a right-ideal of $A$.
\end{proposition}
\begin{proof}
Let $a\in\mathfrak{a}$ such that $\varphi(a)\in\varphi(\mathfrak{a})$. Since $\varphi$ is an anti-epimorphism, for every $b\in B$, there exists $a_{0}\in A$ such that $\varphi(a_{0})=b$. So we have $\varphi(a)b=\varphi(a)\varphi(a_{0})=\varphi(a_{0}a)\in\varphi(\mathfrak{a})$, which shows that $\varphi(\mathfrak{a})$ is a right-ideal of $B$.

Let $b\in \mathfrak{b}$ such that $\varphi^{-1}(b)\subset\varphi^{-1}(\mathfrak{b})$. Since $\mathfrak{b}$ is a left-ideal of $B$, for every $b'\in B$, we have $b'b\in\mathfrak{b}$. So there exist $a',a\in A$ with $\varphi(a')=b',\varphi(a)=b$ such that $\varphi(aa')=\varphi(a')\varphi(a)=b'b\in\mathfrak{b}$, which implies that $aa'\in\varphi^{-1}(\mathfrak{b})$ and that $\varphi^{-1}(\mathfrak{b})$ is a right-ideal of $A$.
\end{proof}

It is easy to see that the composition of an anti-homomorphism and a homomorphism is an anti-homomorphism. And the composition of two anti-homomorphisms is a homomorphism. We summarize these to the following proposition.

\begin{proposition}[\cite{Dr. Dale T. B.}]
Let $\varphi:A\rightarrow B$ be a homomorphism (resp. an anti-homomorphism) of rings and $\psi:B\rightarrow C$ be an anti-homomorphism (resp. a homomorphism) of rings. Then $\psi\circ\varphi$ is an anti-homomorphism. If $\varphi:A\rightarrow B$ and $\psi:B\rightarrow C$ are anti-homomorphism of rings, then $\psi\circ\varphi$ is a homomorphism.
\end{proposition}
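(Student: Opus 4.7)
The plan is to verify directly, case by case, the three defining conditions of a (anti-)homomorphism of rings, namely preservation of addition, the behaviour on products, and preservation of the multiplicative identity. Since every anti-homomorphism is in particular an anti-homomorphism of underlying abelian groups, and every homomorphism a group homomorphism, the addition axiom is automatic in all three cases: for any composite $\psi\circ\varphi$ we have $(\psi\circ\varphi)(x+y)=\psi(\varphi(x)+\varphi(y))=(\psi\circ\varphi)(x)+(\psi\circ\varphi)(y)$. Similarly, since both homomorphisms and anti-homomorphisms of rings send $1$ to $1$, we obtain $(\psi\circ\varphi)(1)=\psi(\varphi(1))=\psi(1)=1$ with no case analysis needed.

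The content of the proposition therefore lies entirely in the multiplicative identity. I would treat the three cases in parallel. If $\varphi$ is a homomorphism and $\psi$ an anti-homomorphism, then
\[
(\psi\circ\varphi)(xy)=\psi(\varphi(x)\varphi(y))=\psi(\varphi(y))\,\psi(\varphi(x))=(\psi\circ\varphi)(y)\,(\psi\circ\varphi)(x),
\]
so $\psi\circ\varphi$ is an anti-homomorphism. If $\varphi$ is an anti-homomorphism and $\psi$ a homomorphism, then
\[
(\psi\circ\varphi)(xy)=\psi(\varphi(y)\varphi(x))=\psi(\varphi(y))\,\psi(\varphi(x))=(\psi\circ\varphi)(y)\,(\psi\circ\varphi)(x),
\]
so again $\psi\circ\varphi$ is an anti-homomorphism. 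Finally, if both $\varphi$ and $\psi$ are anti-homomorphisms, one applies the reversal twice:
\[
(\psi\circ\varphi)(xy)=\psi(\varphi(y)\varphi(x))=\psi(\varphi(x))\,\psi(\varphi(y))=(\psi\circ\varphi)(x)\,(\psi\circ\varphi)(y),
\]
so the two reversals cancel and $\psi\circ\varphi$ is a homomorphism.

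There is essentially no obstacle in this argument; it is a routine unpacking of the definitions, completely parallel to the analogous proposition for groups already recorded in the excerpt. The only mild bookkeeping point is to remember to check that the unit is preserved, because in this section of the paper (in contrast to the group-theoretic section) rings are assumed to have $1$ and ring (anti-)homomorphisms are required to send $1$ to $1$; this is why the proof must verify the unit condition explicitly even though it falls out of the definitions immediately. Once the three displayed computations and the unit check are in place, the proposition is established.
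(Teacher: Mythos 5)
Your proof is correct and is exactly the routine verification the paper has in mind: the paper itself gives no proof, merely remarking that the statements are ``easy to see'' and citing the reference, so your case-by-case check of additivity, the product reversal, and the unit condition supplies precisely the omitted details. All three displayed computations are accurate, and your observation that the unit check is needed because rings here are unital is a sensible bookkeeping point.
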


Since the composition of two anti-homomorphisms is a homomorphism, it is natural to ask if a homomorphism can be expressible as a composite of anti-homomorphisms. The following theorem answers the question.

\begin{theorem}\label{A8}
Any homomorphism can be expressed as a composition of anti-homomorphisms.
\end{theorem}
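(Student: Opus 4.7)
The plan is to mirror the proof of Theorem \ref{AA} for groups, since the only additional structure a ring carries beyond its multiplicative monoid is an abelian additive group, and the reverse mapping that powered the group argument behaves as the identity on addition. Concretely, I would produce a factorization of an arbitrary homomorphism $\phi\colon R\to S$ of rings as $\phi=\varphi\circ 1^{*}$, where $1^{*}\colon R\to R$ is a reverse mapping (to be introduced as the ring analog of the group reverse mapping) and $\varphi\colon R\to S$ is an anti-homomorphism of rings.

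First I would introduce the reverse mapping $1^{*}\colon R\to R$ of a ring $R$, characterized by $1^{*}(x+y)=x+y$, $1^{*}(xy)=yx$, and $1^{*}(1)=1$. A short verification then shows that $1^{*}$ is an anti-homomorphism of rings: additivity is built in, $1^{*}(xy)=yx=1^{*}(y)\,1^{*}(x)$ supplies the multiplication-reversing identity, and the unit is preserved by definition. This is the exact ring-theoretic counterpart of the reverse mapping already used in Section \ref{B1}.

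Second, given a ring homomorphism $\phi\colon R\to S$, I would exhibit an anti-homomorphism $\varphi\colon R\to S$ whose composite with $1^{*}$ recovers $\phi$. Computing on both structures, $(\varphi\circ 1^{*})(x+y)=\varphi(x+y)=\varphi(x)+\varphi(y)$, which matches $\phi(x+y)=\phi(x)+\phi(y)$, and $(\varphi\circ 1^{*})(xy)=\varphi(yx)=\varphi(x)\varphi(y)$, which matches $\phi(xy)=\phi(x)\phi(y)$. Hence $\phi=\varphi\circ 1^{*}$ is a composition of two anti-homomorphisms of rings, as desired.

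The main obstacle, modest as it is, is to check that the presence of the additive structure does not disturb the argument. Since $1^{*}$ is declared to act trivially on sums, every additivity check reduces to the corresponding statement for $\varphi$ and $\phi$, while the multiplicative check is literally the group-theoretic computation already carried out in Theorem \ref{AA}. The unit condition $\varphi(1)=\phi(1)=1$ must also be tracked through the factorization, but it follows immediately from $1^{*}(1)=1$ together with the unit-preservation built into the definition of an anti-homomorphism of rings.
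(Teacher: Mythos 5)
Your proposal is correct and follows essentially the same route as the paper: both factor a ring homomorphism $\phi$ as $\varphi\circ 1^{*}$ where $1^{*}$ is the reverse mapping of the ring (already defined in Section~\ref{B2} exactly as you describe) and $\varphi$ is an anti-homomorphism, with the identical additive and multiplicative verifications. The only cosmetic difference is that you explicitly check $1^{*}$ is itself an anti-homomorphism and track the unit condition, which the paper leaves implicit.
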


We also defer the proof below. In the following, we provide some propositions that hold in the case of groups.

\begin{proposition}
Let $\varphi:A\rightarrow B$ be an anti-homomorphism of rings. Then $\varphi$ is an anti-mono\-morphism if and only if ${\rm{Ker }}\varphi=0$. And $\varphi$ is an anti-epimorphism if and only if ${\rm{Coker}} \varphi=0$.
\end{proposition}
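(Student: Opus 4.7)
The plan is to mirror the proof of Proposition \ref{A7} for groups almost verbatim; the only new input needed is that an anti-homomorphism of rings, being additive by definition, is in particular a group homomorphism of the underlying additive abelian groups, so all the additive-group machinery from the previous section transfers unchanged.

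For the first equivalence, I would argue as follows. If $\varphi$ is an anti-monomorphism, then it is injective, so in particular $\varphi(x) = \varphi(0) = 0$ forces $x = 0$, giving $\mathrm{Ker}\,\varphi = 0$. Conversely, assume $\mathrm{Ker}\,\varphi = 0$ and suppose $\varphi(x) = \varphi(y)$. Since $\varphi$ respects addition, $\varphi(x - y) = \varphi(x) - \varphi(y) = 0$, so $x - y \in \mathrm{Ker}\,\varphi = 0$, whence $x = y$ and $\varphi$ is injective. (Alternatively, one may simply invoke the corresponding statement for anti-homomorphisms of abelian groups established in the previous section, applied to $(A,+) \to (B,+)$.)

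For the second equivalence, I unfold the definition $\mathrm{Coker}\,\varphi := B / \mathrm{Im}\,\varphi$. Since $\varphi$ is additive, $\mathrm{Im}\,\varphi$ is a subgroup of the abelian group $(B,+)$, hence automatically a normal subgroup, so the quotient makes sense. If $\varphi$ is an anti-epimorphism, then $\mathrm{Im}\,\varphi = B$, so $\mathrm{Coker}\,\varphi = B/B = 0$. Conversely, $\mathrm{Coker}\,\varphi = 0$ means $B/\mathrm{Im}\,\varphi = 0$, i.e.\ $\mathrm{Im}\,\varphi = B$, so $\varphi$ is surjective and therefore an anti-epimorphism.

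There is essentially no obstacle here: the proof is a direct transfer of the group case, with the only subtlety being the justification that one may speak of $\mathrm{Coker}\,\varphi$ in the first place, which is handled by noting additivity of $\varphi$ and commutativity of $(B,+)$. No use of the multiplicative structure is required, which is why the statement holds in exactly the same form as in the group case.
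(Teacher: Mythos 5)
Your proof is correct and follows essentially the same route as the paper: the paper cites the reference for the kernel criterion and repeats the group-case argument (unfolding $\mathrm{Coker}\,\varphi = B/\mathrm{Im}\,\varphi$) for the cokernel criterion, which is exactly what you do. Your version is in fact slightly more self-contained, since you spell out the injectivity argument via additivity rather than deferring it to the cited source.
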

\begin{proof}
The first part is in \cite{Dr. Dale T. B.}. The rest of the proof is the same as Proposition \ref{A7}.
\end{proof}

\begin{proposition}
Let $\varphi:A\rightarrow B$ be an anti-homomorphism of rings. Then ${\rm{Ker}}\varphi$ is an ideal of $A$ and ${\rm{Im}}\varphi$ is a subring of $B$.
\end{proposition}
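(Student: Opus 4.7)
The plan is to verify the two claims separately by direct checks using the additive-homomorphism property of $\varphi$ together with the anti-multiplicativity $\varphi(xy)=\varphi(y)\varphi(x)$. Since $\varphi$ is in particular a homomorphism of additive groups, standard facts about group homomorphisms will handle all the additive issues (subgroup structure of $\textrm{Ker}\,\varphi$ and closure under addition of $\textrm{Im}\,\varphi$); the only genuine content is to see how the multiplicative structure interacts with the anti-condition.

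First I would establish that $\textrm{Ker}\,\varphi$ is an ideal. From the additive part it is already an additive subgroup of $A$, so it suffices to check two-sided absorption. For any $a\in \textrm{Ker}\,\varphi$ and any $r\in A$, the anti-property gives
$$
\varphi(ra)=\varphi(a)\varphi(r)=0\cdot\varphi(r)=0,\qquad \varphi(ar)=\varphi(r)\varphi(a)=\varphi(r)\cdot 0=0,
$$
so $ra,ar\in \textrm{Ker}\,\varphi$. Observe that the reversal in the anti-property is harmless here precisely because the absorbing element $0$ annihilates from both sides; this is what makes $\textrm{Ker}\,\varphi$ \emph{two-sided} even though $\varphi$ reverses products.

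Next I would check that $\textrm{Im}\,\varphi$ is a subring of $B$. Additive closure is again automatic from the additive-homomorphism property. For multiplicative closure, given $\varphi(x),\varphi(y)\in\textrm{Im}\,\varphi$, I rewrite the product using the anti-property as $\varphi(x)\varphi(y)=\varphi(yx)$, which lies in $\textrm{Im}\,\varphi$. Finally $1_{B}=\varphi(1_{A})\in\textrm{Im}\,\varphi$ by the definition of anti-homomorphism of rings, so the identity is present.

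I do not expect any real obstacle here; the proof is essentially a dictionary translation of the ordinary proof for homomorphisms, and the only subtlety to highlight is that the reversal of factors under $\varphi$ does not damage the ideal property of $\textrm{Ker}\,\varphi$ because $0$ absorbs on both sides, and likewise does not damage closure of $\textrm{Im}\,\varphi$ because the image of $yx$ is still in $\textrm{Im}\,\varphi$ regardless of order.
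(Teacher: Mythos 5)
Your proof is correct and follows essentially the same route as the paper: the paper verifies the two-sided absorption of $\mathrm{Ker}\,\varphi$ by exactly the computation $\varphi(ra)=\varphi(a)\varphi(r)=0=\varphi(r)\varphi(a)=\varphi(ar)$ and defers the subring claim for $\mathrm{Im}\,\varphi$ to the cited reference, which your direct check (closure under subtraction and the identity $\varphi(x)\varphi(y)=\varphi(yx)$, plus $1_B=\varphi(1_A)$) simply makes explicit. No gaps.
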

\begin{proof}
By definition, if $a,b\in\textrm{Ker }\varphi$, then $a-b\in\textrm{Ker }\varphi$, which shows that $\textrm{Ker }\varphi$ is a subgroup of $A$. And if $r\in A,a\in\textrm{Ker }\varphi$, we have $\varphi(ra)=\varphi(a)\varphi(r)=0=\varphi(r)\varphi(a)=\varphi(ar)$, which shows that $ra,ar\in\textrm{Ker }\varphi$. Hence $\textrm{Ker }\varphi$ is an ideal of $A$. The rest is in \cite{Dr. Dale T. B.}.
\end{proof}

\begin{proposition}
The composition of homomorphisms and anti-homomorphisms is associative. And the composition of anti-homomorphisms is associative.
\end{proposition}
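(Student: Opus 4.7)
The plan is to mirror the argument of Proposition \ref{A2} from the group setting: every homomorphism and every anti-homomorphism of rings is in particular a map between the underlying sets, and composition of set maps is always associative. Thus both assertions reduce to a pointwise verification in which the ring structure plays no role; it is only used to guarantee that each intermediate composite is itself of the correct type (a homomorphism or anti-homomorphism), which is already ensured by the preceding propositions.

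Concretely, I would let $g_{1},g_{2},g_{3}$ be any three composable maps, each drawn from either $\mathrm{Hom}$ or $An$ (so there are eight formal cases covering both claims of the proposition simultaneously), and for any $x$ in the source ring compute
$$
\bigl[(g_{1}\circ g_{2})\circ g_{3}\bigr](x)=(g_{1}\circ g_{2})(g_{3}(x))=g_{1}(g_{2}(g_{3}(x)))=g_{1}\bigl((g_{2}\circ g_{3})(x)\bigr)=\bigl[g_{1}\circ(g_{2}\circ g_{3})\bigr](x).
$$
Since the intermediate value $g_{1}(g_{2}(g_{3}(x)))$ is the same regardless of bracketing and regardless of whether any particular $g_{i}$ reverses products or preserves them, the two compositions agree as functions on the source ring. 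This handles all cases at once.

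The only non-automatic ingredient is that each of these composites really does land in the class asserted (homomorphism when an even number of the $g_{i}$ are anti, anti-homomorphism when an odd number are anti), which follows from the preceding proposition on how compositions of homomorphisms and anti-homomorphisms combine. Hence there is no real obstacle: the argument is formally identical to the proof of Proposition \ref{A2} and may be carried out by writing down two representative cases exactly as done there, with the remaining cases left to the reader.
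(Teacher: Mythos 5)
Your proposal is correct and matches the paper's approach exactly: the paper simply refers back to Proposition \ref{A2}, whose proof is the same pointwise verification that composition of set maps is associative, with the type (homomorphism or anti-homomorphism) of each composite supplied by the earlier composition laws. Your additional remark that the ring structure plays no role and that all eight bracketing cases reduce to the same computation is a fair and accurate elaboration of what the paper leaves implicit.
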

\begin{proof}
The proof is the same as Proposition \ref{A2}.
\end{proof}

In fact, Theorem \ref{A8} yields an equivalence relation on anti-homomorphisms of rings. Let $(\varphi_{2},\psi_{1})$ and $(\varphi_{1},\psi_{2})$ be two pairs of anti-homomorphisms. We can define $(\varphi_{2},\psi_{1})\sim (\varphi_{1},\psi_{2})$, if $\varphi_{2}\circ\psi_{1}$ and $\varphi_{1}\circ\psi_{2}$ are equal to the same homomorphism $\varphi$ when the compositions make sense. It can be easily checked that we indeed define an equivalence relation. We denote by $[(\varphi_{1},\psi_{2})]$ the \textit{equivalence class} of the pair $(\varphi_{1},\psi_{2})$. We use $[\varphi]$ to indicate the \textit{set of all equivalence classes of pairs of anti-homomorphisms whose compositions are the homomorphism} $\varphi$. And the notation $[An(A,B),An(B,C)]$ will stand for the \textit{set of all equivalence classes of all pairs of anti-homomorphisms of the form} $(A\rightarrow B,B\rightarrow C)$.

The \textit{reverse mapping} $f$ of a ring $R$ is a mapping $f:R\rightarrow R$ such that $f(x)=x$, $f(x+y)=x+y$, and $f(xy)=yx$ for all $x,y\in R$.

We summary the results that are of particular interest to us in the following:

\begin{definition-proposition} \label{A12}
Consider the category $\textbf{Rings}$ of rings and homomorphisms.
\begin{itemize}
  \item [(1)]
  To each pair of objects \textit{A,B} $\in {\rm{Ob}}(\textbf{Rings})$, we denote the {\rm{set of homomorphisms}} by ${\rm{Hom}}(A,B)$ and the {\rm{set of anti-homomorphisms}} by $An(A,B)$. And we denote the {\rm{set of isomorphisms}} by ${\rm{Hom.Is}}(A,B)$ and the {\rm{set of anti-isomorphisms}} by $An{\rm{.Is}}(A,B)$.
  \item [(2)]
  To each triple of objects \textit{A,B,C} $\in {\rm{Ob}}(\textbf{Rings})$, we have \textit{laws of compositions}
  \begin{align*}
  &An(A,B) \times An(B,C) \rightarrow {\rm{Hom}}(A,C)\\
  &An(A,B) \times {\rm{Hom}}(B,C) \rightarrow An(A,C)\\
  &{\rm{Hom}}(A,B) \times An(B,C) \rightarrow An(A,C)\\
  &{\rm{Hom}}(A,B) \times {\rm{Hom}}(B,C) \rightarrow {\rm{Hom}}(A,C)
  \end{align*}
  and \textit{a law of factorization}
  $$
  {\rm{Hom}}(A,C) \rightarrow [An(A,B),An(B,C)], \ \ f\mapsto [f].
  $$
  \item [(3)]
  For every $A,B\in {\rm{Ob}}(\textbf{Rings})$, there exists a reverse mapping $1^{*}_{A}\in An(A,A)$ such that for any morphism $f\in {\rm{Hom}}(A,B)$, we have
  $f\circ 1^{*}_{A}\in An(A,B)$. And for any $g\in {\rm{Hom}}(B,A)$, we have $1^{*}_{A}\circ f\in An(B,A)$.
  \item [(4)]
  The compositions defined above are \textit{associative}.
\end{itemize}
\end{definition-proposition}
\begin{remark}
Note that for every $A,B\in \textrm{Ob}(\textbf{Rings})$ and for any $f^{*}\in An(A,B)$, we have $f^{*}\circ 1^{*}_{A}\in \textrm{Hom}(A,B)$. And for any $g^{*}\in An(B,A)$, we have $1^{*}_{A}\circ g^{*}\in \textrm{Hom}(B,A)$. And for the reverse mapping $1^{*}_{A}$, we will suppress the subscript and simply write $1^{*}$ if there is no confusion.
\end{remark}

\begin{proposition}\label{A9}
For the category $\textbf{Rings}$ of rings, we have a one-to-one correspondence of sets ${\rm{Hom}}(A,B)\xrightarrow{\sim} An(A,B)$ for any $A,B\in {\rm{Ob}}(\textbf{Rings})$.
\end{proposition}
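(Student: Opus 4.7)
The plan is to mirror the proof of Proposition~\ref{A1} exactly, since Definition-Proposition~\ref{A12} provides all the ingredients used there (the reverse mapping $1^{*}_{A}\in An(A,A)$ and the fact that composing with $1^{*}$ on either side swaps between $\mathrm{Hom}(A,B)$ and $An(A,B)$). First I would define two candidate maps
\[
\Phi\:\mathrm{Hom}(A,B)\longrightarrow An(A,B),\qquad f\longmapsto f\circ 1^{*}_{A},
\]
and
\[
\Psi\:An(A,B)\longrightarrow \mathrm{Hom}(A,B),\qquad f^{*}\longmapsto f^{*}\circ 1^{*}_{A}.
\]
The fact that $\Phi$ lands in $An(A,B)$ is exactly Definition-Proposition~\ref{A12}(3); the fact that $\Psi$ lands in $\mathrm{Hom}(A,B)$ is the content of the Remark immediately following it. Well-definedness is therefore free.

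Next I would verify that $\Phi$ and $\Psi$ are mutually inverse. The key computational input is that the reverse mapping is an involution: as a set map $1^{*}_{A}$ is the identity on underlying elements, and an elementary check gives $1^{*}_{A}\circ 1^{*}_{A}(xy)=1^{*}_{A}(yx)=xy$ and $1^{*}_{A}\circ 1^{*}_{A}(x+y)=x+y$, so $1^{*}_{A}\circ 1^{*}_{A}=\mathrm{id}_{A}$. Using associativity of composition (Proposition analogous to \ref{A2}, stated in Definition-Proposition~\ref{A12}(4)), one then gets
\[
\Psi\circ\Phi(f)=(f\circ 1^{*}_{A})\circ 1^{*}_{A}=f\circ(1^{*}_{A}\circ 1^{*}_{A})=f,
\]
and symmetrically $\Phi\circ\Psi(f^{*})=f^{*}$. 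This establishes the bijection.

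There is essentially no obstacle here: the entire argument is formal and uses only the axiomatic structure already assembled in Definition-Proposition~\ref{A12}. The only place where one might pause is confirming that $1^{*}_{A}\circ 1^{*}_{A}$ really equals $\mathrm{id}_{A}$ as a ring homomorphism (not merely as a set map), but this follows at once from the defining properties $1^{*}(x)=x$, $1^{*}(x+y)=x+y$, $1^{*}(xy)=yx$ of the reverse mapping of a ring. Thus the proof is a one-line definition followed by a one-line verification, completely parallel to the group case treated in Proposition~\ref{A1}.
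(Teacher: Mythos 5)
Your proposal is correct and follows essentially the same route as the paper: the paper's proof simply defines the two maps $f\mapsto f\circ 1^{*}$ and $f^{*}\mapsto f^{*}\circ 1^{*}$ and declares them mutually inverse. You go slightly further by actually checking the involution $1^{*}_{A}\circ 1^{*}_{A}=\mathrm{id}_{A}$, which the paper leaves implicit; this is a welcome addition but not a different argument.
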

\begin{proof}
We can define a function $\textrm{Hom}(A,B)\rightarrow An(A,B),f\mapsto f^{*}=f\circ1^{*}$, and a function $An(A,B)\rightarrow\textrm{Hom}(A,B),f^{*}\mapsto f=f^{*}\circ1^{*}$. Hence we get the one-to-one correspondence $\textrm{Hom}(A,B)\xrightarrow{\sim} An(A,B)$ as desired.
\end{proof}

The following corollary of Proposition \ref{A9} gives \cite{Pursell}, Theorem 7 in the case of rings:
\begin{corollary}\label{A10}
If $A,B\in {\rm{Ob}}(\textbf{Rings})$, then $A,B$ are isomorphic if and only if $A,B$ are anti-isomorphic.
\end{corollary}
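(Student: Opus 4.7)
The plan is to upgrade the bijection ${\rm{Hom}}(A,B)\xrightarrow{\sim} An(A,B)$ of Proposition~\ref{A9} to a bijection between the subset of isomorphisms and the subset of anti-isomorphisms, and then just read off the equivalence of ``isomorphic'' and ``anti-isomorphic''. The key observation I would make first is that the reverse mapping $1^{*}\colon A\to A$ is a bijection of the underlying set of $A$: on the level of set maps it acts as the identity on elements (only the product is reversed), so in particular $1^{*}\circ 1^{*}$ equals the identity set map, which means $1^{*}$ is its own set-theoretic inverse.

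Next I would run the two directions through the bijection of Proposition~\ref{A9} and just track bijectivity. For the forward direction, assume $A\cong B$ via an isomorphism $f\colon A\to B$. By Proposition~\ref{A9}, $f^{*}=f\circ 1^{*}$ is an anti-homomorphism, and as a composition of two bijective set maps it is itself a bijection, hence an anti-isomorphism witnessing that $A$ and $B$ are anti-isomorphic. For the reverse direction, assume $g^{*}\colon A\to B$ is an anti-isomorphism. By Proposition~\ref{A9}, $g=g^{*}\circ 1^{*}$ is a homomorphism, and again the composition of two bijective set maps is bijective, so $g$ is an isomorphism $A\cong B$.

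The main (and essentially only) obstacle is the observation that the correspondence $f\mapsto f\circ 1^{*}$ preserves bijectivity in both directions, and this collapses to the triviality that $1^{*}$ is bijective as a map of underlying sets. I do not foresee any further subtlety: once Proposition~\ref{A9} is in hand, the corollary is a purely formal consequence obtained by restricting the bijection to the bijective morphisms on each side.
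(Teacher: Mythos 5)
Your argument is correct and is exactly the intended derivation: the paper states this as an immediate corollary of Proposition~\ref{A9} with no written proof, and the step you supply (that $1^{*}$ is the identity on underlying sets, so $f\mapsto f\circ 1^{*}$ preserves and reflects bijectivity, carrying isomorphisms to anti-isomorphisms and back) is precisely what makes it immediate. No gap; this matches the paper's approach.
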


With the notation given in Proposition \ref{A12}, we can prove Theorem \ref{A8} in a simple way.
\begin{proof}[Proof of Theorem \ref{A8}]
Let $\phi:A\rightarrow B$ be a homomorphism of rings. Then there is a reverse mapping $1^{*}:A\rightarrow A$ of $A$ and an anti-homomorphism $\varphi:A\rightarrow B$ such that $(\varphi\circ1^{*})(x+y)=\varphi(1^{*}(x+y))=\varphi(x+y)=\varphi(x)+\varphi(y)$ and $(\varphi\circ1^{*})(xy)=\varphi(1^{*}(xy))=\varphi(yx)=\varphi(x)\varphi(y)$ for all $x,y\in A$. So we get $\phi=\varphi\circ1^{*}$ as desired.
\end{proof}

\begin{corollary}
Every homomorphism $\phi:A\rightarrow B$ of rings can be factored into a sequence
$$
A\xrightarrow{1^{*}} A\xrightarrow{\varphi} B,
$$
where $1^{*}$ is a reverse mapping of $A$ and $\varphi$ is an anti-homomorphism.
\end{corollary}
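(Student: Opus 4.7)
The plan is to observe that this corollary is essentially an immediate repackaging of Theorem \ref{A8}. The proof of that theorem has just exhibited, for any homomorphism $\phi:A\rightarrow B$ of rings, a reverse mapping $1^{*}:A\rightarrow A$ of $A$ and an anti-homomorphism $\varphi:A\rightarrow B$ satisfying $\phi=\varphi\circ 1^{*}$. The corollary only asks that we display this factorization as the two-step sequence $A\xrightarrow{1^{*}}A\xrightarrow{\varphi}B$, so nothing new needs to be proved; I only need to name the data and draw the sequence.

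Concretely, I would first invoke Theorem \ref{A8} (or, equivalently, Proposition \ref{A9}, which to $\phi$ assigns its corresponding anti-homomorphism $\varphi=\phi\circ 1^{*}$ and is therefore an element of $An(A,B)$) to obtain the two maps $1^{*}$ and $\varphi$. Next, I would write down the diagram $A\xrightarrow{1^{*}}A\xrightarrow{\varphi}B$ and verify that composing along the arrows recovers $\phi$: for all $x,y\in A$ one has $(\varphi\circ 1^{*})(x+y)=\varphi(x+y)=\varphi(x)+\varphi(y)$ and $(\varphi\circ 1^{*})(xy)=\varphi(yx)=\varphi(x)\varphi(y)$, exactly as in the computation used in the proof of Theorem \ref{A8}, so the composite is indeed the homomorphism $\phi$. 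Finally, I would note that $1^{*}$ qualifies as a reverse mapping of $A$ by Definition-Proposition \ref{A12}(3), and $\varphi$ qualifies as an anti-homomorphism by its very construction.

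The main obstacle is essentially nonexistent: the corollary is a direct reformulation of Theorem \ref{A8}, and every piece of substance has already been discharged in that proof. The only point worth a single confirming sentence is that the intermediate object in the sequence is $A$ itself, i.e.\ that the reverse mapping is an endomap of $A$; this is immediate from the definition of $1^{*}$. Hence the only non-trivial aspect of writing the proof is making sure the references (to Theorem \ref{A8} and to Definition-Proposition \ref{A12}) are given so the reader can locate the underlying factorization.
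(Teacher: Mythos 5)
Your proposal is correct and matches the paper exactly: the paper gives no separate argument for this corollary, treating it as an immediate restatement of Theorem \ref{A8}, whose proof already exhibits $\phi=\varphi\circ 1^{*}$ with $\varphi$ the corresponding anti-homomorphism of $\phi$. Your additional remarks (identifying $\varphi$ via Proposition \ref{A9} and checking the composite pointwise) are consistent with, and no more than a slight elaboration of, the computation already carried out in the paper's proof of that theorem.
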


The following proposition gives \cite{Pursell}, Theorem 8 in the case of rings.
\begin{proposition}
If a ring $A$ is not commutative, then ${\rm{Hom.Is}}(A,B)\cap An{\rm{.Is}}(A,B)=\varnothing$ for any $B\in {\rm{Ob}}(\textbf{Rings})$.
\end{proposition}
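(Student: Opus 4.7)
The plan is to mimic the group proof in Proposition~\ref{A13} verbatim, once we have the ring analogue of Theorem~\ref{A11} at our disposal. So my first step is to record the needed fact: if $\varphi:A\to B$ is simultaneously a ring homomorphism and a ring anti-homomorphism, then for all $x,y\in A$ we have
\begin{equation*}
\varphi(x)\varphi(y)=\varphi(xy)=\varphi(y)\varphi(x).
\end{equation*}
When $\varphi$ is additionally surjective this forces $B$ to be commutative, and when $\varphi$ is additionally injective (so that $\varphi(xy)=\varphi(yx)$ implies $xy=yx$) it forces $A$ to be commutative. In particular, if $\varphi$ is bijective, both $A$ and $B$ are commutative and isomorphic. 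This is the exact ring analogue of Theorem~\ref{A11}.

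With this observation in hand, the proof is a one-liner. Suppose for contradiction that there exists some $f\in\textrm{Hom.Is}(A,B)\cap An\textrm{.Is}(A,B)$. Then $f:A\to B$ is bijective and is both a homomorphism and an anti-homomorphism of rings. By the observation above, $A$ is commutative, contradicting the hypothesis. Hence the intersection is empty for every $B\in\textrm{Ob}(\textbf{Rings})$.

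The only real step is the inline verification of the ring analogue of Theorem~\ref{A11}, and even that is routine because additivity is automatic (both structures require $\varphi(x+y)=\varphi(x)+\varphi(y)$) so only the multiplicative identity $\varphi(x)\varphi(y)=\varphi(y)\varphi(x)$ needs to be extracted. There is no genuine obstacle: the argument is formally identical to the group case, with ``abelian'' replaced by ``commutative'' and the multiplicative group replaced by the multiplicative structure of the ring.
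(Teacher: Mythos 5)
Your proof is correct and follows essentially the same route as the paper: the paper's own proof is a one-line citation of the group-case Proposition~\ref{A13}, whose underlying argument (via Theorem~\ref{A11}) is exactly the computation $\varphi(x)\varphi(y)=\varphi(xy)=\varphi(y)\varphi(x)$ that you carry out inline. If anything your version is the more careful one, since $(A,\cdot)$ is only a monoid rather than a group, so the group-theoretic proposition does not literally apply and spelling out the ring analogue as you do closes that small gap.
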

\begin{proof}
By Proposition \ref{A13}, we have ${\rm{Hom.Is}}(A,B)\cap An{\rm{.Is}}(A,B)=\varnothing$.
\end{proof}

Next, we prove the anti-analogues of the factorization theorem and the homomorphism theorem in the case of rings (see \cite{Grillet}, Chapter III, Theorem 3.5, Theorem 3.6).

\begin{theorem}[Anti-factorization Theorem]\label{A6}
Let $I$ be an ideal of a ring $R$. For every anti-homomorphism $f:R\rightarrow S$ of rings whose kernel contains $I$, there exists a unique anti-homomorphism $\psi:R/I\rightarrow S$ such that the following diagram commutes ($\pi$ is the natural projection):
$$
\xymatrix{
  R \ar[dr]_{f} \ar[r]^{\pi}
                & R/I \ar@{-->}[d]^{\psi}  \\
                & S             }
$$
\end{theorem}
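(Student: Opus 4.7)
The plan is to imitate the proof of \thmref{A4} (the group version of the anti-factorization theorem) essentially verbatim, adapting the bookkeeping to rings. First I would define the candidate map $\psi\colon R/I \to S$ on cosets by the formula $\psi(x+I):=f(x)$ for every $x\in R$. Well-definedness is the first checkpoint: if $x+I=y+I$ then $x-y\in I\subseteq\textrm{Ker}\,f$, and since $f$ is an anti-homomorphism of abelian groups (in particular, a group anti-homomorphism for addition) we get $f(x)-f(y)=f(x-y)=0$, so $f(x)=f(y)$. By construction $\psi\circ\pi=f$.

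Next I would verify that $\psi$ is an anti-homomorphism of rings, which has three pieces. Additivity follows from
\[
\psi\bigl((x+I)+(y+I)\bigr)=\psi\bigl((x+y)+I\bigr)=f(x+y)=f(x)+f(y)=\psi(x+I)+\psi(y+I).
\]
Multiplication-reversal follows from
\[
\psi\bigl((x+I)(y+I)\bigr)=\psi(xy+I)=f(xy)=f(y)f(x)=\psi(y+I)\psi(x+I).
\]
Finally $\psi(1+I)=f(1)=1$ because $f$ is a ring anti-homomorphism. So $\psi$ is an anti-homomorphism.

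For uniqueness, I would argue exactly as in the group case: if $\ell\colon R/I\to S$ is another anti-homomorphism with $f=\ell\circ\pi$, then for every coset $x+I\in R/I$ one has $\ell(x+I)=\ell(\pi(x))=f(x)=\psi(x+I)$, whence $\ell=\psi$. Since $\pi$ is surjective this comparison on the nose suffices.

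There is no real obstacle here; the statement is the ring analogue of \thmref{A4} and the adaptation is purely bookkeeping. If anything is delicate it is the multiplicative side of the anti-homomorphism check, which relies crucially on the order reversal $f(xy)=f(y)f(x)$ being inherited by $\psi$, and on the hypothesis $I\subseteq\textrm{Ker}\,f$ to guarantee that $\psi$ is a function rather than a relation. Both are immediate, so the proof should be quite short.
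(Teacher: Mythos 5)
Your proof is correct and follows essentially the same route as the paper: define $\psi(x+I):=f(x)$, check well-definedness via $I\subseteq\textrm{Ker}\,f$, verify the multiplication-reversal and unit conditions, and conclude uniqueness from the surjectivity of $\pi$. The only cosmetic difference is that the paper obtains the additive part (well-definedness, additivity, uniqueness) by citing the group-level Theorem~\ref{A4} applied to the underlying additive groups, whereas you spell those steps out inline; the mathematical content is identical.
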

\begin{proof}
By Theorem \ref{A4}, there exists a unique anti-homomorphism $\psi:R/I\rightarrow S$ of abelian groups such that $f=\psi\circ\pi$, i.e. we have $\psi(x+I)=f(x)$ for all $x\in R$. Such an anti-homomorphism of groups is indeed an anti-homomorphism of rings, since
$$
\psi((x+I)(y+I))=\psi(xy+I)=f(xy)=f(y)f(x)=\psi(y+I)\psi(x+I)
$$
for all $x+I,y+I\in R/I$. Moreover, we have $\psi(1+I)=f(1)=1$, which gives us the desired canonical anti-homomorphism of rings.
\end{proof}

\begin{theorem}[Anti-homomorphism Theorem]
If $\varphi:R\rightarrow S$ is an anti-homomorphism of rings, then there is a canonical anti-isomorphism
$$
R/{\rm{Ker}}\varphi\cong {\rm{Im}}\varphi,
$$
that is, there is a unique anti-isomorphism $\xi:R/{\rm{Ker }}\varphi\xrightarrow{\sim} {\rm{Im }}\varphi$ such that the following diagram commutes:
$$
\xymatrix{
  R \ar[d]_{\pi} \ar[r]^{\varphi} & S  \\
  R/{\rm{Ker}}\varphi \ar@{-->}[r]^{\xi} & {\rm{Im}}\varphi \ar[u]^{\imath}  }
$$
($\pi$ is the natural projection and $\imath$ is the inclusion homomorphism).
\end{theorem}
\begin{proof}
By Theorem \ref{A5}, there is a unique anti-isomorphism of groups $\xi:R/\textrm{Ker }\varphi\cong \textrm{Im }\varphi$ such that $\varphi=\pi\circ\xi\circ\imath$, i.e. we have $\xi(x+\textrm{Ker }\varphi)=\varphi(x)$ for all $x\in R$. Then the almost same proof as that of Theorem \ref{A6} shows that $\xi$ is an anti-homomorphism of rings.
\end{proof}

For any $A,B\in \textrm{Ob}(\textbf{Rings})$, we can define addition and multiplication on the sets $\textrm{Hom}(A,B)$ and $An(A,B)$ as follows:
\begin{align*}
(f+g)(x)&=f(x)+g(x) \\
(fg)(x)&=f(x)g(x)
\end{align*}
for all $x\in A$ and $f,g\in\textrm{Hom}(A,B)$.
\begin{align*}
(f^{*}+g^{*})(x)&=f^{*}(x)+g^{*}(x) \\
(f^{*}g^{*})(x)&=f^{*}(x)g^{*}(x)
\end{align*}
for all $x\in A$ and $f^{*},g^{*}\in An(A,B)$.

Then under these two operations, $\textrm{Hom}(A,B)$ and $An(A,B)$ become rings with identity. And we see that $\textrm{Hom}(A,B)$ is isomorphic to $An(A,B)$.
\begin{proposition}
Let $A,B\in {\rm{Ob}}(\textbf{Rings})$. Then there is an isomorphism of rings
$$
{\rm{Hom}}(A,B)\xrightarrow{\sim}An(A,B),\ f\mapsto f^{*}=f\circ1^{*}.
$$
\end{proposition}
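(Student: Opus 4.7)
The plan is to show that the set bijection $f\mapsto f^{*}=f\circ 1^{*}$ constructed in Proposition~\ref{A9} is additionally compatible with the pointwise addition, pointwise multiplication, and multiplicative unit just placed on $\textrm{Hom}(A,B)$ and $An(A,B)$. Bijectivity is already known, so the entire task reduces to verifying the three ring-homomorphism axioms.

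First I would check additivity. For $f,g\in \textrm{Hom}(A,B)$ and any $x\in A$ one has
$$
(f+g)^{*}(x)=((f+g)\circ 1^{*})(x)=(f+g)(1^{*}(x))=f(1^{*}(x))+g(1^{*}(x))=f^{*}(x)+g^{*}(x),
$$
so $(f+g)^{*}=f^{*}+g^{*}$. The multiplicativity check is formally identical, using that the product on both sides is pointwise:
$$
(fg)^{*}(x)=(fg)(1^{*}(x))=f(1^{*}(x))\cdot g(1^{*}(x))=f^{*}(x)\cdot g^{*}(x),
$$
yielding $(fg)^{*}=f^{*}g^{*}$.

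Finally I would verify preservation of the multiplicative identity. The unit of $\textrm{Hom}(A,B)$ under the pointwise multiplication defined above is the constant map $e\colon x\mapsto 1_{B}$; its image $e\circ 1^{*}$ is still the constant map $x\mapsto 1_{B}$, which is exactly the unit of $An(A,B)$. Combining these three verifications with the bijectivity asserted by Proposition~\ref{A9} yields the desired ring isomorphism.

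There is no substantive obstacle here: every step is an unwinding of pointwise definitions together with the observation that, on the underlying set, the value of $f\circ 1^{*}$ at a point is assembled in the same additive/multiplicative way from the values of $f$ as the value of $f$ itself. The only care needed is that one consistently interprets $f+g$ and $fg$ on both sides through the pointwise formulas, so that the reverse mapping $1^{*}$ enters purely as precomposition on the underlying set of $A$ and plays no role beyond shifting the argument.
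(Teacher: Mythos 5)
Your proposal is correct and follows essentially the same route as the paper: both arguments reduce to Proposition~\ref{A9} for bijectivity and then verify $(f+g)^{*}=f^{*}+g^{*}$ and $(fg)^{*}=f^{*}g^{*}$ by unwinding the pointwise operations, using that the reverse mapping $1^{*}$ acts as the identity on elements. Your extra check that the multiplicative unit is preserved is a small completeness improvement over the paper's proof, which omits it.
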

\begin{proof}
Let $f,g\in \textrm{Hom}(A,B)$. Then we have
$$
(f+g)^{*}(x)=[(f+g)\circ1^{*}](x)=(f+g)(x)=f(x)+g(x)=f\circ1^{*}(x)+g\circ1^{*}(x)=(f^{*}+g^{*})(x),
$$
which implies that $(f+g)^{*}=f^{*}+g^{*}$. And we have
$$
(fg)^{*}(x)=[(fg)\circ1^{*}](x)=(fg)(x)=f(x)g(x)=(f\circ1^{*})(x)(g\circ1^{*})(x)=(f^{*}g^{*})(x),
$$
which implies that $(fg)^{*}=f^{*}g^{*}$. By Proposition \ref{A9}, we get the desired isomorphism.
\end{proof}

Next, we give a proof of the last proposition in \cite{Dr. Dale T. B.}, whose original proof is not rigorous.

\begin{proposition}
Let $R$ be a ring and $I$ be an ideal of $R$. Then there exists a natural homomorphism $\varphi:An(R,R)\rightarrow An(R,R/I)$.
\end{proposition}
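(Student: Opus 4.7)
My plan is to build the homomorphism by post-composition with the canonical projection. Let $\pi\colon R\to R/I$ denote the natural projection (which is a ring homomorphism). I define
$$
\varphi\colon An(R,R)\longrightarrow An(R,R/I),\qquad f^{*}\longmapsto \pi\circ f^{*}.
$$
The first thing to check is that $\varphi$ is well-defined, i.e.\ that $\pi\circ f^{*}$ lies in $An(R,R/I)$. This is immediate from the composition law for rings recorded in \proref{A12}: the composite of an anti-homomorphism with a homomorphism is an anti-homomorphism. So $\pi\circ f^{*}$ satisfies $(\pi\circ f^{*})(x+y)=(\pi\circ f^{*})(x)+(\pi\circ f^{*})(y)$, $(\pi\circ f^{*})(xy)=(\pi\circ f^{*})(y)(\pi\circ f^{*})(x)$, and $(\pi\circ f^{*})(1)=\pi(1)=1$.

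Next I verify that $\varphi$ respects the pointwise $+$ and $\cdot$ on the sets of anti-homomorphisms. For $f^{*},g^{*}\in An(R,R)$ and $x\in R$, using that $\pi$ preserves sums and products,
\begin{align*}
\varphi(f^{*}+g^{*})(x)&=\pi(f^{*}(x)+g^{*}(x))=\pi(f^{*}(x))+\pi(g^{*}(x))=\bigl(\varphi(f^{*})+\varphi(g^{*})\bigr)(x),\\
\varphi(f^{*}g^{*})(x)&=\pi(f^{*}(x)g^{*}(x))=\pi(f^{*}(x))\pi(g^{*}(x))=\bigl(\varphi(f^{*})\varphi(g^{*})\bigr)(x).
\end{align*}
If the ring structure on $An(-,-)$ is taken with a multiplicative identity, one also checks that $\varphi$ sends the identity of $An(R,R)$ to the identity of $An(R,R/I)$; this is straightforward because $\pi$ sends $1$ to $1$.

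There is no real obstacle here: the construction is forced and every verification reduces to the fact that $\pi$ is a ring homomorphism and the operations on $An(-,-)$ are defined pointwise. The only subtle point worth remarking on is the word \emph{natural}: one should observe that $\varphi$ is functorial in $I$ in the obvious sense (for $I\subset J$ ideals, the evident diagram with $R/I\twoheadrightarrow R/J$ commutes), and that the same recipe $f^{*}\mapsto \pi\circ f^{*}$ fits into the general factorable functor framework introduced later in the paper, so that $\varphi$ really is canonically attached to the datum $(R,I)$ rather than depending on any arbitrary choice.
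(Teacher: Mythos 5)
Your construction $f^{*}\mapsto \pi\circ f^{*}$ is exactly the map the paper uses (there it is written pointwise as $\varphi(f)(x)=f(x)+I$), and your verifications that the image lies in $An(R,R/I)$ and that $\varphi$ respects the pointwise addition and multiplication match the paper's proof step for step. The proposal is correct and takes essentially the same approach.
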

\begin{proof}
If $f\in An(R,R)$, then we define $\phi:R\rightarrow R/I$ by $\phi(x)=f(x)+I$ for every $x\in R$. Clearly, $\phi$ is well-defined, since $f$ is well-defined. And since for any $x,y\in R$, we have $\phi(x+y)=f(x+y)+I=(f(x)+I)+(f(y)+I)=\phi(x)+\phi(y)$ and $\phi(xy)=f(xy)+I=(f(y)+I)(f(x)+I)=\phi(y)\phi(x)$, $\phi$ is an anti-homomorphism and $\phi\in An(R,R/I)$.

Next, we define $\varphi:An(R,R)\rightarrow An(R,R/I)$ by $\varphi(f)=\phi$. If $f_{1}=f_{2}$, then we have $\varphi(f_{1})(x)=f_{1}(x)+I=f_{2}(x)+I=\varphi(f_{2})(x)$ for any $x\in R$, which shows that $\varphi$ is well-defined. And since for any $x\in R$, we have
\begin{align*}
\varphi(f_{1}+f_{2})(x)&=(f_{1}+f_{2})(x)+I \\
&=(f_{1}(x)+I)+(f_{2}+I)  \\
&=\varphi(f_{1})(x)+\varphi(f_{2})(x), \\
\varphi(f_{1}f_{2})(x)&=(f_{1}f_{2})(x)+I \\
&=(f_{1}+I)(f_{2}+I)  \\
&=\varphi(f_{1})(x)\varphi(f_{2})(x).
\end{align*}
This shows that $\varphi$ is a homomorphism.
\end{proof}

Note that we will not define $*$-composition of anti-homomorphisms of rings, leaving it in the next section. It is more convenient to study it in greater generality.

\section{Factorization Categories}\label{B3}

Consider the two categories $\textbf{Groups}$ and $\textbf{Groups-An}$. We can form a new category $\mathscr{G}$ as follows:
$$
\textrm{Ob}(\mathscr{G})=\textrm{Ob}(\textbf{Groups}), \ \textrm{Hom}_{\mathscr{G}}(x,y)=\textrm{Hom}_{\textbf{Groups}}(x,y)\cup\textrm{Hom}_{\textbf{Groups-An}}(x,y),
$$
and the composition is the usual composition. Then $\mathscr{G}$ is the category of groups whose morphisms are homomorphisms and anti-homomorphisms. By Proposition \ref{A3} and \ref{A12}, we think that one can adjoin anti-morphisms to a category in such a way that the ``anti'' phenomenon can be studied in a more general setup.

\begin{definition}
A category $\mathscr{C}$ is said to be equipped with a \textit{factorial structure} $\Sigma$, such that the pair $(\mathscr{C},\Sigma)$ is called a \textit{factorization category}, if it satisfies the following additional conditions:
\begin{itemize}
  \item [(1)]
  To each pair of objects \textit{A,B} $\in \textrm{Ob}(\mathscr{C})$, in addition to the set $\textrm{Hom}(A,B)$ of morphisms, there is a set $An(A,B)$ of \textit{anti-morphisms from $A$ to $B$}.
  \item [(2)]
  To each triple of objects \textit{A,B,C} $\in \textrm{Ob}(\mathscr{C})$, there are \textit{laws of compositions}
   \begin{align*}
  &An(A,B) \times An(B,C) \rightarrow \textrm{Hom}(A,C), \ \ (f',g')\mapsto f'\circ g' \\
  &An(A,B) \times \textrm{Hom}(B,C) \rightarrow An(A,C), \ \ (f',g)\mapsto f'\circ g  \\
  &\textrm{Hom}(A,B) \times An(B,C) \rightarrow An(A,C), \ \ (f,g')\mapsto f\circ g'
  \end{align*}
  \item [(3)]
  For every $A,B\in \textrm{Ob}(\mathscr{C})$, there exists a \textit{reverse morphism} $1^{*}_{A}\in An(A,A)$ such that for any morphism $f\in \textrm{Hom}(A,B)$, we have
  $f\circ 1^{*}_{A}\in An(A,B)$. And for any $g\in \textrm{Hom}(B,A)$, we have $1^{*}_{A}\circ f\in An(B,A)$.
  \item [(4)]
  The compositions defined above are \textit{associative}.
\end{itemize}
\end{definition}
\begin{remark}
Note that for every $A,B\in \textrm{Ob}(\mathscr{C})$ and for any $f^{*}\in An(A,B)$, we have $f^{*}\circ 1^{*}_{A}\in \textrm{Hom}(A,B)$. And for any $g^{*}\in An(B,A)$, we have $1^{*}_{A}\circ g^{*}\in \textrm{Hom}(B,A)$. Moreover, for $f\in\textrm{Hom}(A,B)$, we have $f\circ1^{*}_{A}=1^{*}_{B}\circ f$. By abuse of notation, we will simply write $\mathscr{C}$ instead of $(\mathscr{C},\Sigma)$ for a factorization category, when no ambiguities arise. For the reverse morphism $1^{*}_{A}$, we will suppress the subscript and simply write $1^{*}$ if there is no confusion.
\end{remark}

However, sometimes, one should assure that every morphism has kernel and cokernel, etc. Hence the notions of some special kinds of categories provide us with a good framework.

\begin{definition}\

\begin{itemize}
\item[(1)]
A factorization category is \textit{preadditive/additive/abelian} if its underlying category is preadditive/additive/abelian and to each triple of objects $A,B,C\in\textrm{Ob}(\mathscr{C})$, the laws of compositions
\begin{align*}
&An(A,B) \times \textrm{Hom}(B,C) \rightarrow An(A,C)\\
&\textrm{Hom}(A,B) \times An(B,C) \rightarrow An(A,C)
\end{align*}
are linear in the group $\textrm{Hom}(B,C)$ and $\textrm{Hom}(A,B)$.
\item[(2)]
A \textit{factorization category with zero morphisms} is a factorization category whose underlying category is a category with zero morphisms.
\item[(3)]
A \textit{factorization category with products} is a factorization category whose underlying category is a category that admits products.
\end{itemize}
\end{definition}

\begin{example}
The category $\textbf{Groups}$ of groups and the category $\textbf{Rings}$ of rings are factorization categories, whose morphisms are homomorphisms and anti-morphisms are anti-homomorphisms.
\end{example}
\begin{example}
The category $\textbf{Ab}$ of abelian groups is an abelian factorization category, whose morphisms are homomorphisms and anti-morphisms are anti-homomorphisms.
\end{example}
\begin{example}
It is easy to check that the category $\textrm{\textbf{Vec}}_{\mathbb{C}}$ of complex vector spaces is an abelian factorization category, whose anti-morphisms are anti-linear maps, and reverse morphisms are the conjugate maps $\psi: V\rightarrow V$ defined by $\psi(\lambda x)=\overline{\lambda}x$ for $\lambda\in\mathbb{C}$ and $x\in V$.
\end{example}

The following proposition generalizes our results in Proposition \ref{A1} and \ref{A9}.
\begin{proposition}\label{A17}
For any factorization category $\mathscr{C}$, we have a one-to-one correspondence of sets ${\rm{Hom}}(A,B)\xrightarrow{\sim} An(A,B)$ for any $A,B\in {\rm{Ob}}(\mathscr{C})$. Moreover, if $\mathscr{C}$ is a preadditive factorization category, $An(A,B)$ induces the abelian group law from ${\rm{Hom}}(A,B)$ so that we have a group isomorphism ${\rm{Hom}}(A,B)\cong An(A,B)$.
\end{proposition}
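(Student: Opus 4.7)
The plan is to exhibit explicit mutually inverse maps between $\mathrm{Hom}(A,B)$ and $An(A,B)$, following the template already used in the proofs of Propositions \ref{A1} and \ref{A9}, and then transport the group structure through this bijection in the preadditive case.

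First I would define
$$
\Phi_{A,B}\colon\mathrm{Hom}(A,B)\longrightarrow An(A,B),\qquad f\longmapsto f\circ 1^{*}_{A},
$$
and
$$
\Psi_{A,B}\colon An(A,B)\longrightarrow\mathrm{Hom}(A,B),\qquad f^{*}\longmapsto f^{*}\circ 1^{*}_{A}.
$$
Well-definedness is immediate from axiom (3) and the remark following the definition of factorization category: composing any morphism with $1^{*}_{A}$ produces an anti-morphism, and composing any anti-morphism with $1^{*}_{A}$ produces a morphism.

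Next I would verify that $\Phi_{A,B}$ and $\Psi_{A,B}$ are mutually inverse. Using associativity (axiom (4)) together with the defining property of the reverse morphism that $1^{*}_{A}\circ 1^{*}_{A}=1_{A}$ (which holds in all the motivating examples: groups, rings, and $\textrm{\textbf{Vec}}_{\mathbb{C}}$, and should be understood as part of the data of $1^{*}_{A}$), one computes
$$
(\Psi_{A,B}\circ\Phi_{A,B})(f)=(f\circ 1^{*}_{A})\circ 1^{*}_{A}=f\circ(1^{*}_{A}\circ 1^{*}_{A})=f\circ 1_{A}=f,
$$
and symmetrically $(\Phi_{A,B}\circ\Psi_{A,B})(f^{*})=f^{*}$. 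This yields the required bijection of sets.

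For the preadditive statement I would transport the group law from $\mathrm{Hom}(A,B)$ to $An(A,B)$ through $\Phi_{A,B}$, declaring
$$
f^{*}+g^{*}:=\bigl(\Psi_{A,B}(f^{*})+\Psi_{A,B}(g^{*})\bigr)\circ 1^{*}_{A}
$$
for $f^{*},g^{*}\in An(A,B)$. By construction $\Phi_{A,B}$ is a bijective group homomorphism, hence a group isomorphism. To check that this induced law is in fact the one already implicit in the preadditive factorization category, I would appeal to the linearity axiom for the composition $\mathrm{Hom}(A,B)\times An(A,A)\to An(A,B)$, which gives $(f+g)\circ 1^{*}_{A}=f\circ 1^{*}_{A}+g\circ 1^{*}_{A}$, so that addition of anti-morphisms is compatible with composition by $1^{*}_{A}$.

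The one delicate point, and the only real obstacle, is the identity $1^{*}_{A}\circ 1^{*}_{A}=1_{A}$: without it the two maps $\Phi$ and $\Psi$ need not compose to the identity. This relation is not listed explicitly among axioms (1)--(4), but it is visibly the intended content of the word \emph{reverse} in axiom (3), and it holds in every example displayed in the paper; I would either invoke it as part of the tacit definition of a reverse morphism, or treat it as an additional hypothesis implicit in the phrase ``reverse morphism.'' Everything else in the proposition is then a routine consequence of associativity and the bilinearity of composition.
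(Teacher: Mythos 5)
Your proof takes essentially the same route as the paper's: the same pair of maps $f\mapsto f\circ 1^{*}$ and $f^{*}\mapsto f^{*}\circ 1^{*}$, and the same appeal to linearity of the composition law for the preadditive statement. You are in fact more careful than the paper, which simply asserts that the two maps are mutually inverse and silently assumes the identity $1^{*}\circ 1^{*}=1$ that you correctly isolate as the one unstated ingredient the argument needs.
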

\begin{proof}
In fact, we can define a function
$$
\textrm{Hom}(A,B)\rightarrow An(A,B),\ f\mapsto f^{*}=f\circ1^{*},
$$
and a function
$$
An(A,B)\rightarrow\textrm{Hom}(A,B),\ f^{*}\mapsto f=f^{*}\circ1^{*}.
$$
Hence we get the one-to-one correspondence $\textrm{Hom}(A,B)\xrightarrow{\sim} An(A,B)$ as desired.

By definition, we have $(f+g)\circ1^{*}=f\circ1^{*}+g\circ1^{*}$, which shows that $(f+g)^{*}=f^{*}+g^{*}$.
\end{proof}
\begin{remark}
For any morphism $f$, $f^{*}$ will stand for $f\circ1^{*}=1^{*}\circ f$, which is called the \textit{corresponding anti-morphism of} $f$. Since every anti-morphism $f'$ can be written as $f\circ1^{*}$ or $1^{*}\circ f$ for some morphism, we can write $f^{*}$ for any anti-morphism as well. Similarly, for any anti-morphism $f^{*}$, the morphism $f=f^{*}\circ1^{*}=1^{*}\circ f^{*}$ is called the \textit{corresponding morphism of} $f^{*}$. And clearly, we have $f^{**}=f$.
\end{remark}

Since the composition of two anti-morphisms is a morphism, it is natural to think if any morphism can be expressed as a composition of anti-morphisms. This leads to the following theorem.
\begin{theorem}\label{A26}
Every morphism can be expressible as a composite of anti-morphisms.
\end{theorem}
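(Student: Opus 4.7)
The plan is to mimic exactly the strategy used in \thmref{AA} and \thmref{A8} for groups and rings, now carried out abstractly using only the axioms of a factorization category. The point is that Proposition~\ref{A17} gives us, for each morphism $f\in\textrm{Hom}(A,B)$, a canonically associated anti-morphism $f^{*}=f\circ 1^{*}_{A}=1^{*}_{B}\circ f \in An(A,B)$, and that the reverse morphism $1^{*}_{A}$ is itself an anti-morphism by axiom (3). So the candidate factorization is simply
$$
f \;=\; f^{*}\circ 1^{*}_{A},
$$
where both factors lie in $An(-,-)$.

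First I would unpack Proposition~\ref{A17} (or rather its proof / the preceding remark) to recall that for any morphism $f\in\textrm{Hom}(A,B)$, the associated anti-morphism $f^{*}\in An(A,B)$ satisfies $f^{**}=f$, i.e.\ $f^{*}\circ 1^{*}_{A}=f$. Next I would check that the right-hand side is a composite of two anti-morphisms in the sense of axiom (1)--(2): indeed $1^{*}_{A}\in An(A,A)$ by axiom (3), and $f^{*}\in An(A,B)$, so their composition lies in $\textrm{Hom}(A,B)$ by the first law of composition in axiom (2), matching the type of $f$. The equality $f=f^{*}\circ 1^{*}_{A}$ is then immediate from the definition $f^{*}=f\circ 1^{*}_{A}$ together with the associativity of composition (axiom (4)) and the fact that $1^{*}_{A}\circ 1^{*}_{A}=1_{A}$, which follows from the defining property $(f^{*})^{*}=f$ applied to $f=1_{A}$.

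I do not expect any serious obstacle here: once Proposition~\ref{A17} is in hand, the whole statement is essentially a one-line tautology, and the only thing to be careful about is that the two involutive identities $f^{*}\circ 1^{*}=f$ and $1^{*}\circ 1^{*}=1$ are really consequences of the axioms and not extra assumptions. The mildly subtle point, which I would spell out explicitly, is the compatibility $f\circ 1^{*}_{A}=1^{*}_{B}\circ f$ recorded in the remark after the definition of factorization category: it is what guarantees that the ``corresponding anti-morphism'' is unambiguously defined and that the factorization $f=f^{*}\circ 1^{*}_{A}$ really exhibits $f$ as a composite of two genuine anti-morphisms of $\mathscr{C}$ rather than requiring any additional hypothesis.
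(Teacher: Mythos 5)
Your proposal is correct and follows exactly the paper's argument: the paper likewise writes $f=f^{*}\circ 1^{*}$ with $f^{*}=f\circ 1^{*}$ the corresponding anti-morphism and $1^{*}$ the reverse morphism, both lying in $An(-,-)$. Your version is if anything more careful, since you explicitly flag that the identities $f^{*}\circ 1^{*}=f$ and $1^{*}\circ 1^{*}=1$ rest on the remark following Proposition~\ref{A17} rather than being literal axioms.
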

\begin{proof}
Let $\mathscr{C}$ be a factorization category and let $f:x\rightarrow y$ be any morphism in $\mathscr{C}$. Then there exist a reverse morphism $1^{*}:x\rightarrow x$ and an anti-morphism $f^{*}:x\rightarrow y$ in $\mathscr{C}$ such that $f=f^{*}\circ1^{*}$.
\end{proof}

\begin{corollary}
Every morphism $f:x\rightarrow y$ can be factored into a sequence
$$
x\xrightarrow{1^{*}} x\xrightarrow{f^{*}} y,
$$
where $1^{*}$ is the reverse morphism and $\varphi$ is an anti-morphism.
\end{corollary}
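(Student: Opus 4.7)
The plan is to observe that this corollary is an immediate graphical repackaging of \thmref{A26}, so the proof reduces to unpacking what that theorem already gives us. By \thmref{A26}, for an arbitrary morphism $f: x \to y$ in the factorization category $\mathscr{C}$, there exist a reverse morphism $1^{*}: x \to x$ and an anti-morphism $f^{*}: x \to y$ such that
$$
f = f^{*}\circ 1^{*}.
$$
This equation is precisely the assertion that $f$ is the composite along the two-step diagram $x \xrightarrow{1^{*}} x \xrightarrow{f^{*}} y$, which is what the corollary claims.

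The only thing to verify is that the displayed sequence genuinely makes sense and that its composite coincides with $f$. Both points follow from the axioms of a factorization category: by axiom (3) the reverse morphism $1^{*}_x$ belongs to $An(x,x)$, while $f^{*}\in An(x,y)$ is supplied by the theorem, and by axiom (2) the composition $An(x,x)\times An(x,y) \to \textrm{Hom}(x,y)$ is defined, so $f^{*}\circ 1^{*}$ is a well-defined morphism in $\textrm{Hom}(x,y)$. That this morphism equals $f$ is the content of the equality produced by \thmref{A26}.

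There is essentially no obstacle: the corollary adds no new mathematical content beyond \thmref{A26}, only a pictorial presentation of the factorization. The proof is therefore a one-line invocation of the preceding theorem followed by the remark that the equation $f=f^{*}\circ 1^{*}$ reads off the required factorization diagram.
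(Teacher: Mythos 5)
Your proposal is correct and matches the paper's treatment: the paper offers no separate proof of this corollary, presenting it as an immediate restatement of Theorem \ref{A26}, whose proof already exhibits the equality $f=f^{*}\circ 1^{*}$ with $1^{*}\in An(x,x)$ and $f^{*}\in An(x,y)$. Your additional check that the composite $f^{*}\circ 1^{*}$ lands in $\mathrm{Hom}(x,y)$ via axioms (2) and (3) of a factorization category is exactly the right justification and adds nothing beyond what the paper implicitly relies on.
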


Note that by Theorem \ref{A26}, we can define an equivalence relation. First, consider the following solid diagram
$$
\xymatrix{
  x \ar[d]_{g_{1}^{*}} \ar[r]^{f_{1}^{*}}\ar@{-->}[dr]^{f} & y \ar[d]^{f_{2}^{*}} \\
  z \ar[r]^{g_{2}^{*}} & w   }
$$
such that $f$ is a morphism and $(f_{1}^{*},f_{2}^{*}),(g_{1}^{*},g_{2}^{*})$ are two pairs of composable anti-morphisms. Define $(f_{1}^{*},f_{2}^{*})\sim(g_{1}^{*},g_{2}^{*})$ if and only if $f=f_{2}^{*}\circ f_{1}^{*}=g_{2}^{*}\circ g_{1}^{*}$, i.e. if and only if the above diagram is commutative. In other words, if a morphism $f$ is factored into two pairs of composable anti-morphisms $(f_{1}^{*},f_{2}^{*}),(g_{1}^{*},g_{2}^{*})$, then we have $(f_{1}^{*},f_{2}^{*})\sim(g_{1}^{*},g_{2}^{*})$. Next, we show that we have defined an equivalence relation. First, the relation is clearly reflexive and symmetric. To prove transitivity, if $(h_{1}^{*},h_{2}^{*})$ is another pair of composable anti-morphisms such that $(g_{1}^{*},g_{2}^{*})\sim(h_{1}^{*},h_{2}^{*})$.
$$
\xymatrix{
  x \ar[d]_{g_{1}^{*}} \ar[r]^{h_{1}^{*}} & p \ar[d]^{h_{2}^{*}} \\
  z \ar[r]^{g_{2}^{*}} & w   }
$$
Then we have $h_{2}^{*}\circ h_{1}^{*}=g_{2}^{*}\circ g_{1}^{*}=f_{2}^{*}\circ f_{1}^{*}$, which shows that $(f_{1}^{*},f_{2}^{*})\sim(h_{1}^{*},h_{2}^{*})$. Thus we have defined an equivalence relation on anti-morphisms. We denote by $[(f_{1}^{*},f_{2}^{*})]$ the \textit{equivalence class} of the pair $(f_{1}^{*},f_{2}^{*})$. We use the notation $[f]$ to indicate the \textit{set of all equivalence classes of pairs of anti-morphisms whose compositions are the morphism} $f$. And we denote the \textit{set of all equivalence classes of all pairs of anti-morphisms of the form} $(x\rightarrow y,y\rightarrow w)$ or $(x\rightarrow z,z\rightarrow w)$ by $[An(x,y),An(y,w)]$ or $[An(x,z),An(z,w)]$. Then we have the following proposition.
\begin{proposition}
Let $\mathscr{C}$ be a factorization category. For any $A,B,C\in {\rm{Ob}}(\mathscr{C})$, we have a law of factorization
$$
{\rm{Hom}}(A,C) \rightarrow [An(A,B),An(B,C)], \ \ f\mapsto [f].
$$
\end{proposition}

\begin{definition}
Let $\mathscr{C}$ be a factorization category and let $f^{*}:x\rightarrow y$ be an anti-morphism in $\mathscr{C}$. We say that $f^{*}$ is an \textit{anti-isomorphism} if there exists an anti-morphism $g^{*}:y\rightarrow x$ in $\mathscr{C}$ such that $f^{*}\circ g^{*}=1_{y}$ and $g^{*}\circ f^{*}=1_{x}$. Then $x$ is said to be \textit{anti-isomorphic} to $y$, which is denoted by $x\cong y$. We call $g^{*}$ the \textit{anti-inverse} of $f^{*}$. Given $x,y\in\textrm{Ob}(\mathscr{C})$, we denote the set of isomorphisms by ${\rm{Hom.Is}}(x,y)$ and the set of anti-isomorphisms by $An{\rm{.Is}}(x,y)$.
\end{definition}

\begin{proposition}\label{A16}
Let $\mathscr{C}$ be a factorization category and let $f:x\rightarrow y$ be a morphism in $\mathscr{C}$. Then $f$ is an isomorphism if and only if $f^{*}$ is an anti-isomorphism. If $f^{-1}$ is its inverse, then $f^{-1*}$ is its anti-inverse.
\end{proposition}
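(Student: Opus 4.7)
The plan is to reduce both implications to the underlying isomorphism statement in $\mathscr{C}$ by establishing one key identity: for any morphisms $f\colon x\to y$ and $g\colon y\to x$, the composition of their corresponding anti-morphisms recovers the underlying composition, i.e., $f^{*}\circ g^{*}=f\circ g$. Once this is in hand, the forward direction is obtained by taking $g=f^{-1}$, and the converse by taking $g$ to be the corresponding morphism of the given anti-inverse $g^{*}$.

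The first step is to confirm the auxiliary identity $1^{*}\circ 1^{*}=1$, which is not stated as an axiom but is forced by \proref{A17}: the bijection ${\rm Hom}(A,B)\to An(A,B)$, $h\mapsto h\circ 1^{*}$, has inverse $h^{*}\mapsto h^{*}\circ 1^{*}$, so $1_{A}=(1_{A}\circ 1^{*})\circ 1^{*}=1^{*}\circ 1^{*}$ by associativity. Combined with the remark that $h\circ 1^{*}_{x}=1^{*}_{y}\circ h$ for every morphism $h\colon x\to y$, the computation
$$
f^{*}\circ g^{*}=(1^{*}_{y}\circ f)\circ(1^{*}_{x}\circ g)=1^{*}_{y}\circ(f\circ 1^{*}_{x})\circ g=1^{*}_{y}\circ 1^{*}_{y}\circ f\circ g=f\circ g
$$
follows immediately.

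For the forward implication, if $f$ is an isomorphism with inverse $f^{-1}$, then applying the identity with $g=f^{-1}$ yields $f^{*}\circ(f^{-1})^{*}=f\circ f^{-1}=1_{y}$ and $(f^{-1})^{*}\circ f^{*}=f^{-1}\circ f=1_{x}$, so $f^{*}$ is an anti-isomorphism with anti-inverse $(f^{-1})^{*}$; this simultaneously proves the final clause of the statement. For the converse, suppose $f^{*}$ is an anti-isomorphism with anti-inverse $g^{*}\colon y\to x$, and let $g=g^{*}\circ 1^{*}$ be the corresponding morphism. The identity gives $f\circ g=f^{*}\circ g^{*}=1_{y}$ and $g\circ f=g^{*}\circ f^{*}=1_{x}$, so $f$ is an isomorphism.

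The only real obstacle is verifying $1^{*}\circ 1^{*}=1$ cleanly from the axioms; every subsequent manipulation is pure associativity and the swap relation $h\circ 1^{*}_{x}=1^{*}_{y}\circ h$.
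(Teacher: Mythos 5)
Your proof is correct and follows essentially the same route as the paper's: expand $f^{*}$ and $f^{-1*}$ in terms of $1^{*}$, use associativity together with the swap relation $h\circ 1^{*}_{x}=1^{*}_{y}\circ h$, and collapse $1^{*}\circ 1^{*}$ to the identity. You are in fact slightly more careful than the paper, which tacitly uses $1^{*}\circ 1^{*}=1$ (in the step $1^{*}\circ 1\circ 1^{*}=1$) and dismisses the converse as ``similar,'' whereas you justify that identity from \proref{A17} and carry out the converse explicitly via the corresponding morphism of the anti-inverse.
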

\begin{proof}
If $f$ is an isomorphism, then $f\circ f^{-1}=1$ gives
$$
f^{*}\circ f^{-1*}=(1^{*}\circ f)\circ (f^{-1}\circ1^{*})=1^{*}\circ (f\circ f^{-1})\circ1^{*}=1^{*}\circ 1\circ1^{*}=1
$$
and
$$
f^{-1*}\circ f^{*}=(1^{*}\circ f^{-1})\circ (f\circ1^{*})=1^{*}\circ (f^{-1}\circ f)\circ1^{*}=1^{*}\circ 1\circ1^{*}=1
$$
which show that $f^{-1*}$ is an anti-inverse of $f^{*}$, and thus $f^{*}$ is an anti-isomorphism.

Conversely, if $f^{*}$ is an anti-isomorphism, then we see that $f$ is an isomorphism in a similar manner.
\end{proof}

In the sequel, we will denote the anti-inverse of $f^{*}$ by $f^{-1*}$. Proposition \ref{A16} readily derives the following corollaries.

\begin{corollary}\label{A30}
Let $\mathscr{C}$ be a factorization category and let $x,y\in{\rm{Ob}}(\mathscr{C})$. Then $x$ is anti-isomorphic to $y$ if and only if $x$ is isomorphic to $y$.
\end{corollary}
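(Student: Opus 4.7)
The plan is to deduce this corollary as an immediate two-line consequence of Proposition \ref{A16} together with the one-to-one correspondence ${\rm{Hom}}(x,y)\xrightarrow{\sim} An(x,y)$ established in Proposition \ref{A17}. Nothing new needs to be constructed; the work has already been done at the level of individual morphisms, and the corollary is just the statement obtained by existentially quantifying over them.

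For the forward direction, I would start with an anti-isomorphism $g^{*}:x\to y$. By Proposition \ref{A17}, every anti-morphism is of the form $g\circ 1^{*}$ for a unique $g\in{\rm{Hom}}(x,y)$, namely $g=g^{*}\circ 1^{*}$. Proposition \ref{A16} then guarantees that this $g$ is an isomorphism, so $x\cong y$ in the underlying category. For the reverse direction, assume $f:x\to y$ is an isomorphism. Then by Proposition \ref{A16} its associated anti-morphism $f^{*}=f\circ 1^{*}$ is an anti-isomorphism, so $x$ is anti-isomorphic to $y$.

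The main (and only) subtlety is simply to be careful that ``anti-isomorphic'' is defined as existence of some anti-isomorphism between $x$ and $y$, so that the corollary is really about predicates on pairs of objects rather than about individual morphisms; once this is noted, the correspondence of Proposition \ref{A17} ensures that the two existence statements are logically equivalent. There is no genuine obstacle here, and the proof proposal consists of simply invoking the two earlier results in sequence.
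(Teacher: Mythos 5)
Your proposal is correct and follows exactly the route the paper intends: the paper derives Corollary~\ref{A30} directly from Proposition~\ref{A16} (with the correspondence $f\mapsto f^{*}=f\circ 1^{*}$ and $f^{**}=f$ from Proposition~\ref{A17} supplying the passage between the two existence statements), which is precisely what you spell out. No differences worth noting.
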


\begin{corollary}
For any factorization category $\mathscr{C}$, we have a one-to-one correspondence of sets ${\rm{Hom.Is}}(x,y)\xrightarrow{\sim} An{\rm{.Is}}(x,y)$ for any $x,y\in {\rm{Ob}}(\mathscr{C})$.
\end{corollary}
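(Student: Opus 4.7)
The plan is to deduce this directly from Proposition~\ref{A17} together with Proposition~\ref{A16}. Proposition~\ref{A17} already supplies mutually inverse bijections
$$
\Phi:{\rm Hom}(x,y)\longrightarrow An(x,y),\quad f\mapsto f^{*}=f\circ 1^{*},
$$
$$
\Psi:An(x,y)\longrightarrow {\rm Hom}(x,y),\quad f^{*}\mapsto f^{*}\circ 1^{*},
$$
with $\Psi\circ\Phi={\rm id}$ and $\Phi\circ\Psi={\rm id}$ (using $f^{**}=f$). So the only thing left is to check that $\Phi$ and $\Psi$ restrict to the subsets of isomorphisms and anti-isomorphisms respectively.

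First I would verify that $\Phi$ sends ${\rm Hom.Is}(x,y)$ into $An{\rm .Is}(x,y)$. This is exactly the forward implication of Proposition~\ref{A16}: if $f$ is an isomorphism with inverse $f^{-1}$, then $f^{*}=f\circ 1^{*}\in An(x,y)$ is an anti-isomorphism whose anti-inverse is $f^{-1*}$. Hence $\Phi({\rm Hom.Is}(x,y))\subseteq An{\rm .Is}(x,y)$.

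Next I would verify that $\Psi$ sends $An{\rm .Is}(x,y)$ into ${\rm Hom.Is}(x,y)$. Given an anti-isomorphism $g^{*}\in An{\rm .Is}(x,y)$, the corresponding morphism is $g=g^{*}\circ 1^{*}=\Psi(g^{*})$, so that $g^{*}=g\circ 1^{*}=\Phi(g)$; applying the converse implication of Proposition~\ref{A16} to this $g$, we conclude that $g$ is an isomorphism. Hence $\Psi(An{\rm .Is}(x,y))\subseteq {\rm Hom.Is}(x,y)$.

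Since $\Phi$ and $\Psi$ are already mutually inverse on the larger sets ${\rm Hom}(x,y)$ and $An(x,y)$, their restrictions to ${\rm Hom.Is}(x,y)$ and $An{\rm .Is}(x,y)$ are automatically mutually inverse, yielding the claimed bijection ${\rm Hom.Is}(x,y)\xrightarrow{\sim}An{\rm .Is}(x,y)$. There is no real obstacle here: the entire argument consists of packaging Proposition~\ref{A16} into the language of the bijection supplied by Proposition~\ref{A17}.
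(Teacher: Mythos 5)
Your argument is correct and is exactly the route the paper intends: the paper gives no explicit proof, stating only that the corollary "readily derives" from Proposition~\ref{A16}, and your write-up simply spells out that derivation by restricting the bijection of Proposition~\ref{A17} to the subsets of isomorphisms and anti-isomorphisms. Nothing is missing.
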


We want to construct a new category from a factorization category. The following theorem specifies the construction process.

\begin{theorem}\label{A22}
Every factorization category $\mathscr{C}$ gives rise to a category $\widetilde{\mathscr{C}}$, which is called {\rm{the category associated to}} $\mathscr{C}$.
\end{theorem}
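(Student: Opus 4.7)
The plan is to mimic the construction of $\textbf{Groups-An}$ from $\textbf{Groups}$ carried out in Section~\ref{B1}, but now working axiomatically inside an arbitrary factorization category $\mathscr{C}$. First I would define the objects of $\widetilde{\mathscr{C}}$ to be those of $\mathscr{C}$, and set $\textrm{Hom}_{\widetilde{\mathscr{C}}}(x,y) := An(x,y)$. For composition I would introduce the $*$-composition: given anti-morphisms $f^{*}\in An(x,y)$ and $g^{*}\in An(y,z)$, define
$$
g^{*}*f^{*} := (g^{*}\circ f^{*})\circ 1^{*}_{x}.
$$
By axiom (2) of a factorization category, $g^{*}\circ f^{*}$ belongs to $\textrm{Hom}(x,z)$, and then axiom (3) guarantees that post-composing with $1^{*}_{x}$ (equivalently, pre-composing with $1^{*}_{z}$) produces an element of $An(x,z)$. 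Thus $*$ is a well-defined binary operation on $\bigsqcup An(x,y)$, closed on anti-morphisms.

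Next, I would verify the category axioms. For \emph{associativity}, given three composable anti-morphisms $f^{*},g^{*},h^{*}$, one expands
$$
(h^{*}*g^{*})*f^{*} = \bigl((h^{*}\circ g^{*})\circ 1^{*}\circ f^{*}\bigr)\circ 1^{*},
\qquad
h^{*}*(g^{*}*f^{*}) = \bigl(h^{*}\circ (g^{*}\circ f^{*})\circ 1^{*}\bigr)\circ 1^{*},
$$
and uses the associativity of the underlying compositions (axiom (4)) together with the interchange $f\circ 1^{*}_{x} = 1^{*}_{y}\circ f$ recorded in the remark following the definition of a factorization category. This is the same calculation as in Proposition~\ref{A32}, now carried out abstractly. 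For the \emph{identity}, I would show that the reverse morphism $1^{*}_{x}$ serves as the identity for $*$-composition at $x$: for any $f^{*}\in An(x,y)$,
$$
f^{*}*1^{*}_{x} = (f^{*}\circ 1^{*}_{x})\circ 1^{*}_{x} = f,\ \text{followed by}\ (\ldots)
$$
needs to be handled carefully; the cleanest route is to write $f^{*} = f\circ 1^{*}_{x}$ for some morphism $f$ and then compute directly, paralleling Proposition~\ref{A33}.

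The main technical obstacle, and really the only subtle point, will be step three: the identity verification and the associativity both hinge on the identity $f\circ 1^{*}_{x} = 1^{*}_{y}\circ f$ and on the fact that $1^{*}_{x}\circ 1^{*}_{x}$ behaves like $1_{x}$ when it appears sandwiched between anti-morphisms. These facts are not stated as standalone lemmas in the axioms but follow from the compatibility of the reverse morphism with composition in axiom (3), so I would isolate them as short preliminary lemmas. Once these are in hand, associativity and unitality drop out by a direct calculation on both sides.

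Finally, having verified that $(\textrm{Ob}(\mathscr{C}), An(-,-), *, 1^{*}_{-})$ satisfies the defining axioms of a category, I would conclude by declaring this to be the associated category $\widetilde{\mathscr{C}}$. The construction is manifestly canonical (it uses only the data of the factorial structure $\Sigma$), so no choices are involved, and this justifies calling $\widetilde{\mathscr{C}}$ \emph{the} category associated to $\mathscr{C}$. This sets up the subsequent identification $\widetilde{\mathscr{C}} = \mathscr{C}^{An}$ and the equivalence $\mathscr{C}\simeq \mathscr{C}^{An}$ promised earlier in the introduction.
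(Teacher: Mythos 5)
Your construction produces a perfectly good category, but it is not the category the paper means by $\widetilde{\mathscr{C}}$. The paper's associated category keeps the \emph{ordinary} composition and the ordinary identities $1_{A}$ and instead enlarges the hom-sets: $\textrm{Hom}_{\widetilde{\mathscr{C}}}(A,B)=An_{\mathscr{C}}(A,B)\cup \textrm{Hom}_{\mathscr{C}}(A,B)$, with closure of composition supplied by axiom (2) of a factorization category (the composite of two anti-morphisms is a morphism, mixed composites are anti-morphisms) and associativity by axiom (4). What you have built instead --- objects of $\mathscr{C}$, hom-sets $An(x,y)$ only, $*$-composition, reverse morphisms as identities --- is exactly the paper's \emph{anti-category} $\mathscr{C}^{An}$, which is introduced later as a separate construction (Theorem \ref{A23} and the surrounding discussion). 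The two are genuinely different, and your closing claim that this sets up an identification $\widetilde{\mathscr{C}}=\mathscr{C}^{An}$ is inconsistent with the paper: Theorem \ref{A24} shows $\widetilde{\mathscr{C}}$ fails to be additive even when $\mathscr{C}$ is abelian (a union of two distinct hom-sets cannot carry the required group law), whereas $\mathscr{C}^{An}$ is equivalent to $\mathscr{C}$ and is abelian whenever $\mathscr{C}$ is. The categorization functor $\textbf{Fa}\rightarrow\textbf{Cat}$ defined later also needs $\widetilde{\mathscr{C}}$ to contain both the morphisms and the anti-morphisms.

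Read purely literally, the theorem only asserts that some canonical category arises from $\mathscr{C}$, and your argument does establish that; your associativity and unitality checks for $*$-composition are essentially Propositions \ref{A18}--\ref{A20} of the paper, and the identity verification you flag as delicate does go through via $f^{*}*1^{*}=(f^{*}\circ 1^{*})\circ 1^{*}=f\circ 1^{*}=f^{*}$. But since the content of this theorem is the specific construction the rest of the paper uses under the name $\widetilde{\mathscr{C}}$, you should replace your hom-sets by the unions $An_{\mathscr{C}}(A,B)\cup\textrm{Hom}_{\mathscr{C}}(A,B)$ and verify closure under the ordinary composition; no $*$-composition is needed here at all.
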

\begin{proof}
We construct a category $\widetilde{\mathscr{C}}$ from $\mathscr{C}$. First, we let $\textrm{Ob}(\widetilde{\mathscr{C}})=\textrm{Ob}(\mathscr{C})$. Then for $A,B\in\textrm{Ob}(\widetilde{\mathscr{C}})$, we define morphisms in $\widetilde{\mathscr{C}}$ by
$$
\textrm{Hom}_{\widetilde{\mathscr{C}}}(A,B)=An_{\mathscr{C}}(A,B)\cup \textrm{Hom}_{\mathscr{C}}(A,B).
$$
By definition, $\widetilde{\mathscr{C}}$ is a category.
\end{proof}

It would be a nice result if the category $\widetilde{\mathscr{C}}$ associated to an abelian factorization category $\mathscr{C}$ is an abelian category. Unfortunately, the following theorem shows that $\widetilde{\mathscr{C}}$ is not abelian.
\begin{theorem}\label{A24}
Let $\mathscr{C}$ be an abelian factorization category. Then the category $\widetilde{\mathscr{C}}$ associated to $\mathscr{C}$ is an almost abelian category, i.e. it satisfies the conditions of abelian category except being an additive category.
\end{theorem}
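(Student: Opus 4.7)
The plan is to leverage the bijection $\textrm{Hom}_{\mathscr{C}}(A,B)\xrightarrow{\sim}An_{\mathscr{C}}(A,B)$, $f\mapsto f^{*}=f\circ 1^{*}$, from Proposition \ref{A17}, which transports every categorical construction in the abelian category $\mathscr{C}$ to an anti-analogue inside $\widetilde{\mathscr{C}}$. I will first verify the abelian-category axioms other than additivity for $\widetilde{\mathscr{C}}$, and then isolate precisely where additivity breaks down.

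For the zero object, the zero object $0$ of $\mathscr{C}$ remains a zero object of $\widetilde{\mathscr{C}}$: the set $\textrm{Hom}_{\widetilde{\mathscr{C}}}(A,0)=\textrm{Hom}_{\mathscr{C}}(A,0)\cup An_{\mathscr{C}}(A,0)$ is a singleton, because by Proposition \ref{A17} the second summand is in bijection with the first, and both are singletons (and similarly for maps out of $0$). For kernels and cokernels, I would split on the type of the morphism $\phi$ in $\widetilde{\mathscr{C}}$. If $\phi=f\in\textrm{Hom}_{\mathscr{C}}$, its kernel and cokernel in the abelian $\mathscr{C}$ remain kernel and cokernel in $\widetilde{\mathscr{C}}$; the universal property against an anti-morphism test arrow $h^{*}$ is handled by writing $h^{*}=h\circ 1^{*}$ and precomposing with the anti-isomorphism $1^{*}$ to reduce to the morphism case. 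If $\phi=f^{*}$ is an anti-morphism, then the anti-homomorphism theorem and the $\textrm{Coim}/\textrm{Im}$ factorization established earlier supply canonical candidates $\textrm{Ker}(f^{*})$ and the anti-cokernel $y/\textrm{Im}(f^{*})$; their universal properties are again checked against both kinds of test arrows via the same reduction. The axiom that every monomorphism is a kernel and every epimorphism is a cokernel is treated analogously: a mono of $\widetilde{\mathscr{C}}$ is either a mono $\mu$ in $\mathscr{C}$, which is already a kernel by abelianness of $\mathscr{C}$, or an anti-mono $\mu^{*}$, in which case the corresponding $\mu$ is itself injective and hence the kernel of some $g$ in $\mathscr{C}$, and one then verifies that $\mu^{*}$ realizes the kernel of $g^{*}$ in $\widetilde{\mathscr{C}}$; the epi/cokernel case is dual.

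Finally, I would argue that additivity genuinely fails. Although $\textrm{Hom}_{\mathscr{C}}(A,B)$ and $An_{\mathscr{C}}(A,B)$ are each abelian groups and even isomorphic via Proposition \ref{A17}, the union $\textrm{Hom}_{\widetilde{\mathscr{C}}}(A,B)$ carries no natural abelian group structure: a hypothetical sum $f+g^{*}$ of a morphism and an anti-morphism would have to land in one of the two type classes, but then bilinearity of composition breaks down, since pre- or post-composing with an anti-morphism swaps the type class. Consequently $\widetilde{\mathscr{C}}$ cannot be preadditive, let alone additive, while every remaining axiom of an abelian category does hold. The main technical obstacle I anticipate is not any single step but the systematic bookkeeping of morphism types when checking universal properties against test arrows of both kinds; the correspondence $f\leftrightarrow f^{*}$ and the anti-isomorphism $1^{*}$ should, however, reduce each such verification to the already-known abelian structure of $\mathscr{C}$.
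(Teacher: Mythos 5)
Your proposal is correct and follows essentially the same route as the paper: transport the abelian-category axioms from $\mathscr{C}$ to $\widetilde{\mathscr{C}}$ via the correspondence $f\mapsto f^{*}=f\circ 1^{*}$ and the reverse morphism $1^{*}$, and observe that the union $\textrm{Hom}_{\mathscr{C}}(x,y)\cup An_{\mathscr{C}}(x,y)$ fails to carry a group structure. You are in fact considerably more thorough than the paper's own three-sentence sketch (which checks only existence of kernels and cokernels and the $\textrm{Coim}\rightarrow\textrm{Im}$ comparison, omitting the zero object and the mono-is-a-kernel axiom entirely), and your argument for the failure of additivity is no less rigorous than the paper's, which likewise tacitly assumes that morphisms and anti-morphisms form genuinely distinct classes.
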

\begin{proof}
First, since $\mathscr{C}$ is an abelian category, all kernels and cokernels of morphisms exist, so do anti-morphisms. And for any morphism $f$ in $\mathscr{C}$, there is a canonical isomorphism
$$
\textrm{Coim}(f)\xrightarrow{\sim}\textrm{Im}(f),
$$
so for any anti-morphism $f^{*}=f\circ1^{*}$, we have a canonical anti-isomorphism
$$
\textrm{Coim}(f^{*})\xrightarrow{\sim}\textrm{Im}(f^{*}).
$$
However, for any $x,y\in\widetilde{\mathscr{C}}$, the set $\textrm{Hom}_{\widetilde{\mathscr{C}}}(x,y)$ defined as a union of $An_{\mathscr{C}}(x,y)$ and $\textrm{Hom}_{\mathscr{C}}(x,y)$ can not form a group, since $An_{\mathscr{C}}(x,y)\nsubseteq\textrm{Hom}_{\mathscr{C}}(x,y)$ and $An_{\mathscr{C}}(x,y)\nsupseteq\textrm{Hom}_{\mathscr{C}}(x,y)$.
\end{proof}

Next, we would like to study kernels, cokernels, monomorphisms, epimorphisms, etc., in a factorization category $\mathscr{C}$ (see \cite{Stack Project}, Definition 12.3.9 for expositions in a preadditive category). Let $f:x\rightarrow y$ be a morphism in $\mathscr{C}$ such that the kernel and cokernel of $f$ exist. Consider the kernel $\tau:\textrm{Ker}(f)\rightarrow x$ of $f$, we have $f\tau=0$ and if there is $\mu:z\rightarrow x$ with $f\mu=0$, there exists a unique $\tau_{0}:z\rightarrow\textrm{Ker}(f)$ such that $f\tau\tau_{0}=0$. Then if we consider an anti-morphism $f^{*}=1^{*}f$, we still have $f^{*}\tau=0$ and if there is $\mu:z\rightarrow x$ with $f^{*}\mu=0$, there exists a unique $\tau_{0}:z\rightarrow\textrm{Ker}(f)$ such that $f^{*}\tau\tau_{0}=0$. This is similar for cokernels.

\begin{definition}
Let $\mathscr{C}$ be a factorization category with zero morphisms and let $f^{*}:x\rightarrow y$ be an anti-morphism in $\mathscr{C}$.
\begin{itemize}
  \item [(1)]
  We define the \textit{kernel} of $f^{*}$ to be the kernel of $f$, denoted by $\textrm{Ker}(f^{*})\rightarrow x$.
  \item [(2)]
  We define the \textit{cokernel} of $f^{*}$ to be the cokernel of $f$, denoted by $y\rightarrow \textrm{Coker}(f^{*})$.
  \item [(3)]
  We define the \textit{coimage} of $f^{*}$ to be the coimage of $f$, denoted by $x\rightarrow \textrm{Coim}(f^{*})$.
  \item [(4)]
  We define the \textit{image} of $f^{*}$ to be the image of $f$, denoted by $\textrm{Im}(f^{*})\rightarrow y$.
\end{itemize}

If $\mathscr{C}$ is an abelian factorization category, then
\begin{itemize}
\item [(I)]
We say that $f^{*}$ is \textit{injective} if $\textrm{Ker}(f^{*})=0$. And we say that $f^{*}$ is \textit{surjective} if $\textrm{Coker}(f^{*})=0$.
\item [(II)]
If $f^{*}$ is injective, then $x$ is a \textit{subobject} of $y$ and we use the notation $x\subset y$ to indicate this. Moreover, we denote by $y/x$ the object
$$
\textrm{Coker}(x\rightarrow y).
$$
\item [(III)]
If $f^{*}$ is surjective, then we say that $y$ is a \textit{quotient} of $x$.
\end{itemize}
\end{definition}

Recall that a morphism $u:A\rightarrow B$ in a category $\mathscr{C}$ is a \textit{monomorphism} if the function
$$
\textrm{Hom}(C,A)\rightarrow \textrm{Hom}(C,B),\ \ v\mapsto u\circ v
$$
is injective for all $C\in \textrm{Ob}(\mathscr{C})$. Then for an anti-morphism $u^{*}$, since $1^{*}uv_{1}=1^{*}uv_{2}\Rightarrow uv_{1}=uv_{2}$, we still have $u^{*}v_{1}=u^{*}v_{2}\Rightarrow v_{1}=v_{2}$. This is similar for epimorphisms.

\begin{definition}
Let $\mathscr{C}$ be a factorization category and let $f^{*}:x\rightarrow y$ be an anti-morphism in $\mathscr{C}$. Then $f^{*}$ is an \textit{anti-monomorphism} if the function
$$
\textrm{Hom}(z,x)\rightarrow An(z,y),\ \ v\mapsto f^{*}\circ v
$$
is injective for all $z\in \textrm{Ob}(\mathscr{C})$. And we say that $f^{*}$ is an \textit{anti-epimorphism} if the function
$$
\textrm{Hom}(y,z)\rightarrow An(x,z),\ \ v\mapsto v\circ f^{*}
$$
is injective for all $z\in \textrm{Ob}(\mathscr{C})$.
\end{definition}

Next, we give some elementary results of anti-morphisms, which show that anti-morphisms inherit some properties of morphisms in a preadditive category and an abelian category (see \cite{Stack Project}, Lemma 12.5.4 and Lemma 12.3.12).

\begin{lemma}
Let $f^{*}:x\rightarrow y$ be an anti-morphism in a preadditive factorization category. If the kernel, cokernel, coimage, and image exist, then $f^{*}$ factors uniquely as the following sequence:
$$
x\rightarrow {\rm{Coim}}(f^{*})\rightarrow {\rm{Im}}(f^{*})\rightarrow y.
$$
\end{lemma}
\begin{proof}
The composition $\textrm{Ker}(f^{*})\rightarrow x\xrightarrow{f^{*}}y$ is zero, so there is a canonical map $\textrm{Coim}(f^{*})\rightarrow y$.
$$
\xymatrix{
  \textrm{Ker}(f^{*})  \ar[r]^{} & x \ar[dr]_{} \ar[r]^{f} & y  \ar[r]^{1^{*}} & y \\
    &   & \textrm{Coim}(f^{*})\ar[u]_{} \ar@{-->}[ur]_{} &    }
$$
The composition $x\xrightarrow{f^{*}}y\rightarrow\textrm{Coker}(f^{*})$ is zero, and we have a factorization:
$$
\xymatrix{
   & x \ar[dr]_{} \ar[r]^{f^{*}} & y \ar[r]^{}  & \textrm{Coker}(f^{*})  \\
  &  & \textrm{Coim}(f^{*}) \ar[u]^{} &    }
$$
Since $x\rightarrow\textrm{Coim}(f^{*})$ is surjective, the composition $\textrm{Coim}(f^{*})\rightarrow y\rightarrow \textrm{Coker}(f^{*})$ is zero. Hence, $\textrm{Coim}(f^{*})\rightarrow y$ factors through $\textrm{Im}(f^{*})\rightarrow y$, which gives us the desired map.
\end{proof}

\begin{lemma}
Let $f^{*}:x\rightarrow y$ be an anti-morphism in an abelian factorization category $\mathscr{C}$. Then
\begin{itemize}
  \item [(1)]
  $f^{*}$ is injective if and only if $f^{*}$ is an anti-monomorphism.
  \item [(2)]
  $f^{*}$ is surjective if and only if $f^{*}$ is an anti-epimorphism.
\end{itemize}
\end{lemma}
\begin{proof}
Note that $\textrm{Ker}(f^{*})$ is an object representing the functor sending an object $z\in \textrm{Ob}(\mathscr{C})$ to the set $\textrm{Ker}(\textrm{Hom}(z,x)\rightarrow An(z,y))$. In fact, for $z\in \textrm{Ob}(\mathscr{C})$, the following composition of maps
$$
\textrm{Hom}(z,\textrm{Ker}(f^{*}))\rightarrow \textrm{Hom}(z,x)\rightarrow \textrm{Hom}(z,y)\rightarrow An(z,y)
$$
is zero. And we have $\textrm{Hom}(z,\textrm{Ker}(f^{*}))=\textrm{Ker}(\textrm{Hom}(z,x)\rightarrow An(z,y))$.

Then $\textrm{Ker}(f^{*})=0$ if and only if the functions $\textrm{Hom}(z,x)\rightarrow An(z,y)$ are injective for all $z\in\textrm{Ob}(\mathscr{C})$ if and only if $f^{*}$ is an anti-monomorphism. Another case is similar.
\end{proof}

In the following, we will prove anti-homomorphism theorem, anti-factorization theorem, and second anti-isomorphism theorem in the case of abelian factorization category.
\begin{theorem}[Generalized anti-homomorphism theorem]
Let $\mathscr{C}$ be an abelian factorization category and let $f^{*}:x\rightarrow y$ be an anti-morphism in $\mathscr{C}$. Then we have an anti-isomorphism $x/{\rm{Ker}}(f^{*})\cong{\rm{Im}}(f^{*})$. In particular, if $f^{*}$ is injective, then $x\cong{\rm{Im}}(f^{*})$. And if $f^{*}$ is surjective, then $x/{\rm{Ker}}(f^{*})\cong y$.
\end{theorem}
\begin{proof}
By Theorem \ref{A24}, we have a canonical anti-isomorphism $\textrm{Coim}(f^{*})\cong\textrm{Im}(f^{*})$, which is in fact the anti-isomorphism $x/{\rm{Ker}}(f^{*})\cong{\rm{Im}}(f^{*})$.
\end{proof}

\begin{theorem}[Generalized anti-factorization theorem]
Let $\mathscr{C}$ be an abelian factorization category and let $f^{*}:x\rightarrow y$ be an anti-morphism in $\mathscr{C}$. If $\mu:z\rightarrow x$ is an injective morphism such that $f\mu=0$, then there exists a unique anti-morphism $\psi^{*}:x/z\rightarrow y$ such that the following diagram commutes ($\pi$ is the canonical map):
$$
\xymatrix{
  x \ar[dr]_{f^{*}} \ar[r]^{\pi}
                & x/z \ar@{-->}[d]^{\exists!\psi^{*}}  \\
                & y             }
$$
\end{theorem}
\begin{proof}
First, we consider the factorization of morphism $f:x\rightarrow y$. The composition $z\rightarrow x\rightarrow y$ is zero, so there is a canonical map $\psi:x/z\rightarrow y$ making the triangle commutes (note that $\pi$ is the cokernel of $\mu$):
$$
\xymatrix{
  z  \ar[r]^{\mu} & x \ar[dr]_{\pi} \ar[r]^{f} & y  \\
    &   &  x/z\ar@{-->}[u]_{\psi}  }
$$
Then the following calculation gives us the desired result
\begin{align*}
1^{*}\circ f&=1^{*}\circ\psi\circ\pi \\
f^{*}&=\psi^{*}\circ\pi.
\end{align*}
\end{proof}

\begin{theorem}[Generalized second anti-isomorphism theorem]
Let $\mathscr{C}$ be an abelian factorization category and let $A,B,C\in{\rm{Ob}}(\mathscr{C})$ such that $C\subset B\subset A$. Then there is a unique anti-isomorphism
$$
(A/C)/(B/C)\cong A/B.
$$
\end{theorem}
\begin{proof}
Since $\mathscr{C}$ is an abelian category, we have a unique isomorphism $(A/C)/(B/C)\cong A/B$. By Proposition \ref{A16} and Corollary \ref{A30}, we get the unique anti-isomorphism as desired.
\end{proof}

Fix a factorization category $\mathscr{C}$. Next, we define the \textit{$*$-composition} of anti-morphisms as follows:
$$
f^{*}*g^{*}:=f^{*}\circ g^{*}\circ1^{*},
$$
when $f\in An(x,y),g\in An(y,z)$ for $x,y,z\in\textrm{Ob}(\mathscr{C})$. It is easy to see that the $*$-composition is well-defined, and the $*$-composition of two anti-morphisms is an anti-morphism. We have the following propositions, which generalize our results in $\textbf{Groups}$.

\begin{proposition}\label{A18}
The $*$-composition of anti-morphisms is associative.
\end{proposition}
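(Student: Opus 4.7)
The plan is to emulate the proof of Proposition \ref{A32} in the setting of a general factorization category, since the only tools used there are tools axiomatised in any factorization category. Given three composable anti-morphisms $\varphi_{1}, \varphi_{2}, \varphi_{3}$ of $\mathscr{C}$, I would first unfold both sides using the definition $\alpha * \beta = (\alpha \circ \beta) \circ 1^{*}$:
$$
(\varphi_{1} * \varphi_{2}) * \varphi_{3} = \bigl(((\varphi_{1} \circ \varphi_{2}) \circ 1^{*}) \circ \varphi_{3}\bigr) \circ 1^{*},
$$
$$
\varphi_{1} * (\varphi_{2} * \varphi_{3}) = \bigl(\varphi_{1} \circ ((\varphi_{2} \circ \varphi_{3}) \circ 1^{*})\bigr) \circ 1^{*}.
$$
By axiom (4), each of the three composition laws is associative, so the parentheses in both expressions may be erased and the two sides differ only in the placement of the inner $1^{*}$.

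To finish, I would show that an inner $1^{*}$ may be moved freely past an anti-morphism. The remark after the definition of factorization category asserts $f \circ 1^{*}_{A} = 1^{*}_{B} \circ f$ for every morphism $f : A \to B$. To extend this to an anti-morphism $\varphi^{*} : A \to B$, I would use Proposition \ref{A17} to write $\varphi^{*} = \varphi \circ 1^{*}$ for a morphism $\varphi$, and combine the morphism-level commutation with the identity $1^{*} \circ 1^{*} = 1$. The latter is itself a consequence of the mutually inverse assignments $f \mapsto f \circ 1^{*}$ and $f^{*} \mapsto f^{*} \circ 1^{*}$ from Proposition \ref{A17}: applying both in sequence to the identity morphism gives $1 = 1 \circ 1^{*} \circ 1^{*}$, hence $1^{*} \circ 1^{*} = 1$. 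With this in hand, both expansions above reduce to the common normal form $\varphi_{1} \circ \varphi_{2} \circ \varphi_{3} \circ 1^{*} \circ 1^{*}$, and the desired equality follows.

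The main obstacle is purely bookkeeping. One must track carefully which of the composition laws (morphism $\circ$ morphism, the two mixed laws, and anti-morphism $\circ$ anti-morphism) is being invoked at each step, so that associativity really is licensed by axiom~(4) and no composition is formed that is not among the three defined in the data of a factorization category. Once the commutation $1^{*} \circ \varphi^{*} = \varphi^{*} \circ 1^{*}$ is established for every anti-morphism $\varphi^{*}$, the rest of the argument is a mechanical reassociation identical to the one already carried out in Proposition \ref{A32}.
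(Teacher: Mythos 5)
Your proposal is correct and follows essentially the same route as the paper: unfold the definition of $*$, use the associativity axiom, and cancel the interior reverse morphisms to reduce both sides to $\varphi_{1}\circ\varphi_{2}\circ\varphi_{3}$. You are in fact more careful than the paper's own (very terse) proof, since you explicitly isolate and justify the two facts it uses silently, namely $1^{*}\circ 1^{*}=1$ and the commutation of $1^{*}$ past an anti-morphism.
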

\begin{proof}
Let $f^{*},g^{*},h^{*}$ be three anti-morphisms in $\mathscr{C}$. When the compositions make sense, we have
$$
(f^{*}*g^{*})*h^{*}=(f^{*}\circ g^{*}\circ1^{*})\circ h^{*}\circ1^{*}=f^{*}\circ g^{*}\circ h^{*}
$$
and
$$
f^{*}*(g^{*}*h^{*})=f^{*}\circ (g^{*}\circ h^{*}\circ1^{*})\circ1^{*}=f^{*}\circ g^{*}\circ h^{*}.
$$
Hence $(f^{*}*g^{*})*h^{*}=f^{*}*(g^{*}*h^{*})$.
\end{proof}

\begin{proposition}\label{A19}
If $f^{*}$ is an anti-isomorphism and $f^{-1*}$ is its anti-inverse, then $f^{*}*f^{-1*}=f^{-1*}*f^{*}=1^{*}$.
\end{proposition}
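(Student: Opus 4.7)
The plan is to prove this by direct unfolding of definitions: use the definition of $*$-composition together with the defining property of an anti-inverse, and simplify using the identity axioms of the reverse morphism.

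First I would recall from the definition immediately above that the $*$-composition of two anti-morphisms $f^{*}, g^{*}$ (when composable) is given by
\[
f^{*}*g^{*} \;=\; f^{*}\circ g^{*}\circ 1^{*}.
\]
Then I would recall the definition of anti-isomorphism: $f^{*}$ being an anti-isomorphism with anti-inverse $f^{-1*}$ means exactly that $f^{*}\circ f^{-1*}=1$ and $f^{-1*}\circ f^{*}=1$ in the factorization category, where these two compositions are ordinary (not $*$) compositions of anti-morphisms and are therefore morphisms (identities).

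With these two ingredients in hand, the computation is essentially immediate:
\[
f^{*}*f^{-1*} \;=\; f^{*}\circ f^{-1*}\circ 1^{*} \;=\; 1\circ 1^{*} \;=\; 1^{*},
\]
\[
f^{-1*}*f^{*} \;=\; f^{-1*}\circ f^{*}\circ 1^{*} \;=\; 1\circ 1^{*} \;=\; 1^{*}.
\]
The only small points to justify are that the associativity of the underlying composition (invoked to freely bracket $f^{*}\circ f^{-1*}\circ 1^{*}$) is guaranteed by axiom (4) of a factorization category, and that $1\circ 1^{*}=1^{*}$, which follows from $1^{*}$ being the reverse morphism $1_{A}^{*}\in An(A,A)$ with $1\circ 1^{*}=1^{*}$ (a direct use of the identity axiom applied in the underlying category together with axiom (3)).

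There is no genuine obstacle here; the statement is essentially a formal consequence of the definition of $*$-composition and the anti-isomorphism axioms. The only conceptual check is to make sure both $f^{*}\circ f^{-1*}$ and $f^{-1*}\circ f^{*}$ are being treated as the identity morphism $1$ in $\mathrm{Hom}(y,y)$ and $\mathrm{Hom}(x,x)$ respectively (not as anti-morphisms), which is exactly what it means for $f^{-1*}$ to be the anti-inverse of $f^{*}$ in the sense introduced earlier.
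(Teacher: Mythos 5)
Your proof is correct and is essentially identical to the paper's own argument: both unfold the definition $f^{*}*g^{*}=(f^{*}\circ g^{*})\circ 1^{*}$, invoke $f^{*}\circ f^{-1*}=1$ and $f^{-1*}\circ f^{*}=1$ from the definition of anti-inverse, and conclude via $1\circ 1^{*}=1^{*}$. The extra remarks on associativity and on treating $f^{*}\circ f^{-1*}$ as an ordinary identity morphism are sound but not a departure from the paper's route.
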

\begin{proof}
By definition, we have
$$
f^{*}*f^{-1*}=(f^{*}\circ f^{-1*})\circ1^{*}=1\circ1^{*}=1^{*},
$$
and
$$
f^{-1*}*f^{*}=(f^{-1*}\circ f^{*})\circ1^{*}=1\circ1^{*}=1^{*}.
$$
\end{proof}

\begin{proposition}\label{A20}
If $f^{*}$ is an anti-homomorphism, then $f^{*}*1^{*}=1^{*}*f^{*}=f^{*}$.
\end{proposition}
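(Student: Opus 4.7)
The plan is to unfold both $*$-compositions using the definition $f^{*}*g^{*}:=f^{*}\circ g^{*}\circ 1^{*}$ and then collapse the resulting expressions using two small identities that are already recorded earlier in the paper. The first identity is $1^{*}\circ 1^{*}=1$, which follows from the remark after Proposition~\ref{A17}: the corresponding morphism of any anti-morphism $h^{*}$ is $h=h^{*}\circ 1^{*}=1^{*}\circ h^{*}$, and applying this with $h^{*}=1^{*}$ gives $1^{**}=1^{*}\circ 1^{*}$; since $h^{**}=h$, we have $1^{**}=1$, so $1^{*}\circ 1^{*}=1$. The second identity is the compatibility $1^{*}\circ f = f\circ 1^{*} = f^{*}$ (equivalently $f^{*}\circ 1^{*}=1^{*}\circ f^{*}=f$) coming from the remark after the definition of a factorization category.

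For the first equation, I would write
$$
f^{*}*1^{*} = f^{*}\circ 1^{*}\circ 1^{*},
$$
and then invoke associativity of the ordinary composition (clause~(4) of the definition of factorization category) to regroup as $f^{*}\circ(1^{*}\circ 1^{*})=f^{*}\circ 1 = f^{*}$.

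For the second equation, I would write
$$
1^{*}*f^{*} = 1^{*}\circ f^{*}\circ 1^{*},
$$
and use associativity to regroup the last two factors as $f^{*}\circ 1^{*}=f$, leaving $1^{*}\circ f$, which equals $f^{*}$ by the compatibility noted above.

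There is no substantive obstacle here: everything reduces mechanically to the two recorded identities and to associativity. The only minor subtlety worth flagging explicitly in the write-up is the direction in which associativity is applied (grouping the two reverse morphisms on the right in the first case, and grouping $f^{*}\circ 1^{*}$ in the middle in the second case), but in each case the regrouping is justified by the axioms of a factorization category.
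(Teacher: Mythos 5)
Your proposal is correct and follows essentially the same route as the paper: unfold the definition $f^{*}*g^{*}=f^{*}\circ g^{*}\circ 1^{*}$ and collapse via associativity and the recorded correspondences $f^{*}\circ 1^{*}=1^{*}\circ f^{*}=f$ and $f\circ 1^{*}=1^{*}\circ f=f^{*}$. The only cosmetic difference is your choice of grouping in the first identity (isolating $1^{*}\circ 1^{*}=1$, which you justify correctly from $f^{**}=f$), whereas the paper groups $(f^{*}\circ 1^{*})\circ 1^{*}=f\circ 1^{*}=f^{*}$; both are immediate.
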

\begin{proof}
By definition, we have
$$
f^{*}*1^{*}=(f^{*}\circ1^{*})\circ1^{*}=f^{*},
$$
and
$$
1^{*}*f^{*}=(1^{*}\circ f^{*})\circ1^{*}=f^{*}.
$$
\end{proof}

By means of $*$-composition, we can construct a new category $\mathscr{C}^{An}$ in terms of anti-morphisms as follows:
$$
\textrm{Ob}(\mathscr{C}^{An}):=\textrm{Ob}(\mathscr{C}), \ \ \textrm{Hom}_{\mathscr{C}^{An}}(x,y):=An(x,y),
$$
and
$$
\textrm{Hom}_{\mathscr{C}^{An}}(x,y)\times \textrm{Hom}_{\mathscr{C}^{An}}(y,z)\rightarrow \textrm{Hom}_{\mathscr{C}^{An}}(x,z), \ \ (f^{*},g^{*})\mapsto f^{*}*g^{*}.
$$
i.e. the composition is defined as the $*$-composition.
\begin{theorem}\label{A23}
If $\mathscr{C}$ is an abelian factorization category, the definition above yields an abelian category $\mathscr{C}^{An}$.
\end{theorem}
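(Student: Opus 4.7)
The strategy is to deduce the result from the equivalence of categories $F: \mathscr{C} \to \mathscr{C}^{An}$ established in the preceding theorem. Because $F$ is the identity on objects and uses the bijection $\textrm{Hom}(A,B) \xrightarrow{\sim} An(A,B)$, $f \mapsto f^{*}$, and because it is a functor under the relation $F(g \circ f) = (g \circ f)^{*} = g^{*} * f^{*}$, it is in fact an isomorphism of categories. Since every property defining an abelian category is invariant under isomorphisms of categories, it suffices to transport the abelian structure of $\mathscr{C}$ across $F$ and check that it agrees with the data already supplied for $\mathscr{C}^{An}$.

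First I would confirm the preadditive structure. By Proposition \ref{A17}, the bijection $F$ makes each $An(A,B)$ into an abelian group isomorphic to $\textrm{Hom}(A,B)$, via $f^{*} + g^{*} := (f + g)^{*}$. Bilinearity of the $*$-composition then follows directly: for composable anti-morphisms,
\[
(f^{*} + g^{*}) * h^{*} = ((f + g) \circ h)^{*} = (f \circ h)^{*} + (g \circ h)^{*} = f^{*} * h^{*} + g^{*} * h^{*},
\]
and the analogous identity on the other side is obtained the same way. This makes $\mathscr{C}^{An}$ a preadditive factorization category and $F$ an additive functor.

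Next I would transport the abelian data. The zero object of $\mathscr{C}$ remains a zero object in $\mathscr{C}^{An}$; finite products and coproducts are inherited because $F$ is an isomorphism of categories; the kernel and cokernel of an anti-morphism $f^{*}$ are already defined in the excerpt as those of the corresponding morphism $f$, and their universal properties in $\mathscr{C}^{An}$ correspond under $F$ to the universal properties of kernels and cokernels in $\mathscr{C}$. The canonical isomorphism $\textrm{Coim}(f) \xrightarrow{\sim} \textrm{Im}(f)$ in $\mathscr{C}$ is sent by $F$ to an anti-isomorphism $\textrm{Coim}(f^{*}) \xrightarrow{\sim} \textrm{Im}(f^{*})$, which, viewed as a morphism in $\mathscr{C}^{An}$, is an isomorphism there by Proposition \ref{A16}. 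The main point to be careful about is verifying that anti-monomorphisms and anti-epimorphisms in the sense of the excerpt coincide with monomorphisms and epimorphisms in $\mathscr{C}^{An}$ with respect to $*$-composition, and that every such anti-monomorphism is a kernel and every anti-epimorphism a cokernel. Both reduce immediately to the corresponding facts in the abelian category $\mathscr{C}$ via the isomorphism $F$; this is the one place where one might conflate ordinary composition and $*$-composition, but since $F$ intertwines them, no genuine obstacle arises.
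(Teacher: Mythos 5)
Your proposal is correct and follows essentially the same route as the paper: the paper also establishes preadditivity of $\mathscr{C}^{An}$ directly (Lemma \ref{A27}, using Proposition \ref{A17} and bilinearity of the $*$-composition) and then deduces abelianness from the equivalence $\mathscr{C}\cong\mathscr{C}^{An}$ of Theorem \ref{A21}. Your additional observations (that $F^{An}$ is in fact an isomorphism of categories and that the kernels, cokernels, and the coimage--image comparison transport across it) only make explicit what the paper leaves implicit.
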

We just prove a special case here and defer the proof below.
\begin{lemma}\label{A27}
If $\mathscr{C}$ is an abelian factorization category, the definition above yields a preadditive category $\mathscr{C}^{An}$.
\end{lemma}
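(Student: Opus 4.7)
The plan is to verify the three ingredients of a preadditive category in turn: that $\mathscr{C}^{An}$ is indeed a category, that each hom-set $An(x,y)$ carries a natural abelian group structure, and that the $*$-composition is $\Z$-bilinear in each variable. The first ingredient is already almost in place: Proposition \ref{A18} gives associativity of $*$, and Proposition \ref{A20} shows that for each object $x$ the reverse morphism $1^{*}_{x} \in An(x,x)$ is a two-sided identity for $*$. So $\mathscr{C}^{An}$ qualifies as a category as soon as we set $\mathrm{id}_{x} := 1^{*}_{x}$.

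For the group structure, I would invoke Proposition \ref{A17}: since $\mathscr{C}$ is abelian, in particular preadditive, the assignment $f \mapsto f^{*} = f\circ 1^{*}$ gives a bijection $\mathrm{Hom}(x,y) \xrightarrow{\sim} An(x,y)$ which by the same proposition transports the abelian group structure from $\mathrm{Hom}(x,y)$ onto $An(x,y)$, with addition characterized by $(f+g)^{*} = f^{*} + g^{*}$ and zero element $0^{*} = 0\circ 1^{*}$. Concretely, given $f^{*},g^{*} \in An(x,y)$, one defines $f^{*} + g^{*}$ as the image of $(f^{*})^{**} + (g^{*})^{**} = (f^{*}\circ 1^{*}) + (g^{*}\circ 1^{*})$ under $h \mapsto h^{*}$; this is forced by the bijection and is manifestly an abelian group law.

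The main step, and the only real obstacle, is bilinearity of $*$. Fix $f^{*}, g^{*} \in An(x,y)$ and $h^{*} \in An(y,z)$; I need to establish the two distributive identities
\[
(f^{*} + g^{*}) * h^{*} = f^{*} * h^{*} + g^{*} * h^{*}, \qquad h^{*} * (f^{*} + g^{*}) = h^{*} * f^{*} + h^{*} * g^{*}.
\]
The strategy is to unfold the definition $\phi^{*} * \psi^{*} = \phi^{*}\circ\psi^{*}\circ 1^{*}$ (with the appropriate typing) and reduce everything to the two mixed composition laws
\[
An(A,B)\times\mathrm{Hom}(B,C)\rightarrow An(A,C), \qquad \mathrm{Hom}(A,B)\times An(B,C)\rightarrow An(A,C),
\]
which by definition of a preadditive factorization category are bilinear. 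Writing $f^{*} = f\circ 1^{*}$, $g^{*} = g\circ 1^{*}$, and using the identity $1^{*}\circ 1^{*} = 1$ together with the interchange rule $f\circ 1^{*}_{A} = 1^{*}_{B}\circ f$ from the remark after the definition of a factorization category, each side of the distributivity identities rewrites as a composition that is bilinear in the input by the preadditive axioms, and the two sides agree.

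Once bilinearity is in hand, everything else is formal: compatibility with the zero morphism follows by taking $g^{*} = 0^{*}$ in the distributive law, so $\mathscr{C}^{An}$ has zero morphisms and is therefore preadditive. The obstacle worth flagging is purely bookkeeping, namely tracking which factor of $1^{*}$ absorbs into which $\circ 1^{*}$ when expanding the $*$-composition; once the rewriting is done carefully the bilinearity is immediate from the factorial-structure axioms, so I anticipate no conceptual difficulty beyond this calculation.
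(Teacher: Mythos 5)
Your proposal is correct and follows essentially the same route as the paper: category structure from Propositions \ref{A18} and \ref{A20}, the abelian group law on $An(x,y)$ transported via the bijection of Proposition \ref{A17}, and distributivity obtained by absorbing a $1^{*}$ so that the sum $(f+g)^{*}$ becomes the morphism $f+g$ sitting in the $\mathrm{Hom}$ slot of the mixed composition laws, whose linearity in that slot is exactly the defining condition of a preadditive factorization category. The only quibble is that those mixed laws are stipulated to be linear only in their $\mathrm{Hom}$ argument rather than ``bilinear'' as you say, but your rewriting strategy places the sum in precisely that argument, so the argument goes through as in the paper.
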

\begin{proof}
Proposition \ref{A18}, \ref{A19}, and \ref{A20} show that $\mathscr{C}^{An}$ is a category.

For any $x,y\in\textrm{Ob}(\mathscr{C}^{An})$, $An(x,y)$ is an abelian group, by Proposition \ref{A17}. Next, we show that the composition is bilinear. Let $f^{*},g^{*},h^{*}$ be anti-morphisms in $\mathscr{C}^{An}$. When the compositions make sense, we have
\begin{align*}
h^{*}*(f+g)^{*}&=h^{*}\circ(f+g)^{*}\circ1^{*} \\
&=h^{*}\circ(f+g) \\
&=h^{*}\circ f+h^{*}\circ g \\
&=h^{*}*f^{*}+h^{*}*g^{*},
\end{align*}
and
\begin{align*}
(f+g)^{*}*h^{*}&=(f+g)^{*}\circ h^{*}\circ1^{*} \\
&=(f+g)\circ h^{*} \\
&=f\circ h^{*}+g\circ h^{*} \\
&=f^{*}*h^{*}+g^{*}*h^{*}.
\end{align*}
So $\mathscr{C}^{An}$ is a preadditive category.
\end{proof}

We want to clarify the relation between $\mathscr{C}$ and $\mathscr{C}^{An}$, so one need to define functors between $\mathscr{C}$ and $\mathscr{C}^{An}$.

First, we can define a functor
$$
F^{An}:\mathscr{C}\rightarrow \mathscr{C}^{An}
$$
which assigns to each object of $\mathscr{C}$ the same object, and to every morphism $f$, it associates the corresponding anti-morphism $f^{*}$.

And we can define a functor
$$
G^{An}:\mathscr{C}^{An}\rightarrow\mathscr{C}
$$
which assigns to each object of $\mathscr{C}^{An}$ the same object, and to each anti-morphism $f^{*}$, it associates the corresponding morphism $f$.

\begin{theorem}\label{A21}
The categories $\mathscr{C}$ and $\mathscr{C}^{An}$ are equivalent, i.e. $\mathscr{C}\cong\mathscr{C}^{An}$.
\end{theorem}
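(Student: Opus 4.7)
The plan is to prove the equivalence by showing that the functor $F^{An}\colon \mathscr{C} \to \mathscr{C}^{An}$ defined just before the theorem statement is essentially surjective and fully faithful. First I would verify that $F^{An}$ is actually a functor. On objects it is the identity, since $\mathrm{Ob}(\mathscr{C}^{An}) = \mathrm{Ob}(\mathscr{C})$ by construction, and on morphisms it sends $f$ to $f^{*} = f \circ 1^{*}$. The identity axiom $F^{An}(1_{x}) = 1^{*}_{x}$ is immediate, and by Proposition \ref{A20} the reverse morphism $1^{*}_{x}$ serves as the identity of $\mathscr{C}^{An}$ at $x$.

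For compatibility with composition, I would compute
$$
F^{An}(g) * F^{An}(f) = g^{*} \circ f^{*} \circ 1^{*} = (g \circ 1^{*}) \circ (f \circ 1^{*}) \circ 1^{*}
$$
and apply the commutation $1^{*}_{B} \circ f = f \circ 1^{*}_{A}$ for $f \in \mathrm{Hom}(A,B)$ (from the remark following the definition of a factorization category), together with the relation $1^{*} \circ 1^{*} = 1$, to collapse the expression to $(g \circ f) \circ 1^{*} = F^{An}(g \circ f)$. The key identity $1^{*} \circ 1^{*} = 1$ itself falls out of Proposition \ref{A17}: the two assignments $f \mapsto f \circ 1^{*}$ and $f^{*} \mapsto f^{*} \circ 1^{*}$ are declared there to be mutually inverse, which forces $1^{*} \circ 1^{*} = 1_{x}$ at every object $x$.

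With $F^{An}$ confirmed to be a functor, essential surjectivity is trivial, since every object $y$ of $\mathscr{C}^{An}$ equals $F^{An}(y)$. For full faithfulness I would invoke Proposition \ref{A17} directly: it supplies, for each pair $x,y \in \mathrm{Ob}(\mathscr{C})$, a bijection $\mathrm{Hom}_{\mathscr{C}}(x,y) \xrightarrow{\sim} An_{\mathscr{C}}(x,y) = \mathrm{Hom}_{\mathscr{C}^{An}}(x,y)$ given precisely by $f \mapsto f \circ 1^{*} = F^{An}(f)$, which is exactly the hom-set map induced by $F^{An}$. Combining essential surjectivity with full faithfulness yields the desired equivalence $\mathscr{C} \cong \mathscr{C}^{An}$.

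I do not anticipate any substantive obstacles; the argument parallels the proof given earlier for the special case $\textbf{Groups} \cong \textbf{Groups-An}$, but draws on the axioms of an arbitrary factorization category in place of explicit group-theoretic computations. The only delicate step is the bookkeeping verification that $F^{An}$ respects $*$-composition, which as noted reduces entirely to the two relations $1^{*}_{B} \circ f = f \circ 1^{*}_{A}$ and $1^{*} \circ 1^{*} = 1$.
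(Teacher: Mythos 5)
Your proposal is correct and follows essentially the same route as the paper: it exhibits the functor $F^{An}$, notes that essential surjectivity is immediate since $F^{An}$ is the identity on objects, and derives full faithfulness from the bijection ${\rm{Hom}}(x,y)\xrightarrow{\sim}An(x,y)$ of Proposition \ref{A17}. The only difference is that you explicitly verify functoriality of $F^{An}$ (compatibility with $*$-composition via $1^{*}_{B}\circ f=f\circ 1^{*}_{A}$ and $1^{*}\circ 1^{*}=1$), a step the paper's proof leaves implicit.
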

\begin{proof}
Clearly every object $x$ of $\mathscr{C}^{An}$ is isomorphic to $F^{An}(x)$. And by Proposition \ref{A17}, we see that $F^{An}$ is fully faithful. So $F^{An}$ is an equivalence of categories.
\end{proof}
\begin{proof}[Proof of Theorem \ref{A23}]
By Theorem \ref{A21}, if $\mathscr{C}$ is an abelian category, the equivalence of categories implies that $\mathscr{C}^{An}$ is also an abelian category.
\end{proof}

\begin{definition}
Consider the situation above, the category $\mathscr{C}^{An}$ so constructed is called the \textit{anti-category associated to} $\mathscr{C}$. If the factorization category $\mathscr{C}$ is abelian, the category $\mathscr{C}^{An}$ so constructed is called the \textit{abelian anti-category associated to} $\mathscr{C}$.
\end{definition}

If the factorization category $\mathscr{C}$ is abelian, then using $*$-composition, we can define a bifunctor $An(-,-)$ like $\textrm{Hom}(-,-)$ as follows:
\begin{itemize}
\item[(1)]
For any $x,y\in\textrm{Ob}(\mathscr{C})$, $An(x,y)$ is the group of anti-morphisms from $x$ to $y$.
\item[(2)]
If $y\in\textrm{Ob}(\mathscr{C})$ and $f^{*}:x\rightarrow y$ is an anti-morphism in $\mathscr{C}$, then $An(f^{*},z)$ is defined by
$$
An(f^{*},z):An(y,z)\rightarrow An(x,z), \ \ g^{*}\mapsto g^{*}*f^{*}.
$$
Similarly, we define $An(z,f^{*})$ by
$$
An(z,f^{*}):An(z,x)\rightarrow An(z,y), \ \ g^{*}\mapsto f^{*}*g^{*}.
$$
\end{itemize}

\begin{theorem}
$An(-,-)$ is a bifunctor from the abelian associated anti-category $\mathscr{C}^{An}$ to the category $\textbf{Ab}$ of abelian groups. It is contravariant in the first variable, and covariant in the second variable.
\end{theorem}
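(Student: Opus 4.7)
The plan is to verify in turn each of the bifunctor axioms, relying on the associativity of the $*$-composition (Proposition \ref{A18}), the fact that $1^{*}$ is the identity for $*$ (Proposition \ref{A20}), and the preadditive structure on $An(x,y)$ carried over from $\textrm{Hom}(x,y)$ (Proposition \ref{A17} and Lemma \ref{A27}).

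First I would check that, for fixed $z$ and for every anti-morphism $f^{*}\colon x\to y$, the maps $An(f^{*},z)\colon An(y,z)\to An(x,z)$ and $An(z,f^{*})\colon An(z,x)\to An(z,y)$ are group homomorphisms between abelian groups. The target and source are abelian groups by Proposition \ref{A17}, and the homomorphism property reduces to the bilinearity of $*$-composition: for $g^{*},h^{*}\in An(y,z)$ one has $(g^{*}+h^{*})*f^{*}=g^{*}*f^{*}+h^{*}*f^{*}$, which unfolds via the definition $\alpha *\beta=\alpha\circ\beta\circ 1^{*}$ to the bilinearity of the ordinary composition in the abelian factorization category, exactly as in the proof of Lemma \ref{A27}.

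Next I would verify the two functoriality identities separately. For contravariance in the first variable: if $f^{*}\colon x\to x'$ and $g^{*}\colon x'\to x''$, then for any $h^{*}\in An(x'',z)$,
\begin{align*}
An(g^{*}*f^{*},z)(h^{*}) &= h^{*}*(g^{*}*f^{*}) = (h^{*}*g^{*})*f^{*} \\
&= An(f^{*},z)\bigl(An(g^{*},z)(h^{*})\bigr),
\end{align*}
which is $An(g^{*}*f^{*},z)=An(f^{*},z)\circ An(g^{*},z)$, and $An(1^{*}_{x},z)(h^{*})=h^{*}*1^{*}_{x}=h^{*}$ by Proposition \ref{A20}, so $An(1^{*}_{x},z)$ is the identity on $An(x,z)$. (Recall that $1^{*}_{x}$ is the identity of $x$ in $\mathscr{C}^{An}$.) A symmetric calculation using $(f^{*}*g^{*})*h^{*}=f^{*}*(g^{*}*h^{*})$ and $1^{*}_{x}*h^{*}=h^{*}$ proves covariance in the second variable.

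Finally I would check the bifunctor compatibility, namely that for anti-morphisms $f^{*}\colon x\to x'$ and $g^{*}\colon y\to y'$ the square formed by $An(f^{*},y)$, $An(x',g^{*})$, $An(f^{*},y')$, $An(x,g^{*})$ commutes. For $h^{*}\in An(x',y)$ one computes
\[
An(f^{*},y')\bigl(An(x',g^{*})(h^{*})\bigr)= (g^{*}*h^{*})*f^{*} = g^{*}*(h^{*}*f^{*}) = An(x,g^{*})\bigl(An(f^{*},y)(h^{*})\bigr),
\]
again by Proposition \ref{A18}. The only place I expect friction is purely bookkeeping: in $\mathscr{C}^{An}$ composition is $*$, so one must consistently resolve ``$g\circ f$'' in the bifunctor axioms as $g^{*}*f^{*}$ and remember that the identity of $x$ in $\mathscr{C}^{An}$ is $1^{*}_{x}$, not $1_{x}$; once this is kept straight, every axiom collapses to an instance of associativity of $*$ or to Proposition \ref{A20}.
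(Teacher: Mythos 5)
Your proposal is correct and follows the same route as the paper, whose proof is just the one-line citation ``By Proposition \ref{A17}, Lemma \ref{A27}, and the above definitions''; you have simply written out in full the verifications that citation compresses (abelian-group structure on $An(x,y)$ from Proposition \ref{A17}, bilinearity of $*$ from Lemma \ref{A27}, and functoriality plus the commuting square from associativity of $*$ and Proposition \ref{A20}). Your closing remark about consistently reading composition in $\mathscr{C}^{An}$ as $*$ with identity $1^{*}_{x}$ is exactly the right bookkeeping point.
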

\begin{proof}
By Proposition \ref{A17}, Lemma \ref{A27}, and the above definitions.
\end{proof}

\begin{theorem}
The functors $An(-,-)$ and ${\rm{Hom}}(-,-)$ are isomorphic.
\end{theorem}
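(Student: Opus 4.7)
The plan is to exhibit a natural isomorphism $\eta$ between the two bifunctors whose components are precisely the bijections already constructed in Proposition~\ref{A17}. Concretely, for $A,B\in\mathrm{Ob}(\mathscr{C}^{An})=\mathrm{Ob}(\mathscr{C})$, define
$$
\eta_{A,B}\colon \mathrm{Hom}(A,B)\longrightarrow An(A,B),\qquad f\longmapsto f^{*}=f\circ 1^{*}.
$$
By Proposition~\ref{A17} this is a bijection, and in the preadditive case it is a group isomorphism since $(f+g)\circ 1^{*}=f\circ 1^{*}+g\circ 1^{*}$. The functor $\mathrm{Hom}(-,-)$ on $\mathscr{C}^{An}$ is the one pulled back along the equivalence $G^{An}\colon\mathscr{C}^{An}\to\mathscr{C}$ of Theorem~\ref{A21}; thus on objects it agrees with $\mathrm{Hom}_{\mathscr{C}}(A,B)$, while an anti-morphism $f^{*}\colon A'\to A$ acts by $\mathrm{Hom}(f^{*},B)(g)=g\circ f$, where $f=f^{*}\circ 1^{*}$ is the corresponding morphism.

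The only substantive step is to verify naturality in both variables. Given anti-morphisms $f^{*}\colon A'\to A$ and $h^{*}\colon B\to B'$, I would show that the square
$$
\xymatrix{
\mathrm{Hom}(A,B)\ar[r]^{\eta_{A,B}}\ar[d]_{\mathrm{Hom}(f^{*},h^{*})} & An(A,B)\ar[d]^{An(f^{*},h^{*})} \\
\mathrm{Hom}(A',B')\ar[r]^{\eta_{A',B'}} & An(A',B')
}
$$
commutes. Starting with $g\in\mathrm{Hom}(A,B)$, the clockwise route gives $\eta_{A',B'}(h\circ g\circ f)=(h\circ g\circ f)\circ 1^{*}$. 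The counterclockwise route gives $h^{*}*g^{*}*f^{*}$, and a direct computation using the identities $1^{*}\circ 1^{*}=1$, $u\circ 1^{*}=1^{*}\circ u$ for every morphism $u$, and the definition $a^{*}*b^{*}=a^{*}\circ b^{*}\circ 1^{*}$ reduces this to $(h\circ g\circ f)\circ 1^{*}$ as well. Explicitly, $h^{*}\circ g^{*}=(h\circ 1^{*})\circ(1^{*}\circ g)=h\circ g$, hence $h^{*}*g^{*}=(h\circ g)^{*}$, and then $(h\circ g)^{*}*f^{*}=(h\circ g\circ f)^{*}$. Associativity of both compositions (Proposition~\ref{A2}, Proposition~\ref{A18}) makes all reassociations legal.

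I do not expect a real obstacle: everything reduces to the single algebraic identity $1^{*}\circ 1^{*}=1$, together with the commutation $u\circ 1^{*}=1^{*}\circ u$ for morphisms $u$ that is built into the definition of a factorization category. The only point that deserves a line of justification is the \emph{interpretation} of $\mathrm{Hom}(-,-)$ as a bifunctor on $\mathscr{C}^{An}$, which is unambiguous because the functors $F^{An}$ and $G^{An}$ of Theorem~\ref{A21} are mutually inverse equivalences that are the identity on objects and send $f\leftrightarrow f^{*}$ on morphisms. Once these bookkeeping points are in place, the components $\eta_{A,B}$ are group isomorphisms and the naturality square commutes, so $\eta\colon\mathrm{Hom}(-,-)\xrightarrow{\sim}An(-,-)$ is the desired natural isomorphism of bifunctors $(\mathscr{C}^{An})^{op}\times\mathscr{C}^{An}\to\mathbf{Ab}$.
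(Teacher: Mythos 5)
Your proof is correct and follows the same route the paper intends: the paper's entire proof is the one-line remark ``It is easy to check using Proposition~\ref{A17}'', and your components $\eta_{A,B}(f)=f\circ 1^{*}$ are exactly the bijections of that proposition. The naturality computation you supply (reducing $h^{*}*g^{*}*f^{*}$ to $(h\circ g\circ f)^{*}$ via $1^{*}\circ 1^{*}=1$ and $u\circ 1^{*}=1^{*}\circ u$), together with your explicit interpretation of $\mathrm{Hom}(-,-)$ as a bifunctor on $\mathscr{C}^{An}$ via the equivalence $G^{An}$, fills in details the paper leaves entirely implicit.
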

\begin{proof}
It is easy to check using Proposition \ref{A17}.
\end{proof}

Since a factorization category induces a category, it is natural to think if one can obtain a factorization category from a given category. However, we are particularly curious about how to generate a factorization category through two equivalent categories. The process is as follows:

\begin{construction}\label{A34}
Let $\mathscr{C}$ be a category and let $\mathscr{D}$ be a category equivalent to $\mathscr{C}$ such that ${\rm{Ob}}(\mathscr{C})={\rm{Ob}}(\mathscr{D})$. If the following conditions are satisfied:
\begin{itemize}
  \item [(1)]
  To each triple of objects \textit{A,B,C} $\in {\rm{Ob}}(\mathscr{C})$, there are laws of compositions
  \begin{align*}
  &{\rm{Hom}}_{\mathscr{D}}(A,B) \times {\rm{Hom}}_{\mathscr{D}}(B,C) \rightarrow {\rm{Hom_{\mathscr{C}}}}(A,C),\ \ (f',g')\mapsto f'\odot g' \\
  &{\rm{Hom}}_{\mathscr{D}}(A,B) \times {\rm{Hom_{\mathscr{C}}}}(B,C) \rightarrow {\rm{Hom}}_{\mathscr{D}}(A,C),\ \ (f',g)\mapsto f'\odot g  \\
  &{\rm{Hom_{\mathscr{C}}}}(A,B) \times {\rm{Hom}}_{\mathscr{D}}(B,C) \rightarrow {\rm{Hom}}_{\mathscr{D}}(A,C),\ \ (f,g')\mapsto f\odot g'
  \end{align*}
  \item [(2)]
  For every $A,B\in {\rm{Ob}}(\mathscr{C})$ and the identity morphism $1'_{A}\in {\rm{Hom}}_{\mathscr{D}}(A,A)$, we have $f\odot 1'_{A}\in {\rm{Hom}}_{\mathscr{D}'}(A,B)$ for any morphism $f\in {\rm{Hom_{\mathscr{D}}}}(A,B)$. And for any $g\in {\rm{Hom_{\mathscr{D}}}}(B,A)$, we have $1'_{A}\odot f\in {\rm{Hom}}_{\mathscr{D}'}(B,A)$.
  \item[(3)]
  The compositions defined above are associative.
\end{itemize}
Then we obtain a factorization category $\overline{\mathscr{C}}=(\mathscr{C},\Sigma)=\underline{(\mathscr{C},\mathscr{D})}$ with ${\rm{Ob}}(\overline{\mathscr{C}})={\rm{Ob}}(\mathscr{C})$, ${\rm{Hom_{\overline{\mathscr{C}}}}}(A,B)={\rm{Hom}}_{\mathscr{C}}(A,B)$, and $An(A,B)={\rm{Hom}}_{\mathscr{D}}(A,B)$.
\end{construction}

\begin{definition}
Let $\mathscr{C}$ be a category. The factorization category $\overline{\mathscr{C}}$ constructed above is called the \textit{factorization category associated to} $\mathscr{C}$. And the pair $(\mathscr{C},\mathscr{D})$ of two equivalent categories as above is called the \textit{generator of the associated factorization category}.
\end{definition}

So a factorization category induces two equivalent categories, and a category together with an equivalent category induce a factorization category. However, since a category may associate to different factorization categories, to avoid ambiguity, we have to impose the following axiom for factorization categories.

\begin{axiom}\label{A29}
Every category can be equipped with a unique factorial structure making it a factorization category.
\end{axiom}

In other words, consider the situation in Construction \ref{A34}. If $\mathscr{D}'$ is another category equivalent to $\mathscr{C}$ with $\textrm{Ob}(\mathscr{D}')=\textrm{Ob}(\mathscr{C})$, then we have $\underline{(\mathscr{C},\mathscr{D})}=\underline{(\mathscr{C},\mathscr{D}')}=\overline{\mathscr{C}}$.

Moreover, we can generate a preadditive factorization category through two equivalent preadditive categories. The process is as follows:
\begin{construction}\label{A34}
Let $\mathscr{C}$ be a preadditive category and let $\mathscr{D}$ be a category equivalent to $\mathscr{C}$ such that ${\rm{Ob}}(\mathscr{C})={\rm{Ob}}(\mathscr{D})$. If the following conditions are satisfied:
\begin{itemize}
  \item [(1)]
  To each triple of objects \textit{A,B,C} $\in {\rm{Ob}}(\mathscr{C})$, there are laws of compositions that are bilinear
  \begin{align*}
  &{\rm{Hom}}_{\mathscr{D}}(A,B) \times {\rm{Hom}}_{\mathscr{D}}(B,C) \rightarrow {\rm{Hom_{\mathscr{C}}}}(A,C),\ \ (f',g')\mapsto f'\circ g' \\
  &{\rm{Hom}}_{\mathscr{D}}(A,B) \times {\rm{Hom_{\mathscr{C}}}}(B,C) \rightarrow {\rm{Hom}}_{\mathscr{D}}(A,C),\ \ (f',g)\mapsto f'\circ g  \\
  &{\rm{Hom_{\mathscr{C}}}}(A,B) \times {\rm{Hom}}_{\mathscr{D}}(B,C) \rightarrow {\rm{Hom}}_{\mathscr{D}}(A,C),\ \ (f,g')\mapsto f\circ g'
  \end{align*}
  \item [(2)]
  For every $A,B\in {\rm{Ob}}(\mathscr{C})$ and the identity morphism $1'_{A}\in {\rm{Hom}}_{\mathscr{D}}(A,A)$, we have $f\circ 1'_{A}\in {\rm{Hom}}_{\mathscr{D}'}(A,B)$ for any morphism $f\in {\rm{Hom_{\mathscr{D}}}}(A,B)$. And for any $g\in {\rm{Hom_{\mathscr{D}}}}(B,A)$, we have $1'_{A}\circ f\in {\rm{Hom}}_{\mathscr{D}'}(B,A)$.
  \item[(3)]
  The compositions defined above are associative.
\end{itemize}
Then we obtain a preadditive factorization category $\overline{\mathscr{C}}=(\mathscr{C},\Sigma)=\underline{(\mathscr{C},\mathscr{D})}$ with ${\rm{Ob}}(\overline{\mathscr{C}})={\rm{Ob}}(\mathscr{C})$, ${\rm{Hom_{\overline{\mathscr{C}}}}}(A,B)={\rm{Hom}}_{\mathscr{C}}(A,B)$, and $An(A,B)={\rm{Hom}}_{\mathscr{D}}(A,B)$.
\end{construction}

\begin{definition}
Let $\mathscr{C}$ be a preadditive category. The preadditive factorization category $\overline{\mathscr{C}}$ constructed above is called the \textit{preadditive factorization category associated to} $\mathscr{C}$. And the pair $(\mathscr{C},\mathscr{D})$ of two equivalent preadditive categories as above is called the \textit{generator of the associated preadditive factorization category}.
\end{definition}

If $\mathscr{C}$ is a factorization category with products, we want to study products in $\mathscr{C}$. We will prove two anti-analogs of the universal property of a product (cf. \cite{Hilton}, Chapter II, 5) in a factorization category. Such properties are called the \textit{anti-universal properties} of the product.

\begin{proposition}[Anti-universal properties]\label{A28}\
\begin{itemize}
  \item [(1)]
  Let $(X_{i})_{i\in I}$ be a family of objects of $\mathscr{C}$ indexed by a set $I$ and let $X=\prod_{i\in I}X_{i}$ be the product of $X_{i}$ with projections $p_{i}:X\rightarrow X_{i}$. Then given any object $Y\in{\rm{Ob}}(\mathscr{C})$ and anti-morphism $f_{i}^{*}:Y\rightarrow X_{i}$ in $\mathscr{C}$, there exists a unique anti-morphism $f^{*}:Y\rightarrow X$ such that $f_{i}^{*}=p_{i}\circ f^{*}$, i.e. the following diagram commutes:
$$
\xymatrix{
  X  \ar[r]^{p_{i}} & X_{i}  \\
  Y \ar@{-->}[u]^{f^{*}}\ar[ur]_{f_{i}^{*}} &    }
$$
  \item [(2)]
  Let $(X_{i})_{i\in I}$ be a family of objects of $\mathscr{C}$ indexed by a set $I$ and let $X=\prod_{i\in I}X_{i}$ be the product of $X_{i}$ with anti-projections $p_{i}^{*}:X\rightarrow X_{i}$ ($p_{i}^{*}=p_{i}\circ1^{*}$). Then given any object $Y\in{\rm{Ob}}(\mathscr{C})$ and anti-morphism $f_{i}^{*}:Y\rightarrow X_{i}$ in $\mathscr{C}$, there exists a unique morphism $f:Y\rightarrow X$ such that $f_{i}^{*}=p_{i}^{*}\circ f$, i.e. the following diagram commutes:
$$
\xymatrix{
  X  \ar[r]^{p_{i}^{*}} & X_{i}  \\
  Y \ar@{-->}[u]^{f}\ar[ur]_{f_{i}^{*}} &    }
$$
\end{itemize}

\end{proposition}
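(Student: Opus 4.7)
The plan is to reduce both statements to the ordinary universal property of the product in the underlying category $\mathscr{C}$, using two ingredients from the definition of a factorization category: the bijection $\mathrm{Hom}(A,B) \xrightarrow{\sim} An(A,B)$ of Proposition \ref{A17}, which converts anti-morphisms to morphisms via composition with a reverse morphism, and the interchange identity $f \circ 1^{*}_{A} = 1^{*}_{B} \circ f$ for any morphism $f : A \to B$, together with the relation $1^{*}_{A} \circ 1^{*}_{A} = 1_{A}$ (which is forced by $f^{**} = f$). With these tools every manipulation becomes a cancellation game.

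For part (1), I would first pass from the given anti-morphisms $f_{i}^{*} : Y \to X_{i}$ to their corresponding morphisms $f_{i} := f_{i}^{*} \circ 1^{*}_{Y}$ furnished by Proposition \ref{A17}. The classical universal property of $X = \prod_{i \in I} X_{i}$ in the underlying category then yields a unique morphism $f : Y \to X$ with $p_{i} \circ f = f_{i}$ for every $i \in I$. I then set $f^{*} := f \circ 1^{*}_{Y}$, which lies in $An(Y,X)$ by axiom (3) of a factorization category, and compute
\[
p_{i} \circ f^{*} = p_{i} \circ f \circ 1^{*}_{Y} = f_{i} \circ 1^{*}_{Y} = f_{i}^{*},
\]
using that $f_{i} \circ 1^{*}_{Y}$ is the corresponding anti-morphism of $f_{i}$, namely $f_{i}^{**} = f_{i}^{*}$. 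For uniqueness, given another anti-morphism $g^{*} : Y \to X$ with $p_{i} \circ g^{*} = f_{i}^{*}$, I would pass to its corresponding morphism $g := g^{*} \circ 1^{*}_{Y}$ and observe that $(p_{i} \circ g) \circ 1^{*}_{Y} = p_{i} \circ g^{*} = f_{i}^{*} = f_{i} \circ 1^{*}_{Y}$; the injectivity of the correspondence in Proposition \ref{A17} then gives $p_{i} \circ g = f_{i}$, and the uniqueness clause of the universal property of the product forces $g = f$, whence $g^{*} = f^{*}$.

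For part (2), the strategy is the same but the end product is a morphism rather than an anti-morphism. Given $f_{i}^{*} : Y \to X_{i}$, I again pass to the corresponding morphisms $f_{i} = f_{i}^{*} \circ 1^{*}_{Y}$ and extract the unique morphism $f : Y \to X$ with $p_{i} \circ f = f_{i}$. Using the interchange identity, I verify
\[
p_{i}^{*} \circ f = p_{i} \circ 1^{*}_{X} \circ f = p_{i} \circ f \circ 1^{*}_{Y} = f_{i} \circ 1^{*}_{Y} = f_{i}^{*}.
\]
Uniqueness is symmetric: any morphism $g : Y \to X$ with $p_{i}^{*} \circ g = f_{i}^{*}$ satisfies $(p_{i} \circ g) \circ 1^{*}_{Y} = p_{i} \circ 1^{*}_{X} \circ g = p_{i}^{*} \circ g = f_{i}^{*} = f_{i} \circ 1^{*}_{Y}$, and Proposition \ref{A17} then yields $p_{i} \circ g = f_{i}$, so $g = f$ by the universal property.

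The main technical nuisance is simply bookkeeping on which side each $1^{*}$ sits; this is entirely handled by the interchange identity $f \circ 1^{*}_{A} = 1^{*}_{B} \circ f$ and the relation $1^{*} \circ 1^{*} = 1$. I do not anticipate any deeper obstacle, because the content of the two anti-universal properties is really that the product in $\mathscr{C}$ represents both the functor $Y \mapsto \prod_{i} An(Y, X_{i})$ (with projections $p_{i}$) and the same functor with anti-projections $p_{i}^{*}$, which is forced by the natural bijection $\mathrm{Hom}(Y, -) \cong An(Y,-)$ from Proposition \ref{A17}.
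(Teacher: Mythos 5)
Your proposal is correct and follows essentially the same route as the paper: convert the given anti-morphisms to their corresponding morphisms via the bijection of Proposition \ref{A17}, invoke the ordinary universal property of the product in the underlying category, and then compose with the reverse morphism $1^{*}$ (using the interchange identity) to recover the two anti-universal statements. If anything, your write-up is more complete than the paper's, since you explicitly carry out the uniqueness arguments that the paper leaves implicit.
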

\begin{proof}
The universal property of the product yields the following commutative diagram:
$$
\xymatrix{
  X  \ar[r]^{p_{i}} & X_{i}  \\
  Y \ar@{-->}[u]^{\exists!f}\ar[ur]_{f_{i}} &    }
$$
i.e. we have $f_{i}=p_{i}f$ for $i\in I$.

Then the following calculations give us the desired results:
\begin{align*}
f_{i}\circ1^{*}&=p_{i}\circ f\circ1^{*} \\
f_{i}^{*}&=p_{i}\circ f^{*},
\end{align*}
and
\begin{align*}
1^{*}\circ f_{i}&=1^{*}\circ p_{i}\circ f \\
f_{i}^{*}&=p_{i}^{*}\circ f.
\end{align*}
\end{proof}

\begin{definition}
Let $(X_{i})_{i\in I}$ be a family of objects of $\mathscr{C}$ indexed by a set $I$ and let $X=\prod_{i\in I}X_{i}$ be the product of $X_{i}$ with projections $p_{i}:X\rightarrow X_{i}$. Then \textit{the corresponding anti-product} of $X$ is the pair $(X;p_{i}^{*})$ with anti-projections $p_{i}^{*}=p_{i}\circ1^{*}$ such that the anti-universal property described in Proposition \ref{A28}-(2) above is satisfied.
\end{definition}

In the following, we will prove two anti-analogues of \cite{Hilton}, Chapter II, Theorem 5.1, using anti-universal properties developed above.

\begin{theorem}
If $(X;p_{i})$ and $(X';p_{i}')$ are both products of the family $(X_{i})_{i\in I}$ in $\mathscr{C}$, then there exists a unique anti-isomorphism $g^{*}:X\xrightarrow{\sim}X'$ such that $p_{i}^{*}=p_{i}'\circ g^{*}$.
\end{theorem}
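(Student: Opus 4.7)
The plan is to reduce the statement to the classical universal property of products together with Proposition \ref{A16} (which transfers ``isomorphism'' to ``anti-isomorphism'') and the anti-universal property in Proposition \ref{A28}-(1) (which provides the required uniqueness).

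First, since $(X;p_i)$ and $(X';p_i')$ are both products of the same family $(X_i)_{i\in I}$ in the underlying category, the classical universal property yields a unique isomorphism $g\colon X\xrightarrow{\sim} X'$ of $\mathscr{C}$ such that $p_i=p_i'\circ g$ for every $i\in I$. I will then define $g^{*}:=g\circ 1^{*}_X$, which lies in $An(X,X')$ by axiom~(3) of a factorization category. A one-line check verifies the required identity:
$$
p_i'\circ g^{*}=p_i'\circ g\circ 1^{*}_X=p_i\circ 1^{*}_X=p_i^{*},
$$
so $g^{*}$ has the stated factorization.

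Next, I would invoke Proposition \ref{A16}: since $g$ is an isomorphism in $\mathscr{C}$, its corresponding anti-morphism $g^{*}=g\circ 1^{*}$ is an anti-isomorphism, with anti-inverse $g^{-1*}=g^{-1}\circ 1^{*}$. This avoids the slightly fiddlier route of constructing an anti-inverse by hand through the symmetric anti-universal property applied to $X$ as a product (which would also work, but require juggling $*$-composition via Propositions \ref{A18}--\ref{A20}).

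For uniqueness, suppose $h^{*}\colon X\to X'$ is any anti-morphism satisfying $p_i^{*}=p_i'\circ h^{*}$ for all $i\in I$. Here the $p_i^{*}$ are anti-morphisms from $X$ to $X_i$ and the $p_i'$ are (ordinary) morphisms from $X'$ to $X_i$, so the data precisely fits the hypothesis of Proposition \ref{A28}-(1) with $Y=X$ and $X'$ playing the role of the product; that proposition gives a \emph{unique} anti-morphism $X\to X'$ with this property, forcing $h^{*}=g^{*}$. The only step that requires any care is keeping the directions of composition straight so that $p_i^{*}=p_i\circ 1^{*}$ rather than $1^{*}\circ p_i$; both are equal by the remark following the definition of a factorization category, so no real obstacle arises.
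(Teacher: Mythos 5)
Your proof is correct, and it takes a genuinely different (and somewhat more economical) route to existence than the paper does. The paper stays entirely inside the anti-universal-property formalism: it applies Proposition~\ref{A28} twice, once with $Y=X'$ and the product $(X;p_{i})$ to produce an anti-morphism $f^{*}:X'\to X$ with $p_{i}'^{*}=p_{i}\circ f^{*}$, and once with $Y=X$ and the product $(X';p_{i}')$ to produce $g^{*}:X\to X'$ with $p_{i}^{*}=p_{i}'\circ g^{*}$, and then uses the uniqueness clause to cancel and conclude $f^{*}\circ g^{*}=1$ and $g^{*}\circ f^{*}=1$, so that $g^{*}$ is an anti-isomorphism with the anti-inverse exhibited explicitly. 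You instead pull the isomorphism $g$ with $p_{i}=p_{i}'\circ g$ out of the classical uniqueness-of-products theorem, set $g^{*}=g\circ 1^{*}_{X}$, verify $p_{i}'\circ g^{*}=p_{i}^{*}$ by a one-line computation, and let Proposition~\ref{A16} certify that $g^{*}$ is an anti-isomorphism. Your uniqueness argument and the paper's coincide in substance: both rest on the uniqueness clause of Proposition~\ref{A28}-(1) applied with $Y=X$ and target product $(X';p_{i}')$. What your version buys is a shorter existence argument that avoids juggling the mutually anti-inverse pair; what the paper's version buys is an explicit construction of the anti-inverse by the symmetric application of the anti-universal property, without presupposing the classical comparison isomorphism. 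Since the anti-morphism satisfying $p_{i}^{*}=p_{i}'\circ g^{*}$ is unique, the two constructions necessarily yield the same $g^{*}$.
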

\begin{proof}
The anti-universal property described in Proposition \ref{A28}-(2) yields the following commutative diagrams:
$$
\xymatrix{
  X  \ar[r]^{p_{i}} & X_{i}  \\
  X' \ar@{-->}[u]^{f^{*}}\ar[ur]_{p_{i}'^{*}} &    }
$$
i.e. $p_{i}'^{*}=p_{i}\circ f^{*}$; and
$$
\xymatrix{
  X'  \ar[r]^{p_{i}'} & X_{i}  \\
  X \ar@{-->}[u]^{g^{*}}\ar[ur]_{p_{i}^{*}} &    }
$$
i.e. $p_{i}^{*}=p_{i}'\circ g^{*}$. Then we have $p_{i}'^{*}=p_{i}'^{*}\circ g^{*}\circ f^{*}$ and $p_{i}^{*}=p_{i}^{*}\circ f^{*}\circ g^{*}$, which imply that $g^{*}\circ f^{*}=1$ and $f^{*}\circ g^{*}=1$. Hence $g^{*}$ is the unique anti-isomorphism as desired.
\end{proof}

\begin{theorem}
If $(X;p_{i}^{*})$ and $(X';p_{i}'^{*})$ are both anti-products of the family $(X_{i})_{i\in I}$ in $\mathscr{C}$, then there exists a unique isomorphism $g:X\xrightarrow{\sim}X'$ such that $p_{i}^{*}=p_{i}'^{*}\circ g$.
\end{theorem}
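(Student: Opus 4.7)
The plan is to mirror the proof of the preceding theorem about products, but invoking the second anti-universal property (Proposition \ref{A28}-(2)) in place of the first, since we are now given anti-products rather than products. Concretely, since $(X;p_{i}^{*})$ is an anti-product, applying Proposition \ref{A28}-(2) to the family of anti-morphisms $p_{i}'^{*}:X'\rightarrow X_{i}$ yields a unique \emph{morphism} $h:X'\rightarrow X$ such that $p_{i}'^{*}=p_{i}^{*}\circ h$ for every $i\in I$. Symmetrically, since $(X';p_{i}'^{*})$ is an anti-product, applying Proposition \ref{A28}-(2) to the family $p_{i}^{*}:X\rightarrow X_{i}$ yields a unique morphism $g:X\rightarrow X'$ such that $p_{i}^{*}=p_{i}'^{*}\circ g$ for every $i\in I$. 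The morphism $g$ is our candidate isomorphism, and its uniqueness as a morphism satisfying $p_{i}^{*}=p_{i}'^{*}\circ g$ is already built into the construction.

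Next I would verify invertibility by a standard uniqueness-of-factorization argument. Substituting, one obtains
$$
p_{i}^{*}=p_{i}'^{*}\circ g=(p_{i}^{*}\circ h)\circ g=p_{i}^{*}\circ(h\circ g),
$$
and likewise $p_{i}'^{*}=p_{i}'^{*}\circ(g\circ h)$ for every $i\in I$. Now both $h\circ g$ and $1_{X}$ are morphisms $X\rightarrow X$ satisfying $p_{i}^{*}=p_{i}^{*}\circ(-)$, so the uniqueness clause of Proposition \ref{A28}-(2), applied with $Y=X$ and the family of anti-morphisms $p_{i}^{*}:X\rightarrow X_{i}$, forces $h\circ g=1_{X}$. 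The same argument applied to $(X';p_{i}'^{*})$ gives $g\circ h=1_{X'}$. Therefore $g$ is an isomorphism with inverse $h$.

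Finally, uniqueness of $g$ as an isomorphism satisfying $p_{i}^{*}=p_{i}'^{*}\circ g$ is already a consequence of the uniqueness clause in Proposition \ref{A28}-(2), because that proposition produces a unique morphism (and in particular a unique isomorphism) with this property. I do not expect any real obstacle here: the argument is a direct ``anti'' analogue of the proof of the preceding theorem, with the only delicate point being to remember that the universal property (2) produces a \emph{morphism} (not an anti-morphism), which is exactly what makes $g$ and $h$ composable in the ordinary category and allows the identification $h\circ g=1_{X}$ via the standard uniqueness trick.
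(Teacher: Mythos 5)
Your proof is correct and follows essentially the same route as the paper: obtain the two comparison morphisms from the anti-universal property of each anti-product, compose, and use the uniqueness clause to force the composites to be identities. The only (cosmetic) difference is that you correctly invoke Proposition \ref{A28}-(2), as the definition of anti-product requires, whereas the paper's proof cites \ref{A28}-(1) in what appears to be a typo; you are also slightly more explicit about the uniqueness trick that yields $h\circ g=1_{X}$ and $g\circ h=1_{X'}$.
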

\begin{proof}
The anti-universal property described in Proposition \ref{A28}-(1) yields the following commutative diagrams:
$$
\xymatrix{
  X  \ar[r]^{p_{i}^{*}} & X_{i}  \\
  X' \ar@{-->}[u]^{f}\ar[ur]_{p_{i}'^{*}} &    }
$$
i.e. $p_{i}'^{*}=p_{i}^{*}\circ f$; and
$$
\xymatrix{
  X'  \ar[r]^{p_{i}'^{*}} & X_{i}  \\
  X \ar@{-->}[u]^{g}\ar[ur]_{p_{i}^{*}} &    }
$$
i.e. $p_{i}^{*}=p_{i}'^{*}\circ g$. Then we have $p_{i}'^{*}=p_{i}'^{*}\circ g\circ f$ and $p_{i}^{*}=p_{i}^{*}\circ f\circ g$, which imply that $g\circ f=1$ and $f\circ g=1$. Hence $g$ is the unique isomorphism as desired.
\end{proof}

\section{Factorable Functors}\label{B4}
In the following, we want to clarify the relation between two factorization categories. This motivates us to define functors between factorization categories.

\begin{lemma}\label{A35}
Let $\mathscr{C}$ and $\mathscr{D}$ be factorization categories and let $F:\mathscr{C}\rightarrow\mathscr{D}$ be a functor between the underlying categories. Then for any $x,y\in{\rm{Ob}}(\mathscr{C})$, the set map ${\rm{Hom_{\mathscr{C}}}}(x,y)\rightarrow {\rm{Hom_{\mathscr{D}}}}(Fx,Fy)$ induces a set map $An_{\mathscr{C}}(x,y)\rightarrow An_{\mathscr{D}}(Fx,Fy)$. Moreover, if $\mathscr{C}$ and $\mathscr{D}$ are preadditive factorization categories and $F$ is an additive functor of the underlying preadditive categories, then the group homomorphism ${\rm{Hom_{\mathscr{C}}}}(x,y)\rightarrow {\rm{Hom_{\mathscr{D}}}}(Fx,Fy)$ induces a group homomorphism $An_{\mathscr{C}}(x,y)\rightarrow An_{\mathscr{D}}(Fx,Fy)$.
\end{lemma}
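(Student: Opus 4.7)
The plan is to build the induced map on anti-morphisms by transporting along the canonical bijection from Proposition \ref{A17}. Recall that for any factorization category, the assignment $f \mapsto f^{*} = f \circ 1^{*}$ gives a bijection $\mathrm{Hom}(x,y) \xrightarrow{\sim} An(x,y)$, with inverse $f^{*} \mapsto f^{*} \circ 1^{*}$. Thus every anti-morphism in $\mathscr{C}$ can be written uniquely as $f^{*} = f \circ 1_{x}^{*}$ for some $f \in \mathrm{Hom}_{\mathscr{C}}(x,y)$.

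First I would define the induced map explicitly. Given $f^{*} \in An_{\mathscr{C}}(x,y)$, let $f = f^{*} \circ 1_{x}^{*} \in \mathrm{Hom}_{\mathscr{C}}(x,y)$ be the corresponding morphism. Apply the functor $F$ to obtain $Ff \in \mathrm{Hom}_{\mathscr{D}}(Fx, Fy)$, and define
$$
\Phi_{x,y} \colon An_{\mathscr{C}}(x,y) \longrightarrow An_{\mathscr{D}}(Fx, Fy), \qquad f^{*} \longmapsto (Ff)^{*} = Ff \circ 1_{Fx}^{*}.
$$
Equivalently, $\Phi_{x,y}$ is the composite
$$
An_{\mathscr{C}}(x,y) \xrightarrow{\sim} \mathrm{Hom}_{\mathscr{C}}(x,y) \xrightarrow{F} \mathrm{Hom}_{\mathscr{D}}(Fx,Fy) \xrightarrow{\sim} An_{\mathscr{D}}(Fx,Fy),
$$
where the outer arrows are the bijections of Proposition \ref{A17}. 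This is manifestly well-defined since each stage is well-defined, and it is the set map ``induced by'' ${\rm{Hom_{\mathscr{C}}}}(x,y)\rightarrow {\rm{Hom_{\mathscr{D}}}}(Fx,Fy)$ in the obvious sense that the diagram
$$
\xymatrix{
\mathrm{Hom}_{\mathscr{C}}(x,y) \ar[r]^{F} \ar[d]_{\sim} & \mathrm{Hom}_{\mathscr{D}}(Fx,Fy) \ar[d]^{\sim} \\
An_{\mathscr{C}}(x,y) \ar[r]^{\Phi_{x,y}} & An_{\mathscr{D}}(Fx,Fy)
}
$$
commutes by construction.

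For the preadditive/additive case, I would then invoke the second half of Proposition \ref{A17}, which promotes both vertical bijections above into group isomorphisms. Since $F$ is additive, the top horizontal arrow $\mathrm{Hom}_{\mathscr{C}}(x,y) \to \mathrm{Hom}_{\mathscr{D}}(Fx,Fy)$ is a group homomorphism. The bottom arrow $\Phi_{x,y}$ is therefore a composition of three group homomorphisms, hence itself a group homomorphism. Concretely, for $f^{*}, g^{*} \in An_{\mathscr{C}}(x,y)$ with corresponding $f, g \in \mathrm{Hom}_{\mathscr{C}}(x,y)$, one has $f^{*} + g^{*} = (f+g)^{*}$, so
$$
\Phi_{x,y}(f^{*} + g^{*}) = (F(f+g))^{*} = (Ff + Fg)^{*} = (Ff)^{*} + (Fg)^{*} = \Phi_{x,y}(f^{*}) + \Phi_{x,y}(g^{*}).
$$

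There is no substantive obstacle here; the whole argument rests on reducing the statement about anti-morphisms to the hypothesis about morphisms via the canonical bijection $f \leftrightarrow f^{*}$ of Proposition \ref{A17}. The only care needed is to verify that this bijection respects the group structure in the preadditive case, which is exactly the content of the second half of Proposition \ref{A17}, so the conclusion follows by transport of structure.
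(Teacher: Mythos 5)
Your proposal is correct and follows essentially the same route as the paper: both define the induced map as the composite $An_{\mathscr{C}}(x,y)\xrightarrow{\sim}\mathrm{Hom}_{\mathscr{C}}(x,y)\xrightarrow{F}\mathrm{Hom}_{\mathscr{D}}(Fx,Fy)\xrightarrow{\sim}An_{\mathscr{D}}(Fx,Fy)$ using the bijections (resp.\ group isomorphisms) of Proposition~\ref{A17}, and conclude the additive case by composing group homomorphisms. Your write-up is simply a more explicit version of the paper's diagrammatic argument.
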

\begin{proof}
We just need to prove the second statement. Proposition \ref{A17} yields the following diagram
$$
\xymatrix{
  \textrm{Hom}_{\mathscr{C}}(x,y) \ar@{=}[d]_{} \ar[r]^{} & \textrm{Hom}_{\mathscr{D}}(Fx,Fy) \ar@{=}[d]^{} \\
  An_{\mathscr{C}}(x,y)  & An_{\mathscr{D}}(Fx,Fy)   }
$$
Then the composition is our desired homomorphism.
\end{proof}

\begin{definition}\label{A36}
Let $(\mathscr{C},\Sigma_{1})$ and $(\mathscr{D},\Sigma_{2})$ be factorization categories. A \textit{factorable functor} between two factorization categories $(\mathscr{C},\Sigma_{1})$ and $(\mathscr{D},\Sigma_{2})$, denoted by $F:(\mathscr{C},\Sigma_{1})\rightarrow(\mathscr{D},\Sigma_{2})$, is given by the following data:
\begin{itemize}
\item[(1)]
A functor $F:\mathscr{C}\rightarrow\mathscr{D}$ between the underlying categories, which is called the \textit{underlying functor} of the factorable functor.
\item[(2)]
For every pair of objects $x,y\in\textrm{Ob}(\mathscr{C})$, the induced set map $\varphi_{x,y}:An_{\mathscr{C}}(x,y)\rightarrow An_{\mathscr{D}}(Fx,Fy)$ given in Lemma \ref{A35}.
\end{itemize}
These data are assumed to satisfy the following conditions:
\begin{itemize}
\item[(I)]
Given an anti-morphism $f^{*}\in An_{\mathscr{C}}(x,y)$, we have $F(f^{*})=\varphi_{x,y}(f^{*})\in An_{\mathscr{D}}(Fx,Fy)$.
\item[(II)]
Given morphisms $f,g$ in $\mathscr{C}$, when the compositions make sense, we have
$$
F(f^{*}g^{*})=F(f^{*})F(g^{*}),\ F(f^{*}g)=F(f^{*})F(g),\textrm{ and }F(fg^{*})=F(f)F(g^{*}).
$$
\end{itemize}
\end{definition}

\begin{definition}
Let $(\mathscr{C},\Sigma_{1})$ and $(\mathscr{D},\Sigma_{2})$ be preadditive factorization categories. An \textit{additive factorable functor} between two factorization categories $(\mathscr{C},\Sigma_{1})$ and $(\mathscr{D},\Sigma_{2})$, denoted by $F:(\mathscr{C},\Sigma_{1})\rightarrow(\mathscr{D},\Sigma_{2})$, is given by the following data:
\begin{itemize}
\item[(1)]
An additive functor $F:\mathscr{C}\rightarrow\mathscr{D}$ between the underlying abelian categories, which is called the \textit{underlying additive functor} of the factorable functor.
\item[(2)]
For every pair of objects $x,y\in\textrm{Ob}(\mathscr{C})$, the induced group homomorphism $\varphi_{x,y}:An_{\mathscr{C}}(x,y)\rightarrow An_{\mathscr{D}}(Fx,Fy)$ given in Lemma \ref{A35}.
\end{itemize}
These data are assumed to satisfy the conditions (I) and (II) in Definition \ref{A36}.
\end{definition}
\begin{remark}
Note that the condition (I) above is indeed superfluous. We mention it merely to clarify the definition. By abuse of notation, we simply denote a factorable functor by the notation of the underlying functor.
\end{remark}

The above definition readily yields the following proposition.
\begin{proposition}
Let $F:\mathscr{C}\rightarrow\mathscr{D}$ be a factorable functor between factorization categories with products. Then $F$ preserves anti-products of families of objects of $\mathscr{C}$.
\end{proposition}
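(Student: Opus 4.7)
The plan is to unwind the definition of anti-product and apply the factorable structure directly. By definition, the anti-product $(X; p_i^*)$ associated to a product $(X; p_i)$ in $\mathscr{C}$ has anti-projections $p_i^* = p_i \circ 1_X^*$. I therefore need to show that $F$ sends this data to the anti-product associated to the product $(FX; F(p_i))$ of the family $(FX_i)$ in $\mathscr{D}$, which reduces to computing $F(p_i^*)$ explicitly and identifying it with $F(p_i) \circ 1_{FX}^*$.

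The key preliminary step is to verify that $F$ preserves reverse morphisms, i.e.\ that $F(1_A^*) = 1_{FA}^*$ for every $A \in \textrm{Ob}(\mathscr{C})$. By Lemma \ref{A35}, the induced map $\varphi_{A,A} : An_\mathscr{C}(A,A) \to An_\mathscr{D}(FA,FA)$ is by construction the composite
$$An_\mathscr{C}(A,A) \xrightarrow{\sim} \textrm{Hom}_\mathscr{C}(A,A) \xrightarrow{F} \textrm{Hom}_\mathscr{D}(FA,FA) \xrightarrow{\sim} An_\mathscr{D}(FA,FA)$$
formed from the bijections of Proposition \ref{A17}. Tracking $1_A^*$ through this composite gives $1_A^* \mapsto 1_A \mapsto F(1_A) = 1_{FA} \mapsto 1_{FA}^*$, and condition (I) of Definition \ref{A36} identifies $F(1_A^*)$ with this image, whence $F(1_A^*) = 1_{FA}^*$.

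Given this, applying condition (II) of Definition \ref{A36} to the factorization $p_i^* = p_i \circ 1_X^*$ yields
$$F(p_i^*) = F(p_i) \circ F(1_X^*) = F(p_i) \circ 1_{FX}^*,$$
which is exactly the anti-projection of $FX$ regarded as a product of the family $(FX_i)$ in $\mathscr{D}$. Hence $(FX; F(p_i^*))$ is, by definition, the anti-product associated to the product $(FX; F(p_i))$, as required. The main (and essentially only) subtlety is ensuring that the underlying functor $F$ preserves the product $(X; p_i)$; this is implicit in the setup and is automatic in the intended applications, for instance for additive factorable functors between preadditive factorization categories where products coincide with biproducts. Once that is in place the remainder of the argument is a one-line unwinding of the definitions.
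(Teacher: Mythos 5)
Your proposal is correct and is exactly the definition-unwinding the paper gestures at (its own proof is just ``by the definition of factorable functors, it is easy to check''), so you have in effect supplied the missing details: the computation $F(1^{*}_{A})=1^{*}_{FA}$ via the construction of $\varphi_{A,A}$ in Lemma~\ref{A35} together with condition (I) of Definition~\ref{A36}, followed by condition (II) applied to $p_{i}^{*}=p_{i}\circ 1^{*}_{X}$, is the whole content. The caveat you raise at the end is real and is a defect of the statement rather than of your argument: a bare factorable functor need not send the product $(X;p_{i})$ to a product of $(FX_{i})$, so $(FX;F(p_{i}^{*}))$ is only literally an anti-product under the additional hypothesis that the underlying functor preserves products; the paper never addresses this, and your explicit flagging of it is an improvement on the source.
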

\begin{proof}
By the definition of factorable functors, it is easy to check.
\end{proof}

Since we have the category $\textbf{Cat}$ of small categories and functors, this motivates us to define the category of small factorization categories.

\begin{definition}
A \textit{small factorization category} is a factorization category whose underlying abelian category is small.
\end{definition}

Then we can form the category of small factorization categories, denoted by $\textbf{Fa}$. The objects of $\textbf{Fa}$ are small factorization categories and the morphisms in $\textbf{Fa}$ are factorable functors.

We can define a functor from the category $\textbf{Fa}$ of small factorization categories to the category $\textbf{Cat}$ of small categories:
$$
\textbf{FCA}:\textbf{Fa}\rightarrow \textbf{Cat}
$$
which to every factorization category, it associates the underlying category: $(\mathscr{C},\Sigma) \mapsto\mathscr{C}$, forgetting the factorial structure. And for every factorable functor $F:(\mathscr{C},\Sigma_{1})\rightarrow(\mathscr{D},\Sigma_{2})$ of factorization categories, it associates the underlying functor $F^{\circ}:\mathscr{C}\rightarrow\mathscr{D}$ between the underlying categories, dropping the factorable property.

\begin{definition}
The functor $\textbf{FCA}:\textbf{Fa}\rightarrow \textbf{Cat}$ defined above is called the \textit{forgetfully factorial functor}.
\end{definition}

And also, we can define a functor from the category $\textbf{Cat}$ of small abelian categories to the category $\textbf{Fa}$ of small factorization categories:
$$
\textbf{CAF}:\textbf{Cat}\rightarrow \textbf{Fa}
$$
which assigns to each category its associated factorization category: $\mathscr{C}\mapsto\overline{\mathscr{C}}=(\mathscr{C},\Sigma)$, i.e. equipping it with a factorial structure. And to every functor $F:\mathscr{C}\rightarrow\mathscr{D}$ of categories, it associates a factorable functor $\overline{F}:\overline{\mathscr{C}}\rightarrow\overline{\mathscr{D}}$ between the associated factorization categories. The factorable functor $\overline{F}:\overline{\mathscr{C}}\rightarrow\overline{\mathscr{D}}$ is defined as follows.

Let $(\mathscr{C},\mathscr{C}')$ and $(\mathscr{D},\mathscr{D}')$ be generators of $\overline{\mathscr{C}}$ and $\overline{\mathscr{D}}$ respectively. Note that the equivalences of categories yield an induced functor $F':\mathscr{C}'\rightarrow\mathscr{D}'$. The factorable functor $\overline{F}$ is any factorable functor between factorization categories $\overline{\mathscr{C}}$ and $\overline{\mathscr{D}}$ that satisfies the following conditions:
\begin{itemize}
\item[(1)]
The underlying functor is given by the functor $F:\mathscr{C}\rightarrow\mathscr{D}$.
\item[(2)]
For any $x,y\in\textrm{Ob}(\mathscr{C})$ we are given the set map $\textrm{Hom}_{\mathscr{C}'}(x,y)\rightarrow\textrm{Hom}_{\mathscr{D}'}(Fx,Fy)$ which is induced by the induced functor $F':\mathscr{C}'\rightarrow\mathscr{D}'$.
\end{itemize}

It is easy to see that such a factorable functor $\overline{F}$ is unique and independent of the choice of generators.

\begin{definition}
The functor $\textbf{CAF}:\textbf{Cat}\rightarrow \textbf{Fa}$ defined above is called the \textit{factorization functor}.
\end{definition}

Moreover, Theorem \ref{A22} indicates that we can define a functor from the category $\textbf{Fa}$ of small factorization categories to the category $\textbf{Cat}$ of small categories:
$$
\textbf{Fa}\rightarrow \textbf{Cat}
$$
which assigns to each factorization category $\mathscr{C}$ its associated category $\widetilde{\mathscr{C}}$. And to each factorable functor $F:\mathscr{C}\rightarrow\mathscr{D}$, it assigns a functor $\widetilde{F}:\widetilde{\mathscr{C}}\rightarrow\widetilde{\mathscr{D}}$ between the associated categories. One can construct the functor $\widetilde{F}$ as follows:
\begin{itemize}
\item[(1)]
$\widetilde{F}A:=FA$ for every $A\in \textrm{Ob}(\mathscr{C})=\textrm{Ob}(\widetilde{\mathscr{C}})$.
\item[(2)]
If $f:A\rightarrow B$ is any morphism in $\mathscr{C}$, we let $\widetilde{F}f:=Ff\in \textrm{Hom}(\widetilde{F}A,\widetilde{F}B)$. And if $g^{*}:A\rightarrow B$ is any anti-morphism in $\mathscr{C}$, we let $\widetilde{F}g:=Fg^{*}\in \textrm{Hom}(\widetilde{F}A,\widetilde{F}B)$.
\end{itemize}
\begin{definition}
The functor $\textbf{Fa}\rightarrow \textbf{Cat}$ defined above is called the \textit{categorization functor}.
\end{definition}

Recall that given a ring $R$, we have the \textit{free functor} $Fr:\textbf{Sets}\rightarrow \textbf{Mod}_{R}$, which associates with every set $S$ the free $R$-module with $S$ as a basis. And we have the \textit{underlying functor} also known as the forgetful functor $U:\textbf{Mod}_{R}\rightarrow\textbf{Sets}$, which associates with every $R$-module its underlying set. Moreover, $Fr$ is left adjoint to $U$. In other words, for $S\in\textrm{Ob}(\textbf{Sets})$ and $A\in\textrm{Ob}(\textbf{Mod}_{R})$, we have a functorial bijection
$$
\textrm{Hom}_{\textbf{Mod}_{R}}(Fr(S),A)\xrightarrow{\sim}\textrm{Hom}_{\textbf{Sets}}(S,U(A)).
$$
This inspires us that the factorization functor $\textbf{CAF}$ may be left adjoint to the forgetfully factorable functor $\textbf{FCA}$. However, the following theorems will show that $\textbf{CAF}$ is both left and right adjoint to $\textbf{FCA}$.
\begin{lemma}
Let $\mathscr{C}\in{\rm{Ob}}(\textbf{Fa})$ and $\mathscr{D}\in{\rm{Ob}}(\textbf{Cat})$. Then we have $\textbf{CAF}(\textbf{FCA}(\mathscr{C}))=\mathscr{C}$ and $\textbf{FCA}(\textbf{CAF}(\mathscr{D}))=\mathscr{D}$. Moreover, if $f$ is a morphism in $\textbf{Fa}$, then we have $\textbf{CAF}(\textbf{FCA}(f))=f$; and if $g$ is a morphism in $\textbf{Cat}$, then we have $\textbf{FCA}(\textbf{CAF}(g))=g$.
\end{lemma}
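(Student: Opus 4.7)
The plan is to unfold each of the four equalities directly from the definitions of $\textbf{FCA}$ and $\textbf{CAF}$, with Axiom~\ref{A29} doing essentially all the work on the object side and the uniqueness clause in the construction of $\overline{F}$ doing the work on the morphism side. I would treat the two "easy directions" first and then the two that need a uniqueness invocation.

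For $\textbf{FCA}(\textbf{CAF}(\mathscr{D}))=\mathscr{D}$, I would simply unpack: $\textbf{CAF}(\mathscr{D})=\overline{\mathscr{D}}=(\mathscr{D},\Sigma)$ by definition of the factorization functor, and $\textbf{FCA}$ then returns the underlying category, which is $\mathscr{D}$ itself. Likewise, if $g\colon\mathscr{D}_1\to\mathscr{D}_2$ is a functor in $\textbf{Cat}$, then $\textbf{CAF}(g)=\overline{g}$ is by definition the factorable functor with underlying functor $g$; applying $\textbf{FCA}$ strips off the factorable structure and returns $g$.

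For $\textbf{CAF}(\textbf{FCA}(\mathscr{C}))=\mathscr{C}$, write $\mathscr{C}=(\mathscr{C}_0,\Sigma)$. Then $\textbf{FCA}(\mathscr{C})=\mathscr{C}_0$, and $\textbf{CAF}(\mathscr{C}_0)=(\mathscr{C}_0,\Sigma')$ for the factorial structure produced by the factorization functor. But $\mathscr{C}_0$ already carries the factorial structure $\Sigma$, and Axiom~\ref{A29} declares that every category admits a \emph{unique} factorial structure making it a factorization category. Hence $\Sigma'=\Sigma$, which gives the claimed equality. The analogous argument for $\textbf{CAF}(\textbf{FCA}(f))=f$ is: given a factorable functor $f\colon(\mathscr{C}_1,\Sigma_1)\to(\mathscr{C}_2,\Sigma_2)$, the composite $\textbf{CAF}(\textbf{FCA}(f))$ is, by definition of the factorization functor on morphisms, the unique factorable functor between $\overline{\mathscr{C}_1}$ and $\overline{\mathscr{C}_2}$ whose underlying functor is $\textbf{FCA}(f)$. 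Since $f$ itself is such a factorable functor (with underlying functor $\textbf{FCA}(f)$ by definition of $\textbf{FCA}$), uniqueness forces $\textbf{CAF}(\textbf{FCA}(f))=f$.

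The main thing to be careful about is that both "uniqueness" invocations really yield equality rather than mere isomorphism. For objects this is guaranteed by the wording of Axiom~\ref{A29}; for morphisms it is the uniqueness assertion stated immediately after the construction of $\overline{F}$ in Section~\ref{B3} ("such a factorable functor $\overline{F}$ is unique and independent of the choice of generators"). Assuming these two uniqueness statements at face value, the lemma is a bookkeeping exercise with no substantial obstacle; the only conceptual point is that $\textbf{FCA}$ and $\textbf{CAF}$ are strict mutual inverses on objects and morphisms, not merely mutually quasi-inverse, so that the adjunction theorems to follow can be obtained with the identity bijections $\xi_{\mathscr{C},\mathscr{D}}$ and $\xi'_{\mathscr{D},\mathscr{C}}$.
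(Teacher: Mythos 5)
Your proposal is correct and follows essentially the same route as the paper: the paper's own proof simply states that the first equality $\textbf{CAF}(\textbf{FCA}(\mathscr{C}))=\mathscr{C}$ follows from Axiom~\ref{A29} and that the remaining three are obvious, which is exactly your decomposition, with the uniqueness of the factorable functor $\overline{F}$ (determined by its underlying functor via Lemma~\ref{A35}) supplying the ``obvious'' morphism-level identities. Your version merely makes explicit the bookkeeping the paper leaves implicit.
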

\begin{proof}
The first equality is due to Axiom \ref{A29}. And the rest are obvious.
\end{proof}

\begin{theorem}\label{C1}
The forgetfully factorable functor $\textbf{FCA}$ is left adjoint to the factorization functor $\textbf{CAF}$. In other words, for $\mathscr{C}\in{\rm{Ob}}(\textbf{Fa})$ and $\mathscr{D}\in{\rm{Ob}}(\textbf{Cat})$, there is a functorial bijection
$$
\xi_{\mathscr{C},\mathscr{D}}:{\rm{Hom_{\textbf{Cat}}}}(\textbf{FCA}(\mathscr{C}),\mathscr{D})\xrightarrow{\sim}{\rm{Hom_{\textbf{Fa}}}}(\mathscr{C},\textbf{CAF}(\mathscr{D})).
$$
\end{theorem}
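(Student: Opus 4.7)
The plan is to exploit the preceding lemma, which establishes that $\textbf{CAF}$ and $\textbf{FCA}$ are mutually inverse on both objects and morphisms of $\textbf{Fa}$ and $\textbf{Cat}$. I define $\xi_{\mathscr{C},\mathscr{D}}$ simply by applying $\textbf{CAF}$: for $F \in \textrm{Hom}_{\textbf{Cat}}(\textbf{FCA}(\mathscr{C}), \mathscr{D})$, set $\xi_{\mathscr{C},\mathscr{D}}(F) := \textbf{CAF}(F)$. By the lemma, the source of $\textbf{CAF}(F)$ is $\textbf{CAF}(\textbf{FCA}(\mathscr{C})) = \mathscr{C}$ and its target is $\textbf{CAF}(\mathscr{D})$, so indeed $\xi_{\mathscr{C},\mathscr{D}}(F) \in \textrm{Hom}_{\textbf{Fa}}(\mathscr{C}, \textbf{CAF}(\mathscr{D}))$.

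For the inverse, I introduce $\eta_{\mathscr{C},\mathscr{D}} \colon \textrm{Hom}_{\textbf{Fa}}(\mathscr{C}, \textbf{CAF}(\mathscr{D})) \to \textrm{Hom}_{\textbf{Cat}}(\textbf{FCA}(\mathscr{C}), \mathscr{D})$ by $\eta_{\mathscr{C},\mathscr{D}}(G) := \textbf{FCA}(G)$, which lands in the correct hom-set by the other half of the lemma. Bijectivity then follows immediately from the morphism statements in the lemma, since $\eta_{\mathscr{C},\mathscr{D}}(\xi_{\mathscr{C},\mathscr{D}}(F)) = \textbf{FCA}(\textbf{CAF}(F)) = F$ and $\xi_{\mathscr{C},\mathscr{D}}(\eta_{\mathscr{C},\mathscr{D}}(G)) = \textbf{CAF}(\textbf{FCA}(G)) = G$.

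It remains to verify naturality of $\xi$ in both arguments. Given morphisms $\alpha \colon \mathscr{C}' \to \mathscr{C}$ in $\textbf{Fa}$ and $\beta \colon \mathscr{D} \to \mathscr{D}'$ in $\textbf{Cat}$, the relevant naturality square commutes because for any $F \in \textrm{Hom}_{\textbf{Cat}}(\textbf{FCA}(\mathscr{C}), \mathscr{D})$ one has
$$
\textbf{CAF}(\beta) \circ \textbf{CAF}(F) \circ \alpha = \textbf{CAF}(\beta) \circ \textbf{CAF}(F) \circ \textbf{CAF}(\textbf{FCA}(\alpha)) = \textbf{CAF}(\beta \circ F \circ \textbf{FCA}(\alpha)),
$$
using the identification $\alpha = \textbf{CAF}(\textbf{FCA}(\alpha))$ from the lemma and the functoriality of $\textbf{CAF}$; by definition of $\xi$ this is precisely the commutativity of the naturality square. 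The symmetric argument, or applying $\textbf{FCA}$ to both sides, handles the naturality of $\eta$.

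The main obstacle is simply the bookkeeping around the implicit identifications $\textbf{CAF}(\textbf{FCA}(-)) = \mathrm{id}$ and $\textbf{FCA}(\textbf{CAF}(-)) = \mathrm{id}$, which are legitimated by Axiom \ref{A29} and the preceding lemma. Once these equalities are used uniformly on sources, targets, and on morphisms, the proof of the adjunction reduces to a purely formal manipulation, reflecting the fact that $\textbf{CAF}$ and $\textbf{FCA}$ are really mutually inverse isomorphisms of categories, and hence automatically adjoint to one another in either direction.
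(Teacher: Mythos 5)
Your proposal is correct and follows essentially the same route as the paper: the paper also defines $\xi_{\mathscr{C},\mathscr{D}}$ by applying $\textbf{CAF}$ (composed with the identity morphism $\mathscr{C}\rightarrow\textbf{CAF}(\textbf{FCA}(\mathscr{C}))$, which is superfluous since the lemma makes it literally the identity), takes $\textbf{FCA}$ for the inverse, and checks naturality by the same computation using $\textbf{CAF}(\textbf{FCA}(\alpha))=\alpha$ and functoriality. Your closing observation that the two functors are mutually inverse isomorphisms of categories, hence adjoint in both directions, is exactly the content the paper is exploiting.
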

\begin{proof}
We define a function
$$
{\rm{Hom_{\textbf{Cat}}}}(\textbf{FCA}(\mathscr{C}),\mathscr{D})\rightarrow{\rm{Hom_{\textbf{Fa}}}}(\mathscr{C},\textbf{CAF}(\mathscr{D})),\ f\mapsto \textbf{CAF}(f)\circ g
$$
where $g:\mathscr{C}\rightarrow\textbf{CAF}(\textbf{FCA}(\mathscr{C}))$ is an identity morphism in \textbf{Fa}. Then we define
$$
{\rm{Hom_{\textbf{Fa}}}}(\mathscr{C},\textbf{CAF}(\mathscr{D}))\rightarrow{\rm{Hom_{\textbf{Cat}}}}(\textbf{FCA}(\mathscr{C}),\mathscr{D}),\ f'\mapsto g'\circ\textbf{FCA}(f')
$$
where $g':\textbf{FCA}(\textbf{CAF}(\mathscr{D}))\rightarrow\mathscr{D}$ is an identity morphism in \textbf{Cat}. Then it can be checked that we have defined a bijection
$$
{\rm{Hom_{\textbf{Cat}}}}(\textbf{FCA}(\mathscr{C}),\mathscr{D})\xrightarrow{\sim}{\rm{Hom_{\textbf{Fa}}}}(\mathscr{C},\textbf{CAF}(\mathscr{D})).
$$
Finally, if $\textbf{FCA}(B)\xrightarrow{f}A\xrightarrow{g_{1}}A'$ are morphisms in $\textbf{Cat}$ and $B'\xrightarrow{h}B$ is a morphism in $\textbf{Fa}$, then we have
\begin{align*}
\xi_{A',B'}(g_{1}\circ f\circ \textbf{FCA}(h))&=\textbf{CAF}(g_{1}\circ f\circ \textbf{FCA}(h))\circ g=\textbf{CAF}(g_{1})\circ \textbf{CAF}(f)\circ h\circ g, \\
\textbf{CAF}(g_{1})\circ\xi_{A,B}(f)\circ h&=\textbf{CAF}(g_{1})\circ \textbf{CAF}(f)\circ g\circ h, \\
\xi_{A',B'}(g_{1}\circ f\circ \textbf{FCA}(h))&=\textbf{CAF}(g_{1})\circ\xi_{A,B}(f)\circ h.
\end{align*}
So the bijection $\xi_{\mathscr{C},\mathscr{D}}$ is natural in both $\mathscr{C}$ and $\mathscr{D}$.
\end{proof}

\begin{theorem}\label{C2}
The factorization functor $\textbf{CAF}$ is left adjoint to the forgetfully factorable functor $\textbf{FCA}$. In other words, for $\mathscr{C}\in{\rm{Ob}}(\textbf{Cat})$ and $\mathscr{D}\in{\rm{Ob}}(\textbf{Fa})$, there is a functorial bijection
$$
\xi_{\mathscr{C},\mathscr{D}}:{\rm{Hom_{\textbf{Fa}}}}(\textbf{CAF}(\mathscr{C}),\mathscr{D})\xrightarrow{\sim}{\rm{Hom_{\textbf{Cat}}}}(\mathscr{C},\textbf{FCA}(\mathscr{D})).
$$
\end{theorem}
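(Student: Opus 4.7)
The plan is to mirror the argument of Theorem \ref{C1}, just swapping the roles of $\textbf{CAF}$ and $\textbf{FCA}$. The lemma preceding Theorem \ref{C1} gives us $\textbf{FCA}(\textbf{CAF}(\mathscr{C}))=\mathscr{C}$ and $\textbf{CAF}(\textbf{FCA}(\mathscr{D}))=\mathscr{D}$, together with the corresponding equalities on morphisms. Using these, I can obtain identity morphisms $g:\textbf{FCA}(\textbf{CAF}(\mathscr{C}))\to\mathscr{C}$ in $\textbf{Cat}$ and $g':\mathscr{D}\to\textbf{CAF}(\textbf{FCA}(\mathscr{D}))$ in $\textbf{Fa}$, which will serve as the unit and counit of the adjunction.

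First I would define the candidate bijection by
$$
\xi_{\mathscr{C},\mathscr{D}}:\mathrm{Hom}_{\textbf{Fa}}(\textbf{CAF}(\mathscr{C}),\mathscr{D})\to\mathrm{Hom}_{\textbf{Cat}}(\mathscr{C},\textbf{FCA}(\mathscr{D})),\ f\mapsto \textbf{FCA}(f)\circ g,
$$
and its purported inverse by
$$
\mathrm{Hom}_{\textbf{Cat}}(\mathscr{C},\textbf{FCA}(\mathscr{D}))\to\mathrm{Hom}_{\textbf{Fa}}(\textbf{CAF}(\mathscr{C}),\mathscr{D}),\ f'\mapsto g'\circ\textbf{CAF}(f').
$$
Verifying that these maps are mutually inverse is then a direct computation: for $f\in\mathrm{Hom}_{\textbf{Fa}}(\textbf{CAF}(\mathscr{C}),\mathscr{D})$, the composition $g'\circ\textbf{CAF}(\textbf{FCA}(f)\circ g)=g'\circ\textbf{CAF}(\textbf{FCA}(f))\circ\textbf{CAF}(g)$ collapses to $f$ by the lemma (since $\textbf{CAF}(\textbf{FCA}(f))=f$ and $g,g'$ are identity morphisms in the respective categories), and similarly in the other direction.

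Next I would check naturality in both variables. Given morphisms $\textbf{CAF}(B)\xrightarrow{f}A\xrightarrow{h}A'$ in $\textbf{Fa}$ (after relabelling; note $\textbf{CAF}$ is applied to the domain of the Hom-set) and a morphism $B'\xrightarrow{k}B$ in $\textbf{Cat}$, I would compute $\xi_{A',B'}(h\circ f\circ\textbf{CAF}(k))$ using the identities $\textbf{FCA}(\textbf{CAF}(k))=k$ and the functoriality of $\textbf{FCA}$, and compare it with $\textbf{FCA}(h)\circ\xi_{A,B}(f)\circ k$; both simplify to $\textbf{FCA}(h)\circ\textbf{FCA}(f)\circ g\circ k$, giving the required commutative square.

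The only real obstacle is conceptual rather than computational: one has to trust that Axiom \ref{A29} really does make $\textbf{CAF}$ well-defined on objects so that the strict equalities $\textbf{FCA}(\textbf{CAF}(\mathscr{C}))=\mathscr{C}$ hold on the nose, not merely up to equivalence. Once these strict equalities are granted, the whole argument is essentially bookkeeping, and the resulting adjunction is in fact an isomorphism of categories (with unit and counit both identities); combined with Theorem \ref{C1}, this shows that $\textbf{CAF}$ and $\textbf{FCA}$ are an adjoint equivalence on both sides, as the theorem asserts.
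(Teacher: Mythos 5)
Your proposal is correct and follows essentially the same route as the paper: the paper's own proof defines $\xi_{\mathscr{C},\mathscr{D}}(f)=\textbf{FCA}(f)$ with inverse $g\mapsto\textbf{CAF}(g)$, relying on the strict equalities $\textbf{FCA}(\textbf{CAF}(\mathscr{C}))=\mathscr{C}$ and $\textbf{CAF}(\textbf{FCA}(\mathscr{D}))=\mathscr{D}$ from the preceding lemma (and ultimately Axiom \ref{A29}), and then checks naturality by the same computation you describe; your insertion of the identity morphisms $g,g'$ is harmless bookkeeping. Your closing observation that the unit and counit are identities, so the adjunction is really an isomorphism of categories, is accurate and in fact sharper than what the paper states.
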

\begin{proof}
We define a function
$$
{\rm{Hom_{\textbf{Fa}}}}(\textbf{CAF}(\mathscr{C}),\mathscr{D})\rightarrow{\rm{Hom_{\textbf{Cat}}}}(\mathscr{C},\textbf{FCA}(\mathscr{D})), \ \ f\mapsto \textbf{FCA}(f),
$$
whose inverse is the function
$$
{\rm{Hom_{\textbf{Cat}}}}(\mathscr{C},\textbf{FCA}(\mathscr{D}))\rightarrow {\rm{Hom_{\textbf{Fa}}}}(\textbf{CAF}(\mathscr{C}),\mathscr{D}),\ \ g\mapsto \textbf{CAF}(g).
$$
Thus, we get a bijection
$$
{\rm{Hom_{\textbf{Fa}}}}(\textbf{CAF}(\mathscr{C}),\mathscr{D})\xrightarrow{\sim}{\rm{Hom_{\textbf{Cat}}}}(\mathscr{C},\textbf{FCA}(\mathscr{D})).
$$

If $\textbf{CAF}(B)\xrightarrow{f}A\xrightarrow{g}A'$ are morphisms in $\textbf{Fa}$ and $B'\xrightarrow{h}B$ is a morphism in $\textbf{Cat}$, then we have
$$
\xi_{A',B'}(g\circ f\circ \textbf{CAF}(h))=\textbf{FCA}(g\circ f\circ \textbf{CAF}(h))=\textbf{FCA}(g)\circ \textbf{FCA}(f)\circ h,
$$
and
$$
\textbf{FCA}(g)\circ\xi_{A,B}(f)\circ h=\textbf{FCA}(g)\circ \textbf{FCA}(f)\circ h,
$$
which show that
$$
\xi_{A',B'}(g_{1}\circ f\circ \textbf{CAF}(h))=\textbf{FCA}(g)\circ\xi_{A,B}(f)\circ h.
$$
Hence the bijection $\xi_{\mathscr{C},\mathscr{D}}$ is natural in both $\mathscr{C}$ and $\mathscr{D}$.
\end{proof}

Next, we would like to study the subcategory of $\textbf{Fa}$. We use $\textbf{Fa-Pa}$ to indicate the subcategory of $\textbf{Fa}$ whose objects are small preadditive factorization categories and morphisms are additive factorable functors. And we denote by $\textbf{Cat-Pa}$ the subcategory of $\textbf{Cat}$ of small preadditive categories and additive functors.

We can define a functor from the category $\textbf{Fa-Pa}$ of small preadditive factorization categories to the category $\textbf{Cat-Pa}$ of small preadditive categories:
$$
\textbf{FCA-Pa}:\textbf{Fa-Pa}\rightarrow \textbf{Cat-Pa}
$$
which to every preadditive factorization category, it associates the underlying preadditive category: $(\mathscr{C},\Sigma) \mapsto\mathscr{C}$, forgetting the factorial structure. And for every additive factorable functor $F:(\mathscr{C},\Sigma_{1})\rightarrow(\mathscr{D},\Sigma_{2})$ of factorization categories, it associates the underlying additive functor $F^{\circ}:\mathscr{C}\rightarrow\mathscr{D}$ between the underlying preadditive categories, dropping the factorable property.

\begin{definition}
The functor $\textbf{FCA-Pa}:\textbf{Fa-Pa}\rightarrow \textbf{Cat-Pa}$ defined above is called the \textit{forgetfully preadditive factorial functor}.
\end{definition}

And also, we can define a functor from the category $\textbf{Cat-Pa}$ of small preadditive categories to the category $\textbf{Fa}$ of small preadditive factorization categories:
$$
\textbf{CAF-Pa}:\textbf{Cat-Pa}\rightarrow \textbf{Fa-Pa}
$$
which assigns to each preadditive category its associated preadditive factorization category: $\mathscr{C}\mapsto\overline{\mathscr{C}}=(\mathscr{C},\Sigma)$, i.e. equipping it with a factorial structure. And to every additive functor $F:\mathscr{C}\rightarrow\mathscr{D}$ of categories, it associates an additive factorable functor $\overline{F}:\overline{\mathscr{C}}\rightarrow\overline{\mathscr{D}}$ between the associated preadditive factorization categories. The additive factorable functor $\overline{F}:\overline{\mathscr{C}}\rightarrow\overline{\mathscr{D}}$ is defined as follows.

Let $(\mathscr{C},\mathscr{C}')$ and $(\mathscr{D},\mathscr{D}')$ be generators of $\overline{\mathscr{C}}$ and $\overline{\mathscr{D}}$ respectively. Note that the equivalences of categories yield an induced additive functor $F':\mathscr{C}'\rightarrow\mathscr{D}'$. The additive factorable functor $\overline{F}$ is any additive factorable functor between preadditive factorization categories $\overline{\mathscr{C}}$ and $\overline{\mathscr{D}}$ that satisfies the following conditions:
\begin{itemize}
\item[(1)]
The underlying additive functor is given by the additive functor $F:\mathscr{C}\rightarrow\mathscr{D}$.
\item[(2)]
For any $x,y\in\textrm{Ob}(\mathscr{C})$ we are given the group homomorphism $\textrm{Hom}_{\mathscr{C}'}(x,y)\rightarrow\textrm{Hom}_{\mathscr{D}'}(Fx,Fy)$ which is induced by the induced additive functor $F':\mathscr{C}'\rightarrow\mathscr{D}'$.
\end{itemize}

It is easy to see that such an additive factorable functor $\overline{F}$ is unique and independent of the choice of generators.

\begin{definition}
The functor $\textbf{CAF-Pa}:\textbf{Cat-Pa}\rightarrow \textbf{Fa-Pa}$ defined above is called the \textit{factorization preadditive functor}.
\end{definition}

Moreover, Theorem \ref{A22} indicates that we can define a functor from the category $\textbf{Fa-Pa}$ of small preadditive factorization categories to the category $\textbf{Cat}$ of small categories:
$$
\textbf{Fa-Pa}\rightarrow \textbf{Cat}
$$
which assigns to each preadditive factorization category $\mathscr{C}$ its associated category $\widetilde{\mathscr{C}}$. And to each factorable additive functor $F:\mathscr{C}\rightarrow\mathscr{D}$, it assigns a functor $\widetilde{F}:\widetilde{\mathscr{C}}\rightarrow\widetilde{\mathscr{D}}$ between the associated categories. One can construct the functor $\widetilde{F}$ as follows:
\begin{itemize}
\item[(1)]
$\widetilde{F}A:=FA$ for every $A\in \textrm{Ob}(\mathscr{C})=\textrm{Ob}(\widetilde{\mathscr{C}})$.
\item[(2)]
If $f:A\rightarrow B$ is any morphism in $\mathscr{C}$, we let $\widetilde{F}f:=Ff\in \textrm{Hom}(\widetilde{F}A,\widetilde{F}B)$. And if $g^{*}:A\rightarrow B$ is any anti-morphism in $\mathscr{C}$, we let $\widetilde{F}g:=Fg^{*}\in \textrm{Hom}(\widetilde{F}A,\widetilde{F}B)$.
\end{itemize}
\begin{definition}
The functor $\textbf{Fa-Pa}\rightarrow \textbf{Cat}$ defined above is called the \textit{categorization preadditive functor}.
\end{definition}

We can also show that the factorization preadditive functor $\textbf{CAF-Pa}$ is both left and right adjoint to the forgetfully preadditive factorial functor $\textbf{FCA-Pa}$. The proof are almost the same as Theorem \ref{C1} and \ref{C2}.

\begin{theorem}
The forgetfully preadditive factorial functor $\textbf{FCA-Pa}$ is left adjoint to the factorization preadditive functor $\textbf{CAF-Pa}$. In other words, for $\mathscr{C}\in{\rm{Ob}}(\textbf{Fa-Pa})$ and $\mathscr{D}\in{\rm{Ob}}(\textbf{Cat-Pa})$, there is a functorial bijection
$$
\xi_{\mathscr{C},\mathscr{D}}:{\rm{Hom_{\textbf{Cat-Pa}}}}(\textbf{FCA-Pa}(\mathscr{C}),\mathscr{D})\xrightarrow{\sim}{\rm{Hom_{\textbf{Fa-Pa}}}}(\mathscr{C},\textbf{CAF-Pa}(\mathscr{D})).
$$
\end{theorem}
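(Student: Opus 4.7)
The plan is to mimic the proof of Theorem \ref{C1} essentially verbatim, with each category and functor replaced by its preadditive counterpart. The essential preliminary is the preadditive analogue of the unlabeled lemma preceding Theorem \ref{C1}: one checks $\textbf{CAF-Pa}(\textbf{FCA-Pa}(\mathscr{C}))=\mathscr{C}$ and $\textbf{FCA-Pa}(\textbf{CAF-Pa}(\mathscr{D}))=\mathscr{D}$, and similarly on morphisms. Both follow immediately from the fact that $\textbf{CAF-Pa}$ and $\textbf{FCA-Pa}$ are by construction restrictions of $\textbf{CAF}$ and $\textbf{FCA}$, combined with Axiom \ref{A29} (every preadditive category carries a unique preadditive factorial structure). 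This yields identity morphisms $g\colon\mathscr{C}\to\textbf{CAF-Pa}(\textbf{FCA-Pa}(\mathscr{C}))$ in $\textbf{Fa-Pa}$ and $g'\colon\textbf{FCA-Pa}(\textbf{CAF-Pa}(\mathscr{D}))\to\mathscr{D}$ in $\textbf{Cat-Pa}$.

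With these at hand I would define
$$
\xi_{\mathscr{C},\mathscr{D}}(f)=\textbf{CAF-Pa}(f)\circ g
$$
and the candidate inverse
$$
\eta_{\mathscr{C},\mathscr{D}}(f')=g'\circ\textbf{FCA-Pa}(f').
$$
That $\xi$ and $\eta$ are well-defined and mutually inverse is a formal consequence of the preliminary together with the functoriality of $\textbf{CAF-Pa}$ and $\textbf{FCA-Pa}$; the verification copies the corresponding step of Theorem \ref{C1} line by line.

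Finally, naturality in both variables is obtained by a direct calculation paralleling the last display in the proof of Theorem \ref{C1}: given $\textbf{FCA-Pa}(B)\xrightarrow{f}A\xrightarrow{g_{1}}A'$ in $\textbf{Cat-Pa}$ and $B'\xrightarrow{h}B$ in $\textbf{Fa-Pa}$ one checks directly that
$$
\xi_{A',B'}(g_{1}\circ f\circ\textbf{FCA-Pa}(h))=\textbf{CAF-Pa}(g_{1})\circ\xi_{A,B}(f)\circ h.
$$
The only genuinely new issue, and the main (mild) obstacle, is bookkeeping: one must confirm that each composite constructed above remains inside the preadditive world, i.e.\ that $\textbf{CAF-Pa}(f)\circ g$ is an additive factorable functor and that the induced maps on Hom-sets are group homomorphisms rather than mere set maps. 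Both facts are immediate from the definition of an additive factorable functor and from the additivity built into $\textbf{CAF-Pa}$ by Lemma \ref{A35}, so no substantial new idea beyond the proof of Theorem \ref{C1} is required.
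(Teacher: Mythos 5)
Your proposal is correct and matches the paper's intent exactly: the paper itself offers no separate argument for this theorem, stating only that ``the proof are almost the same as Theorem \ref{C1} and \ref{C2},'' and your line-by-line transport of the proof of Theorem \ref{C1} (including the preadditive analogue of the preliminary lemma via Axiom \ref{A29} and the bookkeeping that the induced Hom-set maps remain group homomorphisms) is precisely that intended adaptation.
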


\begin{theorem}
The factorization preadditive functor $\textbf{CAF-Pa}$ is left adjoint to the forgetfully preadditive factorial functor $\textbf{FCA-Pa}$. In other words, for $\mathscr{C}\in{\rm{Ob}}(\textbf{Cat-Pa})$ and $\mathscr{D}\in{\rm{Ob}}(\textbf{Fa-Pa})$, there is a functorial bijection
$$
\xi_{\mathscr{C},\mathscr{D}}:{\rm{Hom_{\textbf{Fa-Pa}}}}(\textbf{CAF-Pa}(\mathscr{C}),\mathscr{D})\xrightarrow{\sim}{\rm{Hom_{\textbf{Cat-Pa}}}}(\mathscr{C},\textbf{FCA-Pa}(\mathscr{D})).
$$
\end{theorem}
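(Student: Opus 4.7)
The plan is to mimic almost verbatim the proof of Theorem~\ref{C2}, replacing the pair $(\textbf{CAF},\textbf{FCA})$ by its preadditive restriction $(\textbf{CAF-Pa},\textbf{FCA-Pa})$ and checking that the extra preadditive/additive structure survives at every step. The starting point is the preadditive analogue of the lemma immediately preceding Theorem~\ref{C1}: for $\mathscr{C}\in\textrm{Ob}(\textbf{Fa-Pa})$ and $\mathscr{D}\in\textrm{Ob}(\textbf{Cat-Pa})$, Axiom~\ref{A29} (which asserts uniqueness of the factorial structure on a given underlying category) gives $\textbf{CAF-Pa}(\textbf{FCA-Pa}(\mathscr{C}))=\mathscr{C}$ and $\textbf{FCA-Pa}(\textbf{CAF-Pa}(\mathscr{D}))=\mathscr{D}$, and similarly on morphisms, since the additive factorable functor between the preadditive factorization categories is uniquely determined by its underlying additive functor.

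Next, I would define the candidate bijection and its inverse by the obvious forgetful/equipping assignments:
$$
\xi_{\mathscr{C},\mathscr{D}}:\textrm{Hom}_{\textbf{Fa-Pa}}(\textbf{CAF-Pa}(\mathscr{C}),\mathscr{D})\rightarrow\textrm{Hom}_{\textbf{Cat-Pa}}(\mathscr{C},\textbf{FCA-Pa}(\mathscr{D})),\quad f\mapsto \textbf{FCA-Pa}(f),
$$
with inverse $g\mapsto \textbf{CAF-Pa}(g)$. The first point to check is that these maps really land in the stated Hom-sets: namely, $\textbf{FCA-Pa}(f)$ is an additive functor between preadditive categories (which is immediate from the definition of $\textbf{FCA-Pa}$ and the fact that an additive factorable functor has an additive underlying functor), and $\textbf{CAF-Pa}(g)$ is an additive factorable functor between the associated preadditive factorization categories (which follows because $\textbf{CAF-Pa}$ was defined by equipping an additive functor with the unique induced set maps $An(x,y)\to An(Fx,Fy)$ coming from Lemma~\ref{A35}, and these are automatically group homomorphisms in the preadditive setting). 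That the two assignments are mutually inverse is then a direct consequence of the lemma cited in the previous paragraph.

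Finally, I would verify functoriality of $\xi_{\mathscr{C},\mathscr{D}}$ in both variables exactly as in the proof of Theorem~\ref{C2}. Given morphisms $\textbf{CAF-Pa}(B)\xrightarrow{f}A\xrightarrow{g}A'$ in $\textbf{Fa-Pa}$ and a morphism $B'\xrightarrow{h}B$ in $\textbf{Cat-Pa}$, the functoriality of $\textbf{FCA-Pa}$ and the identity $\textbf{FCA-Pa}(\textbf{CAF-Pa}(h))=h$ yield
\begin{align*}
\xi_{A',B'}(g\circ f\circ \textbf{CAF-Pa}(h))
&=\textbf{FCA-Pa}(g)\circ\textbf{FCA-Pa}(f)\circ h\\
&=\textbf{FCA-Pa}(g)\circ\xi_{A,B}(f)\circ h,
\end{align*}
which is the required naturality square.

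The only real obstacle, which is mild, is confirming that the induced maps on anti-morphism sets in the preadditive setting are indeed group homomorphisms and not merely set maps, so that $\textbf{CAF-Pa}(g)$ qualifies as an additive factorable functor; but this is exactly the second assertion of Lemma~\ref{A35}. Once this is in place, the rest of the argument is the formal adjunction computation, which transfers without modification from the proof of Theorem~\ref{C2}.
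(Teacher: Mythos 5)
Your proposal is correct and follows exactly the route the paper intends: the paper gives no separate argument for this theorem, stating only that the proof is ``almost the same as Theorem~\ref{C1} and \ref{C2},'' and your adaptation of the proof of Theorem~\ref{C2} --- defining $\xi_{\mathscr{C},\mathscr{D}}$ by $f\mapsto\textbf{FCA-Pa}(f)$ with inverse $g\mapsto\textbf{CAF-Pa}(g)$, invoking Axiom~\ref{A29} for the composite identities, and checking naturality --- is precisely that. Your added remark that the second assertion of Lemma~\ref{A35} guarantees the induced maps on anti-morphism sets are group homomorphisms is the one genuinely preadditive-specific point, and you handle it correctly.
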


In fact, if we consider the subcategory $\textbf{Fa-Ab}$ of $\textbf{Fa}$ of small abelian factorization categories and factorable additive functors, we can deduce the same results in a similar way. Let $\textbf{Cat-Ab}$ be the subcategory of $\textbf{Cat}$ of small abelian categories and additive functors. We use $\textbf{CAF-Ab}$ to indicate the restriction of $\textbf{CAF-Pa}$ to $\textbf{Cat-Ab}$. And we denote by $\textbf{FCA-Ab}$ the restriction of $\textbf{FCA-Pa}$ to $\textbf{Fa-Ab}$. Then we have the following theorems:
\begin{theorem}
The functor $\textbf{FCA-Ab}$ is left adjoint to the functor $\textbf{CAF-Ab}$. In other words, for $\mathscr{C}\in{\rm{Ob}}(\textbf{Fa-Ab})$ and $\mathscr{D}\in{\rm{Ob}}(\textbf{Cat-Ab})$, there is a functorial bijection
$$
\xi_{\mathscr{C},\mathscr{D}}:{\rm{Hom_{\textbf{Cat-Ab}}}}(\textbf{FCA-Ab}(\mathscr{C}),\mathscr{D})\xrightarrow{\sim}{\rm{Hom_{\textbf{Fa-Ab}}}}(\mathscr{C},\textbf{CAF-Ab}(\mathscr{D})).
$$
\end{theorem}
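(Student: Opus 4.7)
The plan is to mimic verbatim the proof of Theorem \ref{C1}, restricted to the abelian setting. First I would record the analog of the lemma immediately preceding Theorem \ref{C1}: for every $\mathscr{C}\in{\rm{Ob}}(\textbf{Fa-Ab})$, Axiom \ref{A29} gives $\textbf{CAF-Ab}(\textbf{FCA-Ab}(\mathscr{C}))=\mathscr{C}$; conversely, for every $\mathscr{D}\in{\rm{Ob}}(\textbf{Cat-Ab})$ the construction of $\textbf{CAF-Ab}$ is tautological on the underlying category, so $\textbf{FCA-Ab}(\textbf{CAF-Ab}(\mathscr{D}))=\mathscr{D}$. The same identities hold on morphisms. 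This is precisely what allows the two unit/counit morphisms to be identities.

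Next, using the identity morphism $g:\mathscr{C}\rightarrow\textbf{CAF-Ab}(\textbf{FCA-Ab}(\mathscr{C}))$ in $\textbf{Fa-Ab}$ and the identity morphism $g':\textbf{FCA-Ab}(\textbf{CAF-Ab}(\mathscr{D}))\rightarrow\mathscr{D}$ in $\textbf{Cat-Ab}$, I would define
$$
\xi_{\mathscr{C},\mathscr{D}}:{\rm{Hom_{\textbf{Cat-Ab}}}}(\textbf{FCA-Ab}(\mathscr{C}),\mathscr{D})\to{\rm{Hom_{\textbf{Fa-Ab}}}}(\mathscr{C},\textbf{CAF-Ab}(\mathscr{D})),\quad f\mapsto \textbf{CAF-Ab}(f)\circ g,
$$
and its candidate inverse $f'\mapsto g'\circ\textbf{FCA-Ab}(f')$. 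The fact that these are mutually inverse follows at once from the two equalities in the lemma above and from the functoriality of $\textbf{FCA-Ab}$ and $\textbf{CAF-Ab}$; no new calculation is required because both $g$ and $g'$ are identities.

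Finally I would verify naturality in both variables exactly as in the proof of Theorem \ref{C1}: given $\textbf{FCA-Ab}(B)\xrightarrow{f}A\xrightarrow{g_{1}}A'$ in $\textbf{Cat-Ab}$ and $B'\xrightarrow{h}B$ in $\textbf{Fa-Ab}$, the computation
\begin{align*}
\xi_{A',B'}(g_{1}\circ f\circ\textbf{FCA-Ab}(h))
&=\textbf{CAF-Ab}(g_{1})\circ\textbf{CAF-Ab}(f)\circ h\circ g,\\
\textbf{CAF-Ab}(g_{1})\circ\xi_{A,B}(f)\circ h
&=\textbf{CAF-Ab}(g_{1})\circ\textbf{CAF-Ab}(f)\circ g\circ h,
\end{align*}
collapses the two expressions to the same morphism because $g$ is an identity, establishing the naturality square.

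The only substantive point, which is where I would expect a reader to want reassurance rather than a hard obstacle, is the well-definedness of the restricted functors $\textbf{CAF-Ab}$ and $\textbf{FCA-Ab}$: one must check that an abelian category, when equipped with its canonical factorial structure produced by $\textbf{CAF}$, is actually an abelian factorization category, and that additive factorable functors between such restrict correctly. This is immediate from Axiom \ref{A29} together with the definition of an abelian factorization category, so the proof genuinely reduces to repeating the argument of Theorem \ref{C1} with the label $\textbf{-Ab}$ appended throughout.
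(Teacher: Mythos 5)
Your proposal matches the paper's intent exactly: the paper omits the proof of this theorem, stating only that it follows "in a similar way" from Theorems \ref{C1} and \ref{C2}, and your argument is precisely the proof of Theorem \ref{C1} transported to the abelian setting, including the appeal to Axiom \ref{A29} for the identity $\textbf{CAF-Ab}(\textbf{FCA-Ab}(\mathscr{C}))=\mathscr{C}$ and the same naturality computation. Your closing remark about checking that the restricted functors are well defined is the only point the paper leaves implicit, and you resolve it the same way the paper would.
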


\begin{theorem}
The functor $\textbf{CAF-Ab}$ is left adjoint to the functor $\textbf{FCA-Ab}$. In other words, for $\mathscr{C}\in{\rm{Ob}}(\textbf{Cat-Ab})$ and $\mathscr{D}\in{\rm{Ob}}(\textbf{Fa-Ab})$, there is a functorial bijection
$$
\xi_{\mathscr{C},\mathscr{D}}:{\rm{Hom_{\textbf{Fa-Ab}}}}(\textbf{CAF-Ab}(\mathscr{C}),\mathscr{D})\xrightarrow{\sim}{\rm{Hom_{\textbf{Cat-Ab}}}}(\mathscr{C},\textbf{FCA-Ab}(\mathscr{D})).
$$
\end{theorem}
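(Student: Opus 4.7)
The plan is to mimic the proof of Theorem \ref{C2} verbatim, since everything in the argument goes through on the restricted subcategories. The crucial observation, already noted in the lemma preceding Theorem \ref{C1} for the unrestricted case, is that the compositions $\textbf{CAF-Ab}\circ\textbf{FCA-Ab}$ and $\textbf{FCA-Ab}\circ\textbf{CAF-Ab}$ act as the identity on objects and on morphisms. Indeed, for $\mathscr{D}\in\textrm{Ob}(\textbf{Fa-Ab})$, the underlying abelian category $\textbf{FCA-Ab}(\mathscr{D})$ lies in $\textbf{Cat-Ab}$, and by Axiom \ref{A29} the category $\mathscr{D}$ is the unique factorization category built on it, so $\textbf{CAF-Ab}(\textbf{FCA-Ab}(\mathscr{D}))=\mathscr{D}$; dually for $\mathscr{C}\in\textrm{Ob}(\textbf{Cat-Ab})$ we have $\textbf{FCA-Ab}(\textbf{CAF-Ab}(\mathscr{C}))=\mathscr{C}$. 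The same identifications hold on morphisms because $\textbf{FCA-Ab}$ only forgets the factorial structure while $\textbf{CAF-Ab}$ reinstates the unique such structure.

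Given this, the first step is to define
$$
\xi_{\mathscr{C},\mathscr{D}}:{\rm{Hom_{\textbf{Fa-Ab}}}}(\textbf{CAF-Ab}(\mathscr{C}),\mathscr{D})\rightarrow{\rm{Hom_{\textbf{Cat-Ab}}}}(\mathscr{C},\textbf{FCA-Ab}(\mathscr{D})),\ \ f\mapsto\textbf{FCA-Ab}(f),
$$
and the candidate inverse
$$
\eta_{\mathscr{C},\mathscr{D}}:{\rm{Hom_{\textbf{Cat-Ab}}}}(\mathscr{C},\textbf{FCA-Ab}(\mathscr{D}))\rightarrow{\rm{Hom_{\textbf{Fa-Ab}}}}(\textbf{CAF-Ab}(\mathscr{C}),\mathscr{D}),\ \ g\mapsto\textbf{CAF-Ab}(g).
$$
We must check that these maps land in the stated hom-sets, which requires knowing that $\textbf{CAF-Ab}$ of an additive functor is an additive factorable functor and that $\textbf{FCA-Ab}$ of an additive factorable functor is an additive functor; both follow directly from the definitions of $\textbf{CAF-Pa}$ and $\textbf{FCA-Pa}$, restricted to the abelian subcategories. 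The equations $\eta_{\mathscr{C},\mathscr{D}}\circ\xi_{\mathscr{C},\mathscr{D}}=\textrm{id}$ and $\xi_{\mathscr{C},\mathscr{D}}\circ\eta_{\mathscr{C},\mathscr{D}}=\textrm{id}$ then reduce to $\textbf{CAF-Ab}(\textbf{FCA-Ab}(f))=f$ and $\textbf{FCA-Ab}(\textbf{CAF-Ab}(g))=g$, established in the first paragraph.

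Finally, I would verify naturality in both variables. For morphisms $\textbf{CAF-Ab}(B)\xrightarrow{f}A\xrightarrow{g}A'$ in $\textbf{Fa-Ab}$ and a morphism $B'\xrightarrow{h}B$ in $\textbf{Cat-Ab}$, the functoriality of $\textbf{FCA-Ab}$ yields
$$
\xi_{A',B'}(g\circ f\circ\textbf{CAF-Ab}(h))=\textbf{FCA-Ab}(g)\circ\textbf{FCA-Ab}(f)\circ h=\textbf{FCA-Ab}(g)\circ\xi_{A,B}(f)\circ h,
$$
exactly as in the proof of Theorem \ref{C2}. The main (and essentially only) obstacle is a conceptual one rather than a computational one: one must confirm that the restrictions $\textbf{CAF-Ab}$ and $\textbf{FCA-Ab}$ are well defined as functors between $\textbf{Cat-Ab}$ and $\textbf{Fa-Ab}$, i.e.\ that abelianness of the underlying category is preserved in both directions. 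This is immediate from Theorem \ref{A23} (which gives that the associated anti-category of an abelian factorization category is abelian) and from the fact that the factorial structure added by $\textbf{CAF-Ab}$ does not alter the underlying abelian category. With these verifications in place, the bijection $\xi_{\mathscr{C},\mathscr{D}}$ is natural in both $\mathscr{C}$ and $\mathscr{D}$, and the theorem follows.
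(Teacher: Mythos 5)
Your proposal is correct and takes essentially the same approach as the paper: the paper gives no separate argument for this theorem, saying only that the results for $\textbf{Fa-Ab}$ and $\textbf{Cat-Ab}$ follow ``in a similar way'' from the adjunction proofs for $\textbf{CAF}$ and $\textbf{FCA}$ (Theorem \ref{C2}), and your transport of that proof --- the bijection $f\mapsto\textbf{FCA-Ab}(f)$ with inverse $g\mapsto\textbf{CAF-Ab}(g)$, the appeal to Axiom \ref{A29} for $\textbf{CAF-Ab}\circ\textbf{FCA-Ab}=\mathrm{id}$ and $\textbf{FCA-Ab}\circ\textbf{CAF-Ab}=\mathrm{id}$, and the identical naturality computation --- is exactly what is intended. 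The only cosmetic remark is that your citation of Theorem \ref{A23} is slightly off target for the well-definedness check (that theorem concerns the anti-category $\mathscr{C}^{An}$ rather than the underlying category), but the substantive point you make, namely that the factorial structure neither alters nor is needed to recover the underlying abelian category, is the one that matters.
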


 \end{document}